\documentclass[11pt]{article}
\usepackage[a4paper,left=0.85in,right=0.85in,top=0.85in,bottom=1.15in]{geometry}
\usepackage{graphicx}

\graphicspath{{figures/}}
\usepackage{svg}

\usepackage{caption}
\usepackage{subcaption}
\usepackage[british]{babel}
\usepackage[utf8]{inputenc}

\usepackage{amssymb}
\usepackage{amsmath}
\usepackage{amsthm}
\usepackage{mathtools}
\usepackage{amsfonts}
\usepackage{physics}
\usepackage[scr=dutchcal]{mathalpha}
\usepackage{stmaryrd}

\usepackage{makecell}

\usepackage[pdfencoding=auto, psdextra]{hyperref}
\hypersetup{
    colorlinks=true,
    linkcolor=blue,
    filecolor=magenta,      
    urlcolor=cyan,
    }
\usepackage{cleveref}
\usepackage{authblk}

\usepackage[normalem]{ulem}
\usepackage{cancel}

\usepackage{tikz-cd}
\usepackage{tikz}
\usepackage{pgfplots}
\usetikzlibrary{patterns,patterns.meta}
\usetikzlibrary{intersections}
\usetikzlibrary{math}
\pgfplotsset{compat=1.18}
\usepackage{quiver}
\usepackage{diagbox}
\usepackage{CormorantGaramond}
\usetikzlibrary{decorations.markings}

\usepackage{color}
\usepackage{xcolor}
\definecolor{FTChampagnePink}{HTML}{F2DFCE}
\definecolor{FTOldLace}{HTML}{FFF1E0}
\definecolor{FTFloralWhite}{HTML}{FFF9F5}
\definecolor{FTMetallicSeaweed}{HTML}{0D7680}
\definecolor{FTMaroon}{HTML}{8F223A}
\definecolor{BBGreen}{HTML}{18453B}

\makeatletter
\newcommand{\globalcolor}[1]{%
  \color{#1}\global\let\default@color\current@color
}
\makeatother

\usepackage{booktabs}
\usepackage{diagbox}

\usepackage{cancel}
\usepackage[textsize=footnotesize]{todonotes}

\usepackage{enumitem}
\usepackage{comment}
\usepackage{thmtools, thm-restate}

\usepackage{blkarray, bigstrut}
\usepackage[]{natbib}

\usepackage{array}
\usepackage{upgreek}
\newcommand{\functionname}{chordal distance transform~}
\newcommand{\FunctionName}{Chordal Distance Transform~}
\newcommand{\FN}{CDT~}
\newcommand{\AutoDsq}{\ensuremath{\Gamma}} 
\newcommand{\AutoD}{\bar{\mathsf{D}}}
\newcommand{\DT}{\mathcal{T}}
\newcommand{\DTm}{\mathsf{T}}

\newcommand{\VolT}{\mathsf{Vol}}
\newcommand{\diam}{\mathrm{diam}}
\newcommand{\FSpace}[3]{\ensuremath{C^{#1}({#2}, {#3})}}
\newcommand{\Emb}[3]{\ensuremath{\mathrm{Emb}^{#1}({#2}, {#3})}}
\newcommand{\Jet}[4]{\ensuremath{J^{#1}_{#2}({#3}, {#4} )}}
\newcommand{\prolong}[2]{\ensuremath{j^{#1}_{#2}}}
\newcommand{\transverse}{\ensuremath{\pitchfork}}
\newcommand{\UConf}[2]{\ensuremath{\mathrm{UConf}_{#1}(#2)}}
\newcommand{\Conf}[2]{\ensuremath{\mathrm{Conf}_{#1}(#2)}}
\newcommand{\ExpS}[2]{\ensuremath{\mathbf{F}_{#1}(#2)}}

\newcommand{\ucpt}[1]{\ensuremath{\lbag {#1} \rbag}}

\DeclareMathOperator{\interior}{\mathrm{int}}

\newcommand{\closure}[1]{\mathrm{cl}(#1)}
\newcommand{\innerprod}[2]{\ensuremath{\langle {#1}, {#2} \rangle}}
\newcommand{\Mob}{\ensuremath{\mathcal{M}}}

\newcommand{\stripinf}{\ensuremath{[0,1] \times \R}}

\DeclareMathOperator{\sgn}{sgn}

\newcommand{\Con}{\mathrm{Con}}

\DeclareMathOperator{\pershomf}{PH} 

\DeclareMathOperator{\iden}{\mathrm{id}} 

\DeclareMathOperator{\scre}{\mathscr{e}}
\DeclareMathOperator{\screii}{\tilde{\scre}}

\DeclareMathOperator{\Z}{\mathbb{Z}}
\DeclareMathOperator{\R}{\mathbb{R}}

\DeclareMathOperator{\Id}{\mathrm{id}}

\newcommand{\homeo}{\approx}

\newcommand{\inv}[1]{#1^{-1}}
\newcommand{\fibre}[2]{\inv{#1}{\left({#2}\right)}}
\newcommand{\sublevel}[2]{\inv{#1}(-\infty, {#2}]}

\newcommand{\sett}[1]{\left\{ {#1} \right\}}

\DeclareMathOperator{\imag}{im}

\newcommand{\cU}{\mathcal{U}}
\newcommand{\cV}{\mathcal{V}}
\newcommand{\cW}{\mathcal{W}}
\newcommand{\cN}{\mathcal{N}}

\newcommand{\cI}{\mathcal{I}}
\newcommand{\cJ}{\mathcal{J}}

\newcommand{\sU}{\mathsf{U}}
\newcommand{\sV}{\mathsf{V}}
\newcommand{\sW}{\mathsf{W}}
\newcommand{\sN}{\mathsf{N}}

\newcommand{\nerve}[1]{\sN({#1})}

\let\angle\measuredangle

\usepackage{multirow}

\theoremstyle{plain}
  \newtheorem{theorem}{Theorem}[section]
  \newtheorem{corollary}[theorem]{Corollary}
  \newtheorem{lemma}[theorem]{Lemma}
  
  \newtheorem{proposition}[theorem]{Proposition}

\theoremstyle{definition}
  \newtheorem{definition}[theorem]{Definition}
  
  \newtheorem{ex}[theorem]{Example}
  
  \newtheorem{remark}[theorem]{Remark}
  \newenvironment{example}{\begin{ex}}{\end{ex}}

\newcommand{\Dgm}{\mathbf{Dgm}}

\newcommand*{\seteq}{\stackrel{\text{set}}{=}}

\title{The \FunctionName of Geometric Loops and its Persistent Homology}

\author[1,2]{James A. D. Binnie\thanks{ Email:~\texttt{BinnieJA@cardiff.ac.uk}}}
\author[3,4,5]{Otto Sumray\thanks{Email:~\texttt{sumray@mpi-cbg.de}}}
\author[6]{Ka Man Yim\thanks{ Email:~\texttt{kaman.yim@stats.ox.ac.uk}}}

\affil[1]{School of Mathematics, Cardiff University, U.K.}
\affil[2]{Heilbronn Institute for Mathematical Research, Bristol, U.K.}
\affil[3]{Center for Systems Biology Dresden, Germany}
\affil[4]{Max Planck Institute of Molecular Cell Biology and Genetics, Germany}
\affil[5]{Max Planck Institute for the Physics of Complex Systems, Germany}
\affil[6]{Department of Statistics, University of Oxford, U.K.}

\date{\today}

\begin{document}
\maketitle
\begin{abstract}
We present an isometry and parametrisation invariant of embeddings of $S^1$ into Euclidean space. We do so by representing the distance between pairs of points on the embedded circle as a function on a M\"obius band, the two-point finite subset space of $S^1$. We call this function the chordal distance transform of the embedding. We show that the sublevel set persistent homology of the chordal distance transform satisfies the desired isometry and parametrisation invariance, and is a continuous transform with respect to the Whitney topology on the space of circle embeddings and the bottleneck distance in the space of persistence diagrams. We then considered the generic behaviour of the chordal distance transform for $C^2$ and finite piecewise linear embeddings. In the $C^2$-case, we show that  non-boundary critical points of the chordal distance transform are finite and non-degenerate on an open and dense subset of circle embeddings. Consequently, its persistent homology is pointwise finite dimensional for generic $C^2$-embeddings. In the finite piecewise linear case, we also find piecewise-continuous analogues of non-degenerate critical points, and give generic conditions for the homological critical points of the chordal distance transform to be non-degenerate. In order to gain a geometric interpretation of the chordal distance transform and its persistent homology, we give a geometric characterisation of the $C^2$ and finite piecewise linear non-degenerate critical points. Finally, we consider how the chordal distance transform can be generalised to capture geometric features involving $n\geq 2$ points on an embedded shape, as a function on the $n$-point finite subset space.
\end{abstract}
\newpage 
\tableofcontents
\newpage
\section{Introduction}
\label{sec:intro}

Topological Data Analysis (TDA) concerns the statistical analysis of shapes. A leading paradigm for tackling this problem is to transform shapes into vectors features which express salient topological and geometric properties of the shape. In mathematical terms, this involves desigining a feature map $F$ from a space of shapes, to a vector space $V$. Having expressed characteristics of shapes as vectors, classical statistical tools can then be applied to analyse datasets of shapes.
In this study, we consider how we can perform topological data analysis on \emph{geometric loops}. Our work is motivated by problems in biological and chemical shape analysis, where  structures such as cell contours and ring compounds are modelled as geometric loops~\cite{Bleile2023PersistencePopulation}. 

\begin{figure}[ht]
    \centering
    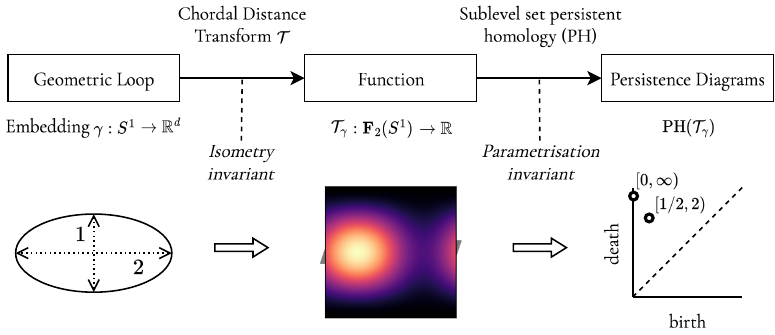
    \caption{The \functionname transforms a geometric loop represented by an embedding $\gamma$ into a function $\DT_\gamma$ on the M\"obius band. This gives an isometry invariant representation of our geometric loop.  The persistent homology of the sublevel set filtration of $\DT_\gamma$ yields a summary of the geometry of the loop that is invariant with respect to reparametrisations of $\gamma$.}
    \label{fig:pipeline}
\end{figure}

The space of geometric loops is parametrised by the space of circle embeddings $\Emb{}{S^1}{\R^d}$ into Euclidean space. However, applying statistical methods to the embeddings directly is a potentially challenging prospect, since there are infinitely many \emph{equivalent} embeddings which represent the same geometric loop. As such, any statistical model inferred on finitely many observations of loop embeddings may fail to generalise well to unseen data.  To overcome this challenge, we consider a feature map (\Cref{fig:pipeline}) which is a composition of two transformations:
\begin{equation} \label{eq:master_feature_map}
  F:\quad   \Emb{}{S^1}{\R^d} \xrightarrow{\DT} C(\ExpS{2}{S^1}, \R) \xrightarrow{\pershomf} \Dgm 
\end{equation}
The first map is the \emph{\functionname} $\DT$, which is a topological realisation of the distance matrix of points along the circle embedding $\gamma$ as a real-valued function $\DT_\gamma$ on a M\"obius band. The M\"obius band is homeomorphic to the two-point finite subset space $\ExpS{2}{S^1}$ of the circle. However, as $\DT_\gamma$ is dependent on how $\gamma$ is parametrised, we apply \emph{sublevel set persistent homology} ($\pershomf$)~\cite{CohenSteiner2007,edelsbrunner2008persistent,edelsbrunner2010computational} to remove this dependency that is not related to the geometry of the loop. Persistent homology is a topological summary of the function that describes the changes in the of sublevel sets 
\begin{equation}
    \AutoDsq^a := \{x \in \ExpS{2}{S^1} \ : \ \DT_\gamma(x) \leq a\}
\end{equation}
as we vary the thresholding parameter $a \in [0,\infty)$. The persistent homology of $\DT_\gamma$ in dimension $i$ is the family of linear maps between homology groups of sublevel sets (with coefficients in some fixed field $\mathbb{F}$), induced by inclusion:
\begin{equation*}
    H_i(\Gamma^\bullet) : (H_i(\Gamma^a)  \to H_i(\Gamma^{b}))_{0 \leq a \leq b < \infty}
\end{equation*}
This family of linear maps, or persistence module, is represented by a persistence diagram $\pershomf(\DT_\gamma)$ that describe the birth and death of homological features as we vary the filtration parameter $a$. Persistence diagrams can then be vectorised to produce features compatible with statistical and machine learning algorithms (see~\cite{Ali2023-xo} for a survey of vectorisation methods for persistence diagrams). 
\begin{table}[h]
\centering
\begin{tabular}{|p{0.15\linewidth}|p{0.35\linewidth}|p{0.35\linewidth}|}
\hline
                              & {Finite, piecewise linear embeddings}  & Smooth embeddings ($C^k$, $k \geq 2$)                                      \\ \hline
Continuity                                  & \multicolumn{2}{p{0.7\linewidth}|}{
\makecell{$\DT: \Emb{k}{S^1}{\R^d} \to C^k(\ExpS{2}{S^1}, \R)$ continuous for $k \geq 0$ (\Cref{lem:continuity_of_distance_transforms}) \\ $\pershomf \circ \DT: \Emb{}{S^1}{\R^d} \to \Dgm$ continuous (\Cref{cor:cont_ph_cdt}).}}                                   \\ \hline
Invariance                                  & \multicolumn{2}{p{0.7\linewidth}|}{$\pershomf \circ \DT$ Invariant with respect to reparametrisations of $S^1$ and Euclidean isometries for any topological embedding (\Cref{prop:sym})} \\ \hline
Tameness of persistence module $\pershomf(\DT_\gamma)$                    & {Yes (\Cref{prop:nerve_to_sublevel,cor:PL_tame})}                       & On a generic (i.e. open and dense) subset of space of embeddings $\Xi \subset \Emb{k}{S^1}{\R^d}$ (\Cref{thm:main_morse})  \\ \hline
Geometric characterisation of critical points of $\DT_\gamma$ (i.e. birth and death of features in $\pershomf(\DT_\gamma)$) & Non-degenerate critical points if generic conditions ~\labelcref{C1,C2,C3} satisfied, geometric and homological description in~\Cref{dfn:pl-critical-index} and ~\Cref{thm:PLmain} respectively.              & Non-degenerate on $\Xi$; described in \Cref{thm:SmoothChar} \\ \hline
\end{tabular}

\caption{A summary of the key properties of $\DT$. The continuity and invariance of $\pershomf \circ \DT$ holds for topological embeddings of circle into Euclidean space. We show for two special cases of embeddings -- where they are either finite piecewise linear, or $C^2$ -- that we can have tame persistence modules and geometric characterisations of critical points with different Morse indices. \label{tab:contribution}}
\end{table}

In this article, we focus on theoretical properties of the feature map $F$ in~\cref{eq:master_feature_map} to justify its use in data analysis of geometric loops. In particular, we consider four desirable properties of $F$, namely:
\begin{enumerate}
    \item The continuity of $F$;
    \item The invariance of $F$ with respect to Euclidean isometries and reparametrisations of the loop $\gamma$; 
    \item The tameness of $F$ (i.e. whether persistence diagrams $F(\gamma) = \pershomf(\DT_\gamma)$ are represented by finitely many points); and
    \item The geometric interpretation of the the persistence diagram $\pershomf(\DT_\gamma)$.
\end{enumerate}
We give a summary of how we have addressed these questions in~\Cref{tab:contribution}. For topological circle embeddings $\Emb{}{S^1}{\R^d}$, we prove that $\pershomf \circ \DT$ is continuous and invariant with respect to reparametrisations of the embedding and Euclidean isometries in in~\Cref{cor:cont_ph_cdt} and~\Cref{prop:sym} respectively. For tameness and geometric interpretation of $\pershomf(\DT_\gamma)$, we restrict ourselves two broad classes of circle embeddings, where the embedding $\gamma$ is either $C^2$ or finite piecewise linear (PL). The PL case arises in applications where we may only observe finitely many samples $(x_i)_{i \in \Z_n}$ of some loop, and we use a piecewise linear interpolation to approximate the underlying shape. They also arise naturally in chemistry where we can represent ring compound configurations as piecewise linear loops. The $C^2$ case is also of independent theoretical interest from a differential geometry perspective. We are also practically motivated to study to $C^2$ case as they describe smooth interpolations of finite data $(x_i)_{i \in \Z_n}$ using splines. 

The restriction to $C^2$ and PL embeddings allows us to take a Morse-theoretic approach to address the tameness of persistence diagrams $\pershomf(\DT_\gamma)$ and the geometric interpretation of homological features described in $\pershomf(\DT_\gamma)$. Morse theory enables us to constrain the topology of the sublevel sets of functions with analytic properties of a function's critical points. If a critical point is non-degenerate, then they correspond to the birth or death of death of a homological feature in the filtration $\Gamma^a$, as summarised in persistence diagrams $\pershomf(\DT_\gamma)$. Thus, we can relate the persistence diagram $\pershomf(\DT_\gamma)$ to the underlying geometry of the loop $\gamma$ by studying how non-degenerate critical points of $\DT_\gamma$ depend on the geometric and analytic properties of $\gamma$.

In the case where embeddings are $C^2$, we use tools from transversality theory in differential topology and classical (smooth) Morse theory to show that for a generic circle embedding $\gamma$, its chordal distance transform $\DT_\gamma$ only has finitely many non-degenerate critical points on the interior of $\ExpS{2}{S^1}$. As a consequence, for a generic embedding $\gamma$, each non-trivial birth or death of a homological feature in the filtration $\Gamma^a$ can be associated with a non-degenerate critical point of $\DT_\gamma$, and the persistence diagram $\pershomf(\DT_\gamma)$ is tame (\Cref{thm:main_morse}). In the case where $\gamma$ is PL, smooth Morse theory fails to apply. However, we can describe the filtration $\Gamma^a$ of $\ExpS{2}{S^1}$ combinatorially instead, as $\Gamma^a$ is homotopy equivalent to a filtration of a finite simplicial complex (\Cref{prop:nerve_to_sublevel}). We then proceed to analyse the simplicial filtration using discrete/combinatorial Morse theory. As we can describe the homological changes with finite data, we  can conclude that $\pershomf(\DT_\gamma)$ is tame. This homotopy equivalence also gives a combinatorial algorithm that allows us to compute the persistent homology $\pershomf(\DT_\gamma)$ exactly in the PL case.  Furthermore, we show that if a set of generic conditions~\labelcref{C1,C2,C3} is satisfied for a PL embedding, then we can identify PL analogues of $C^2$-non-degenerate critical points in the PL-case (\Cref{thm:PLmain}). 

In the generic case described above, we can interpret the birth and death of homological features in $H_i(\Gamma^\bullet)$ by studying the geometric significance of the non-degenerate critical points of $\DT_\gamma$ associated with them. As $\ExpS{2}{S^1}$ is two-dimensional manifold with boundary (in fact a M\"obius band), the non-degenerate critical points of $\DT_\gamma$ in the interior of $\ExpS{2}{S^1}$ are either minima, saddles, or maxima; the birth of a feature in $H_0(\Gamma^\bullet)$ corresponds to a minimum, the death of a feature in $H_1(\Gamma^\bullet)$ corresponds to a maximum, and the death of a feature in $H_0(\Gamma^\bullet)$ or a birth of a feature in $H_1(\Gamma^\bullet)$ corresponds to a saddle point. In the $C^2$-case, we give a geometric description of minima, saddles, or maxima in terms of local tangent and curvature conditions of $\gamma$ in~\Cref{thm:SmoothChar}. In the PL-case, we describe them geometrically in~\Cref{dfn:pl-critical-index} and~\Cref{thm:PLmain}
in terms of the
acuteness or obtuseness
of angles between line segments and chords.

Finally, we discuss how we may generalise the \functionname to encode higher order geometric information about any embeddings $X \hookrightarrow \R^d$ into Euclidean space. We propose a generalisation called the volume transform $\VolT_k$; the case where $X = S^1$ and $k = 1$ recovers the chordal distance transform $\DT$. We discuss challenges in deriving theoretical bounds for such generalisations, as higher order finite subset spaces of manifolds can be stratified spaces where standard differential topology and Morse theory for manifolds cannot be directly applied.

\subsection*{The Chordal Distance Transform}

We propose the \functionname $\DT$ as a representation of the geometry of an embedding $\gamma$, using the distance between pairs of points $\gamma(z_1), \gamma(z_2)$ on the geometric loop. Consider the continuous function $d_\gamma: S^1 \times S^1 \to \R$ on the torus, where $d_\gamma(z_1, z_2) = \norm{\gamma(z_1) - \gamma(z_2)}_2$. Na\"ively, we can represent the geometry of the loop with $d_\gamma$. However, $d_\gamma$ contains redundant information as $d_\gamma$ is symmetric when we permute $z_1$ and $z_2$; the function on one half of the torus carries the same information as the other (see~\Cref{fig:ellipse_distance_matrix}). To remove this redundancy, we consider the induced function on the quotient space $\ExpS{2}{S^1} = (S^1 \times S^1)/S_2$, which is homeomorphic to a M\"obius band $\Mob$. The space $\ExpS{2}{S^1}$ is the space of subsets $\{z_1, z_2\} \subset S^1$ of cardinality at most 2. Since $\{z_1, z_2\}$ defines a \emph{chord} on the circle, we can consider $\ExpS{2}{S^1}$ as the space of chords on a circle. The chordal distance transform is then a function that sends a chord $\{z_1, z_2\}$ to the distance between its end points $\norm{\gamma(z_1) - \gamma(z_2)}_2$ in the embedded loop. 
\begin{definition}
    The chordal distance transform $\DTm: \Emb{}{S^1}{\R^d} \to C(\ExpS{2}{S^1}, \R)$ is the function that sends a circle embedding $\gamma : S^1 \hookrightarrow \R^d$ to the unique continuous map $\DTm_\gamma: \ExpS{2}{S^1} \to \R$ on the quotient space $\ExpS{2}{S^1}$ of $S^1 \times S^1$, such that $d_\gamma = \DTm_\gamma \circ q$:
\begin{equation} \label{eq:Dist}
    \begin{tikzcd}[cramped]
	{S^1 \times S^1} & {\mathbb{R}_{\geq 0}} \\
	{\ExpS{2}{S^1}}
	\arrow["d_\gamma", from=1-1, to=1-2]
	\arrow["q"', two heads, from=1-1, to=2-1]
	\arrow["{\exists ! \DTm\gamma}"', dashed, from=2-1, to=1-2]
\end{tikzcd}.
\end{equation}
Explicitly, $\DTm_\gamma(\{z_1, z_2\}) = \norm{\gamma(z_1) - \gamma(z_2)}_2$. The smoothed chordal distance transform $\DT: \Emb{}{S^1}{\R^d} \to  C(\ExpS{2}{S^1},\R)$ is give by.
\begin{equation}
    \DT_\gamma = \frac{1}{2}\DTm_\gamma^2.
\end{equation} 
\end{definition}
\begin{remark}
    We mainly work with the smoothed chordal distance transform here for the purposes of deriving theoretical results, as $\DT_\gamma$ is a $C^k$-map on $\ExpS{2}{S^1}$ as a manifold with boundary if $\gamma$ is $C^k$. The theoretical properties we derive for $\pershomf(\DT_\gamma)$ extend to $\pershomf(\DTm_\gamma)$, as there is a bijection between the persistence diagrams of $\pershomf(\DT_\gamma)$ and $\pershomf(\DTm_\gamma)$ (\Cref{lem:square_bijection}). In applications, we should use $\DTm$ in practice, as is a 2-Lipschitz map from $\Emb{}{S^1}{\R^d}$ to $C(\ExpS{2}{S^1}, \R)$ (\Cref{lem:continuity_of_distance_transforms}).
\end{remark}

\begin{figure}[t]
    \centering
    \includegraphics[width=0.5\linewidth]{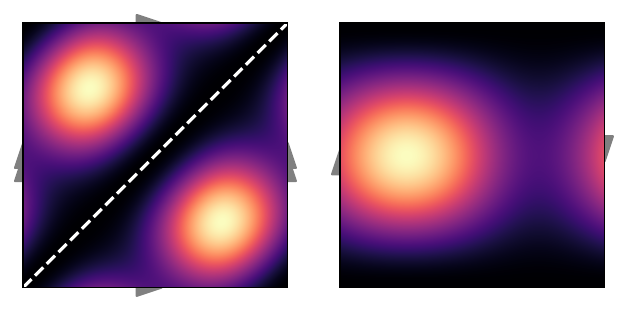}
    \caption{We derive the \functionname of a circle embedding $\gamma$ from the distance matrix of pairs of points, as a function $d: S^1 \times S^1 \to \R$. Since the distance matrix is symmetric with respect to permutation  $d(z_1,z_2) = d(z_2,z_1)$ (visually represented by reflecting across the diagonal),  we remove the redundant information introduced by symmetry by only considering the distance evaluated on pairs of points on the circle, modulo permutation $\{z_1, z_2\}$. This gives us a function $\DTm_\gamma$ on the two-point subset space $\ExpS{2}{S^1}$ of $S^1$, which is homeomorphic to the M\"obius band $\Mob$. We call this function the \functionname of $\gamma$, as each $\{z_1, z_2\}$ in $\ExpS{2}{S^1}$ corresponds to a chord on the circle, and $\DTm_\gamma(\{z_1, z_2\})$ encodes the length of each chord on the embedded loop. On the left, we illustrate the distance matrix of an ellipse $\gamma$ with major axis = 2, minor axis = 1. On the right, we illustrate the chordal distance trasnform $\DTm_\gamma: \ExpS{2}{S^1} \to \R$ as derived from $d$.  }
    \label{fig:ellipse_distance_matrix}
\end{figure}

\begin{figure}[h!]
    \centering
    
    \begin{subfigure}{0.45\textwidth}
        \centering
        \includegraphics[width=\linewidth]{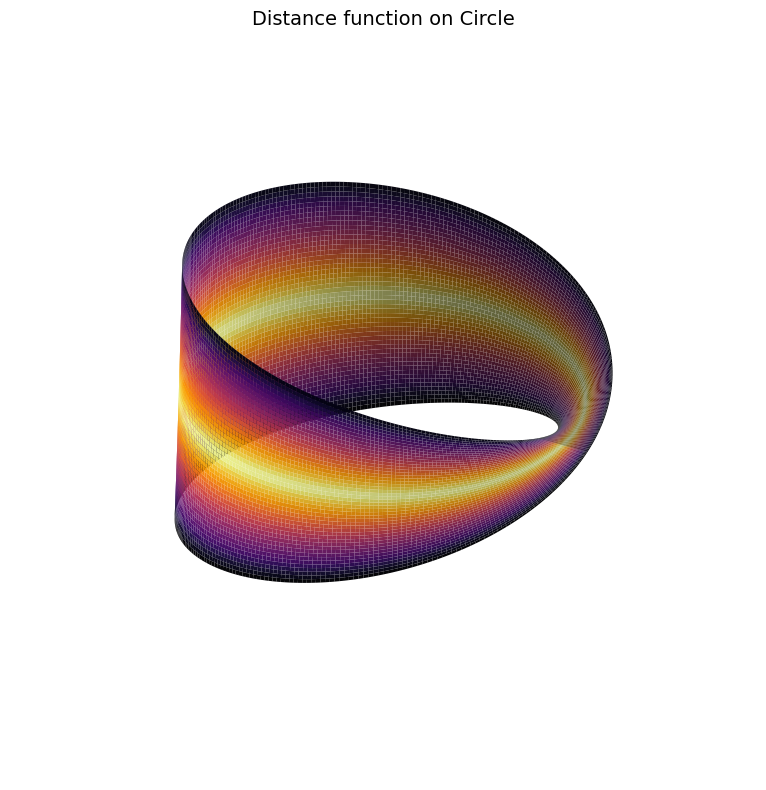}
        \caption{Chordal distance transform of a circle.}
        \label{fig:dist3}
    \end{subfigure}
    \hspace{0.02\textwidth}
    \begin{subfigure}{0.45\textwidth}
        \centering
        \includegraphics[width=\linewidth]{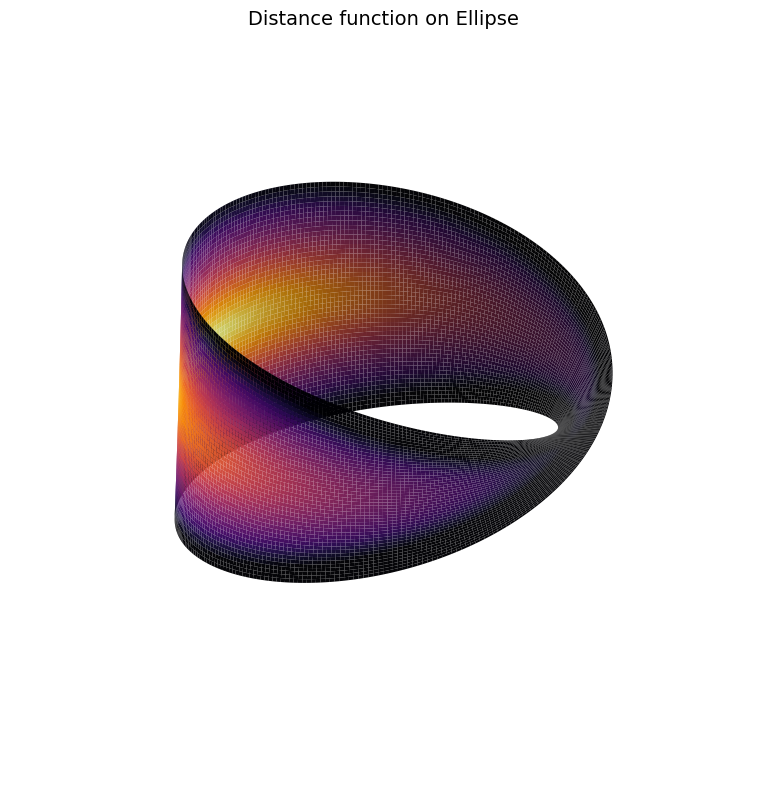}
        \caption{Chordal distance transform of an ellipse.}
        \label{fig:dist4}
    \end{subfigure}

    \caption{Heat map representations of the \functionname of a circle (\cref{fig:dist3}) and an ellipse (\cref{fig:dist4}), as a function on the M\"obius band. The ellipse here has major axis $a=8$ and minor axis $b=1$. }
    \label{fig:bothdist}
\end{figure}

\subsection*{Related Work}

\paragraph{Shape statistics}
In shape analysis, a range of statistical summaries have been developed to quantify and compare the shape of data. TDA-based approaches include the Persistent Homology Transform (PHT)~\cite{Turner2014PHT}
and the Euler Characteristic Transform (ECT)~\cite{HacquardLebovici2024Euler, Munch02012025} 
which
record the persistent homology, respectively Euler characteristic,
of the shape filtered in each direction. For broad classes of shapes, the PHT/ECT is injective and thus uniquely determines the underlying set under mild conditions.
However, both methods are not invariant to rotation/translation and require the shape to be first oriented,
although there exist methods to remove some of this sensitivity (e.g.~\cite{yang2026topological}).
Persistent homology has been directly used to study the shapes of loops
for cell contours~\cite{Bleile2023PersistencePopulation}
and for knotted and unknotted proteins~\cite{Benjamin2023-qb}.
More classical methods include Elliptical Fourier Descriptors (EFDs)~\cite{NixonAguado2012Feature, KUHL1982236}
and Procrustes analysis~\cite{Gower1975GPA, AdamsRohlfSlice2004Geomorph}.
EFDs are essentially the Fourier expansions of the Cartesian coordinate functions of a parametrised closed contour.
EFDs can be normalised to become invariant to translation and rotation (and scale),
however, as with the PHT and ECT this normalisation is non-canonical; translation invariance is usually achieved 
by centring the centroid at the origin.
Procrustes analysis  provides a geometric alignment by optimally removing translational, rotational, and scale differences between shapes (often represented as landmark sets), which allow comparative studies of morphological variation once the extrinsic differences have been eliminated. 

\paragraph{Persistent homology on distance matrices}
Applying persistent homology to distance matrices as a filtered complex is a concept that has appeared in several works related to times series analysis and characterising dynamical systems. In an article by Takashi Ichinomiya~\cite{ichinomiya2025machine} and David Hien's PhD thesis~\cite{hien2025topological}, the distance matrix of time series data, also known as a recurrence plot~\cite{Eckmann1995-vb}, is represented as a a filtered cubical complex called the distance complex. In~\cite{ichinomiya2025machine} the complex is constructed on the full distance matrix, whereas~\cite{hien2025topological} restricts the complex to the upper triangular half of the distance matrix, as the distance matrix is symmetric across the diagonal. This is equivalent to a representation of the unordered two-point configuration space $\ExpS{2}{[0,1]}$ of $[0,1]$ as a cubical complex. Takashi~\cite{ichinomiya2025machine} empirically demonstrated that features derived from the persistent homology of the distance complex can be used to distinguish periodic and chaotic behaviour in dynamical systems, and is useful in classification of electrocardiogram data. In~\cite{hien2025topological}, Hien focused on understanding the theoretical properties of this construction, proving that the $0$\textsuperscript{th} persistent homology of the distance complex captures more information than the $1$\textsuperscript{st} persistent homology of the Vietoris-Rips complex of the time series as a point cloud~\cite[Theorem 5.2.5]{hien2025topological}. 

The chordal distance transform can be thought of as an analogous construction if we interpret embeddings of $S^1$ as cyclic time series data. The sublevel set filtration of the chordal distance transform is a continuous version of the distance complex for explicitly cyclic data. As we are not considering embeddings of $[0,1]$ but rather of $S^1$, the space which is filtered in our case is a M\"obius band $\ExpS{2}{S^1} \homeo \Mob$, rather than one homeomorphic to $[0,1]^2$ or $\ExpS{2}{[0,1]}$ in~\cite{ichinomiya2025machine} and~\cite{hien2025topological} respectively. In our study of piecewise linear embeddings of $S^1$ that interpolate finite cyclic data $(x_i)_{i \in \Z_n}$ (\Cref{sec:pl-case}), we consider the chordal distance transform of such an embedding as a piecewise continuous function on $\Mob$ as a manifold, which is different to the distance complex built on the point set $(x_i)_{i \in \Z_n}$. The filtration of the distance complex is induced solely from the distances between points $x_i,x_j$ in the point set, whereas the chordal distance transform takes into account of distance between points on the piecewise linear embedding interpolated between $x_i$ and $x_{i+1}$. As we see in~\Cref{sec:pl-case}, homological critical values of the chordal distance transform filtration of $\Mob$ can be induced by interpolated points, whereas the distance complex only has distances between points in $(x_i)_{i \in \Z_n}$ as critical values.

\paragraph{Configuration and Finite subset spaces}

Configuration spaces record the positions of finitely many distinct points in a topological space.
If this space is a manifold, the homotopy type of the configuration space is a homeomorphism invariant of the underlying manifold
and as such has been widely studied in algebraic topology
\cite{Knud, CohenConf, kallel2025conf}.
Finite subset spaces were introduced in the early 20th century~\cite{BorsukUlam1931SymmetricProducts, Bott}
with more recent development by Handel~\cite{handel} and Tuffley~\cite{tuffleyexp,TuffPHD} providing homotopic results about finite subset spaces for general topological spaces, surfaces and graphs.
Configuration spaces on graphs have also appeared in application, e.g. ~\cite{ghrist1999configurationspacesbraidgroups}.

A related function
to the \functionname
has been used with configuration spaces in the inscribed squares problem.
In
Vaughan's proof~\cite{vaughan1977rectangles} (see Section 4 in~\cite{meyerson1981balancing})
that any planar simple smooth curve contains a rectangle,
he defines a function $f: \ExpS{2}{S^1} \to \R^3$ where
{
$
f(\{x,y\}) = (\frac{1}{2}(\gamma(x)+\gamma(y)), \|\gamma(x)-\gamma(y)\|).
$
}
If $f$ is injective, it would then induce an embedding of $\mathbb{R}P^2$ into $\R$,
which is not possible. $f$ being injective implies $\gamma$ contains an inscribed rectangle.

\subsection*{Outline}

We give an outline of the content in this article.~\Cref{sec:Background} gives necessary mathematical background for the paper:~\cref{sec:Background:config} introduces configuration spaces and finite subset spaces. We in particular focus on $\ExpS{2}{S^1}$, the 2\textsuperscript{nd} finite subset space of $S^1$, and show that it is homeomorphic to a M\"obius band $\Mob$. We also give an explicit atlas of $\ExpS{2}{S^1}$ as a manifold with boundary, which we use to give local coordinates for functions on $\ExpS{2}{S^1}$ for the remainder of the paper.
~\Cref{sec:Background:ph-sublevelset} gives background on smooth Morse theory and persistent homology of sublevel set filtrations,
relating non-degenerate critical points of the birth and death of features in sublevel set persistent homology.
We introduce the notion of Morse-supported (\Cref{def:finite_morse}) for the functions under our consideration,
namely non-negative functions on a manifold with zero on the boundary,
later used in~\cref{sec:smooth-case}.
For the discrete case we give background on Conley indices, used in ~\cref{sec:pl-case}.
In~\cref{ssec:differential_topology} we give a brief account of strong Whitney topologies for function spaces, jet bundles, and transversality theory, which is necessary for the genericity result in~\cref{ssec:smooth_genericity}.

In~\cref{sec:elementary-props} we provide some elementary desirable properties of the \functionname
and the persistent homology of the \functionname. We prove that $\DT$ and $\pershomf \circ \DT$ are continuous transforms in~\cref{ssec:continuity}. In~\cref{ssec:symmetry}, we show that  that $\pershomf \circ \DT$ they are invariant with respect to reparametrisations and Euclidean isometries. Furthermore, we discuss how the persistent homology of functions on the M\"obius band can depend on the choice of field coefficients for homology in~\cref{ssec:homology_field}.

In~\cref{sec:smooth-case} we focus on smooth embeddings of loops in Euclidean space.
In~\cref{ssec:smooth_genericity}, we prove the main result of this section (\Cref{thm:main_morse}): the \functionname
of (twice-differentiable) embeddings are generically Morse-supported, and hence their associated persistence module is finite dimensional. We address the open and density aspects of genericity in~\cref{ssec:smooth_open,ssec:smooth_dense} respectively.
Having shown that the generic embeddings are Morse-supported, we analyse the non-degenerate critical points of the \functionname in~\cref{ssec:crit_geometric_smooth}, and give a geometric characterisation of the local minima, maxima and saddle points. This thus gives a geometric interpretation of the birth and death of homological features in the persistence diagrams $\pershomf_i\circ \DT (\gamma)$.

\Cref{sec:pl-case} focuses on piecewise-linear embeddings of $S^1$ into Euclidean space.
The main results is~\Cref{thm:PLmain} that gives necessary and sufficient geometric conditions \labelcref{plmain-CAw,plmain-CTog,plmain-CTow} for a point on $\ExpS{2}{S^1}$ to be a non-degenerate homological
critical point of $\DT_\gamma$.
This is achieved in two parts: first by constructing a filtered simplicial complex whose persistent homology is isomorphic to the sublevel set filtration of $\ExpS{2}{S^1}$ by $\DT_\gamma$
(\cref{ssec:what-a-nerve}).
We give a precise construction of the simplicial complex and the filtration values of its simplices, allowing the persistent homology to computed in practice.
Second, under certain genericity of the embedding \labelcref{C1,C2,C3},
we show that the Morse sets of this simplicial complex are simplices themselves
and we give a geometrically characterisation for a point in $\ExpS{2}{S^1}$ to correspond to such a critical simplex and determine its index as an $n$-saddle (\cref{ssec:pl-persistence}).

In Section~\ref{sec:ho-functions} we describe possible extensions of the \functionname
to functions on higher order finite subset spaces. We show that the chordal distance transform can be seen as a special case of the $k$-simplex volume transform that takes embeddings to functions on $(k+1)$\textsuperscript{th} finite subset spaces. We show that this transform is continuous and stable in~\Cref{thm:ContStabVol}.

Finally, in~\Cref{sec:disft}, we have a concluding discussion on possible applications of the chordal distance transform to biology, and theoretical challenges in generalising the Morse-theoretic framework to higher order finite subset spaces.

\section{Background}
\label{sec:Background}

In this section we introduce the spaces that we work with, the configuration spaces, the symmetric product and the $n$-th finite subset space. We provide examples and intuition of the spaces and give some topological properties. We cover the second finite subset space of the circle in detail as this is the space we use in the CDT. We provide the fundamentals of Morse theory and persistent homology that will be used later sections. Further to that, we introduce differential topology concepts such as, the Whitney topology on function spaces, jets and multi-jets. In the final part of this section we discuss results about subsets of positive reach.

\subsection{Configuration Spaces and Finite Subset Spaces}
\label{sec:Background:config}

The main spaces we will be considering throughout are: the ordered and un-ordered configuration spaces, the symmetric product and the $n$-th finite subset space. Throughout we will assume $X$ is a Hausdorff topological space.

\begin{definition}
    Given a space $X$, its \uline{\textbf{ordered}} $n$-\textbf{point configuration space} is defined as 
\begin{align}
    \Conf{n}{X} &:= \qty{(x_1, \ldots, x_n) \ : \ x_i \in X,\ i \neq j \implies x_i \neq x_j }=  \qty(\prod_{i=1}^n X) \setminus \Delta \\
    \qq*{where} \Delta &= \qty{(x_1,\ldots, x_n) \ : \ x_i = x_j \qq{for some} i\neq j} \label{eq:diagonal}
\end{align}
\end{definition}

Naturally the symmetric group $S_n$ acts on \Conf{n}{X} by permutation, which leads to the following:

\begin{definition}
    Taking the orbit space of $\Conf{n}{X}$ by $S_n$, define the \uline{\textbf{unordered}} $n$-\textbf{point configuration space}
\begin{equation}
    \UConf{n}{X} := \Conf{n}{X}/S_n. 
\end{equation}
\end{definition}

This quotient is in fact a covering map as we assume $X$ is Hausdorff and the action of $S_n$ (a finite group) is free. For $n=1$ the following is true, $X=\Conf{1}{X}=\UConf{1}{X}$. 

Given a set $\{x_1, \ldots, x_n\} \subset X$, we use the notation $\ucpt{x_1,\ldots, x_n}$  all elements notated in the brackets are distinct. Using this notation, can write $\UConf{n}{X}$ as a set 
\begin{equation} \label{eq:uconfpt}
    \UConf{n}{X} = \sett{\ucpt{x_1, \ldots, x_n} \ : \ x_i \in X}.
\end{equation}

\begin{remark}
The notation of configuration spaces has varied widely throughout the literature, $\mathcal{F}^{n}(X)$, $F(X,n)$, $B(X,n)$, $C_k(X).$ We shall use of the notation $\Conf{n}{X}$ and $\UConf{n}{X}$ to denote the ordered and unordered configuration spaces.
\end{remark}
\begin{example}\label{ex:confr}
We now consider  $\Conf{n}{\mathbb{R}}$ for $n = 2,3$:
We see that $\Conf{2}{\mathbb{R}}$ is just the plane without the diagonal $\Delta = \{ (x,x) \in \mathbb{R}^2\}$. Therefore, $\UConf{2}{\mathbb{R}}$ is the space that we get by folding over the diagonal $\Delta$. This space is homeomorphic to the upper half-plane.
For $\Conf{3}{\mathbb{R}}$ we have $\mathbb{R}^3$ without the planes $x=y$ , $y=z$ and $x=z$. This yields a space  split into $6$ disjoint areas. The space $\UConf{3}{\mathbb{R}}$, is then made by collapsing the $6$ disjoint areas to one wedge. Hence, this space is homeomorphic to a half-space of $\mathbb{R}^3$.
\end{example}

\begin{example}\label{ex:confint}
    For $\Conf{2}{(0,1)}$ we have a similar situation to $\mathbb{R}$, but with the constraints of lying inside the square $(0,1)^2$. We then have that $\UConf{2}{(0,1)}$ is homeomorphic to $\{(x,y) \in (0,1)^2 \ : \ x<y \}$. For $\Conf{3}{(0,1)}$, this is the unit box cut into 6 areas, and so when we identify them, we see that $\UConf{3}{(0,1)}$ is homeomorphic to the open 3-simplex. In \Cref{fig:confint} we see what a single area looks like. 
\end{example}

 \begin{figure}[h]
     \centering
\includegraphics[width=0.3\linewidth]{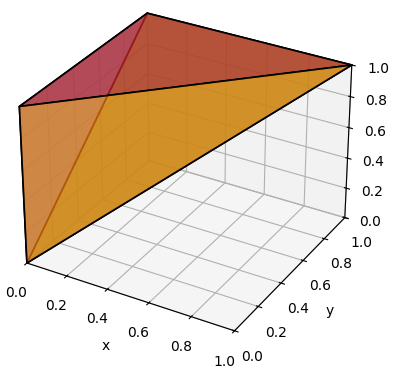}
     \caption{Here we see $\UConf{3}{(0,1)}$ as cut out by the planes, $x=y$, $x=z$ and $y=z$, where we consider $x<y<z$. This is homeomorphic to the open 3-simplex as described in \Cref{ex:confint}.} 
     \label{fig:confint}
 \end{figure}

\begin{remark}
    If $X$ is a smooth $d$-manifold then both \Conf{n}{X} and \UConf{n}{X} are smooth $dn$-manifolds \cite{Knud}. 
\end{remark}

If we do not limit ourselves to $\Conf{n}{X}$, but allow the entire space $X^n$ and the natural action of $S_n$, then we have the following space.

\begin{definition}
    The $n$-th \textbf{symmetric product }of $X$, is defined to be the quotient $$SP^n(X) := X^n / S_n$$
\end{definition}
The action is no longer free, and so for $n \geq 2$ the symmetric product is an orbifold for $X$ a smooth manifold (see \cite{Orbifolds} Ex. 1.13 ). We can express $SP^n(X)$ as the set of unordered $n$-tuples $[x_1,\ldots, x_n]$. 
\\

Although the symmetric product consolidates the symmetries of points of the space into one, we wish to lose all multiplicity of points that have the same set of points. For example, if we consider $n=3$ then we would like to treat $(a,b,b)$ as the same as $(a,a,b)$. However, in the symmetric product these two points have different equivalences classes. So we need to define a new space that will allow us to consolidate these points together.
\\

We now describe the $n$-th \textbf{finite subset space} of $X$, \ExpS{n}{X}. This is not only a quotient of $X^n$ but also a quotient of $SP^n(X)$, as we see in \Cref{prop:SP_to_Exp_quotient}.

 \begin{definition}
     We define $\ExpS{n}{X}$ to be be set of all non-empty finite subsets of $X$ of cardinality at most $n$ \cite{tuffleyexp}. The map $E_n : X^n \rightarrow \ExpS{n}{X} $  given by
\begin{equation}
    (x_1 , x_2 , ... ,x_n) \mapsto \{x_1, x_2 , ... , x_n\} 
\end{equation}
induces the quotient topology on $\ExpS{n}{X}$, the $n$th finite subspace set of $X$. 
 \end{definition}
Using the notation in \cref{eq:uconfpt}, we can also equivalently express the set $\ExpS{n}{X}$ as 
\begin{equation} \label{eq:Fn_UCk}
    \ExpS{n}{X} =  \sett{\ucpt{x_1, \ldots, x_k} \ : \ x_i \in X,\ k \in [n]} = \bigsqcup_{k=1}^n \UConf{k}{X}.
\end{equation}

We note that for $n=2$ we have that $SP^n(X) = \ExpS{n}{X}$ but for $n \geq 3$ this no longer is the case as the $S_3$ equivalence classes of $(x,x,y)$ and $(x,y,y)$ are certainly different in the symmetric product. However, their images under $E_3$ are the same, namely $\ucpt{x,y}$.

\begin{remark}
The $n$-th finite subset space, a term used by Tuffley~\cite{tuffleyexp,tuffley2003finite,TuffPHD}, is also known as the (finite) Ran space $\mathrm{Ran}_{\leq n}(X)$ \cite{AYALA2017903, lazovskis2026simpleconnectednessranspace}.  
It should be noted that the $n$-th finite subset space, as defined here, has been referred to as the symmetric product in the literature. We denote the $n$-th finite subset space as, $\ExpS{n}{X}$. Other notations for this space have been given by: $\text{Sub}(X,n)$, $\text{exp}_n(X)$, $X^{(n)}$ and $\text{Ran}_{\leq n}(X)$ ~\cite{handel,tuffleyexp,Bott,AYALA2017903}.

\end{remark}

\begin{proposition} \label{prop:SP_to_Exp_quotient}
    There exists a unique quotient map, $Q_n : SP^n(X) \rightarrow \ExpS{n}{X}$ such that $E_n = Q_n \circ q$.
\end{proposition}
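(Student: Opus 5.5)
Here $q: X^n \to SP^n(X)$ denotes the orbit map, which is a quotient map by definition of the topology on $SP^n(X)$. The plan is to apply the universal property of quotient maps twice: once to obtain $Q_n$ as a continuous map, and once more to show it is itself a quotient map.

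\textbf{Existence and uniqueness.} First define $Q_n$ on points by $Q_n([x_1,\ldots,x_n]) := \{x_1,\ldots,x_n\}$. This is well defined because $E_n$ is constant on $S_n$-orbits: for any $\sigma \in S_n$, the sets $\{x_{\sigma(1)},\ldots,x_{\sigma(n)}\}$ and $\{x_1,\ldots,x_n\}$ coincide. By construction $E_n = Q_n \circ q$. Uniqueness is immediate: $q$ is surjective, so any map $Q'$ with $Q' \circ q = E_n$ must agree with $Q_n$ at every point $q(x)$, and these exhaust $SP^n(X)$. Continuity follows from the universal property of the quotient map $q$. Since $E_n$ is continuous (it is a quotient map by definition of the topology on $\ExpS{n}{X}$) and factors through $q$, the induced map $Q_n$ is continuous.

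\textbf{$Q_n$ is a quotient map.} The map $Q_n$ is surjective because $E_n$ is. Let $U \subset \ExpS{n}{X}$ be a set with $Q_n^{-1}(U)$ open in $SP^n(X)$. Then
\[
E_n^{-1}(U) = q^{-1}\bigl(Q_n^{-1}(U)\bigr)
\]
is open in $X^n$, because $q$ is continuous. Since $E_n$ is a quotient map, $U$ is open in $\ExpS{n}{X}$. Hence $Q_n$ is a quotient map. This is the standard fact that if $g\circ f$ is a quotient map and $f$ is continuous and surjective, then $g$ is a quotient map.

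The main point requiring care is this last step. One should use the saturation argument above rather than try to show that $Q_n$ is open or closed directly, since that need not hold for general Hausdorff $X$. No other obstacles are expected.
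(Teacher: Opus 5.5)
Your proof is correct and follows essentially the same route as the paper. $Q_n$ comes from the universal property of $q$, and it is a quotient map because $E_n^{-1}(V)=q^{-1}(Q_n^{-1}(V))$ and $E_n$ is a quotient map; the paper also notes that $q$ is open, which is not needed.

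One minor correction to your closing aside. $Q_n$ can indeed fail to be open: since $q$ is open, an open $Q_n$ would make $E_n$ open, contradicting the paper's $X=\mathbb{R}$, $n=3$ example. But $Q_n$ is always closed for Hausdorff $X$, because $E_n$ is closed and $Q_n(C)=E_n(q^{-1}(C))$.
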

\begin{proof}
   We have the quotient map $ q: X^{n} \xrightarrow{/S_n} SP^n(X),$ which is continuous by definition. It is open as if $U \subset X^n$ is open then, the pre-image of $[U]_{S_n}$ is the union $\bigcup_{\sigma \in S_n} \sigma \cdot U $ which is open in $X^n$. As $E_n$ is continuous and is invariant under the $S_n$ action on $X^n$. By the universal property of the quotient map, there exists a unique continuous map $Q_n$.
\begin{equation}
    \begin{tikzcd}[row sep=large, column sep=large]
    X^{n} \arrow[r, "/S_n"', "q", two heads] \arrow[dr, "E_n"', two heads] 
        &  SP^n(X) \arrow[d, "\exists ! Q_n", dashed, two heads] \\
    &  \ExpS{n}{X} 
    \end{tikzcd}
\end{equation}
We have that $V \subset \ExpS{n}{X}$ is open  $ \iff E_n^{-1}(V)$ is open in $X^n$ $\iff E_n^{-1}(V) = q^{-1} ( Q_n^{-1}(V))$ is open in $X^n$ (as the triangle commutes) $\iff Q_n^{-1}(V)$ is open in $SP^n(X)$ as $q$ is a quotient map. Hence, $V$ is open in $\ExpS{n}{X}$ if and only if $Q_n^{-1}(V)$ is open in $SP^n(X)$.
\end{proof}

\begin{remark}
    The map $E_n$ is closed (and therefore perfect) \cite{handel}. However, it need not be an open map. As, if we consider $X=\mathbb{R}$ and $n=3$. We have that a point $(0,2,2)$ has a neighbourhood $U = (-1,1) \times (\frac{3}{2},3) \times (\frac{3}{2},3) $ however, the point  $(0,0,2) \in E^{-1}_3 (E_3 (U))$ but no neighbourhood of this point can lie wholly inside $E^{-1}_3 (E_3 (U))$.
    
\end{remark}

\subsubsection{Stratification of Finite Subset Spaces}\label{subsubsec:StratHO}
As any element $\{x_1,\ldots, x_k\} \in \ExpS{k}{X}$ is also an element of $\ExpS{n}{X}$ for $n \geq k$, we have a filtration 
\begin{equation} \label{eq:Fnfiltration}
    \emptyset \subset \ExpS{1}{X} \subset \ExpS{2}{X} \subset \cdots \subset \ExpS{n-1}{X} \subset \ExpS{n}{X}
\end{equation}
The inclusions are in fact embeddings: the topology of $ \ExpS{n-1}{X}$ endowed by the quotient map $E_{n-1}: X^{n-1} \twoheadrightarrow\ExpS{n-1}{X}$ coincides with the subspace topology derived from $ \ExpS{n}{X}$~\cite[Prop 2.04]{handel}. This filtration is related to the decomposition of $\ExpS{n}{X}$ into lower order unordered configuration spaces in~\cref{eq:Fn_UCk}. We can regard this filtration as a \emph{stratification} of $\ExpS{n}{X}$, where the strata --- the difference between successive spaces $\ExpS{i}{X}$ and $\ExpS{i-1}{X}$ in the filtration --- are given by the ordered configuration spaces $\ExpS{i}{X} \setminus \ExpS{i-1}{X} \homeo \UConf{i}{X}$. This stratification has been shown to be conically smooth (Definition A.5.5 of \cite{Lurie2017HigherAlgebra}) in \cite{AYALA2017903}.

We give an explicit description of the stratification via the quotient map $E_n : X^n \twoheadrightarrow \ExpS{n}{X} $
\begin{equation}
    E_n: (x_1 , x_2 , ... ,x_n) \mapsto \{x_1, x_2 , ... , x_n\}, 
\end{equation}
and a partition of $X^n$. We split $X^n$ into the following disjoint union; writing $X^{n}_i$ as the subset of points $(x_1, \ldots, x_n) \in X^n$ with $i$ distinct entries, we have
\begin{equation}\label{eq:xnsplit}
    X^n = X^{n}_1 \sqcup X^{n}_2 \sqcup\cdots  \sqcup X^{n}_n 
\end{equation}

We note that $X^{n}_n=\Conf{n}{X}$. If we recall the generalised diagonal $\Delta^{n-1} \subset X^n$ are the tuples $(x_1, \ldots, x_n)$ that have at most $n-1$ distinct entries, then by definition 
\begin{equation}
    \Delta^{n-1} = X^n_1 \sqcup \cdots \sqcup X^n_{n-1}\qand X^n = \Conf{n}{X} \sqcup \Delta^{n-1}.
\end{equation}
We now describe the inclusion $\ExpS{n-1}{X} \hookrightarrow \ExpS{n}{X}$ in the stratification~\cref{eq:Fnfiltration} by decomposing $\ExpS{n}{X} = E_n(X^n)$ in terms of the images of the components $E_n(\Delta^{n-1})$ and $E_n(\Conf{n}{X})$ whose union is $\ExpS{n}{X}$. We first observe that the images $E_n(\Delta^{n-1})$ and $E_n(\Conf{n}{X})$ are disjoint and thus partition $\ExpS{n}{X}$:
\begin{equation} \label{eq:fn_first_decomposition}
    \ExpS{n}{X} = E_n(\Conf{n}{X)} \sqcup E_n(\Delta^{n-1}).
\end{equation}
This follows immediately from the definitions of the sets $\Conf{n}{X}$ and $\Delta^{n-1}$. Since $E_n(\Delta^{n-1}) \subset \ExpS{n}{X}$ are exactly the finite subsets with at most $n-1$ elements, 
\begin{equation}
    E_{n}(\Delta^{n-1}) = \ExpS{n-1}{X}.
\end{equation}
Because $(x_1,\ldots, x_n) \in \Conf{n}{X}$ are tuples with $n$ distinct elements, elements of $E_n(\Conf{n}{X})$ are exactly subsets $\{x_1, \ldots,x_n\}$ with cardinality $n$, thus $E_n(\Conf{n}{X}) \cap \ExpS{n-1}{X} = \emptyset$. We now show that $E_n(\Conf{n}{X})$ is in fact homeomorphic to $\UConf{n}{X}$; consequently~\cref{eq:fn_first_decomposition} implies the following.
\begin{proposition} \label{prop:fn_top_homeo}
    $\ExpS{n}{X} \setminus \ExpS{n-1}{X}  = E_n(\Conf{n}{X}) \homeo \UConf{n}{X}$.
\end{proposition}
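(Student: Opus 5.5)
The set-theoretic equality $\ExpS{n}{X} \setminus \ExpS{n-1}{X} = E_n(\Conf{n}{X})$ is already given by the decomposition \cref{eq:fn_first_decomposition} together with $E_n(\Delta^{n-1}) = \ExpS{n-1}{X}$. So only the homeomorphism needs proof. The plan is to compare two quotient maps out of $\Conf{n}{X}$: the covering map $\pi: \Conf{n}{X} \to \UConf{n}{X}$, and the restriction $E_n|: \Conf{n}{X} \to E_n(\Conf{n}{X})$, where the target carries the subspace topology from $\ExpS{n}{X}$.

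First, $E_n|$ is continuous and constant on $S_n$-orbits. Conversely, two tuples with $n$ distinct entries have the same image if and only if they differ by a permutation. The universal property of $\pi$ therefore yields a continuous bijection $\phi: \UConf{n}{X} \to E_n(\Conf{n}{X})$ with $\phi \circ \pi = E_n|$.

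To show $\phi$ is a homeomorphism, it suffices to show $E_n|$ is a quotient map onto its image with the subspace topology. I would do this by exhibiting it as the restriction of a quotient map to a saturated open set. Since $X$ is Hausdorff, the diagonal $\Delta^{n-1}$ is closed in $X^n$, so $\Conf{n}{X}$ is open. It is also saturated with respect to $E_n$: if $E_n(x) = E_n(y)$ and $x$ has $n$ distinct entries, then $y$ does too. The standard fact that the restriction of a quotient map to a saturated open subset is a quotient map onto its image then applies. This requires the image $E_n(\Conf{n}{X})$ to be open in $\ExpS{n}{X}$, which holds because its preimage is exactly $\Conf{n}{X}$, which is open. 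The image's subspace topology then agrees with the quotient topology, so $E_n|$ is a quotient map and $\phi$ is a homeomorphism.

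The main subtlety is the remark that $E_n$ need not be open. I avoid openness of $E_n$ altogether: the argument uses only that $E_n$ is a quotient map and that $\Conf{n}{X}$ is open and saturated. The other facts needed are routine: the saturation statement, and that $\Conf{n}{X}$ is open, which uses the Hausdorff assumption. An alternative route would use that $E_n$ is closed (\cite{handel}), since restrictions of closed quotient maps to saturated subsets are also quotient, but the open-saturated argument is simpler.
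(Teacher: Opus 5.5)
Your proof is correct and is essentially the paper's argument. The paper also first obtains a continuous bijection (there $Q_n|_{\UConf{n}{X}}$, induced from $SP^n(X)$) and then shows it is open by computing $E_n^{-1}(Q_n(V)) = q^{-1}(V)$; that computation is exactly the open-and-saturated restriction lemma you apply to $E_n$ on $\Conf{n}{X}$. Your version is, if anything, slightly cleaner. It states explicitly the saturation of $\Conf{n}{X}$ that the paper's identity $Q_n^{-1}(Q_n(V)) = V$ tacitly relies on; injectivity of $Q_n$ on $\UConf{n}{X}$ alone would not give this. It also avoids having to identify the quotient topology on $\UConf{n}{X}$ with the subspace topology from $SP^n(X)$.
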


\begin{proof}
    
    Recall from~\Cref{prop:SP_to_Exp_quotient} that $E_n = Q_n  \circ q$. Thus, we can write $E_n(\Conf{n}{X})$ as $Q_n(q(\Conf{n}{X)}) = Q_n(\UConf{n}{X})$. As $\UConf{n}{X} \subset SP^n(X)$ are precisely the $n$ unordered elements of $X$ each with multiplicity 1, the restriction of $Q_n$ to $\UConf{n}{X}$ is a continuous bijection of $\UConf{n}{X}$ onto its image. To show that it is a homeomorphism, we now show that if $V$ is open in $\UConf{n}{X}$, then $Q_n(V)$ is open in $Q_n(\UConf{n}{X})$. 
    
    We note if $V$ is open in $\UConf{n}{X}$, then it is open in $SP^n(X)$ as $\UConf{n}{X}$ is an open subspace of $SP^n(X)$. Also, as $\Conf{n}{X}$ is open in $X^n$, $Q_n(\UConf{n}{X})$ is open in $\ExpS{n}{X}$. Hence, we shall show $Q_n(V)$ is open in the whole of $\ExpS{n}{X}$.
    
    If $V$ is open in $\UConf{n}{X}$ then $q^{-1}(V)$ is open in $X^n$. We have that $Q_n(V)$ is open in $\ExpS{n}{X}$ if and only if $E_n^{-1}(Q_n(V))$ is open in $X^n$. Since $Q_n$ restricted to $\UConf{n}{X}$ is a continuous bijection, we have $E_n^{-1}(Q_n(V))= q^{-1} \circ Q_n^{-1} (Q_n(V)) = q^{-1}(V)$.
    Hence, $Q_n(V)$ is open in $\ExpS{n}{X}$. Thus the restriction of $Q_n$ from $\UConf{n}{X}$ to $Q_n(\UConf{n}{X})=E_n(\Conf{n}{X})$ is a homeomorphism.
    
\end{proof}

Furthermore, for $i \leq n$, because the subspace topology of $\ExpS{i}{X}$ as a subset of $\ExpS{n}{X}$ coincides with the quotient topology endowed by $E_i$, we can show by induction that~\Cref{prop:fn_top_homeo} implies
\begin{equation}
    E_n(X^n_i) = \ExpS{i}{X} \setminus \ExpS{i-1}{X} \homeo \UConf{i}{X}
\end{equation}
for every $i \leq n$ in the stratification~\cref{eq:Fnfiltration}.
This allows us to decompose $\ExpS{n}{X}$ in the following manner, following~\cref{eq:Fn_UCk}: 

\begin{equation}
    \begin{tikzpicture}
    \node (top) at (-2,1) {$X^n$};
    \node (exp) at (-2,0) {$\ExpS{n}{X}$};

    \node (eq) at (-1,1) {$=$};

    \node (eq1) at (-1,0) {$=$};
    \node (X1) at (0,1) {$X^{n}_1$};
    \node (X2) at (4,1) {$X^{n}_2$};
    \node (Xn) at (10.6,1) {$X^n_n = \Conf{n}{X}$};
    
    \node (A1) at (0.2,0) {$E_{n}(X^{n}_1)$};
    \node (A2) at (4,0) {$E_{n}(X^{n}_2)$};
    \node (An) at (10.6,0) {$E_{n}(\Conf{n}{X})$};

    \node (eq2) at (-1,-1) {};

    \node (X) at (0,-1) {$X$};
    \node (X) at (4,-1) {$\UConf{2}{X}$};
    \node (X) at (10.6,-1) {$\UConf{n}{X}$};

    \node at (-2,0.5) {$\downarrow E_n$ };
  
    \node at (1.4,-1) {$\sqcup $};

    \node at (1.4,1) {$\sqcup $};
    \node at (1.4,0) {$\sqcup $};
    \node at (7 ,1) {$\sqcup \dots \sqcup$};
    \node at (7 ,0) {$\sqcup \dots \sqcup$};
    \node at (7 ,-1) {$\sqcup \dots \sqcup$};

    \node at (0,0.5) {$\downarrow E_n$};
    \node at (10.6,0.5) {$\downarrow E_n$};
    \node at (4,0.5) {$\downarrow E_n$};

    \node at (0,-0.5) {$\rotatebox{90}{$\,\approx$} $};
    \node at (10.6,-0.5) {$\rotatebox{90}{$\,\approx$}  $};
    \node at (4,-0.5) {$\rotatebox{90}{$\,\approx$}  $};

\end{tikzpicture}
\end{equation}
Furthermore, the boundary of $\UConf{i}{X}$ in $\ExpS{n}{X}$ is exactly $\ExpS{i-1}{X}$.

\begin{proposition}
\label{prop:boundary}    

    The boundary of $E_{n}(\Conf{n}{X})$ in $ \ExpS{n}{X}$ is $\ExpS{n}{X} \backslash E_{n}(\Conf{n}{X}) =E_n(\Delta^{n-1}) \approx \ExpS{n-1}{X}$.
\end{proposition}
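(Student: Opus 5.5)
We interpret "boundary" as the topological frontier $\overline{U}\setminus \interior U$ of $U := E_n(\Conf{n}{X})$ inside $\ExpS{n}{X}$. By \cref{eq:fn_first_decomposition} the space $\ExpS{n}{X}$ is the disjoint union of $U$ and $E_n(\Delta^{n-1}) = \ExpS{n-1}{X}$. The identification $E_n(\Delta^{n-1}) \homeo \ExpS{n-1}{X}$, with the subspace topology, is the embedding result quoted from \cite[Prop 2.04]{handel}. So it remains to prove two things: that $U$ is open, and that $U$ is dense in $\ExpS{n}{X}$. Together these give that the frontier of $U$ is exactly $\ExpS{n}{X}\setminus U = E_n(\Delta^{n-1})$.

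\emph{Openness.} This was essentially observed in the proof of \Cref{prop:fn_top_homeo}, but we can see it directly. Since $X$ is Hausdorff, $\Delta^{n-1}$ is a finite union of the closed sets $\{x_i = x_j\}$, so $\Conf{n}{X}$ is open in $X^n$. Moreover $E_n^{-1}(U) = \Conf{n}{X}$, because a tuple maps to a set of cardinality $n$ exactly when its entries are distinct. As $E_n$ is a quotient map, $U$ is open, and hence equals its own interior.

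\emph{Density.} This is the step that needs an extra hypothesis, and it is the main obstacle. Take $\{x_1,\dots,x_k\}\in\ExpS{n}{X}$ with $k<n$ and an open neighbourhood $V$ of it. Then $E_n^{-1}(V)$ is an open set containing the tuple $p=(x_1,\dots,x_k,x_k,\dots,x_k)$, so it contains a product neighbourhood $W_1\times\cdots\times W_n$ of $p$. We need a tuple in this product with pairwise distinct entries. This can be done provided points of $X$ are not isolated, so that each $W_i$ is infinite. Concretely, choose $y_{k+1}\in W_{k+1}\setminus\{x_1,\dots,x_k\}$, and then inductively choose $y_j \in W_j$ avoiding all previously chosen points. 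Each choice is possible since we only avoid finitely many points of an infinite set. Applying $E_n$ gives a point of $U\cap V$, so $U$ is dense.

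We therefore state the tacit assumption explicitly: $X$ has no isolated points, as is the case for manifolds of positive dimension such as $S^1$, which is the case of interest. Without it the claim fails; for example, if $X$ is a single point, then $U$ is empty.

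Combining the two steps, the frontier of $U$ is $\overline{U}\setminus U = \ExpS{n}{X}\setminus U = E_n(\Delta^{n-1}) \homeo \ExpS{n-1}{X}$, as claimed.
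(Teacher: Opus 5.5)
Your proof is correct and follows the same route as the paper's. Like the paper, you first show that $E_n(\Conf{n}{X})$ is open, because its preimage under the quotient map $E_n$ is the open set $\Conf{n}{X}$. You then show that every point of the complement lies in its closure by working in a product neighbourhood of a tuple with repeated entries.

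Your explicit hypothesis that $X$ has no isolated points is actually the right one, whereas the paper only excludes the discrete topology. That weaker assumption is not enough: for $X=[0,1]\cup\{2\}$, the point $\{2\}$ is open in the finite subset space and misses $E_n(\Conf{n}{X})$. Your inductive choice of distinct points also handles tuples with several repeated entries more cleanly than the paper's contradiction argument.
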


\begin{proof}
     If $X$ is hausdorff then for each point in $q \in \Conf{n}{X}$ we can find a selection of open sets $U_{q_i}$ such that $q \in \prod_{i=1}^{n} U_{q_i} $ and $U_{q_i} \cap U_{q_j} = \emptyset$ for $i\neq j$. Hence $\Conf{n}{X}$ is open. We have that $E_{n}(\Conf{n}{X})$ is open if and only if $E^{-1}_n(E_{n}(\Conf{n}{X}))$ is open in $X^n$ which is precisely $\Conf{n}{X}$ and thus open. Hence, $E_{n}(\Conf{n}{X})$ is open in $\ExpS{n}{X}$.

So the boundary of $E_{n}(\Conf{n}{X})$ in $\ExpS{n}{X}$ is $\partial E_{n}(\Conf{n}{X}) = \text{cl}({E_{n}(\Conf{n}{X})}) \backslash \text{int}(E_{n}(\Conf{n}{X})) = \text{cl}(E_{n}(\Conf{n}{X}) \backslash E_{n}(\Conf{n}{X})$. Suppose that there exists an $ x \in \ExpS{n}{X}$ such that $x \notin \text{cl}(E_{n}(\Conf{n}{X}))$. Then there exists an open set $U \subset \ExpS{n}{X}$ such that $x \in U$ and $ U \cap E_{n}(\Conf{n}{X}) = \emptyset$. We have that $U$ is open when $Y := \{ q \in X^n : [q]_{E_{n}} \in U \}   $  is open. If $q \in Y $ then $q \notin \Conf{n}{X}$ hence when $q \in Y$, $q$ has two entries the same, w.l.o.g. $q=(q_1, q_1, \dots , q_{n-1})$ as $X$ is hausdorff we have that $ q \in U_{q_1} \times U_{q_1} \times \prod_{i=2}^{n - 1} U_{q_i} \subset Y $ where $U_{q_i} \cap U_{q_j} = \emptyset$ for $i\neq j$. This means that $U_{q_1}$ must be the singleton $\{q_1\}$, as $X$ isn't given the discrete topology, this set isn't open (we exclude the discrete topology as all boundaries are empty). Hence we have that all $x \in \ExpS{n}{X}$ are in $\text{cl}(E_{n}(\Conf{n}{X}))$. So we have that, $\partial E_{n}(\Conf{n}{X}) = \ExpS{n}{X} \backslash E_{n}(\Conf{n}{X})=E_n(\Delta^{n-1}) \approx \ExpS{n-1}{X}$.
\end{proof}

\begin{proposition}
    The boundary of $E_n(X^n_i) $ in $\ExpS{i}{X} \subset \ExpS{n}{X}$ is $\ExpS{i}{X} \backslash E_{n}(X^n_i) =E_i(\Delta^{i-1}) \approx \ExpS{i-1}{X}$. 
\end{proposition}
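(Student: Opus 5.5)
The plan is to reduce this statement to \Cref{prop:boundary}, applied with $n$ replaced by $i$, and then transport the conclusion into $\ExpS{n}{X}$. We are free to do this because of the subspace compatibility from \cite[Prop 2.04]{handel}. The quotient topology on $\ExpS{i}{X}$ induced by $E_i : X^i \twoheadrightarrow \ExpS{i}{X}$ coincides with the subspace topology coming from $\ExpS{i}{X} \subset \ExpS{n}{X}$. So the phrase ``boundary in $\ExpS{i}{X} \subset \ExpS{n}{X}$'' means the same thing whichever of the two topologies we put on $\ExpS{i}{X}$. We can therefore compute inside $\ExpS{i}{X}$ equipped with its own quotient topology.

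The first step identifies $E_n(X^n_i)$ with $E_i(\Conf{i}{X})$ as subsets of $\ExpS{i}{X}$. Both sets consist exactly of the finite subsets of $X$ of cardinality exactly $i$. The only thing to check is that a tuple in $X^n_i$, which has $i$ distinct entries, maps to an $i$-element set. Conversely, every $i$-element set is hit both by $E_i$ on $\Conf{i}{X}$ and by some $n$-tuple in $X^n_i$, obtained by padding with repeated entries. By the discussion preceding \Cref{prop:boundary}, this set is also homeomorphic to $\UConf{i}{X}$.

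The second step applies \Cref{prop:boundary} with $n := i$. This gives that $E_i(\Conf{i}{X})$ is open in $\ExpS{i}{X}$ and dense in it. Hence its boundary there is
\begin{equation*}
\ExpS{i}{X} \setminus E_i(\Conf{i}{X}) = E_i(\Delta^{i-1}) \homeo \ExpS{i-1}{X}.
\end{equation*}
Substituting the identification from the first step yields the claim. The final homeomorphism $E_i(\Delta^{i-1}) \homeo \ExpS{i-1}{X}$ again uses Handel's compatibility, now for $\ExpS{i-1}{X} \subset \ExpS{i}{X}$.

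The main obstacle I expect is only bookkeeping about which ambient topology the boundary is taken in. If one instead meant the boundary inside all of $\ExpS{n}{X}$, the answer would be different: for $i<n$, the set $E_n(X^n_i)$ is not open in $\ExpS{n}{X}$. So I will state explicitly that the closure and interior are computed relative to $\ExpS{i}{X}$. Handel's result is exactly what makes that relative topology agree with the one used in \Cref{prop:boundary}. The remaining assumptions are that $X$ is Hausdorff and non-discrete, and both are inherited verbatim from \Cref{prop:boundary}.
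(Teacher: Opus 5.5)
Your proposal is correct and is essentially the paper's argument: you identify $E_n(X^n_i)$ with $E_i(X^i_i)$ as sets, use Handel's Prop.~2.04 to work inside $\ExpS{i}{X}$ with its quotient topology, and get openness and density of the top stratum from \Cref{prop:boundary} (you cite it with $n:=i$, where the paper says ``with an analogous argument''). One caveat you inherit from the paper: the density step needs $X$ to have no isolated points, not merely to be non-discrete, because for an isolated $x\in X$ and $i\geq 2$ the point $\{x\}$ is itself isolated in $\ExpS{i}{X}$ (its $E_i$-preimage $\{(x,\dots,x)\}$ is open), so it lies in $\ExpS{i}{X}\setminus E_n(X^n_i)$ but not in the boundary.
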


\begin{proof}

We have that $E_n(X^n_i) \seteq E_i(X^i_i) \approx \UConf{i}{X}$, which is open in $\ExpS{i}{X}$. By proposition 2.04 of \cite{handel} we have that $E_n(X^n_i)$ is open in $\ExpS{i}{X} \subset \ExpS{n}{X}$. The closure of $E_n(X^n_i)$ in $\ExpS{i}{X} \subset \ExpS{n}{X}$ is $\ExpS{i}{X}$, with an analogous argument as the above proposition. Hence, in $\ExpS{i}{X} \subset \ExpS{n}{X}$ we have, $\partial E_n(X^n_i) = \ExpS{i}{X} \backslash E_n(X^n_i) = E_i(\Delta^{i-1}) \approx \ExpS{i-1}{X}$.
    
\end{proof}

\begin{figure}[ht]
\centering
\begin{tikzpicture}[
    scale=0.95,
    >=Stealth,
    line width=0.9pt,
    arrow/.style={
        postaction={decorate},
        decoration={markings, mark=at position 0.55 with {\arrow{>}}}
    },
    doublearrow/.style={
        postaction={decorate},
        decoration={markings, mark=at position 0.55 with {\arrow{>>}}}
    }
]
\begin{scope}[shift={(0,0)}]
    \draw (0,0) rectangle (3,3);
    \draw[dashed] (-0.2,-0.2) -- (3.2,3.2);
    \draw[arrow] (0,3) -- (3,3);
    \draw[arrow] (0,0) -- (3,0);
    \draw[doublearrow] (0,1.2) -- (0,1.8);
    \draw[doublearrow] (3,1.2) -- (3,1.8);
\end{scope}
\begin{scope}[shift={(3.6,0)}]
    \draw (0,0) -- (3,3) -- (0,3) -- cycle;
    \draw[dashed] (-0.2,3.2) -- (1.5,1.5);
    \draw[doublearrow] (0,1.1) -- (0,1.9);
    \draw[doublearrow] (1.6,3) -- (2.4,3);
\end{scope}
\begin{scope}[shift={(7,0)}]
   \draw (0,0) -- (0,3) -- (1.5,1.5) -- cycle;
\draw (0.1,3.1) -- (3.1,3.1) -- (1.6,1.6) -- cycle;
\draw[doublearrow] (0,1.1) -- (0,1.9);
\draw[doublearrow] (1.6,3.1) -- (2.4,3.1);
  \draw[arrow] (1.5,1.5) -- (0,3);
    \draw[arrow] (1.6,1.6) -- (0.1,3.1);
\end{scope}
\begin{scope}[shift={(10.5,0)}]
    \draw (1.5,3) -- (3,1.5) -- (1.5,0) -- (0,1.5) -- cycle;
    \draw (1.5,0) -- (1.5,3);
    \draw[doublearrow] (1.5,1.1) -- (1.5,1.9);
    \draw[arrow] (3,1.5) -- (1.5,3);
    \draw[arrow] (0,1.5) -- (1.5,0);   
\end{scope}
\end{tikzpicture}

\caption{A cut and gluing argument showing that $\ExpS{2}{S^1} = (S^1 \times S^1) / S_2$ is the M\"obius band. Expressing the torus as a square with opposite sides identified, the $S_2$ action `folds' the torus across the dashed diagonal. The resultant quotient space is the triangle with the indicated pair of edges identified with the orientation given in the diagram. To see that this space is homeomorphic to a M\"obius band $\Mob$, we cut along the opposite diagonal giving us two triangles and another identification. We then identify the vertical and horizontal sides of the original triangle to obtain the square on the right, with a pair of opposing edges identified with opposite orientation.}
\label{fig:cutpa}
\end{figure}
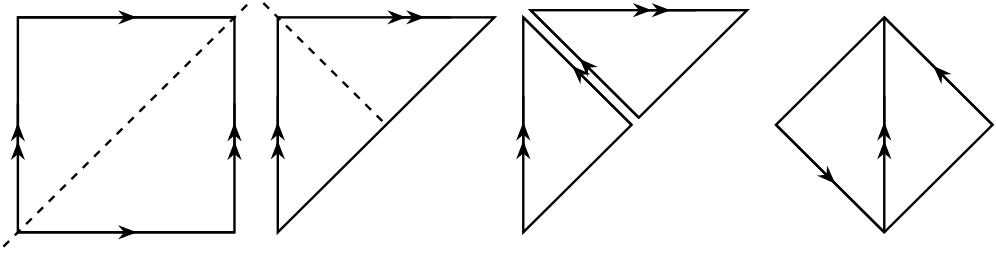

\subsubsection{The Second Finite Subset Space of the Circle as a M\"obius band}
\begin{figure}
    \centering
    \begin{subfigure}{0.45\textwidth}
        \centering
        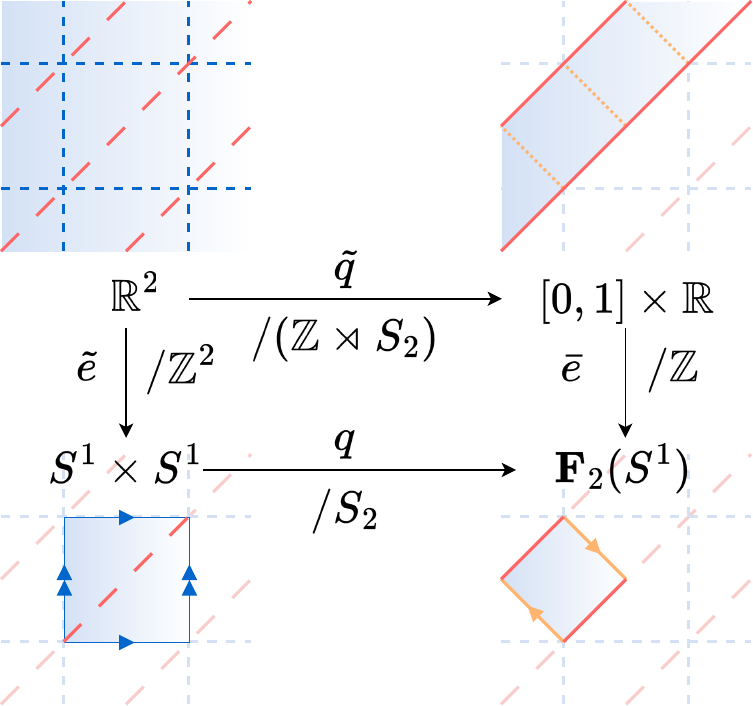
    
    \caption{An illustration of the universal covering $\bar{\scre}$ of $\ExpS{2}{S^1}$, where we choose a square fundamental domain (demarcated by dashed orange lines) for the $\Z$-action on $[0,1] \times \R$.  The action maps successive pairs of the orange dotted lines onto one another with \emph{opposite} orientation,}
    \end{subfigure}\hfill
    \begin{subfigure}{0.45\textwidth}
        \centering
        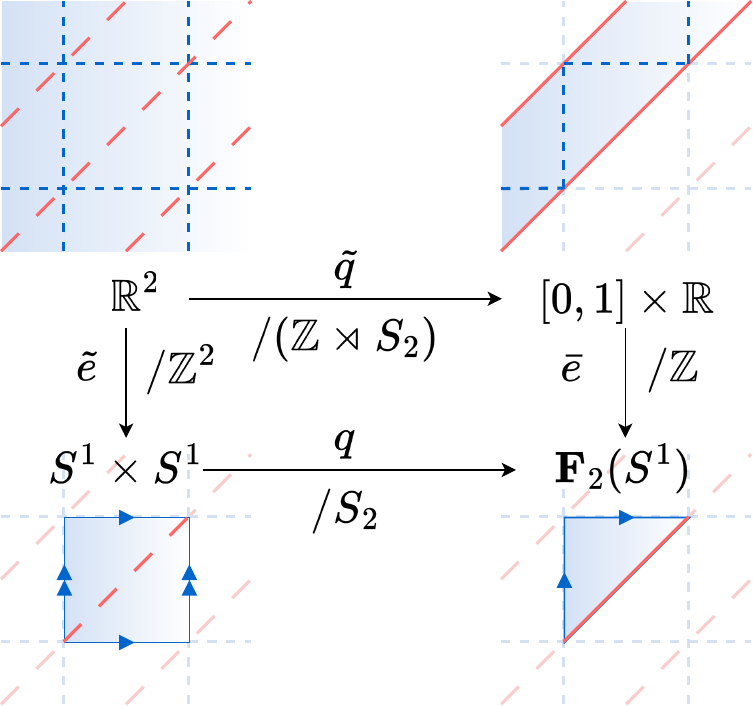
        \caption{An illustration of the universal covering $\bar{\scre}$ of $\ExpS{2}{S^1}$, where we choose a triangular fundamental domain (demarcated by blue dashed lines) for the $\Z$-action on $[0,1] \times \R$. The $\Z$-action on a vertical blue edge of the triangular fundamental domain first reflects it across the red diagonal, and then translates it up by one vertical unit onto the horizontal edge.}
    \end{subfigure}
    \caption{An illustration of~\Cref{prop:Morton}, which describes how the two point finite subset space $\ExpS{2}{S^1}$ can be expressed as a M\"obius band by relating the universal coverings $\tilde{\scre}$ and $\bar{e}$ of $S^1 \times S^1$ and $\ExpS{2}{S^1}$ respectively, by quotient maps $q$ and $\tilde{q}$. The quotient map $q \circ \tilde{\scre}$ can be decomposed into a composition $\bar{\scre} \circ \tilde{q}$, where $\tilde{q}$ is the quotient of $\R^2$ by a $(\Z \rtimes S_2)$-action; the diagonal strip $\homeo [0,1] \times \R$ (as a subset of $\R^2$) is not only a fundamental domain of the $(\Z \rtimes S_2)$-action, but also homeomorphic to the quotient space. The map $\bar{\scre}$ is the quotient by the glide reflection, a $\Z$-action on the infinite strip.  We also note that the diagonal $\Delta \subset S^1 \times S^1$ (indicated by the red-dashed lines) is sent to the circle boundary of $\ExpS{2}{S^1}$; this is a special case of~\Cref{prop:boundary}. It can also be seen by first lifting the diagonals to its preimage in $\R^2$ (also indicated as red dashed lines; as they form the boundary of the manifold with boundary $\R^2/(\Z \rtimes S_2) = [0,1] \times \R$,  and $\bar{\scre}$ is a covering map which sends manifold boundaries to manifold boundaries, the image of $\Delta$ under $q$ is the boundary of $\ExpS{2}{S^1}$. }
    \label{fig:mobius_commutative}
\end{figure}

In this section, we recall that $\ExpS{2}{S^1}$ is homeomorphic to the M\"{o}bius band $\Mob$. This observation was first made in~\cite{BorsukUlam1931SymmetricProducts}, and a visual explanation can be found in~\cite[\S 2.2]{tuffleyexp}, which we reproduce and explain in~\Cref{fig:cutpa}.  Here we use a more technical construction due to~\cite{Morton1967-or}. Morton's construction involves deriving the universal covering $\bar{\scre}$ of $\ExpS{2}{S^1}$ by an infinite strip $[0,1] \times \R$, through the universal covering $\tilde{\scre}$ of $S^1 \times S^1$ by $\R^2$. Recall we have a universal covering of $S^1$ by $\scre: \R \to S^1$ given by $\scre(t) = e^{2\pi \imath t}$, and a universal covering of $S^1 \times S^1$ given by products $\tilde{\scre} = \scre \times \scre: \R^2 \to S^1 \times S^1$. We first observe that we can obtain a quotient map from $\R^2$ to $\ExpS{2}{S^1}$ by composing $\tilde{\scre}: \R^2 \to S^1 \times S^1$ with $q: S^1 \times S^1 \twoheadrightarrow \ExpS{2}{S^1}$. Morton showed that this successive quotient by a $\Z^2$-action on $\R^2$ and an $S_2$-action on $S^1 \times S^1$ is equivalent to a quotient by the action of a semi-direct product $\Z^2 \rtimes S_2$ on $\R^2$. Furthermore, the quotient $q \circ \tilde{e}$ can be decomposed into another pair of successive quotients $\bar{\scre} \circ \tilde{q}$, where the first map $\tilde{q}$ is the quotient of $\R^2$ by the action of a normal subgroup $N \cong \Z \rtimes S_2$ of $G$, and $\bar{\scre}$ is the universal covering of the M\"obius band $\Mob$ by a $\Z$-action on $\R^2/N \homeo [0,1] \times \R$. We summarise his construction in the~\Cref{prop:Morton} below, and give details of the precise construction in~\Cref{app:mobius}. 

\begin{proposition}[\cite{Morton1967-or}]\label{prop:Morton} Let $q: S^1 \times S^1 \twoheadrightarrow \ExpS{2}{S^1}$ be the quotient of the torus by the $S_2$-action $\sigma \cdot (z_1, z_2)  = (z_{\sigma(1)}, z_{\sigma(2)})$, and $\tilde{e}: \R^2 \twoheadrightarrow S^1 \times S^1$ be the universal covering map of $S^1 \times S^1$. Then there are quotients by group actions $\tilde{q}: \R^2 \twoheadrightarrow \R^2/N \xrightarrow{\homeo}  [0,1] \times \R$ , and $\bar{\scre}: [0,1] \times \R \twoheadrightarrow \ExpS{2}{S^1}$, such that the following diagram commutes:
    \begin{equation}\label{eq:master_mob_diagram}
    \begin{tikzcd}[ampersand replacement=\&]
	{\R^2} \& {[0,1] \times \R} \\
	{S^1 \times S^1} \& \ExpS{2}{S^1}
	\arrow["\tilde{q}", "{/(\Z \rtimes S_2)}"', two heads, from=1-1, to=1-2]
	\arrow["{/\Z^2}", "\screii"', two heads, from=1-1, to=2-1]
	\arrow["{/\Z}", "\bar{\scre}"', two heads,  from=1-2, to=2-2]
	\arrow["q", "{/S_2}"', two heads, from=2-1, to=2-2]
\end{tikzcd}.
\end{equation} 
Furthermore, 
\begin{enumerate}[label = (\roman*)]
    \item  $\bar{\scre}$ is the universal covering of $\ExpS{2}{S^1}$ as a M\"obius band $\Mob$; and  
    \item  There is an embedding $\lambda:  [0,1] \times \R \hookrightarrow \R^2$ of $\tilde{q}$ whose image is $\{(x_1, x_2) \ : \ x_2 - x_1 \in [0,1]\}$, and $\tilde{q} \circ \lambda = \iden_{[0,1] \times \R}$.
    \item The boundary $\partial \ExpS{2}{S^1}$, homeomorphic to $S^1$, is the set of points $\{\{z\} \ : \ z \in S^1\} = \ExpS{1}{S^1} = q(\Delta)$, where $\Delta$ is the diagonal $\{(z,z) \ : \ z \in S^1\} \subset S^1 \times S^1$.
\end{enumerate}
\end{proposition}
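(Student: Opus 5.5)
The plan is to write down the groups explicitly and check that the quotients compose as claimed. Let $G = \Z^2 \rtimes S_2$ act on $\R^2$ by integer translations $(x_1,x_2)\mapsto (x_1+m,x_2+n)$ and by the swap $\sigma(x_1,x_2) = (x_2,x_1)$; the swap normalises the translation subgroup by conjugating $(m,n)$ to $(n,m)$. Since $\tilde{\scre}$ is $\Z^2$-equivariantly a quotient and $q$ is the quotient by the induced $S_2$-action, the composite $q\circ\tilde{\scre}$ is the quotient map $\R^2 \to \R^2/G$. To see this, note that both maps are open surjections: $\tilde{\scre}$ is a covering map, and $q$ is open by the argument in \Cref{prop:SP_to_Exp_quotient}. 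Moreover, the fibres of the composite are exactly the $G$-orbits.

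Next I take the normal subgroup $N$ generated by the diagonal translations $(x_1,x_2)\mapsto(x_1+k,x_2+k)$ and the ``shifted swap'' $\tau(x_1,x_2) = (x_2-1,x_1+1)$; this group is isomorphic to $\Z\rtimes S_2$. I would check normality directly and check that $G/N \cong \Z$, generated by the image of $(x_1,x_2)\mapsto(x_1,x_2+1)$. For the strip, I use the coordinates $u = x_2 - x_1$ and $s = x_1 + x_2$. Diagonal translations fix $u$ and shift $s$ by $2k$. The swap sends $u\mapsto -u$, and $\tau$ sends $u \mapsto 2-u$. Hence the $N$-orbits in $u$ are generated by reflections in $u=0$ and $u=1$, so every orbit has a representative with $u\in[0,1]$. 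On the boundary lines $u\in\{0,1\}$ the reflections fix $u$, and one must check whether they identify distinct $s$-values: the swap fixes $u=0$ pointwise, $\tau$ fixes $u=1$ pointwise, and diagonal translations shift $s$. I would therefore choose the fundamental domain $D = \{u\in[0,1]\}$ modulo diagonal translation and adjust the parametrisation so that it matches the stated image $\{x_2-x_1\in[0,1]\}$, with $\lambda(t,y) = (y, y+t)$ chosen so that $\tilde q\circ\lambda = \iden$. This requires $N$ to act on this region with the region as an exact fundamental domain. If the diagonal translations are not wanted in $N$, I would use only $\Z\rtimes S_2$, generated by the swap $\sigma$ and the reflection $\tau'(x_1,x_2) = (x_2-1,x_1+1)$, whose product is a translation by $(1,-1)$ perpendicular to the strip. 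This is the correct choice: the group generated by two reflections in the parallel lines $u=0$ and $u=1$ is infinite dihedral $\cong \Z\rtimes S_2$, the strip $0\le u\le 1$ is a strict fundamental domain, and both boundary lines are pointwise fixed. This gives $\tilde q$ and the section $\lambda$, which proves (ii).

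Then $G/N\cong \Z$ acts on $[0,1]\times\R$ via the image of the translation $(x_1,x_2)\mapsto(x_1,x_2+1)$. After composing with $\tau'$ to return into the strip, this becomes the glide reflection $(t,y)\mapsto(1-t, y+c)$ for a suitable constant $c$. This action is free and properly discontinuous, and the quotient of a strip by a glide reflection is the standard model of $\Mob$. Since $[0,1]\times\R$ is simply connected, the map $\bar{\scre}$ is the universal covering, which proves (i). Commutativity of the square follows from $q\circ\tilde{\scre} = \R^2\to\R^2/G = (\R^2/N)/(G/N)$, the usual iterated-quotient identification, which is valid because all the maps are open.

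For (iii), I would use the fact that a covering map sends the manifold boundary of $[0,1]\times\R$ (the lines $t=0,1$) onto $\partial\Mob$. Under $\lambda$, these lines are $x_2-x_1\in\{0,1\}$, and both lie in $\tilde{\scre}^{-1}(\Delta)$. Conversely, $\tilde{\scre}^{-1}(\Delta) = \{x_2-x_1\in\Z\}$, which meets the strip exactly in those two lines. Hence $\partial\ExpS{2}{S^1} = q(\Delta) = \ExpS{1}{S^1}$, consistent with \Cref{prop:boundary}. The main obstacle I expect is the bookkeeping in the second step: choosing $N$ correctly, so that it is normal, isomorphic to $\Z\rtimes S_2$, and has the strip as an exact fundamental domain, and then computing the residual glide reflection.
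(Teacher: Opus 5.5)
Your final construction is the paper's (Morton's, as in the appendix): your $N=\langle\sigma,\tau'\rangle$ is exactly the paper's $N=\langle\rho,\nu\rangle$ with $\nu=\alpha_1^{-1}\alpha_2$ (indeed $\tau'=\nu\rho$), and the strip $\{0\le x_2-x_1\le 1\}$ is its strict fundamental domain. The residual $\Z\cong G/N$ is generated by $\mu=\rho\alpha_1$ acting as the glide reflection $(u,v)\mapsto(1-u,v+1)$ with $v=x_1+x_2$, and (iii) comes from the boundary lines $x_2-x_1\in\{0,1\}$ lying over $\Delta$, exactly as in the paper. Two things to tidy up: discard your first candidate generated by the diagonal translations and $\tau$, since it is not normal in $G$ (conjugating $\tau$ by $\alpha_1$ gives the swap) and its orbit space is not the strip. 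Also, in your coordinates $\lambda(t,y)=(y,y+t)$ the generator acts by $(t,y)\mapsto(1-t,y+t)$, not $(1-t,y+c)$; it becomes a glide reflection only after the shear $(t,y)\mapsto(t,2y+t)$, so this slip is harmless.
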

Using~\Cref{prop:Morton}, we can show that the map $q \circ \tilde{\scre}: \R^2 \to \ExpS{2}{S^1}$ admits local inverses. That is, for every $w \in \ExpS{2}{S^1}$, there is a neighbourhood $W \ni w$ such that there is an embedding $\varphi: W \hookrightarrow \R^2$ such that $(q \circ \tilde{\scre}) \circ \varphi = \iden_W$. We construct $\varphi$ using lifts of the covering map $\bar{\scre}$, along with the embedding $\lambda$. By taking neighbourhoods for each $w \in \ExpS{2}{S^1}$, we obtain a collection of coordinates charts $\Phi = \{(\varphi, W)\}$ for $\ExpS{2}{S^1}$. We show that such charts can be chosen such that the collection forms an atlas for $\ExpS{2}{S^1}$, which is a manifold with boundary as it is homeomorphic to a M\"obius band. 

\begin{proposition} \label{prop:q_local_invert} For every $w \in \ExpS{2}{S^1}$, there is a neighbourhood $W \ni w$ such that there is an embedding $\varphi: W \hookrightarrow \R^2$, so that $(\bar{e} \circ \tilde{q}) \circ \varphi = \iden_W$. If a collection of such neighbourhoods $\cW = \{W_j\}_{j \in \cJ}$ is an open cover of $\ExpS{2}{S^1}$, then any choice of such charts $\Phi = \{(\varphi_j, W_j)\}_{j \in \cJ}$ for each $W_j$ is an atlas for $\ExpS{2}{S^1}$.  
\end{proposition}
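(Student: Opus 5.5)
We construct the local inverse $\varphi$ in two stages, following the factorisation $q\circ\tilde{\scre} = \bar{\scre}\circ\tilde{q}$ of \Cref{prop:Morton}. Since $\bar{\scre}: [0,1]\times\R \to \ExpS{2}{S^1}$ is a covering map by \Cref{prop:Morton}(i), every $w \in \ExpS{2}{S^1}$ has an evenly covered open neighbourhood $W$. Choose one sheet $\tilde W \subset [0,1]\times\R$ over $W$. Then $\bar{\scre}|_{\tilde W}: \tilde W \to W$ is a homeomorphism, and its inverse $s: W \to \tilde W$ is a continuous local section with $\bar{\scre}\circ s = \iden_W$. We then set $\varphi := \lambda \circ s$, where $\lambda$ is the embedding of \Cref{prop:Morton}(ii). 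This gives
\[
(\bar{\scre}\circ\tilde q)\circ\varphi = \bar{\scre}\circ(\tilde q\circ\lambda)\circ s = \bar{\scre}\circ s = \iden_W .
\]
Moreover $\varphi$ is an embedding, being a composite of the homeomorphism $s$ onto the open set $\tilde W$ with the embedding $\lambda$. Boundary points $w \in q(\Delta)$ need no separate treatment: they lift to points of $\{0,1\}\times\R$, and $\lambda$ sends these to the lines $x_2-x_1\in\{0,1\}$. The chart therefore lands in a closed half-plane type region, which is consistent with $\ExpS{2}{S^1}$ being a manifold with boundary.

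For the atlas statement, let $\cW=\{W_j\}$ be an open cover and $\Phi=\{(\varphi_j,W_j)\}$ any such choice of charts. Each $\varphi_j$ is a homeomorphism onto $\varphi_j(W_j)\subset \lambda([0,1]\times\R)$. Up to the affine change of coordinates $(x_1,x_2)\mapsto(x_2-x_1,x_1)$, which carries $\lambda([0,1]\times\R)$ onto $[0,1]\times\R$, this image is open in a closed half-plane locally, so each $(\varphi_j, W_j)$ is a valid chart for a manifold with boundary.

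It remains to check compatibility of the transition maps $\varphi_k\circ\varphi_j^{-1}$ on $\varphi_j(W_j\cap W_k)$. On this set both $\varphi_j^{-1}$ and $\varphi_k^{-1}$ are restrictions of $\bar{\scre}\circ\tilde q$. So for $x \in \varphi_j(W_j \cap W_k)$, the points $x$ and $y=\varphi_k\circ\varphi_j^{-1}(x)$ lie in the same fibre of $q\circ\tilde{\scre}$, that is, in the same orbit of the group $\Z^2\rtimes S_2$ acting on $\R^2$. This group acts by integer translations composed possibly with the swap $(x_1,x_2)\mapsto(x_2,x_1)$, and these maps are affine isometries. The fibres are discrete, and the transition map is continuous. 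Hence on each connected component of the overlap the transition agrees with a single group element $g$, because the set of $x$ on which it equals $g\cdot x$ is open and closed by discreteness of the orbit. The transition maps are therefore locally affine, in particular smooth, and $\Phi$ is a (smooth) atlas.

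The main obstacle is this last step: making rigorous that a continuous map picking out orbit-mates is locally constant in the group element. The cleanest route is to use proper discontinuity of the $\Z^2\rtimes S_2$ action. The only subtlety is at fixed points of reflections, namely the diagonal lines. There distinct group elements, $g$ and $g$ composed with a reflection, can agree at a point. However, they then agree on the whole boundary line and differ only off it, and since points off the diagonal are dense, continuity still forces a single affine formula on each component.
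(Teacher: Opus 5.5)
Your construction $\varphi = \lambda\circ s$ is exactly the paper's, so the existence half is identical. Your compatibility argument is correct but follows a different route.

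\textbf{The paper's route.} It stays on the strip. Two local sections $\bar\varphi,\bar\varphi'$ of the covering $\bar{\scre}$ over $W\cap W'$ differ by a deck transformation $T = n\star(-)$ of the free $\Z$-action, i.e.\ the glide reflection. The transition map is therefore $\lambda\circ T\circ\tilde q$ restricted to $\varphi(W\cap W')$: a glide reflection conjugated by $\lambda$.

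\textbf{Your route.} You lift to $\R^2$ and identify transition maps with elements of $\Z^2\rtimes S_2$. That group does not act freely, since its reflections fix the lines $x_2-x_1\in\Z$. This forces the extra stabiliser-and-density argument that you flag as the main obstacle.

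The paper's route makes that obstacle disappear. Both charts land in $\lambda([0,1]\times\R)$ and $\tilde q\circ\lambda=\iden$, so the transition map factors through the strip. There the relevant action is free, and uniqueness of lifts for a covering map applies directly.

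\textbf{What your route buys.} First, generality: you only use $(\bar{\scre}\circ\tilde q)\circ\varphi_j=\iden$, so your argument covers arbitrary continuous sections of $q\circ\screii$. That matches the literal ``any choice of such charts'' in the statement. Second, care: you work per connected component of the overlap, whereas the paper's single $n\in\Z$ is only valid componentwise.

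\textbf{One step to add.} To close your density step, note that a connected, relatively open subset of the strip stays connected after deleting its boundary lines, because a half-disc minus its diameter is connected. Hence the group element you find on the interior is constant on the whole component.
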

\begin{proof}
    Since $\bar{\scre}$ is a covering map, for $W$ sufficiently small neighbourhood of $w \in \ExpS{2}{S^1}$, we can find an embedding $\bar{\varphi}: W \hookrightarrow [0,1] \times \R$. Composing it with the embedding $\lambda: [0,1] \times 
    \R \hookrightarrow \R^2$ in~\Cref{prop:Morton}, we have an embedding $\varphi = \lambda \circ \bar{\varphi}: W \hookrightarrow \R^2$.  Because $\tilde{q} \circ \lambda = \iden_{[0,1] \times \R}$, and $\bar{\scre} \circ \bar{\varphi} = \iden_W$, 
    \begin{equation*}
        (\bar{\scre} \circ \tilde{q}) \circ \varphi = \bar{\scre} \circ (\tilde{q} \circ \lambda) \circ \bar{\varphi} = \bar{\scre} \circ \bar{\varphi} = \iden_W.
    \end{equation*}
    We now show that any collection $\Phi$ of such coordinate charts form an atlas. For $W, W' \subset \ExpS{2}{S^1}$ with non-empty intersection that admit lifts $\bar{\varphi}, \bar{\varphi}'$ respectively of $W$ and $W'$, we have some $n \in \Z$ such that $n \star \bar{\varphi}(W \cap W') = \bar{\varphi}'(W \cap W') $.  Let $T: \bar{\varphi}(W \cap W') \to \bar{\varphi}'(W \cap W')$ be that homeomorphism; then we have a transition map $\varphi (W  \cap W') \xrightarrow{\homeo} \varphi'(W \cap W')$, given by
    \begin{equation}
        (\lambda Tq\rvert_{\varphi(W)} )\circ \varphi\rvert_{W \cap W'} = \lambda T\circ (q \lambda\rvert_{\bar{\varphi}(W)} )\circ \bar{\varphi}\rvert_{W \cap W'} = \lambda (T \bar{\varphi}\rvert_{W \cap W'}) = \lambda \bar{\varphi}'\rvert_{W \cap W'} = \varphi' \rvert_{W \cap W'}. 
    \end{equation}
    where in the first and second equality we used the fact that $\varphi = \lambda \bar{\varphi}$, and $q \lambda\rvert_{\bar{\varphi}(W)} = \iden_{\bar{\varphi}(W)}$ respectively.  
\end{proof}
\begin{corollary}\label{cor:q_local_invert}
    Let $\varphi: W \to \R^2$ be a chart for $\ExpS{2}{S^1}$, as described in~\Cref{prop:q_local_invert}. Then the map $\psi := \tilde{\scre} \circ \varphi : W \to S^1 \times S^1$ is an embedding of $W$ such that $ q\circ \psi = \iden_W$.
\end{corollary}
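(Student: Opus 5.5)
The plan is to read the statement off the commutative square \cref{eq:master_mob_diagram} in \Cref{prop:Morton}, together with the defining property of the chart in \Cref{prop:q_local_invert}. Since $\psi = \screii \circ \varphi$, we have
\begin{equation*}
q \circ \psi = (q \circ \screii) \circ \varphi = (\bar{\scre} \circ \tilde{q}) \circ \varphi = \iden_W,
\end{equation*}
where the middle equality is commutativity of \cref{eq:master_mob_diagram} and the last equality is exactly the property of $\varphi$ in \Cref{prop:q_local_invert}. This settles the identity $q\circ\psi = \iden_W$.

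It remains to check that $\psi$ is an embedding. First, $\psi$ is continuous as a composite of continuous maps. It is injective, since it has the left inverse $q$: if $\psi(w)=\psi(w')$ then $w = q\psi(w) = q\psi(w') = w'$. For $\psi$ to be a homeomorphism onto its image $\psi(W)\subset S^1\times S^1$, the inverse $\psi(W)\to W$ must be continuous. This inverse is precisely the restriction $q\rvert_{\psi(W)}$, which is continuous because $q$ is continuous. So $\psi$ is a continuous injection admitting a continuous inverse on its image, and hence an embedding.

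The argument has no real obstacle; in particular, we never need that $\screii$ restricted to $\varphi(W)$ is injective. The only point to watch is that the left inverse $q$ gives injectivity and continuity of the inverse simultaneously, so no separate compactness or openness argument is required.
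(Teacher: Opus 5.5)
Your proof is correct and matches the paper's: both obtain $q\circ\psi=\iden_W$ from the commutative square \cref{eq:master_mob_diagram} together with the chart property in \Cref{prop:q_local_invert}, and then use $q\rvert_{\psi(W)}$ as a continuous inverse of $\psi$ onto its image. Your explicit derivation of injectivity from the left inverse is slightly cleaner than the paper's wording, but the argument is the same.
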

\begin{proof}
    Because of the commutative square in~\cref{eq:master_mob_diagram}, we can also show that $\psi: W \to S^1 \times S^1$ is an embedding:
    \begin{equation*}
       q\circ \psi = q \circ (\tilde{\scre} \circ \varphi) =  (q \circ \tilde{\scre}) \circ \varphi =  (\bar{\scre} \circ \tilde{q}) \circ \varphi = \iden_W. 
    \end{equation*}
    Because $q\rvert_{\psi(W)}$ is a continuous bijective map, and $\psi$ is also a continuous bijection onto its image, we thus establish that $\psi$ is an embedding (homeomorphism onto its image). Furthermore, $q\circ \psi(W) = W$, so $\psi(W) \subset \fibre{q}{W}$. 
\end{proof}
Furthermore, we show that the coordinate charts $\varphi: W \hookrightarrow \R^2$ inherit from the coordinate charts of the choice of $S_2$-orbit representatives $\psi(W)$ in $S^1 \times S^1$. We first establish some notation. If  $\{(\theta_i, U_i)\}_{i \in \cI}$ is an atlas for $S^1$, then we let $\Theta = \{(\vartheta_{ab}, U_{ab})\}_{a,b \in \cI}$ denote the induced atlas of $S^1 \times S^1$, where $\vartheta_{ab}  := \theta_a \times \theta_b$, and $U_{ab} := U_a \times U_b$. 

\begin{proposition}\label{prop:mobius_atlas} There exists an open cover $\cW = \{W_j\}_{j \in \cJ}$ of $\ExpS{2}{S^1}$, and an atlas $\{(\theta_i, U_i)\}_{i \in \cI}$ for $S^1$ whose charts $\theta_i$ are lifts of the universal covering $\scre: \R \twoheadrightarrow S^1$, where the following are satisfied for any $j \in \cJ$:
\begin{itemize}
    \item  There is a local embeddings $\psi_j : W_j \hookrightarrow S^1 \times S^1$, such that $q\circ \psi_j = \iden_{W_j}$;
    \item There exists some $a,b \in \cI$ where $\psi_j(W_j) \subseteq U_{ab} = U_a \times U_b$; and
    \item For $\vartheta_{ab} := \theta_a \times \theta_b$, the map $\varphi_j =\vartheta_{ab} \circ \psi_j: W_j \to \R^2$ is a chart.
\end{itemize}
Furthermore, the set of charts constructed as thus $\Phi = \{(\varphi_j, W_j)\}_{j \in \cJ}$ is an atlas for $\ExpS{2}{S^1}$.
\end{proposition}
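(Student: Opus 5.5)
The plan is to build the cover and the atlas from \Cref{prop:q_local_invert} and \Cref{cor:q_local_invert}, choosing the neighbourhoods $W_j$ small enough that their lifts land inside products of circle charts.

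First I fix the atlas for $S^1$. For each $c \in \R$, let $U_c := \scre\big((c - \tfrac14, c+\tfrac14)\big)$ and let $\theta_c : U_c \to \R$ be the inverse of $\scre$ restricted to $(c-\tfrac14, c+\tfrac14)$. This is a lift of $\scre$, meaning $\scre \circ \theta_c = \iden_{U_c}$. The index set $\cI$ is a set of such $c$ for which the $U_c$ cover $S^1$; taking all $c \in \R$ is harmless.

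Next, for each $w \in \ExpS{2}{S^1}$ I pick an evenly covered neighbourhood $W$ for $\bar{\scre}$, with a lift $\bar{\varphi}: W \hookrightarrow [0,1]\times\R$, and set $\varphi = \lambda\circ\bar{\varphi}$ as in \Cref{prop:q_local_invert}. Let $(x_1,x_2) = \varphi(w) \in \R^2$. By shrinking $W$ to a connected neighbourhood, continuity of $\varphi$ gives $\varphi(W) \subset (x_1-\tfrac14,x_1+\tfrac14)\times(x_2-\tfrac14,x_2+\tfrac14)$. I then set $a = x_1$, $b = x_2$ and $\psi := \tilde{\scre}\circ\varphi$. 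By \Cref{cor:q_local_invert}, $\psi$ is an embedding with $q\circ\psi = \iden_W$. Moreover $\psi(W) \subset \scre(x_1 \pm \tfrac14)\times \scre(x_2\pm\tfrac14) = U_a\times U_b = U_{ab}$, which gives the first two bullets.

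For the third bullet I show $\vartheta_{ab}\circ\psi = \varphi$. On $(x_i - \tfrac14, x_i+\tfrac14)$ the map $\theta_{x_i}$ is a left inverse of $\scre$. Since each coordinate of $\varphi(W)$ lies in the corresponding interval,
\[
\vartheta_{ab}\circ\psi = (\theta_a\times\theta_b)\circ(\scre\times\scre)\circ\varphi = \varphi .
\]
So $\varphi_j := \vartheta_{ab}\circ\psi_j$ coincides with a chart of the type constructed in \Cref{prop:q_local_invert}. Doing this at every $w$ gives an open cover $\cW$. The final claim, that $\Phi$ is an atlas, then follows directly from the second statement of \Cref{prop:q_local_invert}: any cover by such neighbourhoods, with any choice of such charts, is an atlas.

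The main obstacle is the bookkeeping near the boundary $\partial\ExpS{2}{S^1}$, where $x_1 = x_2$. There I need $\psi$ to remain a section of $q$ even though $q$ is not injective near the diagonal. The route through $\lambda$ handles this: $\tilde{q}\circ\lambda = \iden$ ensures $\varphi(W)$ lies in the strip $\{x_2 - x_1\in[0,1]\}$, so $\psi$ selects a single $S_2$-representative, and \Cref{cor:q_local_invert} supplies the embedding property. The one point I will check explicitly is that the image of $\varphi$ near the boundary is a half-disc in the strip. This is what makes $\varphi_j$ a manifold-with-boundary chart.
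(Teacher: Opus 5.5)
Your proposal is correct and follows essentially the same route as the paper. You take the charts $\varphi=\lambda\circ\bar\varphi$ of \Cref{prop:q_local_invert}, shrink $W$ so that $\varphi(W)$ lies in a product of intervals on which $\scre$ is injective, set $\psi=\tilde{\scre}\circ\varphi$, check $\vartheta_{ab}\circ\psi=\varphi$, and get the atlas claim from \Cref{prop:q_local_invert}; the only cosmetic differences are that the paper normalises $\varphi(w)$ into the fundamental domain $F\subset[0,1)^2$ and indexes half-width-$\tfrac12$ circle charts by $[0,1)$, whereas you allow any lift and index half-width-$\tfrac14$ charts by all of $\R$.
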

\begin{proof}
    We prove this proposition by showing that the atlas for $\ExpS{2}{S^1}$ in~\Cref{prop:q_local_invert} satisfies these conditions, where the open cover consists of neighbourhoods $W$ of all $w \in \ExpS{2}{S^1}$. For each such $W$, we have a chart $\varphi = \lambda \circ \bar{\varphi}$ where $\bar{\varphi}: W \hookrightarrow [0,1] \times \R$ is a lift of the covering map $\bar{\scre}$. The lift $\bar{\varphi}$ is completely determined by where it sends $w$ in $\fibre{\bar{\scre}}{w}$. Since $\lambda$ is an embedding, this is equivalent to saying $\varphi$ is completely determined by where it sends $w$ in $\fibre{(\bar{\scre} \circ \tilde{q})}{w}$. Recall from~\Cref{prop:q_local_invert} that these charts form an atlas regardless of the choice of representative of $w$ in $\fibre{(\bar{\scre} \circ \tilde{q})}{w}$.

   We now note that the subset $F  =\{(x_1,x_2) \in [0,1) \times [0,1) \ : \ x_2 -x_1 \in [0,1]\} \subset \R^2$ is in the image of $\lambda$, and contains exactly one point in the fibre $\fibre{(\bar{\scre} \circ \tilde{q})}{w}$ for any $w \in \ExpS{2}{S^1}$. Let us set the charts $\varphi: (W,w) \hookrightarrow (\R^2, (x,y))$ in the atlas to be those that sends $w$ to $(x,y) \in F$. For each $(x,y) \in F$, consider $\tilde{U}_{xy} = \tilde{U}_{x} \times \tilde{U}_{y}$, where $\tilde{U}_{x} = (x-1/2, x+ 1/2)$. We note that $\scre\rvert_{\tilde{U}}$ is a homeomorphism onto its image $U_x := \scre(\tilde{U}_x) \subset S^1$.  Let $\theta_x: U_x \to \tilde{U}_x$ be the lift of the covering ${\scre}$ at $U_x \subset S^1$ such that ${\scre} \theta_x = \iden_{U_x}$. As these are lifts of covering maps from Euclidean space, the collection of such charts $\{(\theta_x,U_x)\}_{x \in [0,1)}$ form atlas for $S^1$ which owes its transition maps to the $\Z$-action on $\R$. Similarly, the product of charts $\Theta = \{(\vartheta_{xy}, U_{xy})\}_{x,y \in [0,1)}$ also form an atlas for $S^1 \times S^1$, where $\vartheta_{xy}(U_{xy}) = \tilde{U}_{xy}$, and $\tilde{\scre}\circ \vartheta_{xy} = \iden_{U_{xy}}$. 

   Now let us assume we have chosen the neighbourhood $W$ of $w$ to be sufficiently small such that $\varphi(W) \subset \tilde{U}_{xy}$, which we are allowed to do as $\varphi$ is continuous. Let $\psi = \tilde{\scre} \circ \varphi: W \to S^1\times S^1$ be the embedding such that $q\circ \psi = \iden_{W}$, as described in~\Cref{cor:q_local_invert}. As $\psi(W) = \tilde{\scre} \circ \varphi(W) \subset \tilde{\scre}(\tilde{U}_{xy}) = U_{xy}$, we can apply $\vartheta_{xy}$ to the image of $\psi$; thus
   \begin{equation*}
       \vartheta_{xy} \circ \psi =  (\vartheta_{xy} \circ \tilde{\scre}\rvert_{\varphi(W)}) \circ \varphi = \varphi.
   \end{equation*}
\end{proof}
The existence of compatible atlases implied by~\Cref{prop:mobius_atlas} allows us to translate local coordinate descriptions of symmetric maps of $S^1 \times S^1$ to its induced map on $\ExpS{2}{S^1}$.  If $f: S^1 \times S^1 \to X$ satisfies $f(z_1, z_2) = f(z_2,z_1)$, then recall there exists a unique map $\bar{f}$ such that
\[\begin{tikzcd}[ampersand replacement=\&]
	{S^1 \times S^1} \& X \\
	{\ExpS{2}{S^1}}
	\arrow["f", from=1-1, to=1-2]
	\arrow["q"', two heads, from=1-1, to=2-1]
	\arrow["{\exists ! \bar{f}}", dashed, from=2-1, to=1-2]
\end{tikzcd}.\]
If we restrict $\bar{f}$ to a cover element $W_j \subset \ExpS{2}{S^1}$ from the atlas $\Phi$, we can express $\bar{f}\rvert_{W_j}$ in local coordinates $\varphi_j(V) \subset \R^2$ via $\bar{f} \circ \inv{\varphi_j}$. We can then apply~\Cref{prop:mobius_atlas} to relate it to a local coordinate expression of $f$ on a choice of orbit representatives $\psi_j(W_j) \subset S^1 \times S^1$:
\begin{align}
    \bar{f} \circ \inv{\varphi_j} &= \bar{f} \circ \inv{\vartheta_{ab} \circ \psi_j} = (\bar{f} \circ \inv{\psi_j}) \circ  \inv{\vartheta_{ab}}\rvert_{\varphi_j(W_j)} = (\bar{f} \circ q\rvert_{\psi_j(W)}) \circ  \inv{\vartheta_{ab}} \rvert_{\varphi_j(W_j)} \nonumber \\
    &= f  \circ  \inv{\vartheta_{ab}} \rvert_{\varphi_j(W)} . 
\end{align}

We see immediately if $X$ is a manifold, and $f \in C^k(S^1 \times S^1,X)$, then $\bar{f} \in C^k(\ExpS{2}{S^1}, X)$.  Expressing functions on $\ExpS{2}{S^1}$ in local coordinates of $S^1 \times S^1$ is necessary for the differential topology arguments in~\Cref{sec:smooth-case}.

\subsection{Morse Theory and Persistent Homology}
\label{sec:Background:ph-sublevelset}

We now briefly introduce the standard theory surrounding the fundamental areas we use to prove our results. We give a basic introduction of the terms and main ideas in persistent homology, and discuss how smooth Morse theory can help describe the persistent homology of sublevel set filtrations of manifolds with boundary by smooth functions. We also touch on ideas from Conley index theory to help describe the persistent homology of filtered simplicial complexes. 

\subsubsection{Persistent Homology of Sublevel Set Filtrations}

Let $f: X \to \R$ be a continuous function, and $X^t := \sublevel{f}{t}$ be the sublevel set of $f$ under $t$. Consider then following filtration of sublevel sets, indexed over $\R$.  That is, a collection of inclusions maps $X^s \hookrightarrow X^t$, whenever $s \leq t$:
\begin{equation}
    X^\bullet: (X^s \xhookrightarrow{s\leq t} X^t)
\end{equation}
Let $H_i(-)$ denote the $i$-th singular homology functor with field coefficients $\mathbb{F}$, that takes maps between topological spaces to linear maps between their degree $i$ homology groups. Passing the inclusions in the filtration $X^\bullet$ to maps between homology groups we obtain a sequence of maps, the $i$\textsuperscript{th}-\emph{persistence module} ~\cite[sec. 1.1]{Chazal2016-bo} of the sublevel set filtration of $X$:
\begin{equation}
  H_i(X^\bullet) :\quad   \left(H_i(X^s)  \xrightarrow{s \leq t} H_i(X^t) \right)
\end{equation}
A persistence module $\mathbb{V}$ over $\mathbb{R}$ is an  $\R$-indexed family of vector spaces $V^s$, equipped with linear maps $$v^t_s : V^s\rightarrow V^t$$ for $s \leq t$, which satisfy the commutation relation $v_s^t \circ v_r^s = v_r^t$ whenever $r\leq s\leq t $, and $v_t^t = \iden_{V^t}$.

We say a persistent module is quadrant tame or \emph{q-tame}, if the rank of each map $v_s^t$ is finite for $s < t$. If all vector spaces $V^t$ are finite-dimensional, then the persistence module is said to be \emph{pointwise finite dimensional} or simply \emph{tame}. If $X$ is homeomorphic to the geometric realisation of a finite simplicial complex, and $f$ is continuous, then the persistent homology of the sublevel set  filtration $H_i(X^\bullet)$ is q-tame~\cite[Theorem 2.22]{Chazal2016-bo}.

Given a persistence module, we can obtain a practically convenient invariant called a persistence diagram. We define persistence diagrams for q-tame persistence modules, following~\cite[Prop. 2.3]{Chazal2016-bo}, based on the rank functions of the persistence module and the notion of persistence measures. Let $\bar{\R} = \R \cup \{\pm \infty\}$, and consider the open extended upper half plane $\mathcal{H} \subset \bar{\R}^2$
\begin{equation*}
   \mathcal{H}:= \{(s,t) \ : \ -\infty \leq s < t \leq \infty\}.
\end{equation*}
Let $\mathcal{R}$ be the set of all rectangles contained in $\mathcal{H}$
\begin{equation*}
    \mathcal{R} = \{[a,b]\times[c,d]  \ : \ a<b<c<d\}.
\end{equation*}
For a persistence module $\mathbb{V}$, let $r^t_s := \rank v^t_s$ be the \emph{rank function} that assigns to every pair $s \leq t$ the rank of the linear map $v^t_s$.
The \emph{persistence measure} of a q-tame persistence module $\mathbb{V}$ \cite[Prop. 2.3]{Chazal2016-bo} is then the function $\mu_\mathbb{V}: \mathcal{R} \to \Z$, 
\begin{equation} \label{eq:pers_measure}
\mu_{\mathbb{V}}([a,b] \times [c,d]) = r^c_b - r^c_a -r^d_b + r^d_a,
\end{equation}
Since the image of the composition of linear maps satisfies $\imag (fg) \cong \imag g / (\ker f \cap \imag g)$ , we can write~\cref{eq:pers_measure} in the following form, following~\cite[Prop. 2.5]{Chazal2016-bo}: 
\begin{equation} \label{eq:pers_measure_b}
\mu_{\mathbb{V}}([a,b] \times [c,d]) = \dim \left(\frac{\imag v_b^c \cap \ker v_c^d}{\imag v_a^c \cap \ker v_c^d} \right). 
\end{equation}
As such, the persistence measure is non-negative. From~\cref{eq:pers_measure_b}, $\mu_{\mathbb{V}}([a,b] \times [c,d])$ is zero if and only if $v_c^d$ is  injective and $v_a^b$ is surjective.  We can interpret the persistence measure on $\mu_{\mathbb{V}}([a,b] \times [c,d])$ as the dimension of the vector space consisting of equivalence classes of vectors that are `born' between $[a,b]$ in the filtration, and `die' between $[c,d]$. 

This interpretation of the persistence measure as an enumeration of equivalence classes of vectors in the persistence module can be made precise using the following formulation in~\cite{Chazal2016-bo}. Let $\Dgm$ be the space of multisets of $\mathcal{H}$ that are finite on each $R \in \mathcal{R}$; we call such multisets \emph{persistence diagrams}. Consider a q-tame persistence module $\mathbb{V}$. By~\cite[Theorem 2.8, Corr. 2.15]{Chazal2016-bo},  there exists a unique persistence diagram $\mathrm{dgm}(\mu_\mathbb{V})$, such that the measure evaluated on rectangles $R \in \mathcal{R}$ is given by the cardinality of points in $\mathrm{dgm}(\mathbb{V}) \cap R$:
\begin{equation}
\mu_\mathbb{V}(R) = \#\{(b,d) \in \mathrm{dgm}(\mathbb{V}) \mid (b,d) \in R\}.
\end{equation}
We call points $(b,d)$ in $\mathrm{dgm}(\mathbb{V})$ a feature, and $b$ and $d$ the birth and death times of the feature in the filtration respectively. 
\begin{remark}
    The standard way to define persistence diagrams of pointwise finite dimensional persistence modules over $\R$ is via interval decomposition~\cite{CohenSteiner2007}. A priori, the persistent homology of sublevel set filtrations $H_i(X^\bullet)$ are not necessarily tame, and therefore their persistence module may not admit an interval decomposition. 
\end{remark}

Recall if we consider continuous functions on a finitely triangulable space, then the persistence module $H_i(X^\bullet)$ is q-tame~\cite[Theorem 2.22]{Chazal2016-bo}. As such, we can consider persistent homology as a function
\[
\pershomf_i : C(X, \mathbb{R}) \;\longrightarrow\; \Dgm
\]
which assigns to each $f \in C(X, \mathbb{R})$ the diagram $\pershomf_i(f):=\mathrm{dgm}(H_i(X^\bullet))$, where $X^\bullet$ is the sublevel set filtration of $X$ by $f$. Following~\cite{CohenSteiner2007}, we say the a map $f$ \emph{tame} if the persistence module $H_i(X^\bullet)$ is pointwise finite dimensional, and the persistence diagram $\pershomf_i(f)$ is finite for all $i$. The space of persistent diagrams $\Dgm$ admits a metric, the \emph{bottleneck distance} (see section 4.2 of ~\cite{Chazal2016-bo} for a full construction). The map $\pershomf_i$ is in fact 1-Lipschitz with respect to the bottleneck distance.
\begin{theorem} \label{thm:stability_ph}
(Stability) \cite[Cor 4.17]{Chazal2016-bo} Let $X$ be a triangulable space with continuous functions $f, g: X \rightarrow \mathbb{R}$. Then the persistence diagrams of the sublevel set filtrations satisfy $d_B(\pershomf_i(f),\pershomf_i(g)) \leq \| f - g\|_{\infty}$ for any integer $i$.
\end{theorem}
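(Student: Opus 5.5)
The plan is the standard algebraic stability argument. It proceeds through interleavings of the sublevel set persistence modules and then applies the isometry theorem for q-tame modules.

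Set $\varepsilon := \norm{f - g}_\infty$, which we may assume finite. Write $X_f^t$ and $X_g^t$ for the sublevel sets of $f$ and $g$. For every $x$ we have $g(x) \leq f(x) + \varepsilon$, so $f(x) \leq t$ implies $g(x) \leq t + \varepsilon$. Hence $X_f^t \subseteq X_g^{t+\varepsilon}$ for all $t$, and symmetrically $X_g^t \subseteq X_f^{t+\varepsilon}$.

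These inclusions commute with the inclusions inside each filtration, since every map involved is an inclusion of subspaces of $X$. Applying $H_i(-)$ therefore gives families of linear maps
\[
\phi_t : H_i(X_f^t) \to H_i(X_g^{t+\varepsilon}), \qquad \psi_t : H_i(X_g^t) \to H_i(X_f^{t+\varepsilon}).
\]
These are natural in $t$. Each composite $\psi_{t+\varepsilon}\circ\phi_t$ is induced by the inclusion $X_f^t \subseteq X_f^{t+2\varepsilon}$, so it equals the structure map $v_t^{t+2\varepsilon}$, and the same holds with $f$ and $g$ exchanged. By functoriality, then, the two persistence modules are $\varepsilon$-interleaved.

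Next, $X$ is triangulable and $f,g$ are continuous. By \cite[Theorem 2.22]{Chazal2016-bo}, recalled above, both $H_i(X_f^\bullet)$ and $H_i(X_g^\bullet)$ are q-tame, so their diagrams $\pershomf_i(f)$ and $\pershomf_i(g)$ are well defined in $\Dgm$.

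The remaining step, and the only substantive one, is the algebraic stability theorem. It says that an $\varepsilon$-interleaving between q-tame modules yields an $\varepsilon$-matching of their diagrams, that is, $d_B \leq \varepsilon$. I would invoke it directly from \cite{Chazal2016-bo}. If it had to be proved from scratch, the route I would try is the interpolation lemma. One first interpolates the interleaving to a one-parameter family of modules $\mathbb{V}_x$, $x\in[0,\varepsilon]$, which are pairwise $|x-y|$-interleaved. For small steps one then compares the persistence measures $\mu$ on rectangles, using the box lemma. This shows that diagram points can move by at most the interleaving distance, and a compactness argument over the family produces the global matching. The delicate part is handling points near the diagonal and points at infinity for diagrams that are merely q-tame rather than finite, which is exactly why the measure-theoretic framework of \eqref{eq:pers_measure} is used.

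Combining the steps gives $d_B(\pershomf_i(f),\pershomf_i(g)) \leq \norm{f-g}_\infty$ for every $i$.
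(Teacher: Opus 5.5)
Your proposal is correct and is exactly the standard proof behind the cited corollary in \cite{Chazal2016-bo}, which the paper invokes without giving a proof of its own. The inclusions $X_f^t \subseteq X_g^{t+\varepsilon} \subseteq X_f^{t+2\varepsilon}$ give an $\varepsilon$-interleaving of q-tame modules, and the algebraic stability theorem turns this into $d_B \leq \varepsilon$. One caveat: q-tameness via \cite[Theorem 2.22]{Chazal2016-bo} needs $X$ to be \emph{finitely} triangulable, which is the implicit reading of ``triangulable'' in the statement. Without it q-tameness can fail, e.g.\ for $f = \sin$ on $\R$.
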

This property ensures that small perturbations of the input function lead to only small changes in the corresponding persistence diagram, making persistent homology a stable tool for data analysis.

\begin{remark}
    Consider the persistence module $H_i(X^\bullet)$ derived from a filtration of a space $X$ by a continuous function $f$. For any pair $X^{s}, X^t$ in the sequence where $s < t$, we can consider the long exact sequence for relative homology
$$
\cdots 
\rightarrow H_{i}(X^{s})
\rightarrow H_{i}(X^{t})
\rightarrow H_{i}(X^{t}, X^{s})
\rightarrow H_{i-1}(X^{t})
\rightarrow \cdots.
$$
Exactness of this sequence implies
$H_{i}(X^{s}) \to H_i(X^{t})$
is an isomorphism for all $i$
if and only if
$H_i(X^{t}, X^{s}) = 0$
for all $i$. We say a parameter $t$ is a \emph{homological regular value} of the filtration, if for sufficiently small $\epsilon > 0$ we have $H_i(X^{t-\epsilon}, X^{t+\epsilon}) = 0$ for all $i$; else it is a \emph{homological critical value}. Given the characterisation of persistence diagrams in~\cref{eq:pers_measure_b}, only homological critical values can be a birth or death of a feature in $\mathrm{dgm}(H_i(X^\bullet))$.
\end{remark}

\subsubsection{Smooth Morse Theory for Manifold with Boundaries}

For a smooth function $f \in C^k(M, \R)$ on a manifold $M$, let us denote $M_a = \sublevel{f}{a}$ as the sublevel set of $f$ under $a$. We recall a critical point $p$ of $f$ is where $\dd{f}(p) = 0$. A critical point is non-degenerate, if for any local chart $\varphi: U \to \R^m$ of $p$, the Hessian matrix $(H_f)_{ij} = \pdv[2]{(f \circ \inv{\varphi})}{x_i}{x_j}$ has non-zero determinant. The Morse index $\lambda(p)$ of a non-degenerate critical point is the number of negative eigenvalues of $H_f$. A function is called a Morse function of all its critical points are non-degenerate. 

Morse theory allows us to analyse the changes in homotopy type when we include a sublevel set $M_a \hookrightarrow M_{a +t}$ into another.  We refer the reader to many excellent texts in literature for a treatment of Morse theory, such as~\cite{Milnor1963-mi,Banyaga2004-my,Nicolaescu2011-oh}.  The key insight from Morse theory is that the homotopy type of the sublevel set $M_{a +t}$ only changes as $t$ passes through critical values of $f$. In particular, if $f$ is a Morse function, then the change in homotopy type is governed by the Morse indices of the critical points. If $[a_-,a]$ contains only one critical value $b$, lying in $(0,a]$, then $M_{a_-} $ is homotopy equivalent to $M_a$ with $\lambda(p)$-dimensional cells $e^{\lambda(p)}$ attached along the boundary, for every critical point $p$ in  $\fibre{f}{b}$ (\cite[Theorem 2.2.3, Remark 2.2.5]{Nicolaescu2011-oh},~\cite[\S 3.2]{Milnor1963-mi}):
\begin{equation}
    M_{a} \simeq M_{a_-} \cup e^{\lambda(p_1)} \cup \cdots \cup e^{\lambda(p_n)}.
\end{equation}
Consequently, the relative homology of the pair $(M_a, M_{a-})$ is given by the following (\cite[\S 5]{Milnor1963-mi},~\cite[Theorem A.4.1-2]{Mazzucchelli2011-tr},~\cite[Theorem 2.2.3]{Nicolaescu2011-oh}): 
\begin{equation}
    H_\bullet(M_a,M_{a-}) = \bigoplus_{i=1}^n \tilde{H}_\bullet(S^{\lambda(p_i)}).
\end{equation}

In our application of smooth Morse theory, we are interested in the case where the manifold has non-empty boundary. This case is treated in~\cite{Turner2024Extended}. As our main object of study is a smooth function on the M\"obius band, that is positive on the interior and zero on the boundary, we restrict ourselves to the following class of functions. 

\begin{definition}\label{def:finite_morse}
    For $k \geq 2$, consider  $f \in C^k(M, \R)$ a non-negative function on a compact manifold with boundary $(M, \partial M)$ such that $\inv{f}(0) = \partial M$. We say $f$ is \emph{Morse-supported}, if the following conditions are satisfied:
    \begin{enumerate}[label=(\emph{M\arabic*})]
        \item \label{M1} Restricted to the interior $\interior{M}$ of $M$, the map $f$ is a Morse function  (its critical points are non-degenerate);
        \item \label{M2} $f\rvert_{\interior{M}}$ has only finitely many critical points; and
        \item \label{M3} There is some sufficiently small $a > 0$, such that for all $t \in [0,a]$, the boundary $\partial \Mob = \fibre{f}{0}$ is a deformation retract of the sublevel set $\inv{f}[0,a]$. 
    \end{enumerate}
\end{definition}
If $0$ is a regular value of $f$, then the second condition~\labelcref{M3} follows from the standard deformation lemma of Morse theory of regular level sets (see~\cite[Theorem 3.20]{Banyaga2004-my} for statement in particular for manifolds with boundary). However, we state this extra condition as the boundary if the M\"obius band is a critical level set of the \functionname $\DT_\gamma$ with critical value 0. 

We adapt the observations of~\cite{Turner2024Extended} on persistent homology of Morse functions on manifolds with boundary for the case of Morse-supported functions in the following lemma.
\begin{lemma}\label{lem:morse_supported_tame}
    Let $f$ be a Morse-supported function with critical values  $0 \leq a_0 < a_1 < \cdots < a_N = \infty$. For each critical value $a_i$, let $C_i$ be the set of critical points in $\fibre{f}{a_i}$. Then:
    \begin{itemize}
        \item For $a_i  \leq s \leq t < a_{i+1}$, $H_\bullet(M_s) \xrightarrow{\cong} H_\bullet(M_t)$; and
        \item For $s \in (a_{i-1}, a_{i}]$, and $t \in (a_{i}, a_{i+1})$, 
        \begin{equation}
            H_n(M_t, M_s) \cong \bigoplus_{p \in C_i} \tilde{H}_n(S^{\lambda(p)}).
        \end{equation}
    \end{itemize}
    Consequently, the persistence module $H_n(M_\bullet)$ is pointwise finite dimensional for any $n$, and $a_i$ are precisely the homological critical values of the filtration. 
\end{lemma}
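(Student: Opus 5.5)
The plan is to reduce everything to standard Morse theory on the interior of $M$, using \labelcref{M3} to control the bottom of the filtration. The critical values are $a_0 = 0$ (the boundary level, which is critical since $\fibre{f}{0} = \partial M$) together with the finitely many values $f(p)$ for interior critical points $p$; by \labelcref{M2} this is a finite list, and we set $a_N = \infty$. Compactness of $M$ and continuity of $f$ imply that $M_t = M$ for $t \geq \max f$. Also, since $M$ is compact and $f \geq 0$, every $M_t$ is compact.

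\textbf{First item (no change between critical values).} Fix $a_i \leq s \leq t < a_{i+1}$. If $i \geq 1$, then $f$ has no critical points in $\fibre{f}{(a_i, a_{i+1})} \subset \interior{M}$. The level sets of $f$ at positive values avoid $\partial M$, because $\fibre{f}{0} = \partial M$. So the standard deformation lemma (flow along $-\nabla f/\norm{\nabla f}^2$ on the compact region $\fibre{f}{[s', t]}$ with $s' > a_i$) gives that $M_{s'}$ is a deformation retract of $M_t$ for every $s' \in (a_i, t]$. It remains to pass from $s'$ down to $s = a_i$. The level $a_i$ contains only finitely many non-degenerate critical points, and for $t < a_{i+1}$ there is no new critical value. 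The cleanest route is to use that homology of the compact sets $M_{a_i} = \bigcap_{s'>a_i} M_{s'}$ agrees with the limit: $M_{a_i}$ is a deformation retract of $M_{a_i+\epsilon}$ for small $\epsilon$, using the Morse lemma near each critical point at level $a_i$ (\cite[Theorem 2.2.3]{Nicolaescu2011-oh} shows the homotopy type jumps only when passing \emph{up through} $a_i$). For $i = 0$ and small $t$, \labelcref{M3} gives $\partial M \simeq M_t$ for $t \in [0,a]$. For $t \in [a, a_1)$, the regular-value deformation above connects $M_a$ to $M_t$. Together these give the isomorphism $H_\bullet(M_s) \cong H_\bullet(M_t)$.

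\textbf{Second item (the jump at a critical value).} Take $s \in (a_{i-1}, a_i]$ and $t \in (a_i, a_{i+1})$. By the first item and excision-free naturality (the five lemma applied to the long exact sequences of pairs), we may replace $(M_t, M_s)$ by $(M_{a_i+\epsilon}, M_{a_i-\epsilon})$ for small $\epsilon$. Now apply the cell-attachment theorem quoted above: since all $p \in C_i$ are interior and non-degenerate, $M_{a_i+\epsilon} \simeq M_{a_i-\epsilon} \cup e^{\lambda(p_1)} \cup \cdots$. This yields $H_n(M_t, M_s) \cong \bigoplus_{p\in C_i}\tilde H_n(S^{\lambda(p)})$.

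\textbf{Consequences.} Pointwise finite dimensionality follows by induction on $i$. $H_\bullet(M_t)$ for $t < a_1$ equals $H_\bullet(\partial M)$, which is finite dimensional since $\partial M$ is a compact 1-manifold. Each subsequent step changes homology by the finite-dimensional relative groups above, via the long exact sequence. The $a_i$ are homological critical values because the relative homology there is nonzero: for $i \geq 1$ we have $C_i \neq \emptyset$, and for $i = 0$ the pair $(M_\epsilon, \emptyset)$ has nonzero homology. Every other value is homological regular by the first item.

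\textbf{Main obstacle.} The delicate step is the boundary level $0$, which is a degenerate critical level. This is handled entirely by \labelcref{M3}, so the remaining care lies only in the routine passage from $s' > a_i$ to $s = a_i$ in the first item.
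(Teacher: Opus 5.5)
Your route is what the paper relies on: deformation between critical levels, \labelcref{M3} at level $0$, and the cell-attachment theorem at each interior critical level. The paper gives no separate proof; it adapts \cite{Turner2024Extended} and the Morse-theoretic facts quoted just before the lemma. However, your second item contains a step that fails.

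\textbf{The endpoint $s=a_i$.} The reduction ``by the first item we may replace $(M_t,M_s)$ by $(M_{a_i+\epsilon},M_{a_i-\epsilon})$'' needs $M_s$ to lie in the previous regular window $[a_{i-1},a_i)$, i.e.\ $s<a_i$. At the endpoint $s=a_i$, which the statement allows, your own first item applies instead. You prove there that $M_{a_i}$ is a deformation retract of $M_{a_i+\epsilon}$, so $H_\bullet(M_{a_i})\to H_\bullet(M_t)$ is an isomorphism. The long exact sequence of the pair then forces $H_\bullet(M_t,M_{a_i})=0$. On the other hand, $\bigoplus_{p\in C_i}\tilde H_\bullet(S^{\lambda(p)})\neq 0$, because $C_i\neq\emptyset$ for $i\geq 1$ and each summand is nonzero in degree $\lambda(p)$.

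So the two bullets, as written, contradict each other, and your argument appears to establish a false claim at $s=a_i$. What you have actually proved, and what is true, is the second bullet for $s\in(a_{i-1},a_i)$. You should have noticed the clash with the deformation retraction you invoke one paragraph earlier.

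\textbf{The case $i=0$.} This case of the second bullet is also not handled, and the formula is wrong there. The points of $\fibre{f}{0}=\partial M$ are not non-degenerate interior critical points; for $\DT_\gamma$ they are degenerate critical points by \cref{eq:dF}, so $\lambda(p)$ is undefined. For $s<0<t<a_1$ the correct answer is $H_\bullet(M_t,M_s)=H_\bullet(M_t)\cong H_\bullet(\partial M)$ by \labelcref{M3}. This is exactly what you use in your consequences paragraph, and it matches the $a=0$ clause of \Cref{thm:PLmain}. It should be stated as a separate case.

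Neither correction affects your consequences: detecting $a_i$ as a homological critical value only uses $s=a_i-\epsilon$.

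\textbf{Justification of the retraction.} That $M_{a_i}$ is a deformation retract of $M_{a_i+\epsilon}$ does not follow from the cell-attachment theorem you cite. That theorem compares $M_{a_i-\epsilon}$ with $M_{a_i+\epsilon}$ and says nothing about where $M_{a_i}$ itself sits. It is a separate standard remark in \cite[\S 3]{Milnor1963-mi}, or can be proved directly with the normalised gradient flow. The aside that the homology of $\bigcap_{s'>a_i}M_{s'}$ ``agrees with the limit'' should be dropped, since singular homology has no such continuity in general.
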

We now give a sufficient condition for $f \in C^2(M, \R)$ having finitely many non-degenerate critical points, a necessary condition for a function being Morse-supported. Recall if $M$ is compact and  without boundary, then a Morse function $f$ must have finitely many critical points. We can show that on a manifold with boundary, we can similarly control the cardinality of critical points on the interior of the manifold, if we can isolate them from the boundary. 
\begin{lemma} \label{lem:smooth_isolate_finitemorse}
    Consider $f \in C^2(M, \R)$ where $M$ is a compact manifold with boundary $\partial M$. If there is a neighbourhood $V$ of $\partial M$ on which there are no critical points of $f$ on $V \setminus \partial M$, then $f$ being Morse on the interior of $M$  implies there are finitely many critical points in the interior of $M$.
\end{lemma}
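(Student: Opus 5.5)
The plan is to show that the critical points of $f$ in $\interior{M}$ all lie in a compact subset of $\interior{M}$. Non-degenerate critical points are isolated, so a compact set can contain only finitely many of them.

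\textbf{Step 1: a compact region containing all interior critical points.} Let $V$ be the given neighbourhood of $\partial M$. Shrinking it if needed, I take $V$ open in $M$. Set $K := M \setminus V$. Then $K$ is closed in the compact space $M$, hence compact. Since $V \supseteq \partial M$, we have $K \subset \interior{M}$. By hypothesis, $f$ has no critical points in $V \setminus \partial M$, so every critical point of $f\rvert_{\interior{M}}$ lies in $K$.

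\textbf{Step 2: the critical set is closed.} The critical set $\mathrm{Crit}(f) = \{p \in \interior{M} : \dd{f}(p) = 0\}$ is closed in $\interior{M}$ because $\dd{f}$ is continuous; this uses only that $f$ is $C^1$. By Step 1, $\mathrm{Crit}(f) = \mathrm{Crit}(f) \cap K$. This set is closed in $K$, and therefore compact.

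\textbf{Step 3: non-degenerate critical points are isolated.} Let $p$ be a critical point and take a chart $\varphi$ around $p$. Consider the $C^1$ map $g := \nabla(f \circ \inv{\varphi})$ near $\varphi(p)$. Its derivative at $\varphi(p)$ is the Hessian, which is invertible because $p$ is non-degenerate. By the inverse function theorem, $g$ is injective on a neighbourhood of $\varphi(p)$, so $\varphi(p)$ is the only zero of $g$ there. Hence $p$ is isolated in $\mathrm{Crit}(f)$. This is exactly where the $C^2$ hypothesis is used.

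\textbf{Step 4: conclusion.} $\mathrm{Crit}(f)$ is a compact set in which every point is isolated, so it is discrete and compact, and therefore finite. Concretely, covering it by the isolating neighbourhoods from Step 3 and extracting a finite subcover gives the bound.

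The only point needing care is Step 1. I must ensure that $V$ can be taken open, or at least that its interior still contains $\partial M$; a neighbourhood of $\partial M$ in $M$ always contains an open set containing $\partial M$. Given that, $K$ is compact and avoids $\partial M$, which keeps critical points from accumulating at the boundary. Everything else is routine.
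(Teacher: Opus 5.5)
Your proof is correct and is essentially the paper's argument: both combine compactness of $M$, the fact that $V$ prevents interior critical points from accumulating at $\partial M$, and the isolation of non-degenerate critical points. The difference is only in presentation. You remove an open collar so the critical set becomes compact, then use that a compact discrete set is finite; the paper instead argues by contradiction with a convergent sequence of critical points, whose limit $V$ forces into the interior, where isolation is violated.
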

\begin{proof}
    The proof follows that for compact manifolds without boundary. Suppose there are infinitely many critical points of $f$ on the interior. Since $M$ is compact we can consider a convergent sequence $(p_n)$ of such critical points, which by continuity of $\dd{f}$, must converge to some critical point $p \in M$. Because $V$ isolates the boundary, this critical point must be on the interior. Since $f$ is Morse on the interior, $p$ is non-degenerate and hence isolated; however this contradicts there being an infinite sequence $p_n \to p$. Hence there cannot be infinitely many critical points on the interior.
\end{proof}

\subsubsection{Morse Theory for Filtered Simplicial Complexes}
\label{ssec:morse_sets}

Suppose $X$ is a finite simplicial complex, and we have a a filtration of subcomplexes $X^\bullet$ of $X$ over $\R$. If the filtration is a sublevel set filtration of a function $f: X \to \R$ which is monotone with respect to the face poset of the simplicial complex, we say $f$ is a \emph{filter function}.

Consider the sequence of inclusions 
\begin{equation}
    \emptyset = X^{a_0} \subset \cdots \subset X^{a_i}  \subset X^{a_n} = X^{a_{i+1}} \subset \cdots \subset X 
\end{equation}
where $a_0 < \cdots <a_n$ are the finitely many values of $\R$ where new simplices are added to the filtration.  In this case, each sublevel set $X^a$ is a subcomplex of $X$. In order to describe the homological critical values of the persistence module $H_i(X^\bullet)$, we wish to consider how individual additions of simplices in a filtration contribute to the relative homology $H_k(X^{a_i}, X^{a_{i-1}})$. We recall some observations from Conley-Morse theory that relates the relative homology to how the additional simplices are attached to an existing complex in the filtration. 

Consider an inclusion of finite simplicial complexes $X^- \hookrightarrow X$. Let $S = X \setminus X^-$
and denote $\closure{S}$ to be the closure of $S$ (the smallest subcomplex in $X$ containing $S$). We call  $L = \closure{S} \cap X^-$  the \emph{link} of $S$; that is, the faces of simplices in $S$ that are in $X^-$ rather than $S$. Borrowing terminology from combinatorial dynamics, we call connected components $S_i$ of $S = \bigsqcup_i S_i$ the \emph{Morse sets} of the filtration, and call $L_i = \closure{S_i} \cap X^-$ the links of the Morse set $S_i$. It follows from the definitions that $L_i$ are closed in $X$ and $L = \bigcup_i L_i$.  We also have the following characterisation.

\begin{lemma}
    Let $S_i$ be a connected component of  $X \setminus X^-$ where $X^-$ is a subcomplex of $X$ a finite simplicial complex. Then $\closure{S_i} =  S_i \sqcup L_i$, for $L_i = \closure{S_i} \cap X^-$. 
\end{lemma}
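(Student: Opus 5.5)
The claim $\closure{S_i} = S_i \sqcup L_i$ reduces to two set-theoretic facts: $S_i$ and $L_i$ are disjoint, and every simplex of $\closure{S_i}$ lies in one of them. Throughout, \emph{connected component} of $S = X\setminus X^-$ means a class of simplices under the equivalence relation generated by the face relation restricted to $S$. So two simplices of $S$ are in the same component if they are joined by a chain of simplices of $S$, each consecutive pair related by being a face or coface. I will use the standard fact that the closure of a set of simplices $A$ is the set of all faces of simplices in $A$, i.e.\ $\closure{A}=\{\tau : \tau\subseteq\sigma \text{ for some }\sigma\in A\}$, since this set is already a subcomplex.

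\textbf{Disjointness.} $S_i\subseteq S = X\setminus X^-$, while $L_i\subseteq X^-$ by definition. Hence $S_i\cap L_i=\emptyset$, and it also follows that the union is disjoint.

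\textbf{The inclusion $S_i\sqcup L_i\subseteq\closure{S_i}$.} This holds because $S_i\subseteq\closure{S_i}$ and $L_i=\closure{S_i}\cap X^-\subseteq\closure{S_i}$.

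\textbf{The inclusion $\closure{S_i}\subseteq S_i\sqcup L_i$.} Take $\tau\in\closure{S_i}$; then $\tau\subseteq\sigma$ for some $\sigma\in S_i$. There are two cases.
\begin{itemize}
\item If $\tau\in X^-$, then $\tau\in\closure{S_i}\cap X^-=L_i$.
\item Otherwise $\tau\in S$. Since $\tau$ is a face of $\sigma\in S_i$ and both lie in $S$, they are related by the generating relation for components. Therefore $\tau$ lies in the same component as $\sigma$, namely $S_i$.
\end{itemize}
This completes the proof.

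The only delicate point is fixing the meaning of ``connected component'' for $S$, which is not a subcomplex. The argument needs components to be closed under passing to faces that remain in $S$. This holds under the face-adjacency definition above. It also holds under the topological definition, namely path components of the union of open simplices $\bigcup_{\sigma\in S}\interior\sigma$: if $\tau\subseteq\sigma$ with both in $S$, then $\interior\tau\cup\interior\sigma$ is connected because $\interior\tau\subseteq\overline{\interior\sigma}$. So either convention gives the same components, and this compatibility is the one step I would state explicitly.
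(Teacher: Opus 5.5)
Your proof is correct and follows the same route as the paper: a simplex of $\closure{S_i}$ either lies in $X^-$, and hence in $L_i$, or lies in $S$ as a face of a simplex of $S_i$, and hence in $S_i$; disjointness comes from $S_i \cap X^- = \emptyset$. The paper argues the first inclusion by contradiction and leaves the meaning of ``connected'' implicit, so your explicit face-adjacency convention just makes precise the step the paper states as ``$\sigma$ is connected to $S_i$''.
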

\begin{proof}
    Suppose there is some simplex $\sigma \in \closure{S_i} \setminus S_i$ that is not in $L_i$. Since $L_i = \closure{S_i} \cap X^-$ and $X$ is a complex, $\sigma$ must be in set $S = X \setminus X^-$. Furthermore since $\sigma$ is in the closure of $S_i$, $\sigma$ is connected to $S_i$. However, since $S_i$ is a connected component, $\sigma$ must be in $S_i$ as well, which is a contradiction of $\sigma \in \closure{S_i} \setminus S_i$ but not in $L_i$. Thus, $\closure{S_i} \setminus S_i \subseteq L_i$. On the other hand, since $S_i \cap X^- = \emptyset$, and $L_i \subseteq X^-$, we have $L_i \subseteq \closure{S_i} \setminus S_i$. Therefore $L_i = \closure{S_i} \setminus  S_i$. 
\end{proof}
We refer to the relative homology group 
\begin{equation}
    \Con_\bullet(S_i) := H_\bullet(\closure{S_i}, L_i)
\end{equation}
as the \emph{Conley index} of the Morse set $S_i$. 
If the Conley index of $S_i$ is trivial, then we also say $S_i$ is \emph{regular}. Otherwise we say $S_i$ is a \emph{critical set}.  
In particular, if $H_\bullet(\closure{S_i}, L_i) \cong \tilde{H}_\bullet(\mathbb{S}^n)$, then we call the Morse set $S_i$ an \emph{$n$-saddle} in the spirit of smooth Morse theory. We let $\lambda(S_i) = n$ denote the Morse index of the saddle. We note that if $\closure{S_i}$ is contractible, then the Conley index of $S_i$ is characterised by the homology of its link: by the reduced homology long exact sequence of the pair $(\closure{S_i}, L_i)$ we have an isomorphism for homology degree $k \geq 1$:
\begin{equation}
{H}_k(\closure{S_i}, L_i)  \cong \tilde{H}_k(\closure{S_i}, L_i) \xrightarrow{\cong} \tilde{H}_{k-1}(L_i). 
\end{equation}
For $k = 0$, if $L_i$ is empty, then $H_0(\closure{S_i}, L_i) = \mathbb{F}$; else  $H_0(\closure{S_i}, L_i) = 0$. 

Using excision-type arguments, we relate the relative homology $H_\bullet(X,X^-)$ to the  Conley indices. 

\begin{lemma} \label{lem:conley_relative_homology} Let $(X,X^-)$ be a pair of finite simplicial complexes. Then the inclusion of pairs $(\closure{S}, L) \hookrightarrow (X,X^-)$ induces an isomorphism 
    \begin{equation}
    \bigoplus_i \Con_\bullet(S_i) \xrightarrow{\cong} H_\bullet(X,X^-).
\end{equation}
where $S_i$ are connected components of $X \setminus X^-$. 
\end{lemma}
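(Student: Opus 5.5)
We prove the isomorphism by excision in simplicial (cellular) homology. Write $S = X \setminus X^-$ and $S = \bigsqcup_i S_i$ for its decomposition into connected components, with links $L_i = \closure{S_i} \cap X^-$. The plan is to compare the pair $(X, X^-)$ with the pair $(\closure{S}, L)$, where $L = \closure{S}\cap X^-$, and then to split the latter into the direct sum over components.

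\textbf{Step 1: excision.} Since $X = X^- \cup \closure{S}$ and both are subcomplexes, and $X^- \cap \closure{S} = L$, the inclusion of pairs $(\closure{S}, L) \hookrightarrow (X, X^-)$ induces an isomorphism in relative homology. The cleanest route is at chain level. The relative simplicial chain complex $C_\bullet(X)/C_\bullet(X^-)$ has basis the simplices of $S$, and so does $C_\bullet(\closure{S})/C_\bullet(L)$, because $\closure{S}\setminus L = S$ by the preceding lemma. The boundary maps agree, since faces lying in $X^-$ are exactly those lying in $L$ and both are killed in each quotient. So the induced chain map is an isomorphism of chain complexes, and simplicial homology agrees with singular homology for finite complexes. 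Equivalently, one may quote the standard excision for CW pairs, $H_\bullet(A\cup B, B)\cong H_\bullet(A, A\cap B)$ for subcomplexes $A, B$.

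\textbf{Step 2: splitting.} We show that $C_\bullet(\closure{S})/C_\bullet(L) \cong \bigoplus_i C_\bullet(\closure{S_i})/C_\bullet(L_i)$. Again each summand has basis $S_i$, by the preceding lemma $\closure{S_i} = S_i \sqcup L_i$, and the $S_i$ partition $S$. We then check that the boundary of a simplex $\sigma\in S_i$, taken modulo $L$, has only faces in $S_i$ or in $L_i$. A face of $\sigma$ not in $X^-$ lies in $S$ and is adjacent to $\sigma$, hence lies in the same component $S_i$. A face of $\sigma$ in $X^-$ lies in $\closure{S_i}\cap X^- = L_i$. So the differential is block diagonal, and the chain-level identification commutes with the maps induced by the inclusions $(\closure{S_i}, L_i)\hookrightarrow(\closure{S}, L)$. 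Composing with Step 1 gives the stated isomorphism, induced by inclusion.

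\textbf{Main obstacle.} The only delicate point is what ``connected component'' of the non-subcomplex $S$ means. It should be taken with respect to face adjacency, so that a simplex and its faces in $S$ lie in the same component; this is the same convention used in the preceding lemma. With that convention the block-diagonal argument goes through. We also need the identification of relative simplicial homology with relative singular homology, which is standard for finite simplicial pairs.
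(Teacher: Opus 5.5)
Your proof is correct and follows essentially the same route as the paper: first a chain-level identification $C_\bullet(\closure{S},L)\cong C_\bullet(X,X^-)$, since both have the simplices of $S$ as a basis, and then a chain-level splitting over the components $S_i$. The only cosmetic difference is in the splitting step. The paper uses the relative Mayer--Vietoris sequence together with $\closure{S_i}\cap\closure{S_j}=L_i\cap L_j$ for $i\neq j$, so the intersection term vanishes. You instead partition the basis directly and check that the differential is block diagonal.
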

\begin{proof}
    Consider relative simplicial chain complexes $C_\bullet(X,X^-)$ and $C_\bullet(\closure{S}, L)$. We can check by direct computation that the inclusion of pairs $(\closure{S}, L) \hookrightarrow (X, X^-)$ induces an isomorphism of chain complexes $C_\bullet(\closure{S}, L) \xrightarrow{\cong}C_\bullet(X,X^-)$.  Furthermore, we can also check by direct computation that $C_\bullet(\closure{S}, L)$ is isomorphic to a direct sum of relative chain complexes associated to the Morse sets and their links $\bigoplus_i C_\bullet(\closure{S_i}, L_i)$. To show this, we first note that for $i\neq j$, because $S_i \cap x_j = \emptyset$, we have
    \begin{equation*}
        \closure{S_i} \cap \closure{x_j} = (\closure{S_i} \setminus S_i) \cap (\closure{x_j} \setminus x_j) = L_i \cap L_j.
    \end{equation*}
    Hence if we consider the relative Mayer-Vietoris short exact sequence of chain complexes
    \begin{equation*}
0 \to C_\bullet(\closure{S_i} \cap \closure{x_j}, L_i \cap L_j) \to C_\bullet(\closure{S_i} , L_i ) \oplus C_\bullet(\closure{x_j}, L_j)  \to C_\bullet(\closure{S_i} \cup \closure{x_j}, L_i \cup L_j)  \to 0, 
    \end{equation*}
    The fact that $C_\bullet(\closure{S_i} \cap \closure{x_j}, L_i \cap L_j)$ is trivial implies we have an isomorphism of chain complexes 
    \begin{equation*}
C_\bullet(\closure{S_i} , L_i ) \oplus C_\bullet(\closure{x_j}, L_j)  \xrightarrow{\cong} C_\bullet(\closure{S_i} \cup \closure{x_j}, L_i \cup L_j).
    \end{equation*}
     Inductively we thus have an isomorphism of chain complexes $ \bigoplus_i C_\bullet(\closure{S_i}, L_i) \to C_\bullet(\closure{S}, L)$, inducing an isomorphism of homology groups.
\end{proof}
As such, if all critical Morse sets of a filtration are saddles, then the rank of $H_n(X,X^-)$ is given by the number of $n$-saddles. Via the relative homology long exact sequence, we can thus relate the contribution of each Morse set to a homological change $H_\bullet(X^-) \to H_\bullet(X)$ in one degree, of rank precisely one. We distinguish this filtration with the following nomenclature.

\begin{remark}
   In the framework of Conley-Morse theory for combinatorial dynamics, the partition of the complex $X$ into $S, X^-$, and $S$ into a disjoint union $S_i$, is a \emph{combinatorial multivector field}; that is, a partition of the complex into locally closed subsets. The individual partition elements $S_i$ of $X^-$ are called \emph{multi-vectors}, and the link is also called a mouth. Our definition of the Conley index coincides with theirs; we refer the reader to ~\cite[\S 2]{mrozek2025dynamics} for an extensive account of Conley theory in the combinatorial setting. 
\end{remark}

\subsection{Differential Topology}
\label{ssec:differential_topology}
\paragraph{Function Space Topologies}
We now define the weak and strong (Whitney) topology on $C^k(M,N)$, following~\cite[\S 2.1]{hirsch2012differential}. We denote by $C^k_W(M,N)$ and $C^k_S(M,N)$ the space of functions endowed with weak and strong topology respectively. 

We use introduce a (sub)base to define these topologies. Let $M,N$ be $C^k$ manifolds. Consider local charts $\varphi: U \to \R^m$ and $\psi: V \to \R^n$ of $U \subset M$ and $V \subset N$, and $K \subset U$ compact. If $f \in C^k(M,N)$ is such that $f(K) \subset V$, consider maps $g \in C^k(M,N)$, such that $g(K) \subset V $, and for $\epsilon > 0$
\begin{align} \label{eq:jet_nbhd_property}
    \norm{D^r(\psi f \inv{\varphi})\rvert_x - D^r(\psi g \inv{\varphi})\rvert_x} & \leq \epsilon & \forall x \in K, \quad r = 0,\ldots, k.
\end{align} 
Then the \emph{weak subbasic neighbourhood} of $f$~\cite[P. 35, \S 2.1]{hirsch2012differential} is defined as
\begin{equation} \label{eq:weak_subbasic_nbhd}
    \cN(f; (\varphi,U), (\psi, V), K, \epsilon):= \{g \in C^k(M,N) \ : \ g(K) \subset V,\ \text{ and $g$ satisfies \cref{eq:jet_nbhd_property}} \}.
\end{equation}
 The \emph{weak} (or $C^k$ compact-open) topology on $C^k(M,N)$ has the collection $\{\cN(f; (\varphi,U), (\psi, V), K, \epsilon) \}$ of subsets of maps (varying over $U,V$ open, $K$ compact, $\epsilon > 0$) as a subbase. In other words, any $f$ has a neighbourhood consisting of finitely many intersections of weak subbasic neighbourhood. A sequence of maps $(f_i)$ in $C^k(M,N)$ converges to another $f$ in $C^k(M,N)$ in weak topology, if for any $W$ that is a finite intersection of weak subbasic neighbourhoods, there is some $j$ such that $f_i \in W$ for $i \geq j$. In other words, convergence in weak topology is equivalent to requiring for any choice of $K$ compact, and compatible charts $\varphi, \psi$, the \emph{uniform convergence} of $D^r(\psi f_i\varphi^{-1})$ to $D^r(\psi f\varphi^{-1})$ on the compact subset $\varphi(K) \subset \R^m$, for $r = 0,\ldots,k$.

On the other hand, the \emph{Whitney} or \emph{strong} topology on $C^k(M,N)$ is given by a base consisting of subsets of the form $\bigcap_{i \in \cI} \cN(f; (\varphi_i,U_i), (\psi, V_i), K_i, \epsilon_i)$, where $\{U_i\}_{i \in \cI}$ and $\{V_i\}_{i \in \cI}$ are locally finite covers of $M$ and $N$. Since the index set is only locally finite, the strong topology is finer than the weak topology. Convergence in strong topology is a more stringent condition, requiring maps to coincide everywhere but a compact subset $K$, and converge uniformly on such $K$ (see~\cite[41.7 \& 41.10]{kriegl1997convenient}).  However, if $M$ is compact, then the weak and strong topologies coincide~\cite[\S 2.1]{hirsch2012differential}. 

\begin{remark}
    If the target manifold $N$ is some Euclidean space $\R^n$, then we let $ \cN(f; (\varphi,U), K, \epsilon)$ denote the weak subbasic neighbourhood where $\psi$ is the identity on $\R^n$ and the neighbourhood $V$ consists of maps $g$ s.t. $\norm{f-g} < \epsilon$ on all $K$.
\end{remark}

\begin{remark}
    We recall some salient further facts about topological properties of subsets of $C^r_S(M,N)$.  We recall a \emph{residual} subset of a topological space is a countable intersection of open dense subsets.
    \begin{enumerate}
        \item $C^r_S(M,N)$ is a Baire space: any residual subset is dense (\cite[Theorem 2.4.4]{hirsch2012differential}). 
        \item For $r \geq 1$, the set of $C^r$ embeddings $\Emb{r}{M}{N}$ is open in $C^r_S(M,N)$ (\cite[Theorem 2.1.4]{hirsch2012differential}). 
        \item If $U$ is an open set of manifold $M$, the restriction $f \mapsto f\rvert_U$ as a function from $C^r(M,N)$ to $C^r(U,N)$ is continuous for the weak topology, but not necessarily for the strong topology (\cite[Problem 2.1.16]{hirsch2012differential}).
    \end{enumerate}
    
\end{remark}

\paragraph{Jets}
We can also describe the weak and strong topologies on $C^r(M,N)$ in terms of \emph{jets}, which is a convenient framework for manipulating  derivatives of smooth maps between manifolds. We follow the exposition of~\cite[\S2]{golubitsky2012stable} and~\cite[\S4]{hirsch2012differential} and refer the reader to them for a full formal exposition. Let $M$ and $N$ be $C^k$ manifolds; for $x \in M$ and $y \in N$, consider the set of maps 
\begin{equation}
    \FSpace{k}{M}{N}_{x,y} = \Bqty{f \in \FSpace{k}{M}{N} \ : \ f(x) = y}.
\end{equation}
For $r \leq k$ a positive integer, and (any and all) local charts $\varphi: (U,x) \to (\R^m,0)$ and $\psi: (V,y) \to (\R^n,0)$ at $x,y$ respectively, consider the equivalence relation on $\FSpace{k}{M}{N}_{x,y}$
\begin{equation}
    f \sim_{x,r} g \qq{if} D^j{(\psi f \inv{\varphi})}\rvert_0 = D^j{(\psi g \inv{\varphi})}\rvert_0 \quad \forall \ j \in 0,\ldots, r.
\end{equation}
Note that the equivalence relation does not depend on the choice of charts. We define the set of $r$-jets $\Jet{r}{}{M}{N}_{x,y}$ from $x$ to $y$, to be $\FSpace{k}{M}{N}_{x,y} / \sim_r$, and the set of $r$-jets from $M$ to $N$ to be
\begin{equation}
    \Jet{r}{}{M}{N} = \bigsqcup_{x \in M,\ y \in  N} \Jet{r}{}{M}{N}_{x,y}. 
\end{equation}
We can write any element of \Jet{r}{}{M}{N} as $(x,y, [f]_{x,r})$, where $x$ and  $y$ are respectively the sources and targets of $f$, and $[f]_{x,r}$ is the equivalence class of $f$ under $\sim_{x,r}$. For any $f \in \FSpace{k}{M}{N}$, the $r$-\emph{prolongation} of $f$ is the map $\prolong{r}{}f \in \FSpace{k-r}{M}{\Jet{r}{}{M}{N}}$, where $\prolong{r}{}f(x) = (x,f(x), [f]_{x,r})$. We equip \Jet{r}{}{M}{N} with the topology where open sets are  $\Jet{r}{}{U}{V} = \bigsqcup_{x \in U, y \in V} \Jet{r}{}{M}{N}_{x,y}$ for $U \subseteq M$, $V \subseteq N$ open.  

We now explicitly derive a canonical atlas of \Jet{r}{}{M}{N} given atlases  
$\Bqty{\varphi_a:  U_a \to \R^m}$ and $\Bqty{\psi_b:  V_b \to \R^n}$ of $M$ and $N$ respectively. For $(x,y,[f]) \in \Jet{r}{}{U_a}{V_b}$, we can specify $[f]$ as the truncated Taylor polynomial expansion of $f_{ab} :=\psi_b f \inv{\varphi_a}: \R^m \to \R^n$ at $\varphi(x)$ to order $r$. Let $A^j_m$ be the $\R$-vector space of real polynomials in $m$-variables of degree $\leq j$ with constant term = 0, and define $B^j_{m,n} = \bigoplus_{i=1}^n A^j_m$. We have a bijection $\zeta_{ab}: \Jet{r}{}{U_a}{V_b} \to \varphi_a(U_a) \times \psi_b(V_b) \times B^j_{m,n}$, where 
\begin{equation}
    (x,y,[f]_{x,r}) \mapsto (\varphi_a(x), \psi_b(y), Df_{ab}\rvert_{\varphi(x)}, \ldots, D^rf_{ab}\rvert_{\varphi(x)}).
\end{equation}
In fact, with the topology we have chosen for \Jet{r}{}{M}{N}, $\zeta_{ab}$ is a homeomorphism between \Jet{r}{}{U_a}{V_b} and a Euclidean space. The collection of maps $\Bqty{\zeta_{ab}, \Jet{r}{}{U_a}{V_b})}$ satisfy the axioms for an atlas, as such this imposes a manifold structure for $\Jet{r}{}{M}{N}$ (we refer the reader to~\cite[\S2]{golubitsky2012stable} and~\cite[\S4]{hirsch2012differential} for details). 

Since jets carry information about higher order derivatives, they can also be used to define the weak and strong $C^k$ topologies. The prolongation of $f \in \FSpace{k}{M}{N}$ as an assignment from $\FSpace{k}{M}{N}$ to the set of continuous maps between $M$ and $\Jet{k}{}{M}{N}$. 
\begin{align}
    j^k:  \FSpace{k}{M}{N} &\to  \FSpace{0}{M}{\Jet{k}{}{M}{N}} \\
    f &\mapsto j^kf. \nonumber
\end{align}
The strong (weak) topology on $\FSpace{k}{M}{N}$ is precisely 
the coarsest topology such that prolongations $j^k$ are continuous, when $\FSpace{0}{M}{\Jet{k}{}{M}{N}} $ is endowed with the strong (weak) topology~\cite[41.9-10]{kriegl1997convenient}. 

\paragraph{Multi-jets} The set of $s$-fold $r$-jets of \FSpace{k}{M}{N}, is defined to be
\begin{equation}
    \Jet{r}{s}{M}{N}:= \{(\prolong{r}{}f(x_1) \ldots, \prolong{r}{}f(x_s))\ : \ f \in \FSpace{k}{M}{N}, (x_1, \ldots, x_s) \in \Conf{n}{M} \}.
\end{equation} 
The set of $s$-fold $r$-jets is a submanifold of $\Jet{r}{}{M}{N}^s$, where the charts are inherited from those of $\Jet{r}{}{M}{N}^s$ as a product manifold:
\[\Bqty{(\zeta_{a_1, b_1}, \ldots, \zeta_{a_s, b_s}), \Jet{r}{}{U_{a_1}}{V_{b_1}} \times \ldots \times \Jet{r}{}{U_{a_s}}{V_{b_s}}}.\]
We can similarly prolong $f \in \FSpace{k}{M}{N}$ to an $n$-fold $r$-jet $\prolong{r}{s} f: \Conf{s}{M} \to \Jet{r}{s}{M}{N}$:
\begin{equation}
    \prolong{r}{s} f : (x_1, \ldots, x_s) \mapsto (\prolong{r}{}f(x_1) \ldots, \prolong{r}{}f(x_s)).
\end{equation}

\paragraph{Transversality} We follow the exposition on transversality in~\cite[\S 2]{hirsch2012differential} and~\cite[\S 4]{golubitsky2012stable}. Let $M$ and $N$ be differentiable manifolds. A differentiable map $f$ intersects a submanifold $W$ of $N$ transversely on $V \subseteq M$, if either $f(V) \cap W = \emptyset$, or where $x \in V$ is mapped into $W$, 
\begin{equation}  \label{eq:transversality}
    T_{f(x)} W + \dd{f}_x (T_xM) = T_{f(x)} M.
\end{equation}
We denote this by $f \transverse_V W$. If $V = M$, then we simply write  $f \transverse W$. If $W$ is a Whitney stratified space, then we say $f \transverse_V W$ if $f \transverse_V W_i$ as manifolds for all strata $W_i$. 

Note that if $\rank \dd{f}$ is less than the codimension of $W$ in $N$ where $f(M) \cap W$, then the $f \transverse W$ implies $f(M) \cap W = \emptyset$. We exploit this fact in the proof of \Cref{prop:crit_ndg}, where the genericity of a transversality condition implies the genericity of a property defined by an empty intersection due to this dimensionality argument. 

The main theorem in transversality theory that we exploit in this paper is the following by Damon~\cite[Corollary 1.11]{damon1997generic}; we state a specific version of the theorem as presented in~\cite{arnal2023critical} (Theorem 9.1). 

\begin{theorem}[Damon] \label{thm:damon}
    Consider $k \geq r+1$ and $s \geq 2$. Let $M,N$ be $C^k$ manifolds where $M$ is compact, and $V \subset \Conf{s}{M}$ and $W \subset \Jet{r}{s}{M}{N}$ be closed Whitney stratified subsets. Then the following subset of $\FSpace{k}{M}{N}$ with Whitney $C^k$-topology is \emph{residual}:
    \begin{equation}
        \Gamma = \Bqty{f \in \FSpace{k}{M}{N} \ : \ \prolong{r}{s}f \transverse_V W}.
    \end{equation}
    Furthermore, if $V$ is compact, then $\Gamma$ is also open in Whitney $C^{k+1}$-topology.
\end{theorem}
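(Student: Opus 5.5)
Since this is a cited result, the plan is the standard proof of the multijet transversality theorem (as in \cite[\S 4]{golubitsky2012stable}), adapted to finite differentiability, to a Whitney stratified target $W$, and to restricting the source to $V$.

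\textbf{Reduction to strata and local pieces.} The condition $\prolong{r}{s}f \transverse_V W$ means transversality to each stratum $W_i$, and a Whitney stratification of a closed set has countably many strata. Since a countable intersection of residual sets is residual, it suffices to treat one stratum $W_i$. Next I cover $\Conf{s}{M}$ by countably many closed sets of the form $K = K_1 \times \cdots \times K_s$. Here each $K_j$ is a compact coordinate ball in $M$, the balls are pairwise disjoint, and each $K_j$ maps into a chart of $N$. I also exhaust $W_i$ by countably many compact pieces $L$. For each pair $(K,L)$ let $\Gamma_{K,L}$ be the set of $f$ with $\prolong{r}{s}f \transverse W_i$ at every point of $K\cap V$ mapped into $L$. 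Then $\Gamma$ contains $\bigcap_{K,L}\Gamma_{K,L}$, so it suffices to show each $\Gamma_{K,L}$ is open and dense.

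\textbf{Openness of $\Gamma_{K,L}$.} Failure of transversality at a point of a compact set is a closed condition. If $f_n \to f$ with failure points $x_n \in K\cap V$, these subconverge to a point of $K\cap V$ (using that $V$ is closed), where $f$ also fails. This uses two facts. First, $\prolong{r}{s}f_n$ and its first derivative converge uniformly on $K$, which needs closeness in $C^{r+1} \subseteq C^k$. Second, the Whitney condition (a) on $W$ ensures that limits of tangent spaces $T W_i$ at points of a smaller stratum contain the tangent space of that stratum. This handles the closure of $L$ within $\overline{W_i}$ correctly.

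\textbf{Density of $\Gamma_{K,L}$.} Given $f$, I perturb it on the disjoint balls $K_j$ independently. Take bump functions $\rho_j$, equal to $1$ on $K_j$ with disjoint supports, and polynomial maps $P_j$ of degree $\le r$ in the chart coordinates. This gives the finite-dimensional family
\[
f_{\mathbf{P}} = f + \sum_j \rho_j P_j ,
\]
so that on $K$ the polynomial parameters freely prescribe each component $\prolong{r}{}f(x_j)$. The evaluation map $(x,\mathbf{P}) \mapsto \prolong{r}{s}f_{\mathbf{P}}(x)$ is therefore a submersion onto $\Jet{r}{s}{M}{N}$ on $K$, so it is transverse to $W_i$. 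The parametric transversality theorem (Sard) then gives $\prolong{r}{s}f_{\mathbf{P}} \transverse W_i$ for almost every small $\mathbf{P}$, and $f_{\mathbf{P}} \to f$ as $\mathbf{P}\to 0$ in the $C^k$ topology.

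The main obstacle is that $\prolong{r}{s}f$ is only $C^{k-r}$. Sard's theorem for the parametric family needs enough differentiability relative to the dimensions involved, and this is exactly why Damon's $C^k$ version needs care. I would first try to work with $k\ge r+1$ as stated. If the $C^{k-r}$ regularity is insufficient, I would approximate $f$ by smooth maps (dense in $C^k_S$) and use openness to transfer transversality back to $C^k$ maps near $f$.

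\textbf{Conclusion.} Since $C^k_S(M,N)$ is Baire, $\bigcap_{K,L}\Gamma_{K,L}$ is residual, and hence so is $\Gamma$. When $V$ is compact, finitely many sets $K$ cover $V$. Using the closedness of $W$ and the Whitney (a) limit argument above, the whole set $\Gamma$ is then a finite intersection of sets that are open in the $C^{k+1}$ topology, and so is itself open in that topology.
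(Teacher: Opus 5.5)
The paper gives no proof of this statement. It imports the theorem from Damon \cite[Cor.~1.11]{damon1997generic} in the form of \cite[Thm.~9.1]{arnal2023critical}, so the comparison is between a citation and your reconstruction.

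\textbf{Your route.} What you give is the classical multijet transversality argument of \cite{golubitsky2012stable}: cover by products of pairwise disjoint compact balls, perturb independently by polynomials on each ball to get a submersive family, apply parametric Sard, and intersect countably many open dense sets. You add the device of first approximating by a smooth map so that Sard has enough differentiability. This is the same smoothing trick the paper itself uses in \Cref{thm:crit_ndg} to pass from $k \geq 3$ to $k = 2$.

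\textbf{What it buys.} The argument is correct, and it makes visible what the citation hides. The hypothesis $k \geq r+1$ is exactly what makes $\prolong{r}{s}f$ of class $C^1$, so failure of transversality on compact pieces survives $C^{r+1}$-limits. For compact $V$ you in fact get openness in the coarser $C^{r+1}$-topology, which implies the stated $C^{k+1}$ claim.

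\textbf{What it costs.} There is an assumption you should state explicitly. Sard applied to the smoothed family needs each stratum $W_i$ to be a $C^q$ submanifold with $q > \max(0, \dim \Conf{s}{M} - \operatorname{codim} W_i)$. This must hold for a smooth structure on $\Jet{r}{s}{M}{N}$ induced from compatible smooth structures on $M$ and $N$. It is harmless in the paper's application, since the degenerate locus $\chi_0$ is algebraic in charts.

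\textbf{Two minor corrections.}
\begin{enumerate}
\item Whitney condition (a) plays no role in the openness of an individual $\Gamma_{K,L}$. There $L$ is a compact subset of the stratum $W_i$ itself, on which $TW_i$ varies continuously. Condition (a) is needed only in the final openness claim for compact $V$, where failure points in $W_i$ may converge into a lower stratum. That step also uses local finiteness of the stratification, to pass to a subsequence lying in a single stratum.
\item Density does not require transferring transversality back via openness. Choose a smooth $g$ in the given $C^k$-neighbourhood of $f$, then a small generic $\mathbf{P}$ with $g_{\mathbf{P}}$ still in that neighbourhood.
\end{enumerate}
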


Since $\Emb{k}{M}{N}$ is open in $C^k_S(M,N)$~\cite[5.3]{kriegl1997convenient}, if $\Gamma$ is residual in $C^k_S(M,N)$, then $\Gamma \cap \Emb{k}{M}{N}$ is also residual in $\Emb{k}{M}{N}$.

\subsection{Subsets of Positive Reach}
The reach of a closed subset $A \subset \R^d$ of Euclidean space was introduced by~\cite{federer1959curvature} to quantify the size of tubular neighbourhoods of $A$. Let  $A^\epsilon = \{x \in \R^d \ : \ d(x,A) \leq \epsilon\}$ denote the $\epsilon$-neighbourhood of $A$ in $\R^d$. A tubular neighbourhood of $A$ is an $\epsilon$-neighbourhood of $A$, such that for any point $x$ in the $A^\epsilon$ has a unique nearest neighbour on $A$. The \emph{reach} of $A$, denoted by $\tau$, is the supremum over all $\epsilon$'s such that $A^\epsilon$ is a tubular neighbourhood of $A$. If $\tau > 0$, we say $A$ has \emph{positive reach}. 

Subsets of positive reach include convex subsets (where $\tau = \infty$), and compact $C^2$-submanifolds~\cite[Remarks 4.20-21]{federer1959curvature}. We recall some results in literature where $\tau$ is shown to be a useful characteristic length scale of a closed subset $A$, below which the local geometry of $A$ can be well approximated by Euclidean data. First, we note a result from~\cite{boissonnat2019reach}, which describes how Euclidean distances bound the geodesic distance $d_A$ in $A$, as induced by the ambient Euclidean distance.
\begin{lemma}[{\cite[Lemma 3]{boissonnat2019reach}}] \label{lem:reach_distance} If $A \subset \R^d$ is a closed set, and $p,q \in A$ have Euclidean distance $\norm{p-q} < 2 \tau$, then 
\begin{equation} \label{eq:geodesic_reach}
    \norm{p-q} \leq d_A(p,q) \leq 2\tau \arcsin\frac{\norm{q-p}}{2\tau}.
\end{equation}
\end{lemma}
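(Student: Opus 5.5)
The lower bound $\norm{p-q} \leq d_A(p,q)$ is immediate. Any rectifiable path in $A$ from $p$ to $q$ is also a path in $\R^d$, so its length is at least the Euclidean distance, and we take the infimum over such paths. All the work is in the upper bound. There we aim to build, for each pair $p,q \in A$ with $\norm{p-q} = \delta < 2\tau$, a path in $A$ whose length is at most the arc length of a circle of radius $\tau$ subtending the chord $\delta$. That arc length is $2\tau\arcsin(\delta/2\tau)$.

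\textbf{Step 1 (short chords).} First treat chords that are small relative to $\tau$. Let $m = (p+q)/2$. Then $d(m,A) \leq \delta/2 < \tau$, so $m$ lies in the tubular neighbourhood and has a unique nearest point $\pi(m)$ on $A$. Federer's theory of sets of positive reach gives two facts we use here: the projection $\pi$ is locally Lipschitz on $A^{r}$ for $r<\tau$, with constant $\tau/(\tau-r)$; and a normal ball of radius $\tau$ at any point of $A$ contains no points of $A$ in its interior. From these we derive the key estimate
\[
\norm{\pi(m) - m} \leq \tau - \sqrt{\tau^2 - \delta^2/4}.
\]
This says $\pi(m)$ lies no further from the chord than the midpoint of the circular arc of radius $\tau$ through $p$ and $q$. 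To prove it, we use that $p$ and $q$ lie outside the open ball of radius $\tau$ tangent to $A$ at $\pi(m)$ along the normal direction $m-\pi(m)$. Elementary plane geometry then bounds the distance from $\pi(m)$ to each of $p$ and $q$ by the chord of the corresponding half-arc.

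\textbf{Step 2 (bisection and limit).} Set $p_0 = p$, $p_1 = \pi(m)$, $p_2 = q$, and iterate: at each stage insert the projections of the midpoints of consecutive points. By Step 1, after $k$ stages the consecutive distances are bounded by the chords of arcs of radius $\tau$ with angle halved $k$ times. The resulting $2^k$-gon therefore has length at most $2^k \cdot 2\tau\sin(\theta/2^{k})$, where $\theta = \arcsin(\delta/2\tau)$, and this converges to $2\tau\theta$. The vertices form a Cauchy family indexed by dyadic rationals with uniformly controlled increments. Hence they extend to a Lipschitz curve in $A$, using that $A$ is closed. Its length is at most $2\tau\arcsin(\delta/2\tau)$ by lower semicontinuity of length, which gives the upper bound on $d_A$.

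\textbf{Main obstacle.} The hardest part is the geometric estimate of Step 1 and keeping it uniform through the iteration. The iteration requires that each new pair of consecutive points again has distance less than $2\tau$ and satisfies the same arc comparison. This holds by monotonicity of the chord-versus-angle function, but the plane-geometry bound must be checked carefully, and the ball-exclusion property of the reach is the essential input. I would first establish the two-point version with an explicit sphere comparison, and only then run the induction.
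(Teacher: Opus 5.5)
The paper gives no proof of its own here; it quotes the lemma from Boissonnat--Lieutier--Wintraecken. Your argument is correct and is essentially the proof in that source: project the midpoint, use the excluded open $\tau$-ball at $\pi(m)$ with the parallelogram law to get the height bound and the individual half-chord bounds $\|p-\pi(m)\|,\|q-\pi(m)\|\le 2\tau\sin(\theta/2)$, then refine dyadically to a $2\tau\theta$-Lipschitz curve in the closed set $A$. The plane-geometry check you flag as the main obstacle does go through. Once you bound the component of $p-m$ along $m-\pi(m)$ using that both $p$ and $q$ avoid the ball, the inequality $\|p-\pi(m)\|^2\le 4\tau^2\sin^2(\theta/2)$ reduces exactly to the height estimate $\|\pi(m)-m\|\le\tau-\sqrt{\tau^2-\delta^2/4}$.
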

In Federer's introduction of the reach~\cite{federer1959curvature}, he shows that the reach also controls the rate at which a tangent plane approximation of a manifold at a point deteriorates as we increase the size of the neighbourhood. 
\begin{theorem}[{\cite[Theorem 4.18]{federer1959curvature}}] Let $\tau$ be the reach of a a submanifold $M \subset \R^d$. For $T_p$ the tangent plane of $M$ at $p$, the distance of $q \in M$ to $T_p$ is bounded above by 
\begin{equation}
    d(q, T_p) \leq \frac{\norm{q-p}^2}{2\tau}.
\end{equation}
\end{theorem}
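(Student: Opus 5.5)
The plan is to prove the bound through the empty-ball characterisation of the reach: for every unit normal vector $v$ at $p$ and every $0<r<\tau$, the open ball of radius $r$ centred at $p+rv$ contains no point of $M$. Taking $v$ to be the unit vector along the normal component of $q-p$ then gives the estimate directly. Throughout, $T_p$ is the affine tangent plane through $p$, and $\xi$ denotes the nearest-point projection, which is well defined and continuous on the open $\tau$-neighbourhood of $M$.

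\emph{Step 1 (main claim).} For a unit normal $v\in (T_pM)^\perp$ and $0\le t<\tau$, we show $\xi(p+tv)=p$. Let $I=\{t\in[0,\tau): \xi(p+tv)=p\}$. Then $0\in I$. The set $I$ is an interval: if $p$ is the nearest point to $p+tv$ and $s<t$, the triangle inequality shows $p$ is the nearest point to $p+sv$, and it is unique by the definition of the reach. The set $I$ is closed in $[0,\tau)$ by continuity of $\xi$.

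\emph{Step 2 (openness; the expected obstacle).} It remains to show $I$ is open, i.e.\ that its supremum $t_0$ cannot lie in $I$ with $t_0<\tau$. Suppose it did. For $s\downarrow t_0$ the nearest points $q_s=\xi(p+sv)$ satisfy $q_s\neq p$ and $q_s\to p$. Since $q_s$ minimises distance and $M$ is $C^1$, the vector $p+sv-q_s$ is normal to $M$ at $q_s$. So $p+sv = q_s + \ell_s n_s$ with $n_s\perp T_{q_s}M$ and $\ell_s\to t_0$. I would use the normal map $E(x,w)=x+w$ on the normal bundle. In the $C^2$ setting this map is $C^1$, and I expect it to be a local diffeomorphism at $(p,t_0v)$ because $t_0<\tau$. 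Then the two preimages $(q_s,\ell_s n_s)$ and $(p,sv)$ of the same point would both converge to $(p,t_0v)$, contradicting local injectivity.

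Making "$t_0<\tau$ implies $E$ is nondegenerate at $(p,t_0v)$" rigorous is the delicate part. The second fundamental form must satisfy $t_0\,\mathrm{II}_p(v)<1$. I would obtain this from the fact that $\xi$ is single-valued, and locally Lipschitz, on the $\tau$-neighbourhood. If the rank of $E$ dropped at $(p,t_0v)$, then $\xi$ would fail to be Lipschitz near $p+t_0v$, contradicting Federer's Lipschitz bound on $\xi$ inside the reach. If this route becomes too technical, I would instead invoke Federer's Theorem 4.8(12) directly for this step.

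\emph{Step 3 (conclusion).} By Step 1, for $q\in M$ we have $\|q-p-rv\|\ge r$. Expanding the square gives
\[
\|q-p\|^2 - 2r\langle q-p, v\rangle \ge 0 ,
\]
hence $\langle q-p,v\rangle\le \|q-p\|^2/(2r)$. Choose $v$ to be the unit vector in the direction of the orthogonal projection of $q-p$ onto $(T_pM)^\perp$; if this projection vanishes, the bound is trivial. With this choice, $\langle q-p,v\rangle = d(q,T_p)$. Letting $r\uparrow\tau$ yields $d(q,T_p)\le \|q-p\|^2/(2\tau)$.
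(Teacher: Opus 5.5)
Your proof is correct for closed $C^2$ submanifolds, which is the only case the paper uses, and it is essentially Federer's own argument; the paper only cites that argument. It establishes the normal-segment property $\xi(p+rv)=p$ for every unit normal $v$ and every $r<\tau$, then expands $\|q-p-rv\|^2\ge r^2$ with $v$ chosen along the normal part of $q-p$.

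One caveat: your Step 2 is not independent of Federer. His Lipschitz estimate for $\xi$ is obtained from the normal-segment property by adding two instances of your Step 3 inequality. So either cite 4.8(12) directly, or make the step self-contained as follows. The function $\delta_M$ is $C^1$ on $\{0<\delta_M<\tau\}$ with unit gradient $(y-\xi(y))/\delta_M(y)$. Hence an integral curve started at $p+t_0v$ (it exists by Peano's theorem) satisfies $\delta_M=t_0+t$ and has length $t$. The triangle inequality then forces it to be the ray $p+(t_0+t)v$, with nearest point $p$, for all $t_0+t<\tau$.
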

Note that $ d(q, T_p) $ is the length of the orthogonal projection of the vector $q-p$ onto the plane $-p + T_p$. Since $d(q, T_p) \leq \norm{q-p}$, the bound is only meaningful when $\norm{q-p} \leq 2\tau$.

In the case where $M= \gamma(S^1)$ is the image of a $C^1$-embedding of $S^1$, we can obtain the following expression which is useful in controlling the critical points of $\DT_\gamma$.  Since $d(q, T_p) = \norm{p-q} |\sin \angle (q-p, \dot{\gamma}(p))|$, 
\begin{equation} \label{eq:reach_tangent}
   \left\langle {\dot{\gamma}(p)}, {\gamma(q) - \gamma(p)}\right \rangle^2 =  \norm{\dot{\gamma}(p)}^2{\|\gamma(q) - \gamma(p)\|}^2\cos^2 \angle (q-p, \dot{\gamma}(p)) \geq  1-\qty(\frac{\|\gamma(q) - \gamma(p)\|}{2\tau})^2. 
\end{equation}



\section{General Properties of the Persistent Homology of Chordal Distance Transforms}
\label{sec:elementary-props}

In this section, we give some general properties of \FN for general topological embeddings of circles. We show that $\DT: \Emb{}{S^1}{\R^d} \to C^k(\ExpS{2}{S^1}, \R)$ is a continuous transform of circle embeddings into functions that satisfy favourable symmetry properties. We also discuss how the choice of field coefficients for persistent homology can affect the persistence diagrams $\pershomf(\DT_\gamma)$. 

We first note that any property we prove about the persistence diagram $\pershomf \DT_\gamma)$ also holds for $\pershomf(\DTm_\gamma)$, as there is a bijection between the diagrams. 
\begin{lemma}\label{lem:square_bijection}
     The homeomorphism $\Phi:\R^2_{\geq 0} \to \R^2_{\geq 0}$
    given by
    $\Phi: (s, t) \mapsto (\frac{1}{2}s^2, \frac{1}{2}t^2)$
    induces a bijection between the persistence diagrams
    $\pershomf \circ \DT(\gamma)$ and 
    $\pershomf \circ \DTm(\gamma)$.
\end{lemma}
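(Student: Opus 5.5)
The key observation is that $\DT_\gamma = h \circ \DTm_\gamma$, where $h(x) = \tfrac12 x^2$ is a strictly increasing homeomorphism of $\R_{\geq 0}$ onto itself. Since $\DTm_\gamma \geq 0$, we have for every $a \geq 0$
\begin{equation*}
\{x \in \ExpS{2}{S^1} \ : \ \DTm_\gamma(x) \leq a\} = \{x \ : \ \DT_\gamma(x) \leq h(a)\}.
\end{equation*}
So the sublevel set filtrations of $\DTm_\gamma$ and $\DT_\gamma$ are the same family of spaces, reindexed by $h$. For negative parameters both sublevel sets are empty. For $a<0$ we may also simply note that both modules vanish, so no points of either diagram have a negative coordinate. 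This justifies restricting $\Phi$ to $\R^2_{\geq 0}$, extended by $\Phi(s,\infty)=(\tfrac12 s^2,\infty)$ for essential classes.

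Next I will pass to persistence modules. Let $\mathbb{V}$ and $\mathbb{W}$ be the degree-$i$ persistence modules of $\DTm_\gamma$ and $\DT_\gamma$. The equality above gives $V^s = W^{h(s)}$ and $v^t_s = w^{h(t)}_{h(s)}$ for $0 \le s \le t$. Hence the rank functions satisfy $r_{\mathbb{V}}{}^t_s = r_{\mathbb{W}}{}^{h(t)}_{h(s)}$. Both modules are q-tame by \cite[Theorem 2.22]{Chazal2016-bo}, since $\ExpS{2}{S^1}$ is a compact triangulable Möbius band and both functions are continuous.

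Because $h$ is order preserving, it sends a rectangle $[a,b]\times[c,d]$ with $a<b<c<d$ in $\mathcal{H}\cap \R^2_{\ge0}$ to the rectangle $[h(a),h(b)]\times[h(c),h(d)]$. From \cref{eq:pers_measure} we then get
\begin{equation*}
\mu_{\mathbb{V}}(R) = \mu_{\mathbb{W}}(\Phi(R))
\end{equation*}
for every such rectangle $R$. The same identity holds for rectangles with $d=\infty$, using the convention for the rank at infinity.

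Finally I will invoke uniqueness of the diagram representing a persistence measure \cite[Theorem 2.8, Cor.~2.15]{Chazal2016-bo}. The pushforward multiset $\Phi(\mathrm{dgm}(\mathbb{V}))$ counts points in $\Phi(R)$ exactly as $\mathrm{dgm}(\mathbb{V})$ counts points in $R$. Since $\Phi$ is a homeomorphism, rectangles $\Phi(R)$ range over all rectangles in the relevant region. Therefore $\Phi(\mathrm{dgm}(\mathbb{V}))$ represents $\mu_{\mathbb{W}}$, and by uniqueness it equals $\mathrm{dgm}(\mathbb{W})$; multiplicities are preserved, giving the claimed bijection.

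The only delicate point is this uniqueness step. The rectangle family must be preserved by $\Phi$ and must generate the diagram, including the diagonal and points at infinity. This holds because $\Phi$ is a monotone product map on each coordinate.
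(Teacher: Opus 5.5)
Your proof is correct and follows the same argument as the paper: the content is that $\{\DTm_\gamma \le t\} = \{\DT_\gamma \le \tfrac12 t^2\}$ for $t \ge 0$, so the two persistence modules differ only by the monotone reindexing $t \mapsto \tfrac12 t^2$. The paper stops there, whereas you also spell out why such a reindexing carries the diagram to its image under $\Phi$, via rank functions, persistence measures and uniqueness of the representing diagram. Classes born at exactly $0$ need rectangles with negative coordinates to be detected; your remark that both modules vanish at negative parameters already covers this, since $h$ can then be extended to an order-isomorphism of all of $\R$.
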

\begin{proof}
By definition,
the sublevel sets 
$
\{w \in \ExpS{2}{S^1} \ : \ \DTm_\gamma(w) \leq t\}
$
and
$
\{w \in  \ExpS{2}{S^1} \ : \  \DT_\gamma(w) \leq \frac{1}{2} t^2\}
$
are equal for all $t \geq 0$,
hence the corresponding persistence modules, and thus their diagrams, only differ by a monotonic
reparametrisation $s \mapsto s^2/2$ of $[0,\infty)$ .
\end{proof}

\subsection{Continuity}\label{ssec:continuity}
We first show that if $\gamma: S^1 \to X$ is continuous, then $\DT_\gamma$ and $\DTm_\gamma$ are also continuous functions on $\ExpS{2}{S^1}$. We respectively endow $C(S^1, X)$ and $C(\ExpS{2}{S^1}, \R)$ with the metrics
\begin{equation*}
    d(\gamma, \tilde{\gamma}) = \sup_{z \in S^1} d_X(\gamma(z), \tilde{\gamma}(z))\qc d(f, \tilde{f}) = \sup_{\{z_1, z_2\} \in \ExpS{2}{S^1}} |f(\{z_1, z_2\}) - \tilde{f}(\{z_1, z_2\}|.
\end{equation*}
These metrics metricise the weak (a.k.a compact-open) topologies of  $C(S^1, X)$ and $C(\ExpS{2}{S^1}, \R)$ respectively. 
 As a simple application of the triangle inequality, we can show that $\DTm$ is Lipschitz continuous.
\begin{lemma} \label{lem:continuity_of_distance_transforms} The distance transforms 
    $\DTm, \DT: C(S^1, X) \to C(\ExpS{2}{S^1}, \R)$ are continuous maps between function spaces. In particular, $\DTm$ is a 2-Lipschitz map.
\end{lemma}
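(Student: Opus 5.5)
The plan is to prove the 2-Lipschitz bound for $\DTm$ pointwise and then take a supremum; continuity of $\DT$ will follow by composing with a map that is locally Lipschitz. Here $X$ is a metric space (in our application $X = \R^d$), and $\DTm_\gamma(\{z_1,z_2\}) = d_X(\gamma(z_1),\gamma(z_2))$.

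\textbf{Step 1: well-definedness.} For $\gamma \in C(S^1, X)$ the function $d_\gamma(z_1,z_2) = d_X(\gamma(z_1),\gamma(z_2))$ is continuous on $S^1\times S^1$ and symmetric. Since $q$ is a quotient map, the universal property in \cref{eq:Dist} gives a unique continuous $\DTm_\gamma$ on $\ExpS{2}{S^1}$.

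\textbf{Step 2: the pointwise bound.} Fix $\gamma,\tilde\gamma$ and a chord $\{z_1,z_2\}$. Two applications of the triangle inequality, the reverse triangle inequality $|d_X(a,b)-d_X(a',b')| \le d_X(a,a') + d_X(b,b')$, give
\begin{equation*}
|\DTm_\gamma(\{z_1,z_2\}) - \DTm_{\tilde\gamma}(\{z_1,z_2\})| \le d_X(\gamma(z_1),\tilde\gamma(z_1)) + d_X(\gamma(z_2),\tilde\gamma(z_2)) \le 2\, d(\gamma,\tilde\gamma).
\end{equation*}
Taking the supremum over $\ExpS{2}{S^1}$ yields $d(\DTm_\gamma,\DTm_{\tilde\gamma}) \le 2\,d(\gamma,\tilde\gamma)$, which is the 2-Lipschitz claim.

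\textbf{Step 3: continuity of $\DT$.} We have $\DT_\gamma = \tfrac12 \DTm_\gamma^2$. Using $|a^2-b^2| = |a-b|\,|a+b|$, we get
\begin{equation*}
d(\DT_\gamma,\DT_{\tilde\gamma}) \le \tfrac12\, d(\DTm_\gamma,\DTm_{\tilde\gamma})\,\bigl(\norm{\DTm_\gamma}_\infty + \norm{\DTm_{\tilde\gamma}}_\infty\bigr).
\end{equation*}
By Step 2, $\norm{\DTm_{\tilde\gamma}}_\infty \le \norm{\DTm_\gamma}_\infty + 2\,d(\gamma,\tilde\gamma)$. Each sup norm is finite by compactness of $\ExpS{2}{S^1}$. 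So on a ball around $\gamma$ the map $\DT$ is Lipschitz with a local constant of order $\mathrm{diam}\,\gamma(S^1) + d(\gamma,\tilde\gamma)$. Hence $\DT$ is continuous, though not globally Lipschitz.

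The only point needing care, which I do not expect to be a real obstacle, is that these sup metrics induce the compact-open topologies. This holds because $S^1$ and $\ExpS{2}{S^1}$ are compact, as the paper asserts just before the statement. The statement for the $C^k$ targets mentioned in \Cref{tab:contribution} is not part of this lemma, so I do not address it here.
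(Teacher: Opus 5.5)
Your proposal is correct and follows the paper's proof: the double triangle inequality, taken pointwise and then supremised, gives the 2-Lipschitz bound for $\DTm$, and $\DT$ is continuous because it is $\DTm$ composed with squaring. Your Step 3 simply makes explicit, via $|a^2-b^2|=|a-b|\,|a+b|$ and compactness of $\ExpS{2}{S^1}$, why $f\mapsto \tfrac12 f^2$ is continuous on $C(\ExpS{2}{S^1},\R)$, a fact the paper uses without comment.
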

\begin{proof}
Consider 
    \begin{equation*}
        d(\DTm\gamma, \DTm\tilde{\gamma}) = \sup_{\{z_1, z_2\} \in \ExpS{2}{S^1}} |d_X(\gamma(z_1),\gamma(z_2)) - d_X(\tilde{\gamma}(z_1),\tilde{\gamma}(z_2))|.
    \end{equation*}
    The term within the supremum can be bounded via the application of the triangle inequality twice, and obtain that $\DTm$ is 2-Lipschitz continuous:
    \begin{align*}
        d(\DTm\gamma, \DTm\tilde{\gamma}) &\leq  \sup_{\{z_1, z_2\} \in \ExpS{2}{S^1}} (d_X(\gamma(z_1),\tilde{\gamma}(z_1)) + d_X(\gamma(z_2),\tilde{\gamma}(z_2))) \\
        &= 2\sup_{z \in S^1} d_X(\gamma(z), \tilde{\gamma}(z))= 2d(\gamma, \tilde{\gamma}).
    \end{align*}
    Since $\DT$ is the composition of $\DTm$ with a continuous map it follows that $\DT$ is also continuous.
\end{proof}
Having defined a continuous function $\DT_\gamma : \ExpS{2}{S^1} \to \R$, we can construct a sublevel set filtration and consider its persistent homology. Since $\ExpS{2}{S^1}$ is a manifold with boundary and thus finitely triangularisable, continuous functions on $\ExpS{2}{S^1}$ always have $q$-tame persistent homology~\cite[Theorem 2.22]{Chazal2016-bo}. Thus, we can consider the persistence diagrams $\pershomf_i(\DT_\gamma)$ in the metric space of diagrams $\Dgm$, equipped with bottleneck distance. Because the map $\pershomf$ is a continuous transformation of functions to the space of diagrams  from~\Cref{lem:continuity_of_distance_transforms}
and by the stability theorem (\cref{thm:stability_ph}), the composition of $\DT: C(S^1, X) \to C(\ExpS{2}{S^1}, \R)$ with $\pershomf_i$ yields a continuous function $\pershomf_i \circ \DT$ from embeddings to persistence diagrams. 
\begin{corollary} \label{cor:cont_ph_cdt}
 $\pershomf \circ \DT: C(S^1,X) \to \Dgm$ continuous w.r.t. bottleneck distance on $\Dgm$. Furthermore, $\pershomf \circ \DTm: C(S^1,X) \to \Dgm$ is 2-Lipschitz.  
\end{corollary}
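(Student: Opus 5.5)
The plan is to compose the two continuity results already established. We begin with the second claim, since it is quantitative. For $\gamma, \tilde{\gamma} \in C(S^1, X)$, \Cref{thm:stability_ph} applies to the two continuous functions $\DTm_\gamma, \DTm_{\tilde{\gamma}}$ on the triangulable space $\ExpS{2}{S^1}$, which is a compact Möbius band by \Cref{prop:Morton}. It gives, for every degree $i$,
\begin{equation*}
    d_B\bigl(\pershomf_i(\DTm_\gamma), \pershomf_i(\DTm_{\tilde{\gamma}})\bigr) \leq \norm{\DTm_\gamma - \DTm_{\tilde{\gamma}}}_\infty .
\end{equation*}
The right-hand side is exactly the sup-metric $d(\DTm\gamma, \DTm\tilde{\gamma})$ on $C(\ExpS{2}{S^1},\R)$. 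By \Cref{lem:continuity_of_distance_transforms}, this is at most $2 d(\gamma,\tilde{\gamma})$. Hence $\pershomf_i \circ \DTm$ is $2$-Lipschitz, and this holds degree by degree. If $\pershomf$ denotes the collection over all $i$ with the maximum of the bottleneck distances, the same bound holds. In that case we note that only $i = 0,1,2$ can be nonzero, because $\ExpS{2}{S^1}$ is $2$-dimensional.

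For the first claim we argue in the same way with $\DT$ in place of $\DTm$. Stability gives $d_B(\pershomf_i(\DT_\gamma), \pershomf_i(\DT_{\tilde{\gamma}})) \leq \norm{\DT_\gamma - \DT_{\tilde{\gamma}}}_\infty$. The right-hand side tends to $0$ as $\tilde{\gamma} \to \gamma$, since $\DT$ is continuous by \Cref{lem:continuity_of_distance_transforms}. The composition of continuous maps is continuous, which gives the result.

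If one wants local Lipschitz control instead of bare continuity, we can add a short estimate. Write $\DT_\gamma - \DT_{\tilde{\gamma}} = \tfrac12(\DTm_\gamma - \DTm_{\tilde{\gamma}})(\DTm_\gamma + \DTm_{\tilde{\gamma}})$. Because $S^1$ is compact, $\DTm_\gamma \leq \diam \gamma(S^1) < \infty$. Therefore
\begin{equation*}
    \norm{\DT_\gamma - \DT_{\tilde{\gamma}}}_\infty \leq \bigl(\diam\gamma(S^1) + \diam\tilde{\gamma}(S^1)\bigr)\, d(\gamma,\tilde{\gamma}),
\end{equation*}
so $\pershomf \circ \DT$ is locally Lipschitz.

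No step here is a real obstacle. The only point needing care is checking the hypotheses of \Cref{thm:stability_ph}. First, $\ExpS{2}{S^1}$ must be triangulable, which holds because it is homeomorphic to a compact Möbius band. Second, the persistence modules must be q-tame so that the diagrams exist in $\Dgm$; this is \cite[Theorem 2.22]{Chazal2016-bo}, which covers continuous functions on finitely triangulable spaces. Restricting from $C(S^1,X)$ to embeddings then preserves both statements.
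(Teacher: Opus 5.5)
Your proposal is correct and matches the paper's argument. The paper likewise composes the continuity of $\DT$ and the $2$-Lipschitz property of $\DTm$ from \Cref{lem:continuity_of_distance_transforms} with the $1$-Lipschitz stability theorem \Cref{thm:stability_ph}, after using the triangulability of $\ExpS{2}{S^1}$ to ensure the persistence modules are q-tame. Your extra local Lipschitz bound for $\DT$, via $\DT_\gamma - \DT_{\tilde\gamma} = \tfrac12(\DTm_\gamma - \DTm_{\tilde\gamma})(\DTm_\gamma + \DTm_{\tilde\gamma})$, is also correct, although the paper does not state it.
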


\subsection{Symmetry invariance} \label{ssec:symmetry}
As $\DT_\gamma$ is purely a function of distance between points in Euclidean space, it is an isometry invariant: any isometry $\omega: \R^d \to \R^d$ leaves the \functionname invariant.  If $\gamma$ is transformed by an isometry $\alpha: \R^d \to \R^d$ of Euclidean space, then the transformed map $\alpha \circ \gamma: S^1 \to \R^d$ has the same \functionname as $\DT_\gamma = \DT_{\alpha \circ \gamma}$ only depends on distances. Explicitly, 
\begin{equation}
    \DT_{\alpha \circ \gamma}(\{z_1,z_2\}) = \frac{1}{2} \norm{\alpha \circ \gamma(z_1) - \alpha \circ \gamma(z_2)}^2 =  \frac{1}{2} \norm{\gamma(z_1) - \gamma(z_2)}^2   = \DT_{\gamma}(\{z_1,z_2\}).
\end{equation}
Importantly for application, these observations mean that two different circle embeddings do not need to be centred nor aligned by rotation in the ambient Euclidean space in order to obtain geometric features that meaningfully compare their geometry.

However, the \functionname is nonetheless sensitive to reparametrisations of $\gamma$. By \emph{reparametrisation}, we mean a homeomorphism $\omega: S^1 \to S^1$. Under this transformation, we can re-parametrise the curve $\gamma \circ \omega: S^1 \to \R^d$, and obtain a different embedding $\gamma \circ \omega$ that has the same image, but potentially different chordal distance transform: $\DT_{\gamma \circ \omega}$ may not be the same as $\DT_\gamma$. Nonetheless, because sublevel sets of the functions $\DT_\gamma$ and $\DT_{\gamma \circ \omega}$ are \emph{homeomorphic}, the persistent homology of \functionname is invariant with respect to parametrisation. 
\begin{lemma}
    Let $\omega: S^1 \to S^1$ be a homeomorphism. Then $\omega$ induces an isomorphism between persistence modules of the sublevel sets filtrations of $\DT_{\gamma \circ \omega}$ and $\DT_{\gamma}$. Consequently, $\pershomf_\bullet(\DT_{\gamma \circ\omega}) = \pershomf_\bullet(\DT_\gamma)$. 
\end{lemma}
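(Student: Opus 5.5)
The plan is to show that $\omega$ induces a homeomorphism $\bar\omega$ of $\ExpS{2}{S^1}$ that intertwines the two chordal distance transforms. Once that is in place, it restricts to homeomorphisms of sublevel sets that commute with the inclusions.

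First, consider $\omega\times\omega: S^1\times S^1\to S^1\times S^1$. It is a homeomorphism and commutes with the swap action of $S_2$. Hence $q\circ(\omega\times\omega)$ is constant on $S_2$-orbits. By the universal property of the quotient $q$ (as used in~\eqref{eq:Dist}), it descends to a unique continuous map
\[
\bar\omega:\ExpS{2}{S^1}\to\ExpS{2}{S^1},\qquad \{z_1,z_2\}\mapsto\{\omega(z_1),\omega(z_2)\}.
\]
Applying the same construction to $\omega^{-1}$ gives $\overline{\omega^{-1}}$. Uniqueness of induced maps shows $\bar\omega\circ\overline{\omega^{-1}}=\iden=\overline{\omega^{-1}}\circ\bar\omega$, so $\bar\omega$ is a homeomorphism. (Alternatively, $\bar\omega$ is a continuous bijection from a compact space to a Hausdorff space.)

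Second, directly from the explicit formula for $\DT$,
\[
\DT_{\gamma\circ\omega}(\{z_1,z_2\})=\tfrac12\norm{\gamma(\omega(z_1))-\gamma(\omega(z_2))}^2=\DT_\gamma(\bar\omega\{z_1,z_2\}),
\]
that is, $\DT_{\gamma\circ\omega}=\DT_\gamma\circ\bar\omega$. Writing $\Gamma^a_{\gamma}$ for sublevel sets, it follows that $\bar\omega$ restricts to a homeomorphism $\Gamma^a_{\gamma\circ\omega}\to\Gamma^a_\gamma$ for every $a$. These restrictions obviously commute with the inclusions $\Gamma^a\hookrightarrow\Gamma^b$ for $a\le b$, since they are all restrictions of the single map $\bar\omega$.

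Applying $H_i$ with field coefficients then gives a family of isomorphisms $H_i(\Gamma^a_{\gamma\circ\omega})\to H_i(\Gamma^a_\gamma)$ that is natural in $a$, i.e. an isomorphism of persistence modules. Isomorphic q-tame modules have equal rank functions $r^t_s$, hence equal persistence measures~\eqref{eq:pers_measure}. By the uniqueness of the diagram realising a persistence measure~\cite{Chazal2016-bo}, $\pershomf_\bullet(\DT_{\gamma\circ\omega})=\pershomf_\bullet(\DT_\gamma)$.

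The only mildly delicate step is checking that $\bar\omega$ is well defined and continuous (and bijective) on the quotient. In particular, it must send the boundary $\ExpS{1}{S^1}$ to itself, which holds because $\omega$ is injective. Everything else is formal functoriality.
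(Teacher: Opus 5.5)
Your proposal is correct and follows the paper's proof essentially step for step: the paper likewise descends $\omega\times\omega$ through the quotient $q$ to a homeomorphism $\tilde{\omega}$ of $\ExpS{2}{S^1}$, observes $\DT_{\gamma\circ\omega}=\DT_\gamma\circ\tilde{\omega}$, restricts to sublevel sets compatibly with the inclusions, and applies homology. You also fill in two points the paper leaves implicit: why $\tilde{\omega}$ is a homeomorphism (the explicit inverse induced by $\omega^{-1}$), and how an isomorphism of persistence modules gives equal diagrams (via rank functions).
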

\begin{proof}
    By the universal property of quotient maps, the homeomorphism which $\omega$ induces on the product space  $\omega \times \omega: S^1 \times S^1 \to S^1 \times S^1$ also induces a homeomorphism $\tilde{\omega}: \ExpS{2}{S^1} \to \ExpS{2}{S^1}$, given by 
\begin{equation}
    \tilde{\omega}(\{z_1, z_2\} ) = \{ \omega(z_1), \omega(z_2)\}.
\end{equation}
We can then write $\DT_{\gamma \circ \omega} = \DT_\gamma \circ \tilde{\omega}$. This implies for any point $\{z_1, z_2\}$ where $\DT_{\gamma \circ \omega} \leq a$, there is another point $\tilde{\omega}(\{z_1, z_2\})$ where $\DT_\gamma$ is less than $a$, and vice versa. In other words, $\tilde{\omega} : \ExpS{2}{S^1} \to \ExpS{2}{S^1}$ restricts to a homeomorphism between sublevel sets of $\DT_{\gamma \circ \omega}$ and $\DT_{\gamma}$ at the same level. Because $\tilde{\omega}$ is a homeomorphism on the whole of $\ExpS{2}{S^1}$, by restriction we have commutative diagrams where the vertical maps are homomorphisms:
\begin{equation}
    \begin{tikzcd}
    (\DT_{\gamma \circ \omega})^{-1}[0,a] \arrow[r, hook] \arrow[d, "\omega"] & (\DT_{\gamma \circ \omega})^{-1}[0,b] \arrow[d, "\omega"] \\
    \DT_{\gamma}^{-1}[0, a] \arrow[r, hook] & \DT_{\gamma}^{-1}[0, b]
\end{tikzcd}.
\end{equation}
Applying the singular homology functor to this diagram, we obtain an isomorphism between the resulting persistence modules of the sublevel sets filtrations of $\DT_{\gamma \circ \omega}$ and $\DT_{\gamma}$. Hence the map $\pershomf \circ \DT: C^k(S^1,\mathbb{R}^d) \to \Dgm$
is invariant to homeomorphic reparametrisations of $S^1$.
\end{proof}

We summarise these invariant properties of $\pershomf \circ \DT$ with the following proposition.
\begin{proposition} \label{prop:sym}
    Let $\DT,\DTm: \Emb{}{S^1}{\R^d} \to C(\ExpS{2}{S^1}, \R)$ be the chordal distance transforms on topological embeddings of circles in Euclidean space. If $\alpha: S^1 \to S^1$ is a homeomorphism, and $\omega: \R^d \to \R^d$ is a Euclidean isometry, then the feature maps $\pershomf \circ \DT$ and $\pershomf \circ \DTm$ are invariant with respect to such transformations:
    \begin{equation}
    \pershomf (\DT_{\omega \circ \gamma\circ \alpha}) = \pershomf (\DT_\gamma)\qc \pershomf (\DTm_{\omega \circ \gamma\circ \alpha}) = \pershomf (\DTm_\gamma). 
\end{equation}
\end{proposition}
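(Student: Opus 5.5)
The plan is to combine the two invariances already established in this subsection: the isometry invariance of $\DT$ itself, and the reparametrisation invariance of $\pershomf \circ \DT$ from the preceding lemma. Then we transfer the result from $\DT$ to $\DTm$.

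First, for the isometry, the explicit computation above gives $\DT_{\omega \circ \beta} = \DT_\beta$ for any continuous $\beta: S^1 \to \R^d$ and any Euclidean isometry $\omega$. This holds because $\norm{\omega(x) - \omega(y)} = \norm{x-y}$. Applying this with $\beta = \gamma \circ \alpha$ gives the equality of functions $\DT_{\omega\circ\gamma\circ\alpha} = \DT_{\gamma\circ\alpha}$, and hence equal sublevel set filtrations and equal diagrams. Note that $\gamma\circ\alpha$ is again a topological embedding, since $\alpha$ is a homeomorphism, so we stay in $\Emb{}{S^1}{\R^d}$.

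Second, for the reparametrisation, the preceding lemma applied to $\gamma$ and the homeomorphism $\alpha$ gives $\pershomf(\DT_{\gamma\circ\alpha}) = \pershomf(\DT_\gamma)$. Concretely, the induced homeomorphism $\tilde\alpha(\{z_1,z_2\}) = \{\alpha(z_1),\alpha(z_2)\}$ of $\ExpS{2}{S^1}$ satisfies $\DT_{\gamma\circ\alpha} = \DT_\gamma\circ\tilde\alpha$. It therefore restricts to homeomorphisms of sublevel sets that commute with the inclusions, and this yields an isomorphism of persistence modules. Chaining the two equalities gives the claim for $\DT$.

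For $\DTm$, the identical argument works verbatim, since $\DTm_{\omega\circ\beta} = \DTm_\beta$ and $\DTm_{\gamma\circ\alpha} = \DTm_\gamma\circ\tilde\alpha$. Alternatively, one can invoke \Cref{lem:square_bijection}: the bijection $\Phi$ between $\pershomf(\DT_\gamma)$ and $\pershomf(\DTm_\gamma)$ is the same fixed map for every embedding, so equality of the $\DT$-diagrams forces equality of the $\DTm$-diagrams.

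There is no real obstacle. The only points needing care are checking that $\tilde\alpha$ is well defined and a homeomorphism, which follows from the universal property of the quotient $q$ applied to $\alpha\times\alpha$ and its inverse. It is also worth noting that isomorphic q-tame persistence modules have equal persistence measures, and hence equal diagrams.
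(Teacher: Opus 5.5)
Your proposal is correct and follows essentially the same route as the paper. Isometry invariance is the pointwise identity $\DT_{\omega\circ\beta}=\DT_\beta$. Reparametrisation invariance comes from the induced homeomorphism $\tilde\alpha$ of $\ExpS{2}{S^1}$: since $\DT_{\gamma\circ\alpha}=\DT_\gamma\circ\tilde\alpha$, it restricts to homeomorphisms of sublevel sets compatible with the inclusions. The $\DTm$ case follows by the identical argument, or from \Cref{lem:square_bijection}.
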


\subsection{Dependency of Persistent Homology on coefficient field.}\label{ssec:homology_field}

As the M\"obius band is homotopic to a circle,
it has relative homology groups $\tilde{H}_i(\Mob; \Z_p)$ as $\Z_p$
if $i=1$ and $0$ otherwise, regardless of $p$.
However, as the boundary $\partial \Mob$ wraps twice
around compared to the centre line of the M\"obius band,
the persistent homology of the M\"obius band is sensitive
to $p$, as the next proposition shows.

\begin{proposition}
\label{prop:maxmin}
    Let $g: \ExpS{2}{S^1} \to [0,\infty)$ be a Morse-supported function. 
    Consider the persistence diagram 
    $\pershomf_1{(g; \Z_p)}$,
    where homology is taken over the field
    $\Z_p$ for some prime $p$.

    There are two specific points
    $(0, d_1), (l, d_2) \in \pershomf_1{(g; \Z_p)}$
    where $l \leq \min_s \max_t g(\{s, t\})$.
    if $p=2$, then $d_1 = \max{g}$ and $d_2 = \infty$.
    Otherwise, $d_1 = \infty$ and $d_2 = \max{g}$.

    \begin{center}
    \begin{tikzpicture}[scale=1, xscale=5]
    \draw     (0, 0) -- (2, 0);
    \draw (0.8, -0.25) -- (2.5, -0.25);
    \draw (0, -1) -- (2.5, -1);
    \draw     (0.8, -1.25) -- (2, -1.25);

    \draw[dotted] (0, -1.5) -- (0, 0.5);
    \draw[dotted] (1, -1.5) -- (1, 0.5);
    \draw[dotted] (2, -1.5) -- (2, 0.5);

    \node at (-0.5, 0) {$p = 2$};
    \node at (-0.5, -1) {$p \neq 2$};
    \node at (-0.5, -1.75) {$t$};

    \node at (0, -1.75) {$0$};
    \node at (1, -1.75) {$\min_s \max_t g(\{s,t\})$};
    \node at (2, -1.75) {$\max g(\{s,t\})$};
    \node at (2.5, -1.75) {$\infty$};
    \end{tikzpicture}
    \end{center}
\end{proposition}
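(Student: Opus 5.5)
We study the sublevel sets $M_t=\sublevel{g}{t}$ of the M\"obius band $M=\ExpS{2}{S^1}$. By~\labelcref{M3}, $M_t$ deformation retracts onto $\partial M$ for small $t>0$, and $M_t=M$ for $t\ge \max g$. The plan is to track two specific classes in $H_1(M_\bullet;\Z_p)$ and read off births and deaths with~\Cref{lem:morse_supported_tame}. The only topological input is how $\partial M$ sits in $M$ in homology.

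\textbf{Topological input.} The inclusion $\partial M\hookrightarrow M$ induces multiplication by $2$ on $H_1(-;\Z)\cong\Z$, since the boundary wraps twice around the core. I will verify this using~\Cref{prop:Morton}. The diagonal class $a+b\in H_1(S^1\times S^1)$ maps under $q$ onto the boundary. The swap identifies $a$ with $b$, so $q_*a=q_*b$ is a generator $c$ of $H_1(M)$, and $[\partial M]=2c$. Over $\Z_p$ this map is therefore zero if $p=2$ and an isomorphism if $p$ is odd. For a fixed $s$, the loop $t\mapsto\{s,t\}$ is the image of $\{s\}\times S^1$, so it represents $c$. It lies in $M_{m(s)}$, where $m(s)=\max_t g(\{s,t\})$.

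\textbf{Step 1: the class born at $0$.} For small $t>0$ we have $H_1(M_t)\cong H_1(\partial M)=\Z_p$, generated by $[\partial M]$, so there is a point with birth $0$. To locate its death, let $p^\ast$ be a point where $g$ attains $\max g$. Since $g>0$ on the interior and $g=0$ on $\partial M$, $p^\ast$ is an interior critical point. For $t<\max g$ we have $M_t\subset M\setminus\{p^\ast\}$. The punctured M\"obius band retracts onto the boundary of the square of~\Cref{fig:cutpa} with its identifications, a graph in which $[\partial M]$ is a nonzero primitive class over any field. Hence $[\partial M]\ne 0$ in $H_1(M_t)$ for every $t<\max g$.
\begin{itemize}
\item For $p=2$, the class becomes $0$ in $M_{\max g}=M$, so it dies exactly at $\max g$ and $d_1=\max g$.
\item For $p$ odd, its image in $H_1(M)$ is $2c\neq 0$ for all $t$, so it never dies and $d_1=\infty$.
\end{itemize}

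\textbf{Step 2: the second point.} For $p=2$, $H_1(M)=\Z_2$ and $[\partial M]\mapsto 0$, so some other class must generate the essential bar. Its birth $l$ is the first level at which $H_1(M_t)\to H_1(M)$ becomes surjective, and then $d_2=\infty$. The loop $t\mapsto\{s,t\}$ lies in $M_{m(s)}$ and represents $c$, so $l\le m(s)$ for every $s$; taking the minimum gives $l\le\min_s\max_t g(\{s,t\})$.

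For $p$ odd, the infinite bar is already accounted for by Step 1. At $\max g$, \Cref{lem:morse_supported_tame} gives $H_2(M_{t},M_{s})\ne 0$, contributed by the index-$2$ critical point. Since $H_2(M)=0$, the long exact sequence forces a class of $H_1$ to die at $\max g$; this gives $d_2=\max g$. This dying class cannot be $[\partial M]$, since that class survives. It must therefore be a class independent of $[\partial M]$ in $H_1(M_t)$, for instance the class of the loop $t\mapsto\{s,t\}$, which satisfies $2\cdot(\text{loop})=[\partial M]$ in $M$. That loop is present from level $m(s)$ onward, so the same bound $l\le\min_s m(s)$ holds for its birth.

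\textbf{Main obstacle.} The step I expect to be hardest is making the pairing rigorous in the odd case. Under the elder rule, I must show that the bar dying at $\max g$ is born no later than $\min_s m(s)$. My first attempt is a rank computation on $H_1(M_t)\to H_1(M_{t'})$ for $t'<\max g$, using the map to $H_1(M\setminus\{p^\ast\})\cong\Z_p^2$. This shows the span of $[\partial M]$ and the loop $\{s,\cdot\}$ is $2$-dimensional in $M_{t}$ for $t\ge m(s)$. The rank function then yields, via~\cref{eq:pers_measure}, a point born at or before $m(s)$ that is not infinite, hence dying at $\max g$. If there are several maxima, or several critical points at level $\max g$, I would adjust this argument accordingly.
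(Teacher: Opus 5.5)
Your proposal is correct and follows essentially the paper's route. Both arguments use three ingredients: $\partial M$ is twice the core in $H_1(M)$, the band punctured at a maximum controls every level below $\max g$, and the loop $t\mapsto\{s,t\}$ at a min--max point $s$ bounds $l$. Your version adds two small improvements: the $p=2$ birth bound comes directly from surjectivity of $H_1(M_t)\to H_1(M)$, and you never need the maximum to be unique.

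The one step you assert rather than prove is that $[\partial M]$ and $[\ell_s]$ are independent in $H_1(M\setminus\{p^\ast\};\Z_p)$ for odd $p$. This amounts to showing that $[\partial M]-2[\ell_s]$ is $\pm$ the puncture class, which is the paper's relation $a\mapsto 2b+c$. It follows because, when $m(s)<\max g$, a push-off of $\ell_s$ is a one-sided simple closed curve, and its complement in $M$ is an annulus containing $p^\ast$.
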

\begin{proof}

In the following, we will write $\Mob = \ExpS{2}{S^1}$.
Suppose for now that $g$ has a unique global maximum.
Let $t_*$ be a value just before this global maximum
so that $\Mob_{t_*}$ is a punctured $\Mob$.
Consider the homology classes $[a], [b], [c]$ of $H_1(M_{t_*})$ as in the following diagram:
$$
\begin{tikzpicture}[scale=3, >=stealth]
\draw[thick] (0,0) rectangle (1,1);

\draw [thick] (0.4, 0.05) -- (0.3, 0) -- (0.4, -0.05);
\draw [thick] (0.6, 1.05) -- (0.7, 1.0) -- (0.6, 0.95);

\draw[red, very thick] (0, 0) -- (0, 1);
\node at (-0.1, 0.5) {$a$};
\draw[red, very thick] (1, 0) -- (1, 1);
\node at (1.1, 0.5) {$a$};

\draw[blue, very thick] (0.5, 0) -- (0.5, 1);
\node at (0.55, 0.25) {$b$};

\filldraw [color=green, fill=green!0, very thick] (0.25, 0.75) circle (0.1);
\node at (0.25, 0.75) {$\circ$};
\node at (0.25, 0.6) {$c$};
\end{tikzpicture}
$$
This gives a submodule of the persistence module restricted to $0, t_*$ and $\max{g}$
as given by the following commutative diagram:
$$
\begin{tikzcd}[ampersand replacement=\&, column sep = 7em]
    \Z{[a]} \arrow[r, "{\begin{bmatrix} 2 & 1 \end{bmatrix}}^T"]  \arrow[d, hook]
    \& 
    {\Z[b] \oplus \Z[c]} \arrow[r, "{\begin{bmatrix} 1 & 0 \end{bmatrix}}"]  \arrow[d, hook]
    \& 
    {\Z[b].} \arrow[d, hook]
    \\
    H_1(\Mob_0) \arrow[r, "H_1(\Mob_0 \subset \Mob_{t_*})"] \& H_1(\Mob_{t_*}) \arrow[r, "H_1(\Mob_{t_*} \subset \Mob_{\max{g}})"] \& H_1(\Mob_{\max{g}}). 
\end{tikzcd}
$$
By the Universal Coefficient Theorem
over $\Z_p$ we have for $p \neq 2$
$$
\begin{tikzcd}[ampersand replacement=\&, column sep = 7em]
    \Z_p \arrow[r, "{\begin{bmatrix} 2 & 1 \end{bmatrix}}^T", swap] 
    \arrow[rr, "2", bend left=20]
    \& 
    \Z_p^2 \arrow[r, "{\begin{bmatrix} 1 & 0 \end{bmatrix}}", swap]
    \& 
    \Z_p
\end{tikzcd}
$$
and for $p = 2$ we have
$$
\begin{tikzcd}[ampersand replacement=\&, column sep = 7em]
    \Z_2 \arrow[r, "{\begin{bmatrix} 0 & 1 \end{bmatrix}}^T", swap] 
    \arrow[rr, "0", bend left=20]
    \& 
    \Z_2^2 \arrow[r, "{\begin{bmatrix} 1 & 0 \end{bmatrix}}", swap]
    \& 
    \Z_2.
\end{tikzcd}
$$
This gives us the rank function restricted to $0, t_*, \max{g}$:
$$
\begin{tikzpicture}
    \begin{scope}
    \node at (-1, 4) {$p\neq 2$};
    \filldraw[gray!30] (0, 3) -- (0, 0) -- (3, 3);
    \filldraw[gray!30] (0.5, 2.5) -- (0.5, 0.5) -- (2.5, 2.5);
    \filldraw[gray!60] (0.5, 1.5) -- (0.5, 0.5) -- (1.5, 1.5);
    
    \draw[->] (-1,-1) -- (3, -1);
    \draw[->] (-1,-1) -- (-1, 3);
    \draw[gray] (-1, -1) -- (3,3);
    \draw[dotted] (0, -1) -- (0, 3);
    \draw[dotted] (-1, 0) -- (3, 0);
    \draw[dotted] (1, -1) -- (1, 3);
    \draw[dotted] (-1, 1) -- (3, 1);
    \draw[dotted] (2, -1) -- (2, 3);
    \draw[dotted] (-1, 2) -- (3, 2);
    \draw[dotted] (3, -1) -- (3, 3);
    \draw[dotted] (-1, 3) -- (3, 3);

    \node at (0, -1.4) {$0$};
    \node at (1, -1.4) {$t_*$};
    \node at (2, -1.4) {$\max{g}$};
    \node at (3, -1.4) {$\infty$};
    \node at (-1.7, 0) {$0$};
    \node at (-1.7, 1) {$t_*$};
    \node at (-1.7, 2) {$\max{g}$};
    \node at (-1.7, 3) {$\infty$};

    \node[red] at (0, 0) {$1$};
    \node[red] at (1, 1) {$2$};
    \node[red] at (2, 2) {$1$};

    \node[red] at (0, 1) {$1$};
    \node[red] at (0, 2) {$1$};
    \node[red] at (1, 2) {$1$};

    \node at (0, 3) {$\times$};
    \node at (0.5, 1.5) {$\times$};
    \end{scope}
    \begin{scope}[shift={(7, 0)}]
    \node at (-1, 4) {$p = 2$};
    \filldraw[gray!30] (0, 1.5) -- (0, 0) -- (1.5, 1.5);
    \filldraw[gray!30] (0.5, 3) -- (0.5, 0.5) -- (3, 3);
    \filldraw[gray!60] (0.5, 1.5) -- (0.5, 0.5) -- (1.5, 1.5);
    
    \draw[->] (-1,-1) -- (3, -1);
    \draw[->] (-1,-1) -- (-1, 3);
    \draw[gray] (-1, -1) -- (3,3);
    \draw[dotted] (0, -1) -- (0, 3);
    \draw[dotted] (-1, 0) -- (3, 0);
    \draw[dotted] (1, -1) -- (1, 3);
    \draw[dotted] (-1, 1) -- (3, 1);
    \draw[dotted] (2, -1) -- (2, 3);
    \draw[dotted] (-1, 2) -- (3, 2);
    \draw[dotted] (3, -1) -- (3, 3);
    \draw[dotted] (-1, 3) -- (3, 3);

    \node at (0, -1.4) {$0$};
    \node at (1, -1.4) {$t_*$};
    \node at (2, -1.4) {$\max{g}$};
    \node at (3, -1.4) {$\infty$};
    \node at (-1.7, 0) {$0$};
    \node at (-1.7, 1) {$t_*$};
    \node at (-1.7, 2) {$\max{g}$};
    \node at (-1.7, 3) {$\infty$};

    \node[red] at (0, 0) {$1$};
    \node[red] at (1, 1) {$2$};
    \node[red] at (2, 2) {$1$};

    \node[red] at (0, 1) {$1$};
    \node[red] at (0, 2) {$0$};
    \node[red] at (1, 2) {$1$};

    \node at (0, 1.5) {$\times$};
    \node at (0.5, 3) {$\times$};
    \end{scope}
\end{tikzpicture}
$$
In both cases,
this implies the existence of two
points $(0, d_1)$ and $(l, d_2)$
for some $l \leq t_*$.
If $p = 2$, then $d_1 = \max{g}$
and $d_2 = \infty$
and vice-versa for $p = 2$.
Now we only need to show that $l \leq \min_s\max_t g(\{s, t\})$.

Let $x_*$ be such that $\max_t{g(\{s, t\})}$ is minimal. 
Consider the subset of $\Mob$ 
(under the identification with $\ExpS{2}{S^1}$) given by 
$$
    d = \{\{x, y\} \in \ExpS{2}{S^1} : x = x_* \text{ or } y = x_*\}.
$$
If $\min_s \max_t g(s,t) = \max{g}$
we are done, otherwise $[d]$ is a cycle in $H_1(\Mob_{\min_s \max_t g(\{s,t\})})$
not equal to $[a]$
and is equal to $[b]$ in $H_1(\Mob_{\max{g}})$.
Thus we have $r_{\min\max{g}}^{\min\max{g}} \geq 2$, $r_{\min\max{g}}^{t_*} = 2$,
and $r_{\min\max{g}}^{\max{g}} = 1$
showing $l \geq \min_s \max_t g(\{s, t\})$.
The proof for multiple global maxima
is similar.
\end{proof}

Therefore for $p \neq 2$, the barcode of the persistence module with $\Z_p$ coefficients is a pairing between all saddle and maxima of $g$ (in the interior, by assumption) as bars of finite length. This makes it a more convenient choice, since most vectorisation methods of persistence modules work assuming the persistence module has only bars of finite length; the infinite bar here $[0,\infty)$ essentially does not contain any relevant geometric and can be discarded. 
\begin{example}
As we will see in Example~\ref{ex:ellipse1}, 
for a ellipse with major axis $2a$ and minor axis $2b$
the only interior critical point is given by the minor axis where the distance is $2b$,
strictly smaller than $\min_s\max_t \AutoD(\{s,t\})$,
showing the upper bound in Proposition~\ref{prop:maxmin} is not always an equality.
\end{example}

\section{Smooth Circle Embeddings in Euclidean Space}
\label{sec:smooth-case}
We now restrict to the case where the loop is smoothly mapped to some Euclidean space $\gamma \in C^k(S^1, \R^d)$. In~\Cref{prop:DT_Ck_continuous}, we show that $\DT$ is a continuous map from $C^k(S^1, \R^d)$ to $C^k(\Mob, \R)$. The other results of this section focus on embeddings $\Emb{2}{S^1}{\R^d} \subset C^k(S^1, \R^d)$ of $S^1$ into Euclidean space. In~\Cref{ssec:crit_geometric_smooth}, we relate non-degenerate critical points of the distance function $\DT_\gamma$ of a smooth embedding  $\gamma$ to local geometric quantities of the $\gamma$, such as local curvature. Furthermore, we show that local geometric properties of $\gamma$ determine the Morse index of such critical points. We then show in~\Cref{ssec:smooth_genericity} that our analysis of critical points in~\Cref{ssec:crit_geometric_smooth} applies to a \emph{generic} (open dense) set of smooth embeddings, on which their distance functions are Morse-supported (\Cref{def:finite_morse}). This result is summarised in~\Cref{thm:main_morse}. We address the open and dense criteria separately in~\Cref{ssec:smooth_open,ssec:smooth_dense} respectively.

For the technical discussions in this section, we often invoke the fact that we can choose an atlas for $\Mob$ such that is compatible with an atlas for $S^1$, which we have shown in~\Cref{prop:mobius_atlas}. This allows us to manipulate derivatives of $\DT_\gamma$ in terms of derivatives of $\gamma$, in some shared set of local coordinates. We recall since $\DT_\gamma$ is induced by a symmetric function on $S^1 \times S^1$, we have an atlas $\Theta = \sett{(\vartheta_i, U_i)}_{i \in \cI}$ of $S^1$, and an atlas $\Phi = \sett{(\varphi_j, W_j)}_{j \in \cJ}$ of $\Mob$, such that on $\varphi_j(W_j) \subset \R^2$,  there are $i_1,i_2 \in \cI$ where
    \begin{equation}
        \DT_\gamma \circ \fibre{\varphi_j}{t_1, t_2} = \norm{\gamma \circ \fibre{\vartheta_{i_1}}{t_1} - \gamma \circ \fibre{\vartheta_{i_2}}{t_2}}^2
        \end{equation}
Since $\gamma \in C^k(S^1, \R^d)$, the derivatives of $\DT_\gamma \circ \inv{\varphi_j}$ are also $k$-times continuously differentiable, so $\DT_\gamma \in C^k(\Mob, \R)$.

Given these atlases, we can compute the gradient and Hessian of $\DT_\gamma$ in local coordinates. Abusing notation, for local coordinates $(t_1,t_2)$ on $W_j \subset \Mob$, we write 
\begin{equation}
    \pdv{\DT_\gamma}{t_a} := \pdv{t_a}( \DT_\gamma \circ \inv{\varphi_j}),
\end{equation}
and similarly for higher derivatives. Likewise for $t$ a local coordinate for $U_i \subset S^1$, we write 
\begin{equation} \label{eq:abuse_of_notation}
   \gamma(t) := (\gamma \circ \inv{\vartheta_i})(t)
\qc \dot{\gamma}(t) := \dv{t} (\gamma \circ \inv{\vartheta_i}) \qand \ddot{\gamma}(t) := \dv[2]{t} (\gamma \circ \inv{\vartheta_i}).
\end{equation}
With this notation, we can express the gradient and Hessian as follows:
\begin{align}
    \mqty( \pdv{t_1} \DT_\gamma \\  \pdv{t_2}\DT_\gamma   ) &=  \mqty( \innerprod{\dot{\gamma}(t_1)}{ \gamma(t_1)- \gamma(t_2)} \\  \innerprod{\dot{\gamma}(t_2)}{ \gamma(t_2)- \gamma(t_1)}  )
    \label{eq:dF} \\
    \mqty(\pdv[2]{\DT_\gamma}{t_1} &  \pdv{\DT_\gamma}{t_1}{t_2}  \\
   \pdv{\DT_\gamma}{t_1}{t_2}   & \pdv[2]{\DT_\gamma}{t_2}) &= 
   \mqty(p( \gamma(t_1)- \gamma(t_2), \dot{\gamma}(t_1), \ddot{\gamma}(t_1))   &  r(\dot{\gamma}(t_1), \dot{\gamma}(t_2))\\
   r(\dot{\gamma}(t_1), \dot{\gamma}(t_2)) & q(\gamma(t_1)- \gamma(t_2), \dot{\gamma}(t_2), \ddot{\gamma}(t_2)) ),
   \label{eq:d2F} \\
   \text{where } \ p(v, x_1', x_1'') &= \norm{x_1'}^2  + \innerprod{x_1''}{v} \label{eq:pqr}\\
    q(v, x_2', x_2'') &= \norm{x_2'}^2- \innerprod{x_2''}{v} \nonumber \\
    r(x_1', x_2') &= -\innerprod{x_1'}{x_2'}.\nonumber
\end{align}
The compatibility of atlases $\Theta$ and $\Phi$ allow us to relate the Whitney-$C^k$ topologies on $C^k(S^1,\R^d)$ and $C^k(\Mob, \R)$. Recall from~\Cref{ssec:differential_topology} that Whitney topology of smooth functions on compact manifolds is defined via uniform convergence of derivatives relative to (any) choice of charts on compact subsets of the manifold. By relating derivatives of $\DT_\gamma$ on a chart from $\Phi$ to that of $\gamma$ on a chart from $\Theta$, we are able to show that $\DT$ is continuous under Whitney topology.   
\begin{proposition} \label{prop:DT_Ck_continuous}
    $\DT: C^k(S^1,\R^d) \to  C^k(\Mob,\R)$ is continuous under Whitney $C^k$-topology. 
\end{proposition}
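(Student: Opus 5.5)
Since $S^1$ and $\Mob=\ExpS{2}{S^1}$ are compact, the weak and strong $C^k$-topologies coincide on both $C^k(S^1,\R^d)$ and $C^k(\Mob,\R)$. It therefore suffices to prove sequential or $\epsilon$-$\delta$ continuity with respect to finitely many charts. Concretely, I will show the following. Given $\gamma$ and a basic neighbourhood $\bigcap_{l=1}^L \cN(\DT_\gamma;(\varphi_{j_l},W_{j_l}),K_l,\epsilon)$ of $\DT_\gamma$, there is a neighbourhood of $\gamma$ in $C^k(S^1,\R^d)$ mapped into it. By compactness of $\Mob$ I may fix a finite subatlas $\{(\varphi_j,W_j)\}_{j=1}^J$ from \Cref{prop:mobius_atlas}, together with compact sets $K_j\subset W_j$ covering $\Mob$. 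Any other chart is related to these by smooth transition maps. On compact sets, the derivatives up to order $k$ in one chart are then bounded by a constant times those in another, via the chain rule with bounded transition derivatives. Hence controlling $D^r$ in this fixed finite atlas suffices.

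Next I use the local expression from \Cref{prop:mobius_atlas}. On $\varphi_j(W_j)$ we have $\DT_\gamma\circ\inv{\varphi_j}(t_1,t_2)=\tfrac12\norm{\gamma_{a}(t_1)-\gamma_{b}(t_2)}^2$, where $\gamma_a=\gamma\circ\inv{\theta_a}$ and $\gamma_b=\gamma\circ\inv{\theta_b}$ are local coordinate expressions of $\gamma$. Write $G=\gamma_a(t_1)-\gamma_b(t_2)$. By the Leibniz rule, every mixed partial $\partial^\alpha$ with $|\alpha|\le k$ of $\tfrac12\innerprod{G}{G}$ is a finite sum of terms $\innerprod{\partial^\beta G}{\partial^{\alpha-\beta}G}$. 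Here $\partial^\beta G$ is either $\gamma_a^{(m)}(t_1)$ (if $\beta$ is purely in $t_1$), $-\gamma_b^{(m)}(t_2)$ (if purely in $t_2$), or $0$ (if mixed); the case $m=0$ gives $G$ itself. Thus $D^r(\DT_\gamma\circ\inv{\varphi_j})$ is a bilinear expression in the jets $(\gamma_a^{(m)},\gamma_b^{(m)})_{m\le r}$ evaluated on the compact sets $\theta_a(\mathrm{pr}_1\psi_j(K_j))$ and $\theta_b(\mathrm{pr}_2\psi_j(K_j))$.

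The key estimate then follows from bilinearity. For $\tilde\gamma$ near $\gamma$, I write $\innerprod{x}{y}-\innerprod{\tilde x}{\tilde y}=\innerprod{x-\tilde x}{y}+\innerprod{\tilde x}{y-\tilde y}$. This gives
\begin{equation*}
\sup_{K_j}\norm{D^r(\DT_\gamma\circ\inv{\varphi_j})-D^r(\DT_{\tilde\gamma}\circ\inv{\varphi_j})}\le C_{r}\,\delta\,(2M+\delta),
\end{equation*}
where $M$ bounds the $C^k$ jets of $\gamma$ on the relevant compact sets of the finitely many charts $\theta_a$, and $\delta$ bounds the $C^k$ jet difference $\tilde\gamma-\gamma$ there. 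Choosing $\delta$ small makes the right side less than $\epsilon$ for all $r\le k$ and all $j$. Since $\{\tilde\gamma:\text{jet difference}<\delta\}$ is a finite intersection of weak subbasic neighbourhoods of $\gamma$, this gives continuity.

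The main obstacle is the bookkeeping in the chart-independence step. One must check that the charts $\varphi_j$ of \Cref{prop:mobius_atlas} are genuinely built from charts $\vartheta_{ab}=\theta_a\times\theta_b$ of $S^1\times S^1$, so that no extra composition is needed. One must also check that neighbourhoods in the Whitney topology on $\Mob$ (a manifold with boundary) are detected by this finite atlas, including the charts meeting $\partial\Mob$. For the latter I rely on the fact that near the boundary the same formula holds, with $t_1,t_2$ in the same chart $\theta_a$ and the domain being the closed half-strip $\{t_2-t_1\in[0,1]\}$. Derivatives there are one-sided limits of the same polynomial expression, so the estimate carries over unchanged.
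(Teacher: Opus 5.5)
Your proposal is correct and follows essentially the same route as the paper. Both arguments use compactness of $\Mob$ to reduce to finitely many weak subbasic neighbourhoods, and both use the compatible atlases of \Cref{prop:mobius_atlas} so that $D^r(\DT_\gamma\circ\inv{\varphi_j})$ is a polynomial (for you, explicitly bilinear) expression in the jets of $\gamma\circ\inv{\theta_a}$ and $\gamma\circ\inv{\theta_b}$ on the compact projections of $K_j$, then choose $\delta$ accordingly. Your explicit bilinear estimate and your transition-map argument for arbitrary charts are refinements of what the paper leaves to ``continuity of polynomials'' and to a tacit use of atlas charts, not a different approach.
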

\begin{proof}
    We want to show for each open subset $\sU \subset C^k(\Mob, \R)$, the preimage $\fibre{\DT}{\sU}$ is open in $C^k(S^1, \R)$. It suffices to show that for any $\DT_\gamma \in \sU$, and a sufficiently small neighbourhood of $\sU(\DT_\gamma)\subseteq U$, there is a some neighbourhood $\sV(\gamma)$ of $\gamma$ such chat $\sV(\gamma) \subseteq \fibre{\DT}{\sU(\gamma)}$. If that is so, then we can write
    \begin{equation*}
        \fibre{\DT}{\sU} = \bigcup_{\gamma \in \fibre{\DT}{\sU}} \sV(\gamma),
    \end{equation*}
    and thus $\fibre{\DT}{\sU}$ is open. 
    
    Consider the charts $\Theta = \sett{(\theta_i, U_i)}_{i \in \cI}$ of $S^1$, and an atlas $\Phi = \sett{(\varphi_j, W_j)}_{j \in \cJ}$ of $\Mob$, as described in~\Cref{prop:mobius_atlas}. Since $\Mob$ is compact, the weak and strong topologies coincide. Thus, an open neighbourhood $\sU$ of $\DT_\gamma$ in $C^k(\Mob, \R)$ can be expressed as a finite intersection of weak subbasic neighbourhoods~\cref{eq:weak_subbasic_nbhd}
    \begin{equation}
       \sU(\DT_\gamma)  = \bigcap_{a=1}^m \sU_a\qc \sU_a = \cN(\DT_\gamma, (\varphi_a, W_a), (\iden_{\R^d}, O_a),  K_a, \epsilon_a)
    \end{equation}
    We note that the restriction that the image of maps in the neighbourhood be within $O_a \subset \R$ is subsumed by the bound $\norm{ f - \DT_\gamma} < \epsilon_a$ on $K_a$.    be within   We claim we can find a neighbourhood
    \begin{equation} \label{eq:gamma_nbhd_projected}
    \sV_a = \cN({\gamma}, (\theta_b, U_b),  (\iden_{\R^d}, V_b), Q_b, \delta_a) \cap \cN({\gamma}, (\theta_c, U_c), (\iden_{\R^d}, V_c), Q_c, \delta_a),
    \end{equation}
  such that $\DT(\sV_a) \subset \sU_a$. If that is so, then we the following finite intersection of open sets is an open set satisfying $\sV(\gamma) \subseteq \fibre{\DT}{\sU(\gamma)}$:
  \begin{equation*}
      \sV(\gamma) := \bigcap_{a=1}^m \sV_a \subset \bigcap_{a=1}^m \fibre{\DT}{\sU_a}= \fibre{\DT}{\sU(\gamma)}.
  \end{equation*}

  We now prove the existence of $\sV_a$ such that $\sV_a \subset \fibre{\DT}{\sU_a}$. We first recall from the definition of the weak subbasic neighbourhood, that $\DT_{\tilde{\gamma}} \in \sU_a$, iff for all $(t_1,t_2) \in K_a \subset \varphi_a(W_a)$, and  $r = 0,\ldots, k$
  \begin{equation} \label{eq:weak_subbasic_Tgamma}
      \norm{D^r  (\DT_\gamma \circ \inv{\varphi_a})\rvert_{({t_1,t_2})} -  D^r(\DT_{\tilde{\gamma}} \circ \inv{\varphi_a})\rvert_{({t_1,t_2})}  } < \epsilon_a
  \end{equation}
  Because charts $\Theta$ and $\Phi$ are coupled in the manner described in~\Cref{prop:mobius_atlas}, there is a pair of charts $(\theta_b, U_b)$ and $(\theta_c, U_c)$ in $\Theta$, such that for $(t_1,t_2) \in \varphi_a(W_a) \subset \theta_b(U_b) \times \theta_c(U_c) =: \vartheta_{bc}(U_{bc})$, 
    \begin{equation}
        \DT_{\gamma} \circ \fibre{\varphi_a}{t_1, t_2} = \norm{\gamma \circ \fibre{\theta_{b}}{t_1} - \gamma \circ \fibre{\theta_{c}}{t_2}}^2 .
    \end{equation}
  We make two observations. 
  First, since $K_a \subset \varphi_a(W_a) \subset \vartheta_{bc}(U_{bc})$, we can obtain subsets $Q_{b} \subset \theta_b(U_b)$ and $Q_c \subset \theta_c(U_c)$, which are projections of $K_a$ onto the respective factors, which are compact as $K_a$ is compact and projections are continuous. Furthermore, $K_a \subseteq Q_b \times Q_c$. Second, we observe that $D^r(\DT_{\gamma} \circ \inv{\varphi_a})$ is a polynomial in $D^l(\gamma \circ \inv{\theta_b})$ and $D^l(\gamma \circ \inv{\theta_c})$ for $l = 0,\ldots, r$. Hence there is some sufficiently small $\delta_a > 0$, such that if for $r = 0,\ldots,k$
    \begin{align*}
        \norm{D^r(\gamma \circ \inv{\theta_b})\rvert_x-D^r(\tilde{\gamma} \circ \inv{\theta_b})\rvert_t } &< \delta_a\qc \forall x \in Q_b \\
        \norm{D^r(\gamma \circ \inv{\theta_c})\rvert_x-D^r(\tilde{\gamma} \circ \inv{\theta_c})\rvert_t } &< \delta_a\qc \forall x \in Q_c , 
    \end{align*}
    then the condition in~\cref{eq:weak_subbasic_Tgamma} for $\DT_{\tilde{\gamma}} \subset \sU_a$ is satisfied. The set of $\tilde{\gamma}$ satisfying these conditions are precisely those in the neighbourhood $\sV_a$, as defined in~\cref{eq:gamma_nbhd_projected}. Hence, we have shown that there exists an open neighbourhood $\sV_a$  of $\gamma$, such that $\sV_a \subset \fibre{\DT}{\sU_a}$.
\end{proof}

\begin{remark}
    Recall weak and strong (Whitney) topology are distinct when the domain manifold is not compact. 
    Consider the distance transform restricted to the interior $\interior{\Mob}$ given by $\gamma \mapsto \DT_\gamma \rvert_{\interior{\Mob}}$.  This function is continuous in weak topology because restriction of domain is continuous in weak topology. However, $C^k_S(\interior{\Mob}, \R)$ is too fine a topology for $\DT$ to be continuous.
\end{remark}

\subsection{Genericity of Embeddings that have Morse-supported \functionname} \label{ssec:smooth_genericity}
We now prove one of our main contributions. We show that the subset of smooth embeddings of $S^1$ into Euclidean space whose \functionname is Morse-supported (\Cref{def:finite_morse}) is generic: that is, it is both open and dense in Whitney $C^k$-topology, for $k \geq 2$. We recall the Morse-supported condition contains two constraints; the first two,~\labelcref{M1,M2}, require that the critical points in the interior be non-degenerate, and there be finitely many of them. The last,~\labelcref{M3}, ensures that homotopy type of sublevel sets of $\DT_\gamma$ are constant as they are extended from the boundary $\partial\Mob$ where $\DT_\gamma$ is zero. We first show in~\Cref{ssec:boundary_critical_points} how restricting \functionname to embeddings constrains the problem. If $\gamma \in \Emb{k}{S^1}{\R^d}$ for $k \geq 2$, then~\labelcref{M3} is automatically satisfied (\Cref{prop:M3}, and~\labelcref{M1} implies~\labelcref{M2} (\Cref{cor:M1impliesM2}). Consequently, the genericity of Morse-supported $\DT_\gamma$ only depends on the genericity of~\labelcref{M1}. We then complete the proof of~\Cref{thm:main_morse} by showing that~\labelcref{M1} holds for \functionname of embeddings on an open and dense subset in~\Cref{ssec:smooth_open,ssec:smooth_dense} respectively.

\begin{restatable}{theorem}{SmoothMorse}
\label{thm:main_morse} Consider $C^k$-embeddings $\Emb{k}{S^1}{\R^d}$ for $k \geq 2$. Let $\Xi$ be the subset of embeddings $\Emb{k}{S^1}{\R^d}$ whose \functionname are Morse-supported (\Cref{def:finite_morse}):
    \begin{equation} \label{eq:Xi}
        \Xi := \{\gamma \in \Emb{k}{S^1}{\R^d} \ : \ \DT_\gamma \text{  satisfies~\labelcref{M1,M2,M3}}\}.
    \end{equation}
    Then $\Xi$ is open and dense in $\Emb{k}{S^1}{\R^d}$ in Whitney $C^k$-topology. Furthermore, this implies $\pershomf(\DT_\gamma)$ and $\pershomf(\DTm_\gamma)$ are pointwise finite dimensional (tame) on $\Xi$.
\end{restatable}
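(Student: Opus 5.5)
The proof splits into three tasks: reduce to~\labelcref{M1}, prove that~\labelcref{M1} is open, and prove that it is dense.

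\textbf{Reduction to~\labelcref{M1}.} The point is to control $\DT_\gamma$ near the boundary $\partial\Mob$ using the reach $\tau>0$ of the compact $C^2$ curve $\gamma(S^1)$. By~\cref{eq:dF}, an interior critical point $\{t_1,t_2\}$ requires the chord $\gamma(t_1)-\gamma(t_2)$ to be orthogonal to both $\dot\gamma(t_1)$ and $\dot\gamma(t_2)$. The bound~\cref{eq:reach_tangent} shows the chord is nearly tangent whenever $\norm{\gamma(t_1)-\gamma(t_2)}<2\tau$. So on $\{0<\DT_\gamma<2\tau^2\}$ the gradient is nonzero, and moreover $\innerprod{\dot\gamma(t_1)}{\gamma(t_1)-\gamma(t_2)}$ has a fixed sign relative to the orientation. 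Two consequences follow.
\begin{itemize}
\item Flowing along $-\nabla \DT_\gamma$, equivalently shrinking the arc between $t_1$ and $t_2$ monotonically, deformation retracts $\inv{\DT_\gamma}[0,a]$ onto $\partial\Mob$ for small $a$. This gives~\labelcref{M3}.
\item The neighbourhood $V=\{\DT_\gamma<2\tau^2\}$ of $\partial\Mob$ contains no interior critical points. By~\Cref{lem:smooth_isolate_finitemorse}, \labelcref{M1} then implies~\labelcref{M2}.
\end{itemize}
Hence $\Xi=\{\gamma:\DT_\gamma \text{ satisfies \labelcref{M1}}\}$. Tameness of $\pershomf(\DT_\gamma)$ then follows from~\Cref{lem:morse_supported_tame}, and tameness of $\pershomf(\DTm_\gamma)$ follows from~\Cref{lem:square_bijection}.

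\textbf{Openness.} Fix $\gamma\in\Xi$. The reach, equivalently a lower bound on $\norm{\dot\gamma}$, an upper bound on $\norm{\ddot\gamma}$, and an injectivity-radius-type constant, is bounded below uniformly on a $C^2$-neighbourhood of $\gamma$. This gives a single collar $K^c=\{\DT<c\}$ that is critical-point-free for every nearby $\tilde\gamma$.

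On the compact complement $K=\Mob\setminus K^c$, $\DT_\gamma$ has finitely many critical points, all non-degenerate, so the quantity $\norm{\dd{\DT_\gamma}}+|\det H_{\DT_\gamma}|$ is bounded below by some $\eta>0$ on $K$. \Cref{prop:DT_Ck_continuous} says $\tilde\gamma\mapsto \DT_{\tilde\gamma}$ is $C^2$-continuous. So for $\tilde\gamma$ close enough to $\gamma$, the same quantity for $\DT_{\tilde\gamma}$ is at least $\eta/2$ on $K$. Every critical point of $\DT_{\tilde\gamma}$ in $K$ is therefore non-degenerate, and $\tilde\gamma\in\Xi$.

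\textbf{Density.} This is the main obstacle, and I would use Damon's~\Cref{thm:damon} with $s=2$ and $r=2$. Inside $\Jet{2}{2}{S^1}{\R^d}$, take $W$ to be the closed stratified set of bi-jets $(x_1,y_1,y_1',y_1'';\,x_2,y_2,y_2',y_2'')$ satisfying
\begin{equation*}
\innerprod{y_1'}{y_1-y_2}=0,\qquad \innerprod{y_2'}{y_2-y_1}=0,\qquad \det\mqty(p & r\\ r & q)=0,
\end{equation*}
where $p,q,r$ are the polynomials of~\cref{eq:pqr} evaluated at $v=y_1-y_2$. The key computation, which I expect to be the real work, is that $W$ has codimension $3$ away from its degenerate locus. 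The first two equations cut out codimension $2$ as long as $y_1\neq y_2$ and $y_i'\neq0$. The determinant condition is independent of them, since $y_1''$ enters $p$ linearly through $\innerprod{y_1''}{v}$, which is a free direction when $v\neq 0$. The degenerate strata ($y_1=y_2$ or $y_i'=0$) have even higher codimension or are excluded for embeddings.

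Since $\dim\Conf{2}{S^1}=2<3$, transversality of $\prolong{2}{2}\gamma$ to $W$ forces $\prolong{2}{2}\gamma$ to miss $W$ entirely. That is exactly the statement that every interior critical point of $\DT_\gamma$ is non-degenerate. Damon's theorem needs $k\ge r+1=3$, so for $k=2$ I would approximate by a $C^3$ embedding first and then argue density in the $C^2$-topology. Because $V$ must be closed in $\Conf{2}{S^1}$, I would exhaust $\Conf{2}{S^1}$ by compact sets $V_n$. Each set of transverse maps is residual; their countable intersection is residual, hence dense in the Baire space $C^k_S$; and intersecting with the open set $\Emb{k}{S^1}{\R^d}$ gives density of $\Xi$.
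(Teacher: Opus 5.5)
Your plan follows the paper's architecture: a reach-based reduction to \labelcref{M1}, a boundary collar plus a compact-set perturbation for openness, and Damon's theorem applied to the same degenerate locus $W=\chi_0$, with the dimension count and the $C^3$-approximation trick for $k=2$. However, the step you single out as the real work is incomplete. You argue that $\det H=0$ is independent of the two critical-point equations because $y_1''$ enters $p$ linearly. You then list the only bad points of $W$ as $y_1=y_2$ or $y_i'=0$. But $\mathrm{d}(\det H)=q\,\mathrm{d}p+p\,\mathrm{d}q-2r\,\mathrm{d}r$, so the $y_1''$- and $y_2''$-components of $\mathrm{d}(\det H)$ are $q\,v$ and $-p\,v$. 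On $\{\det H=0\}$ both vanish exactly when $p=q=0$, which forces $r=0$, i.e.\ $H=0$, and there the whole differential $\mathrm{d}(\det H)$ vanishes. So at jets with $C=0$ and $H=0$ the defining map of $W$ has rank $2$ rather than $3$, even though $v\neq0$ and $y_i'\neq0$. These points are not removed by the embedding condition, and $W$ is genuinely singular there. Transversality to a stratified set is tested stratum by stratum, so your conclusion that $\dim\Conf{2}{S^1}=2<3$ forces $\prolong{2}{2}\gamma$ to miss $W$ needs every stratum through such points to have codimension greater than $2$. Your proposal gives no stratification of $W$ near them.

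This is exactly what \Cref{lem:chi_sing} supplies. For embedding jets the map $(H,C)$ to $\R^3\times\R^2$ is a submersion near such a point, so locally $W$ is $(\det^{-1}(0)\times\{0\})\times\R^{n-5}$ with $n=\dim\Jet{2}{2}{S^1}{\R^d}$. The cone $\det^{-1}(0)\subset\R^3$ is Whitney stratified by $\{0\}\subset\det^{-1}(0)$, which puts these points on a codimension-$5$ stratum; together with \Cref{lem:chi_mfd} this makes \Cref{prop:crit_ndg} go through. A cheaper fix also works: when $v\neq0$ and $y_2'\neq0$, the equations $p=q=r=0$ are already independent, via the $y_1''$, $y_2''$ and $y_1'$ directions. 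Hence $W\cap\{H=0\}$ lies in a codimension-$3$ submanifold, and any Whitney stratification of $W$ having it as a union of strata has codimension at least $3$ there.

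Your boundary arguments take valid alternative routes to the paper's: an explicit arc-shrinking retraction instead of the auxiliary function of \Cref{lem:boundary_deformation_retract}, and a uniform lower bound on the reach over a $C^2$-neighbourhood instead of the sequential argument of \Cref{lem:smooth_open_isolating}. Two small repairs are needed. First, it is the arc-shrinking homotopy, not the gradient flow, that gives \labelcref{M3}, since $\nabla\DT_\gamma$ vanishes on $\partial\Mob$ and the flow never reaches it in finite time. Second, the compact set $K$ should be defined through the fixed $\DT_\gamma$: take $K=\{\DT_\gamma\geq c/2\}$ and $\tilde\gamma$ close enough that $|\DT_{\tilde\gamma}-\DT_\gamma|<c/2$.
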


\begin{proof}
    We defer the proof that $\Xi$ is open and dense to~\Cref{prop:smooth_open} and~\Cref{thm:crit_ndg} respectively. If $\Xi$ is the set of embeddings where~\labelcref{M1,M2,M3} are satisfied, then due to the characterisation of Morse-supported functions in~\Cref{lem:morse_supported_tame}, $\pershomf(\DT_\gamma)$ is tame. Note that this also implies  $\pershomf(\DTm_\gamma)$ is tame due to the correspondence between $\pershomf(\DT_\gamma)$  and $\pershomf(\DTm_\gamma)$ described in~\Cref{lem:square_bijection}.
\end{proof}
\subsubsection{Critical Points Near the Boundary} \label{ssec:boundary_critical_points}
We first consider the behaviour of the distance function near the boundary of $\ExpS{2}{S^1}$. In~\Cref{lem:critx_away_from_bdry}, we first show that for $C^2$-embeddings, there are no critical points in the interior of $\Mob$ that are arbitrarily close to the boundary. Thus, we can apply~\Cref{lem:smooth_isolate_finitemorse} to show that~\labelcref{M1} provides sufficient conditions for~\labelcref{M2}.  We also use~\Cref{lem:critx_away_from_bdry} and show in~\Cref{prop:M3} that~\labelcref{M3} is satisfied for all embeddings $\gamma$: there is some regular level $a > 0$ such that $\fibre{\DT_\gamma}{0}$ is a deformation retract of $\sublevel{\DT_\gamma}{a}$. 

In order to establish a lower bound on interior critical values of $\DT_\gamma$, we make use of the fact that $C^2$ compact submanifolds of Euclidean space have positive reach $\tau > 0$~\cite[Remarks 4.20-21]{federer1959curvature}. This constrains the separation of the loop from the tangent plane at any given point on the loop. This places constraints on how close two distinct points along the loop can be paired to form an interior critical point of the distance function.

\begin{lemma} \label{lem:critx_away_from_bdry}
    Consider for $k \geq 2$ an embedding $\gamma \in \Emb{k}{S^1}{\R^d}$ with reach $\tau > 0$. Then the interval $(0, 2\tau)$ are regular values of $\DT_{\gamma}$.
\end{lemma}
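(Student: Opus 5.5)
The plan is to show that every interior critical point $\{z_1,z_2\}$ of $\DT_\gamma$, with $z_1\neq z_2$, has chord length $\norm{\gamma(z_1)-\gamma(z_2)}\geq 2\tau$. Boundary points all lie in the level $0$, so this excludes every other level set in the stated range. Since $\DT_\gamma=\frac12\DTm_\gamma^2$, it gives interior critical values of $\DTm_\gamma$ at least $2\tau$ and interior critical values of $\DT_\gamma$ at least $2\tau^2$. I read the interval $(0,2\tau)$ in the statement as referring to chord lengths, i.e. to the level parameter of $\DTm_\gamma$. I will state the translation to $\DT_\gamma$ explicitly at the end via \Cref{lem:square_bijection}.

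First I characterise interior critical points in local coordinates. I use the compatible atlases of \Cref{prop:mobius_atlas} and the gradient formula \cref{eq:dF}. A point $\{z_1,z_2\}$ with $z_1\neq z_2$ and local coordinates $(t_1,t_2)$ is critical exactly when
\begin{equation*}
\innerprod{\dot\gamma(t_1)}{\gamma(t_1)-\gamma(t_2)}=0 \qand \innerprod{\dot\gamma(t_2)}{\gamma(t_2)-\gamma(t_1)}=0 .
\end{equation*}
So the chord $v=\gamma(t_2)-\gamma(t_1)$ is orthogonal to the tangent line at both endpoints. Here I use that a $C^k$ embedding with $k\geq 1$ is an immersion, so $\dot\gamma\neq 0$ and the tangent space $T_p$ at $p=\gamma(t_1)$ is the line $p+\R\dot\gamma(t_1)$.

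Next I apply Federer's tangent bound \cite[Theorem 4.18]{federer1959curvature} with $p=\gamma(t_1)$ and $q=\gamma(t_2)$. Because $q-p\perp\dot\gamma(t_1)$, the orthogonal projection of $q-p$ onto the normal space is $q-p$ itself. Hence $d(q,T_p)=\norm{q-p}$. Federer's inequality then gives $\norm{q-p}\leq \norm{q-p}^2/(2\tau)$. Since $q\neq p$ by injectivity of $\gamma$, we may divide by $\norm{q-p}>0$ and obtain $\norm{q-p}\geq 2\tau$. Equivalently, one can argue from \cref{eq:reach_tangent} with the cosine equal to zero, which forces $1-(\norm{q-p}/2\tau)^2\leq 0$. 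Only one endpoint's orthogonality condition is needed.

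The main thing to check carefully is the applicability of Federer's theorem: $\gamma(S^1)$ must be a compact $C^2$ submanifold, which holds for $k\geq 2$, and have positive reach $\tau$, which is assumed. We also need that the chart expressions of the gradient really vanish, which is chart-independent because critical points are defined intrinsically. The conclusion is that no point of $\interior{\Mob}$ with $0<\DTm_\gamma<2\tau$ is critical. Since $\fibre{\DTm_\gamma}{0}=\partial\Mob$ by \Cref{prop:Morton}(iii) and injectivity of $\gamma$, every level in $(0,2\tau)$ of $\DTm_\gamma$ is regular. Correspondingly, every level in $(0,2\tau^2)$ of $\DT_\gamma$ is regular, and this contains $(0,2\tau)$ whenever $\tau\geq 1$.
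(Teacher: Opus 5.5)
Your argument is correct and is essentially the paper's. The paper uses the reach inequality \cref{eq:reach_tangent} to keep the normalised tangent--chord inner product, and hence the gradient \cref{eq:dF}, away from zero whenever $0<\norm{\gamma(t_1)-\gamma(t_2)}<2\tau$; this is the contrapositive of your step combining the orthogonality condition with Federer's bound $d(q,T_p)\leq\norm{q-p}^2/(2\tau)$.

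Reading $(0,2\tau)$ as a range of chord lengths is exactly what the paper's proof silently does, since it writes $\DT_\gamma=\norm{\gamma(t_1)-\gamma(t_2)}$. Your translation to the range $(0,2\tau^2)$ for $\DT_\gamma=\frac{1}{2}\DTm_\gamma^2$ is the right correction: a circle of radius $r<1$ has reach $r$, yet $2r^2\in(0,2r)$ is a critical value of $\DT_\gamma$.
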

\begin{proof}
    Since $\gamma$ is a $C^2$ embedding of a compact manifold, it has positive reach~\cite[Remarks 4.20-21]{federer1959curvature}. We first recall how the reach constrains the variation of a separation vector $\gamma(t_2)-  \gamma(t_1)$ away from the tangent space of the curve spanned by $\dot{\gamma}(t_1)$ at $t_1$: as a consequence of~\cite[Theorem 4.18]{federer1959curvature} we recall~\cref{eq:reach_tangent}: whenever $t_1 \neq t_2$, we have a bound on the Euclidean inner product
    \begin{equation}
        \left\langle \frac{\dot{\gamma}(t_i)}{\norm{\dot{\gamma}(t_i)}}, \frac{\gamma(t_2) - \gamma(t_1)}{\|\gamma(t_2) - \gamma(t_1)\|}\right \rangle^2 \geq  1-\qty(\frac{\|\gamma(t_2) - \gamma(t_1)\|}{2\tau})^2,\quad \text{ for } i = 1,2. 
    \end{equation}
    If $\DT_{\gamma}(t_1,t_2) = \norm{\gamma(t_1) -\gamma(t_2)} \in (0, 2\tau)$, then the right hand side is bounded away from zero. This inequality then bounds $\dd{(\DT_{\gamma})}$ away from zero, given \cref{eq:dF}. Hence there are no critical values $\DT_{\gamma}$ in $(0, 2\tau)$. 

\end{proof}
\begin{corollary} \label{cor:M1impliesM2}
    If  $k \geq 2$ an embedding $\gamma \in \Emb{k}{S^1}{\R^d}$, and all critical points of $\DT_\gamma$ on the interior $\interior{\Mob}$ are non-degenerate~\labelcref{M1}, then there are only finitely many critical points of $\DT_\gamma$ on the interior $\interior{\Mob}$~\labelcref{M2}. 
\end{corollary}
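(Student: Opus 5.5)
The plan is to invoke \Cref{lem:smooth_isolate_finitemorse} with $M = \Mob = \ExpS{2}{S^1}$ and $f = \DT_\gamma$. Two hypotheses of that lemma have to be checked. First, $\Mob$ is a compact manifold with boundary, and $\DT_\gamma \in C^k(\Mob,\R)$ with $k \geq 2$ by the local coordinate expression via the compatible atlases of \Cref{prop:mobius_atlas}. Second, and this is the only substantive point, there must be a neighbourhood $V$ of $\partial\Mob$ such that $V \setminus \partial \Mob$ contains no critical points of $\DT_\gamma$. Once both are in place, the assumption \labelcref{M1} supplies the remaining hypothesis (that $\DT_\gamma$ is Morse on $\interior{\Mob}$), and the lemma yields \labelcref{M2}.

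To construct $V$, I would use \Cref{lem:critx_away_from_bdry}. Since $\gamma$ is a $C^2$ embedding of a compact manifold, it has reach $\tau > 0$, and the lemma shows that $\DT_\gamma$ has no critical points at chords $\{z_1,z_2\}$ with $z_1 \neq z_2$ and $\norm{\gamma(z_1)-\gamma(z_2)} \in (0,2\tau)$. I therefore set
\begin{equation*}
V := \{ w \in \Mob \ : \ \DTm_\gamma(w) < 2\tau \},
\end{equation*}
which is open because $\DTm_\gamma$ is continuous. It contains $\partial\Mob = q(\Delta) = \fibre{\DTm_\gamma}{0}$ by \Cref{prop:Morton}(iii). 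Because $\gamma$ is injective, $\DTm_\gamma > 0$ on $\interior{\Mob}$, so every point of $V \setminus \partial \Mob$ has $\DTm_\gamma \in (0,2\tau)$ and is therefore regular.

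The one thing to watch is the parametrisation of the level. \Cref{lem:critx_away_from_bdry} is phrased in terms of regular values of $\DT_\gamma$, while its proof actually controls the chord length $\norm{\gamma(t_1)-\gamma(t_2)}$. I will phrase $V$ via the chord length, as above, so that the lemma's reach inequality applies directly. If instead one prefers to work with $\DT_\gamma = \tfrac12 \DTm_\gamma^2$, the same argument goes through after shrinking the threshold to $2\tau^2$, since any positive threshold suffices. With $V$ in hand, the compactness argument inside \Cref{lem:smooth_isolate_finitemorse} finishes the proof. I do not expect any real obstacle here.
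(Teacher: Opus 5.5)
Your proposal is correct and follows the paper's own proof: apply \Cref{lem:smooth_isolate_finitemorse}, with the boundary neighbourhood supplied by the reach bound of \Cref{lem:critx_away_from_bdry}. Defining the neighbourhood through the chord length, $\DTm_\gamma < 2\tau$ (equivalently $\DT_\gamma < 2\tau^2$), is also slightly more careful than the paper's $\inv{\DT_\gamma}[0,2\tau)$, which conflates $\DT_\gamma$ with $\DTm_\gamma$.
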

\begin{proof}
    We apply~\Cref{lem:smooth_isolate_finitemorse}: if we have a neighbourhood of $\partial \Mob$  where $\DT_\gamma$ has no interior critical points, and $\DT_\gamma$ is Morse on the interior, then there are only finitely many interior critical points of $\DT_\gamma$. Given~\Cref{lem:critx_away_from_bdry}, we can take $\inv{\DT_\gamma}[0, 2\tau)$ as such a neighbourhood of $\partial \Mob = \fibre{\DT_\gamma}{0}$. 
\end{proof}

\begin{remark}
    The lower bound on the non-zero regular values in \Cref{lem:critx_away_from_bdry} is tight. Consider $\gamma$ being the circle of constant radius $r$. Then the reach is $\tau = r$, and the only non-zero critical value is the maximum $\max \DT_\gamma = 2r$. Note that the maximum is attained by all pairs $\{(z,-z) \ : \ z \in S^1 \}$ in $\UConf{2}{S^1}$ constituting the diameter of the circle. 
\end{remark}

We now show that the absence of interior critical points near the boundary is sufficient to prove~\labelcref{M3} hold for all $\DT_\gamma$ of embeddings: for $a < 2\tau$, the boundary $\fibre{\DT_\gamma}{0} = \partial \Mob$  is a deformation retract of $\sublevel{\DT_\gamma}{a}$. We note the technical complication that $0$ is critical value of $\DT_\gamma$, which limits the application of the standard deformation lemma between sublevels sets of regular values~\cite[Theorem 3.20]{Banyaga2004-my}; nonetheless, we can find an auxiliary function $h$ such that sublevels sets of $h$ coincide with that of $f$ near the boundary, and regular on and near the boundary too. Applying the standard deformation lemma to sublevel sets of $h$ then yields the deformation retraction of sublevel sets of $\DT_\gamma$ near the boundary.

\begin{lemma}\label{lem:boundary_deformation_retract}
    Let $\gamma \in \Emb{k}{S^1}{\R^d}$ for $k \geq 2$. If, for some $a > 0$, we have an interval of regular values  $(0,a]$ for $\DT_\gamma$, then $\fibre{\DT_\gamma}{0} = \partial \Mob$  is a deformation retract of $\sublevel{\DT_\gamma}{a}$ .
\end{lemma}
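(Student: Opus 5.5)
The plan follows the strategy the paper itself indicates: replace $\DT_\gamma$ near $\partial\Mob$ by an auxiliary function $h$ that has the same sublevel sets but is regular up to and including the boundary. The natural choice is $h := \DTm_\gamma = \sqrt{2\DT_\gamma}$, which is the chordal distance itself.

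First, we check that $\DTm_\gamma$ is $C^1$ with non-vanishing differential on all of $\sublevel{\DT_\gamma}{a}$, including $\partial\Mob$. This is easiest on the universal cover. Work in the charts of \Cref{prop:mobius_atlas}, lifted via $\lambda$ to the strip $\{(t_1,t_2) : s := t_2-t_1 \in [0,1]\}$. There we write
\begin{equation*}
\gamma(t_2)-\gamma(t_1) = s\,G(t_1,s), \qquad G(t_1,s) = \int_0^1 \dot\gamma(t_1+us)\,\dd u .
\end{equation*}
Since $\gamma\in C^k$ with $k\ge 2$, $G$ is $C^{k-1}$, hence at least $C^1$. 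Also $G(t_1,0)=\dot\gamma(t_1)\neq 0$ because $\gamma$ is an immersion, and $G(t_1,s)\neq 0$ for $s\in(0,1)$ because $\gamma$ is injective. Near the boundary, i.e.\ on $s$ small, this gives $\DTm_\gamma = s\norm{G(t_1,s)}$, which is $C^1$ up to $s=0$. Its derivative in $s$ at $s=0$ equals $\norm{\dot\gamma(t_1)}>0$. So $\DTm_\gamma$ is a $C^1$ function on a collar of $\partial\Mob$ that vanishes exactly on $\partial\Mob$, with inward-pointing non-zero gradient there. Away from the boundary, $\DTm_\gamma$ is $C^k$ and $\dd\DTm_\gamma = \dd\DT_\gamma/\DTm_\gamma$. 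Hence by hypothesis its values in $(0,\sqrt{2a}]$ are regular, and $\sublevel{\DTm_\gamma}{\sqrt{2a}} = \sublevel{\DT_\gamma}{a}$.

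Second, we build the deformation retraction from a vector field. The gradient-like field $X=-\nabla\DTm_\gamma/\norm{\nabla\DTm_\gamma}^2$ is continuous and non-vanishing on the compact set $\sublevel{\DTm_\gamma}{\sqrt{2a}}$. The unnormalised flow could be a problem since $\DTm_\gamma$ is only $C^1$, so we would instead use a locally Lipschitz pseudo-gradient field $Y$ with $\dd\DTm_\gamma(Y)\le -1$ and $Y$ tangent to nothing problematic. Such a $Y$ exists by a standard partition of unity argument in the collar coordinates, and near the boundary we can simply take $Y=-\partial_s$ suitably rescaled. Flowing along $Y$ decreases $\DTm_\gamma$ at unit rate, so every point $x$ with $\DTm_\gamma(x)=c$ reaches $\partial\Mob$ in time at most $c$.

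Define $r_\tau(x)$ as the flow of $x$ for time $\tau\,\DTm_\gamma(x)$, normalised so that $\DTm_\gamma$ decreases exactly linearly. Equivalently, take $Y$ with $\dd\DTm_\gamma(Y)=-1$ and set $r_\tau(x)=\Phi_{\tau \DTm_\gamma(x)}(x)$. Then $r_0=\iden$, $r_1$ lands in $\fibre{\DTm_\gamma}{0}=\partial\Mob$, and $\partial\Mob$ is fixed throughout. Continuity follows from continuous dependence of flows on initial data, together with the fact that the flow exits only through $\partial\Mob$, at the exact time it hits $\DTm_\gamma = 0$.

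The main obstacle is the boundary behaviour. We must ensure that $\DTm_\gamma$ (or a suitable $h$) is genuinely differentiable with non-zero gradient at $\partial\Mob$ in the Möbius-band charts; this is what the factorisation $s\,G(t_1,s)$ above handles. We must also ensure that the flow is well defined at and near $\partial\Mob$. If Lipschitz regularity of $Y$ near $s=0$ is delicate, an alternative is available: in the collar coordinates $(t_1,s)$, the level sets $\{s\norm{G}=c\}$ are graphs $s=\sigma(t_1,c)$ by the implicit function theorem, since $\partial_s(s\norm G)>0$ for small $s$. Then the retraction can be written explicitly as a straight-line homotopy in $s$, $(t_1,s)\mapsto (t_1,(1-\tau)s)$, after first retracting the region between the collar and level $\sqrt{2a}$ into the collar using the regular-value flow.
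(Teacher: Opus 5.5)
Your proof is correct, but it takes a different route from the paper's.

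\textbf{The paper's route.} It keeps $f=\DT_\gamma$ and repairs the degenerate level $0$ by a perturbation. It sets $h=f+\sigma\circ\nu$, where $\nu$ is the distance to $\partial\Mob$ and $\sigma$ is a monotone bump with $\sigma(0)=0$, $\sigma'(0)=1$ and $\sigma\equiv 1$ beyond $\epsilon$. It checks that $\sublevel{h}{a+1}=\sublevel{f}{a}$ and $\fibre{h}{0}=\partial\Mob$. It then shrinks $\epsilon$, using $\partial_{t_1}f=-\norm{\dot\gamma(t_1)}^2(t_2-t_1)+O((t_2-t_1)^2)$, so that $\nabla h\neq 0$ on $\sublevel{h}{a+1}$, and cites the standard deformation lemma.

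\textbf{Your route.} You repair the degeneracy by taking the square root instead. $\DT_\gamma=\tfrac12\DTm_\gamma^2$ vanishes to second order on $\partial\Mob$. Your factorisation $\gamma(t_1+s)-\gamma(t_1)=s\,G(t_1,s)$ shows that $\DTm_\gamma=s\norm{G}$ vanishes only to first order, with $\partial_s\DTm_\gamma=\norm{\dot\gamma(t_1)}>0$ at $s=0$.

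\textbf{What each buys.} Your route needs no auxiliary function. It matches the sublevel sets at every level via $c\mapsto\sqrt{2c}$, not just at the top level as the paper's $h$ does. It also isolates the real content: $\partial\Mob$ is a regular level of the unsquared transform because $\gamma$ is an injective immersion. The price is one derivative, since $\DTm_\gamma$ is only $C^{k-1}$ up to the boundary. For $k=2$ you therefore cannot quote the smooth deformation lemma verbatim, and that is exactly what the paper's $C^k$ function $h$ buys.

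\textbf{What to write up.} Use your last version, not the flow with $\dd\DTm_\gamma(Y)=-1$. That normalisation divides by $\partial_s\DTm_\gamma$, which is merely continuous at $s=0$ when $k=2$, so your ``equivalently'' is not justified as stated. The clean argument runs as follows.
\begin{enumerate}
\item Choose $c_0>0$ by compactness so that $\sublevel{\DTm_\gamma}{c_0}$ lies in the collar where $\partial_s\DTm_\gamma>0$.
\item Retract $\sublevel{\DTm_\gamma}{\sqrt{2a}}$ onto $\sublevel{\DTm_\gamma}{c_0}$ by the usual regular-value flow in $\interior{\Mob}$, where $\DTm_\gamma$ is $C^k$.
\item Collapse by $(t_1,s)\mapsto(t_1,(1-\tau)s)$. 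This preserves the sublevel set because $\DTm_\gamma$ is increasing along each $s$-fibre.
\end{enumerate}
Also say explicitly that the glide reflection carries points whose strip lift has $s$ near $1$ to $s$ near $0$. This is what makes $(t_1,s)\in(\R/\Z)\times[0,s_0)$ with $s_0\le\tfrac12$ a genuine collar chart.
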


\begin{proof}
    As a shorthand let $f = \DT_\gamma$. We first construct the auxiliary function $h: \Mob \to \R$.  
    
    We first note that the distance function $\nu: \Mob \to \R$ to the boundary is a smooth function off the image of the axis of symmetry of the infinite strip $\stripinf$ under the quotient $\stripinf \twoheadrightarrow \stripinf/\Z = \Mob$.  Let us construct a function $h  =f + \sigma \circ \nu$, where $\sigma$ is a smooth, monotone non-decreasing bump function, such that $\sigma(0) = 0, \sigma'(0) = 1$, $\sigma(y) = 1$ for $y \geq \epsilon$, and $\sigma(y) \in (0,1)$ for $y  \in (0,\epsilon)$. Here we choose $\epsilon>0$ to be sufficiently small such that $\nu$ is smooth on the sublevel set $\sublevel{\nu}{\epsilon}$, and $\DT_\gamma < a$ on such a sublevel set. The latter constraint is possible because $f$ is continuous, implying there being a sufficiently small $\epsilon > 0$ such that $f(\ucpt{p,q}) < a$. 

    We now claim that $\sublevel{h}{a+1} = \sublevel{f}{a}$. First, $f \leq a$ implies $h \leq a + \sigma\circ \nu \leq a+1$. Conversely, $h \leq a+1$ implies $f \leq a + (1-\sigma \circ \nu)$. The upper bound is only strictly greater than a iff $\sigma \circ \nu < 1$, which is only possible if $\nu < \epsilon$. However we have chosen $\epsilon$ to be sufficiently small such that  $\nu < \epsilon$ implies $f < a$. Hence $h \leq a+1$ implies $f \leq a$. 

    Since $\sigma(\nu(x)) = 0$ iff $x \in \Mob$, we also have $\fibre{h}{0} = \fibre{f}{0} = \partial \Mob$. 
    
    We now show that the $\epsilon$ parameter can be further tuned, such that $[0,a+1]$ are regular values of $h$.
    Without loss of generality we consider a chart $\varphi: U \to \R^2$ such that the distance to $\partial \Mob$ is expressed as the distance  to the diagonal boundary $t_1 = t_2$ of the infinite strip $\stripinf$, i.e. $\nu = t_2 -t_1$. Then, 
    \begin{equation}
        \norm{\nabla h}^2 = \norm{\nabla f}^2  + (2\sigma') (-1,1)\cdot \nabla f + 2(\sigma')^2. 
    \end{equation}
    On $\inv{f}[0,a]$, the first term is positive for $\nu > 0$; the last term is positive everywhere on $\nu \in [0, \epsilon)$. To ensure that $ \norm{\nabla h}^2$ is positive on $\nu \in [0, \epsilon]$, we need to choose $\epsilon$ to be sufficiently small, such that on $\inv{\nu}[0,\epsilon]$, we have $ (-1,1)\cdot \nabla f  >0$ (since by construction $\sigma' > 0$). By Taylor expansion, we obtain
    \begin{equation}
        \pdv{h}{t_1} = \innerprod{\dot{\gamma}(t_1)}{\gamma(t_1) - \gamma(t_2)} = -\norm{\dot{\gamma}(t_1)}^2 (t_2 -t_1) + \frac{(t_2 -t_1)^2}{2} M
    \end{equation}
    where in the remainder term of the Taylor expansion, the term $M$ has magnitude bounded only by finite global bounds on the norm of $\dot{\gamma}$ and $\ddot{\gamma}$. Thus for sufficiently small $\epsilon_1 > 0$ for $t_2-t_1$ we can bound $ \pdv{h}{t_1}$ away from  zero. By symmetry the same analysis applies to $ \pdv{h}{t_2}$ and we can find a sufficiently  small $\epsilon_1 > 0$ for $t_2-t_1$ to bound  $ \pdv{h}{t_2}$ away from zero. Since $t_2-t_1$ expresses the distance to the boundary in local coordinates, by taking $\epsilon = \epsilon_1$, we satisfy $\norm{\nabla h}^2 > 0$ on this chart. Since $\Mob$ is compact we can take a finite open cover and take $\epsilon >0$ to be the minimum of all such $\epsilon_1$'s across the charts to ensure $\nabla h \neq 0$ on  $\inv{f}[0,a] = \inv{h}[0,a+1]$. 

    Because $[0,a+1]$ are regular values of $h$, the standard Morse deformation lemma of regular sublevel sets~\cite[Theorem 3.20]{Banyaga2004-my}  implies $\fibre{h}{0}$ is a deformation retract of $\inv{h}{[0,a+1]}$. Since $\fibre{h}{0} = \fibre{f}{0}$ and $\inv{h}[0,a+1] = \inv{f}[0,a]$, we obtain the desired result. 
    \end{proof}

\begin{proposition} \label{prop:M3}
     For $k \geq 2$ and $\gamma \in \Emb{k}{S^1}{\R^d}$, there is some sufficiently small $a > 0$, such that for all $t \in [0,a]$, the boundary $\partial \Mob = \fibre{\DT_\gamma}{0}$ is a deformation retract of the sublevel set $\sublevel{\DT_\gamma}{t}$ (condition~\labelcref{M3},~\Cref{def:finite_morse}).
\end{proposition}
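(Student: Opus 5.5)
The plan is to combine \Cref{lem:critx_away_from_bdry} with \Cref{lem:boundary_deformation_retract}. Since $\gamma$ is a $C^2$ embedding of the compact manifold $S^1$, it has positive reach $\tau > 0$ \cite[Remarks 4.20-21]{federer1959curvature}. By \Cref{lem:critx_away_from_bdry}, no point of $\interior{\Mob}$ with value in $(0,2\tau)$ is critical for $\DT_\gamma$.

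Fix any $a$ with $0 < a < 2\tau$. The only caveat is the normalisation of the lemma: it is phrased with $\norm{\gamma(t_1)-\gamma(t_2)} \in (0,2\tau)$, whereas $\DT_\gamma = \frac12\DTm_\gamma^2$. To be safe, choose $a < \min\{2\tau, 2\tau^2\}$. Then $\DT_\gamma \le a$ forces $\DTm_\gamma < 2\tau$, so every value in $(0,a]$ is a regular value of $\DT_\gamma$.

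Now let $t \in [0,a]$. If $t = 0$, then $\sublevel{\DT_\gamma}{0} = \fibre{\DT_\gamma}{0} = \partial\Mob$, since $\gamma$ is injective, and the claim is trivial. If $t \in (0,a]$, then $(0,t] \subset (0,a]$ is an interval of regular values. So \Cref{lem:boundary_deformation_retract}, applied with $t$ in place of $a$, gives that $\partial \Mob = \fibre{\DT_\gamma}{0}$ is a deformation retract of $\sublevel{\DT_\gamma}{t}$. This establishes \labelcref{M3}.

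There is no substantive obstacle: all the analytic work, namely the auxiliary function $h$ and the Taylor estimate near the diagonal, is already contained in \Cref{lem:boundary_deformation_retract}. The only points needing care are the following.
\begin{enumerate}
\item Check that the regular-value interval from \Cref{lem:critx_away_from_bdry} is stated for the correct function, $\DT_\gamma$ versus $\DTm_\gamma$. Shrinking $a$ as above handles this either way.
\item Note that \Cref{lem:boundary_deformation_retract} only needs the hypothesis on the interval $(0,t]$, so it applies uniformly for every $t \le a$.
\end{enumerate}
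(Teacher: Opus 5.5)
Your proposal is correct and matches the paper's proof: \Cref{lem:critx_away_from_bdry} gives an interval $(0,a]$ of regular values, and \Cref{lem:boundary_deformation_retract} then gives the deformation retraction. Your additional steps fill in details that the paper's two-line proof leaves implicit: choosing $a<\min\{2\tau,2\tau^2\}$ to reconcile $\DT_\gamma$ with $\DTm_\gamma$, handling $t=0$ via injectivity of $\gamma$, and applying the lemma at every level $t\in(0,a]$ rather than only at $a$.
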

\begin{proof}
    From~\Cref{lem:critx_away_from_bdry}, we have for $k \geq 2$, the existence of some regular interval $(0,a]$ of $\DT_\gamma$. \Cref{lem:boundary_deformation_retract} then implies $\fibre{\DT_\gamma}{0} = \partial \Mob$  is a deformation retract of $\sublevel{\DT_\gamma}{a}$.
\end{proof}

\subsubsection{Openness}
We now turn to showing that $C^2$-embeddings that have Morse-supported distance functions are \emph{open} in Whitney topology (\Cref{prop:smooth_open}). We approach this problem by covering $\Mob$ with a neighbourhood $L$ of the boundary $\partial \Mob$, and a compact subset $K$ of the interior. We then consider the distance critical points near the boundary and in the interior separately. We address the critical points near the boundary in~\Cref{lem:smooth_open_isolating}, where we show that we can fix $L$ such that there are no critical points on $L$ for distance functions of embeddings in a neighbourhood $\sU$ of a Morse-supported embedding. As such, for embeddings in $\sU$, the only critical points in the interior are contained in some fixed  $K \subset \interior{\Mob}$. We then show in~\Cref{lem:smooth_morse_compact} that for a Morse-supported embedding, we can find a neighbourhood $\sW$ of embeddings such that critical points are non-degenerate in $K$. On the neighbourhood $\sU \cap \sW$ of $\gamma$, the combination of both lemmas then account for all of the separation critical points of embeddings, and show them to be finite and non-degenerate.

\begin{lemma}
\label{lem:smooth_open_isolating}
    Let $k\geq 1$. Assume for $\gamma \in \Emb{k}{S^1}{\R^d}$, the distance transform $\DT_\gamma \in \FSpace{k}{\Mob}{\R}$ has finitely many critical points in the interior $\interior{\Mob}$. Then there is an open neighbourhood $\sU \subset \Emb{k}{S^1}{\R^d}$ of $\gamma$ on which the  following holds: there is a neighbourhood $L \subset \Mob$ of $\partial \Mob$, whereby $\DT_{\tilde{\gamma}}$ has no critical points in $L$, for all $\tilde{\gamma} \in \sU$.
\end{lemma}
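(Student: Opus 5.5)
The plan is to produce a \emph{uniform} boundary collar on which the gradient of $\DT_{\tilde\gamma}$ is bounded away from zero. I would get it from a first-order Taylor estimate near the diagonal whose constants depend only on $C^1$ data of $\tilde\gamma$, and then control that data uniformly over a Whitney $C^1$ neighbourhood of $\gamma$. I will work in the atlas $\Phi$ of \Cref{prop:mobius_atlas}. Near $\partial\Mob$ we may use charts in which $\partial \Mob$ corresponds to $t_1 = t_2$, and $\nu = t_2 - t_1 \in [0,\epsilon)$ measures the distance to the boundary. By compactness of $\partial \Mob$, finitely many such charts cover a collar $\inv{\nu}[0,\epsilon_0)$.

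The key computation is as follows. By \cref{eq:dF}, in such a chart
\begin{equation*}
\pdv{\DT_{\tilde\gamma}}{t_1} = \innerprod{\dot{\tilde\gamma}(t_1)}{\tilde\gamma(t_1)-\tilde\gamma(t_2)} = -\int_{t_1}^{t_2}\innerprod{\dot{\tilde\gamma}(t_1)}{\dot{\tilde\gamma}(s)}\,\dd{s}.
\end{equation*}
This needs only $k \geq 1$ and avoids second derivatives. I write $\innerprod{\dot{\tilde\gamma}(t_1)}{\dot{\tilde\gamma}(s)} = \norm{\dot{\tilde\gamma}(t_1)}^2 + \innerprod{\dot{\tilde\gamma}(t_1)}{\dot{\tilde\gamma}(s)-\dot{\tilde\gamma}(t_1)}$. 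Since $\gamma$ is an immersion on compact $S^1$, there is $c>0$ with $\norm{\dot\gamma}\geq 2c$ on the compact chart domains. Also $\dot\gamma$ is uniformly continuous, so there is $\epsilon_1$ with $\norm{\dot\gamma(s)-\dot\gamma(t)} < c^2/(4C)$ whenever $|s-t|<\epsilon_1$, where $C = \sup\norm{\dot\gamma}+1$.

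Next I choose the neighbourhood $\sU$. I take $\sU$ to be the weak (equal to Whitney, as $S^1$ is compact) $C^1$ neighbourhood $\bigcap \cN(\gamma,(\theta_b,U_b),Q_b,\delta)$ over the finitely many charts of $S^1$ involved, intersected with $\Emb{k}{S^1}{\R^d}$. The condition is $\norm{\dot{\tilde\gamma}-\dot\gamma} < \delta$ with $\delta$ small, e.g.\ $\delta < \min(c, c^2/(8C))$. Then every $\tilde\gamma\in\sU$ satisfies $\norm{\dot{\tilde\gamma}}\geq c$ and $\norm{\dot{\tilde\gamma}}\leq C$. It also satisfies $\norm{\dot{\tilde\gamma}(s)-\dot{\tilde\gamma}(t)} < c^2/(2C)$ for $|s-t|<\epsilon_1$. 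Hence for $0<t_2-t_1<\epsilon_1$ the integrand is at least $c^2 - C\cdot c^2/(2C) = c^2/2$. This gives $\pdv{\DT_{\tilde\gamma}}{t_1} \leq -\tfrac{c^2}{2}(t_2-t_1) < 0$, so the gradient is nonzero. Setting $L = \inv{\nu}[0,\min(\epsilon_0,\epsilon_1))$ (an open collar, possibly shrunk further so it sits inside the chosen charts) gives no interior critical points of $\DT_{\tilde\gamma}$ in $L\setminus\partial\Mob$, uniformly for $\tilde\gamma \in \sU$. Boundary points themselves are excluded as interior critical points by the statement's scope.

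I expect the main obstacle to be bookkeeping rather than analysis. The Whitney neighbourhood must be expressed in the finitely many compact chart pieces $Q_b$, and these must be compatible with the $\Mob$ charts, so that $t_1,t_2$ with $|t_2-t_1|<\epsilon_1$ lie in a common $S^1$ chart. This is what \Cref{prop:mobius_atlas} provides, via charts $\vartheta_{xy}$ with $x=y$ on the boundary. Notably, the hypothesis of finitely many interior critical points of $\DT_\gamma$ is not really needed for this collar argument. It only matters downstream, where $K = \Mob\setminus L$ should contain all of them in its interior; I would remark on this and, if needed, shrink $L$ so that the finitely many critical points of $\DT_\gamma$ lie outside $\overline{L}$.
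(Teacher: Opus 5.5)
Your argument is correct, but it is not the paper's route. The paper argues by contradiction. It uses the finiteness hypothesis to cover $S^1$ by closed arcs $J_1,\dots,J_m$, each containing at most one endpoint of an interior critical chord of $\DT_\gamma$, and sets $L=q\bigl(\bigcup_i J_i\times J_i\bigr)$. It then assumes a sequence $\gamma_n\to\gamma$ with interior critical points $\{u_n,v_n\}\in L$ and passes to a subsequence converging inside one compact piece $q(J_i\times J_i)$. An interior limit would be a critical point of $\DT_\gamma$ in $L$, which the construction excludes. A boundary limit forces $\|\dot\gamma(u)\|=0$ by a componentwise mean value theorem, contradicting that $\gamma$ is an immersion.

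Your estimate is in effect a uniform, quantitative version of that boundary case. The identity $\partial_{t_1}\DT_{\tilde\gamma}=-\int_{t_1}^{t_2}\langle\dot{\tilde\gamma}(t_1),\dot{\tilde\gamma}(s)\rangle\,ds$, combined with $\min\|\dot\gamma\|$ and the modulus of continuity of $\dot\gamma$, gives a collar of explicit width. On it $|\partial_{t_1}\DT_{\tilde\gamma}|\ge \tfrac{c^2}{2}(t_2-t_1)$ for every $\tilde\gamma$ in a $C^1$ ball. So the interior-limit case never arises, and you are right that the finiteness hypothesis is superfluous.

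What you gain is a direct proof with explicit constants, needing only $C^1$ closeness and no sequential compactness in the Whitney topology. The paper's construction instead yields a collar adapted to the critical set of $\gamma$, possibly much wider than a thin strip. But Proposition~\ref{prop:smooth_open} only needs some collar plus a compact $K$ covering the rest, so your thin collar suffices there.

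Two small points:
\begin{itemize}
\item Add $\delta\le 1$ to your choice of neighbourhood. Otherwise $\|\dot{\tilde\gamma}\|\le\sup\|\dot\gamma\|+\delta$ need not be bounded by $C=\sup\|\dot\gamma\|+1$; for a large round circle, $c^2/(8C)>1$. The estimate is otherwise unaffected.
\item Work with the $1$-periodic lifts to $\mathbb{R}$, whose chart transitions are integer translations. This removes the chart bookkeeping you anticipate.
\end{itemize}
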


\begin{proof}
Let $\{ \ucpt{a_1, b_1}, \ldots,\ucpt{a_n, b_n}\}$ be the set of critical points of $\DT_\gamma$ on the interior of $\Mob$, and consider $R = \{a_1, \ldots, a_n, b_1, \ldots, b_n\} \subset S^1$, the projection of the critical points onto $S^1$. We colloquially call $R$ the critical sites of $\gamma$. Because $R$ is finite, we can cover $S^1$ with closed intervals $J_1, \ldots, J_m$ with non-empty interior, such that each $J_i$ contains at most one element of $R$, and $S^1 = \bigcup_{i=1}^m \interior{J_i}$. 

Let $q: S^1 \times S^1 \twoheadrightarrow \Mob$ be the quotient map, and consider
\begin{align*}
    L &= q\left(\bigcup_{i=1}^m (J_i \times J_i)\right) =  \{\{u,v\} \ : \ \exists i \in [m],\ \text{s.t.}\ u,v \in J_i\}= \bigcup_{i=1}^m L_i \\
    L_i &= q(J_i \times J_i) = \{\{u,v\} \ : \ u,v \in J_i\}
\end{align*}
We now show that by construction, $\partial \Mob \subset L$, and $L$ is a neighbourhood of $\partial \Mob$. For any $\ucpt{u} \in \partial \Mob$, the point $u \in S^1$ is in the interior of some $J_i$, and thus there is some $\delta$ such that the neighbourhood $\{\{v_1,v_2\} \ : \ d(u,v_1), d(u,v_2) < \delta\} $ of $\ucpt{u}$ is contained in $L_i$. Hence $L$ is a neighbourhood of $\partial \Mob$.

Furthermore, a point $\ucpt{u,v} \in L \setminus \partial \Mob$ cannot be a critical point of $\DT_\gamma$, for the following reasons. By construction, there is some $J_i$ such that $u,v \in {J_i}$, and $J_i$ can at most admit one critical site. Thus, $v_1,v_2$ cannot both be critical sites, which is a necessary condition for $\ucpt{u,v} \notin \partial \Mob$ being an interior critical point of $\DT_\gamma$. 

We now show that there is an open neighbourhood $U$ of $\gamma$ in $\Emb{k}{S^1}{\R^d}$, such that for $\tilde{\gamma} \in U$, the interior critical points of $\DT_{\tilde{\gamma}}$ are not in $L$. We show this by contradiction. Suppose there is no such open neighbourhood of $\gamma$ in $\Emb{k}{S^1}{\R^d}$. Then there is a sequence of embeddings $\gamma_n \to \gamma$ in $\Emb{k}{S^1}{\R^d}$ such that for each $\gamma_n$, there is some interior critical point $\ucpt{u_n,v_n} \in L \setminus \partial \Mob$ of $\DT_{\gamma_n}$.  By construction there is some $i(n)$ such that $\ucpt{u_n,v_n} \in L_{i(n)}$. Furthermore, because the number $m$ of cover elements of $L$ is finite, there is some fixed $i \in [m]$ and a subsequence of $(\gamma_n)$ where by each $\gamma_n$ in the subsequence there is an interior critical point $\ucpt{u_n,v_n} \in L_{i}$. Furthermore, because $q$ is continuous and $J_i \times J_i$ is compact, $L_i = q(J_i \times J_i)$ is compact, and thus we can extract a further subsequence of $\gamma_n$ such that $\ucpt{u_n,v_n}$ converges to some point in  $L_{i}$. Because $u_n \neq v_n$, and $u_n,v_n$ are on an interval $J_i$, we can choose $u_n < v_n$ for all $n$ w.o.l.g., and convergence of $\ucpt{u_n,v_n}$ implies $u_n \to u$ and $v_n \to v$ for some $u,v \in J_i$. 

Suppose then $u\neq v$, i.e. the sequence $\ucpt{u_n,v_n} \to \ucpt{u,v}$ converges to some point in the interior of $\Mob$. Because $\ucpt{u_n,v_n}$ is an interior critical point of $\DT_{\gamma_n}$, 
\begin{equation*}
    \langle \gamma_n(u_n) - \gamma_n(v_n), \dot{\gamma_n}(u_n)  \rangle = \langle \gamma_n(u_n) - \gamma_n(v_n), \dot{\gamma_n}(v_n)  \rangle = 0. \tag{$\dagger$}
\end{equation*}
Recall $\gamma_n \to \gamma$ under Whitney $C^k$ topology. This implies uniform convergence of $\gamma_n$ and $\dot{\gamma_n}$ to $\gamma$ and $\dot{\gamma}$ on $J_i$; thus
\begin{equation*}
    \langle \gamma(u) - \gamma(v), \dot{\gamma}(u)  \rangle = \langle \gamma(u) - \gamma(v), \dot{\gamma}(v)  \rangle = 0.
\end{equation*}
This implies $\ucpt{u,v} \in \interior{\Mob}$ is an interior critical point of $\DT_\gamma$. However, because $\ucpt{u,v} \in L_i$, it cannot not an interior critical point. Thus we conclude that $\ucpt{u_n,v_n} \to \ucpt{u}$ on $\partial \Mob$. 

We now show that this sequence is a contradiction of $\gamma$ being an embedding. Let $\gamma^{(j)} \in C^k(S^1,\R)$ denote the $j^\text{th}$-component of $\gamma$. We can then rewrite $(\dagger)$ as 
\begin{equation*}
    \sum_{j=1}^d (\gamma^{(j)}_n(u_n) - \gamma^{(j)}_n(v_n))\dot{\gamma}^{(j)}_n(u_n) = \sum_{j=1}^d (\gamma^{(j)}_n(u_n) - \gamma^{(j)}_n(v_n))\dot{\gamma}^{(j)}_n(u_n) = 0. \tag{$\ddagger$}
\end{equation*}
Because $u_n,v_n \in J_i$ and $J_i$ is a closed interval, the mean value theorem implies there is some $x_{n,j} \in (u_n,v_n)$ such that $\gamma^{(j)}_n(u_n) - \gamma^{(j)}_n(v_n)= \dot{\gamma}^{(j)}_n(x_{n,j})(u_n - v_n)$. Note that $x_{n,j} \to u$ for all $j$. Because $u_n \neq v_n$, we can thus rewrite $(\ddagger)$ as 
\begin{equation*}
   \sum_{j=1}^d \dot{\gamma}^{(j)}_n(x_{n,j})\dot{\gamma}^{(j)}_n(u_n) = \sum_{j=1}^d \dot{\gamma}^{(j)}_n(x_{n,j})\dot{\gamma}^{(j)}_n(v_n) = 0
\end{equation*}
Since we have uniform convergence of $\dot{\gamma}_n \to \dot{\gamma}$ on $J_i$, we then conclude that $\sum_{j=1}^d (\dot{\gamma}^{(j)}(u))^2 = 0$, which contradicts $\gamma$ being an embedding.  Hence there must be an open neighbourhood of $\gamma$ in $\Emb{k}{S^1}{\R^d}$, on which the critical points of the distance transform cannot have interior critical points in $L$. 
\end{proof}

\begin{lemma}
\label{lem:smooth_morse_compact}
    Let $k\geq 2$ and consider $\gamma \in \Emb{k}{S^1}{\R^d}$, such that $\DT_\gamma \rvert_{\interior{\Mob}}$ is Morse on $\interior{\Mob}$. For any compact $K \subset \interior{\Mob}$, there is an open neighbourhood $\sW \subset \Emb{2}{S^1}{\R^d}$ of $\gamma$, such that the critical points of $\DT_{\tilde{\gamma}}$ on $K$ are all non-degenerate for all $\tilde{\gamma} \in W$. 
\end{lemma}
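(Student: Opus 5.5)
The argument is a compactness-plus-continuity one, in the spirit of the proof that Morse functions on compact manifolds form an open set. Non-degeneracy of every critical point on $K$ is the same as saying the map
\[
\Psi_{\tilde\gamma} : K \to [0,\infty), \qquad w \mapsto \norm{\dd{\DT_{\tilde\gamma}}(w)} + \abs{\det H_{\DT_{\tilde\gamma}}(w)}
\]
never vanishes, when measured in local charts. So the plan is to show three things: $\Psi_\gamma$ has a positive minimum on $K$; the derivatives of $\DT_{\tilde\gamma}$ up to order two depend continuously on $\tilde\gamma$, uniformly on $K$; and hence $\Psi_{\tilde\gamma}$ stays positive for nearby $\tilde\gamma$.

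\textbf{Step 1: charts and a positive lower bound for $\gamma$.} Cover $K$ by finitely many compact sets $K_1,\ldots,K_m$ with $K_a \subset W_a$, where $(\varphi_a, W_a)$ are charts from the atlas of \Cref{prop:mobius_atlas}. This is possible because $K$ is compact and contained in the interior. In each chart, define $g_a(x) := \norm{\nabla(\DT_\gamma\circ\inv{\varphi_a})(x)} + \abs{\det D^2(\DT_\gamma\circ\inv{\varphi_a})(x)}$. The function $g_a$ is continuous on the compact set $\varphi_a(K_a)$, because $\DT_\gamma$ is $C^2$. It is also strictly positive: at a point where the gradient vanishes, the hypothesis that $\DT_\gamma\rvert_{\interior{\Mob}}$ is Morse forces the determinant to be non-zero. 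Hence $g_a \geq c_a > 0$. Set $c = \min_a c_a$.

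\textbf{Step 2: uniform closeness of derivatives.} By \Cref{prop:DT_Ck_continuous}, $\DT$ is continuous from $C^k(S^1,\R^d)$ to $C^k(\Mob,\R)$ in the Whitney topology, and $k \geq 2$. Given $\epsilon>0$, take the finite intersection of weak subbasic neighbourhoods $\bigcap_a \cN(\DT_\gamma;(\varphi_a,W_a),K_a,\epsilon)$ in $C^k(\Mob,\R)$. Its preimage contains an open neighbourhood $\sW$ of $\gamma$, which we intersect with the open set of embeddings. For $\tilde\gamma\in\sW$, the first and second derivatives of $\DT_{\tilde\gamma}\circ\inv{\varphi_a}$ are then $\epsilon$-close to those of $\DT_\gamma\circ\inv{\varphi_a}$, uniformly on $\varphi_a(K_a)$. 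Alternatively, the explicit formulas \cref{eq:dF,eq:d2F} show directly that these derivatives are polynomial in $\gamma,\dot\gamma,\ddot\gamma$.

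\textbf{Step 3: perturbation bound and conclusion.} The determinant of a $2\times2$ matrix is locally Lipschitz on bounded sets, and the Hessian entries of $\DT_\gamma$ are bounded on the compact set $\varphi_a(K_a)$. So we can choose $\epsilon$ small enough that $\abs{\tilde g_a - g_a} < c/2$ on $\varphi_a(K_a)$, which gives $\tilde g_a > c/2 > 0$. Consequently any critical point of $\DT_{\tilde\gamma}$ in $K$ has Hessian determinant of absolute value greater than $c/2$, and so is non-degenerate. Non-degeneracy is chart-independent, so checking it in one chart suffices.

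The only delicate point is Step 1: combining "gradient small or determinant non-zero" into a single continuous positive function on a compact set. This is exactly where compactness of $K$, and its separation from $\partial\Mob$, are used. Everything else is routine continuity.
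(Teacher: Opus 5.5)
Your proposal is correct and follows essentially the same route as the paper: a continuous positive quantity combining gradient and Hessian determinant (the paper uses $(\det H)^2 + \norm{\nabla f}^2$) has a positive minimum on compact chart pieces of $K$, and stays positive for $\tilde\gamma$ close to $\gamma$ because the derivatives of $\DT_{\tilde\gamma}$ in compatible charts are polynomial in those of $\tilde\gamma$. Your choice of compact $K_a \subset W_a$ is slightly more careful than the paper's $K_j = W_j \cap K$, which need not be compact since $W_j$ is open.
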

\begin{proof}

     Consider the compatible pair of atlases $\Theta = \sett{(\theta_i, U_i)}$ for $S^1$, and $\Phi = \sett{(\varphi_j, W_j)}$ for $\Mob$, as described in~\Cref{prop:mobius_atlas}. Recall for any $j$ there is some pair of $i,i'$ such that $\varphi_j(W_j) \subset \theta_i(U_i) \times \theta_j(U_j) \subset \R^2$. For $f \in C^k(\Mob, \R)$, let $f_j = f \circ \inv{\varphi_j}: \varphi_j(W_j) \to \R$.  The map $f$ being Morse on the interior of $\Mob$ implies on $W_j \cap \interior{\Mob}$,  
     \begin{equation}
         \mu_j(t_1, t_2) := (\det H_{f_i}\rvert_{(t_1, t_2)})^2 + \norm{\nabla{f_j}}^2 > 0. \tag{$\star$}
     \end{equation}
     Since $W_j$ is an open cover of $\Mob$, consider the cover of $K$ given by $K_j = W_j \cap K$, and $\varphi_j(K_j)$. Because $K_j$ is compact and $\varphi_j$ a diffeomorphism,  $\varphi_j(K_j)$ is compact. Since $f_j$ is in $C^2$, and the R.H.S of $(\star)$ is a polynomial is the first and second derivatives of $f_j$, the function $\mu_j$ is also continuous. Due to $\varphi(K_j)$ being compact, the minimum of $\mu_j$ on  $\varphi_j(K_j)$ is attained by some $\eta_j > 0$. 

     Consider another map $\tilde{f} \in C^2(\Mob, \R)$ and define $\tilde{\mu}_j$ analogously.
     Since $\mu_j$ is a polynomial in the first and second derivatives of $f_j$, there is some choice of $\epsilon_i > 0$ such that if 
     \begin{equation*}
         \norm{D^rf_j - D^r\tilde{f} _j} <\epsilon_i,\ \text{ on } \varphi_j(K_j),\ r = 1,2 \tag{$\star \star$}
     \end{equation*}
     then $\tilde{\mu}_j > \eta_j/2 > 0$ on $\varphi_j(K_j)$.

     Consider then $f = \DT_\gamma$. Recall from~\Cref{prop:mobius_atlas} that the $r$\textsuperscript{th}-order derivatives of $f_j$ is also a polynomial in components of $\gamma_i := \gamma \circ \theta_i$ and $\gamma_{i'} :=\gamma \circ \theta_{i'}$, along with their derivatives up to order $r$. Since $\varphi_j(K_j) \subset \theta_i(U_i) \times \theta_{i'}(U_{i'})$, consider $Q_{jl} \subset \theta_l(U_l)$, which is the projection of $\varphi_j(K_j)$ onto one of the factors in $\theta_i(U_i) \times \theta_{i'}(U_{i'})$ for $l = i,i'$. Note that $Q_{jl}$ is compact, because $\varphi_j(K_j)$ is compact, and $\theta_l$ is continuous. By continuity of polynomials there is some $\delta_j > 0$ such that if 
    \begin{equation*}
         \norm{D^r
         \gamma_l - D^r\tilde{\gamma}_l } <\delta_j,\ \text{ on } Q_{jl},\ r = 0,1,2,\ l = i,i' \tag{$\star \star \star $},
     \end{equation*}
     then $(\star \star)$ holds. This in turn implies $\tilde{\mu_j}  > 0$ for $\tilde{f} = \DT_{\tilde{\gamma}}$, which is equivalent to saying all critical points of $ \DT_{\tilde{\gamma}}$ in $K_j$ are non-degenerate. 

     We now observe that the set of embeddings $\tilde{\gamma}$ satisfying  $(\star \star \star)$ constitute a $C^{k\geq 2}$ weak subbasic neighbourhood $\sW_{j}$ of $\gamma$, on which all critical points of $\DT_{\tilde{\gamma}}$ in $K_j$ are non-degenerate. Because $K$ is compact, it is covered by finitely many $K_j = K \cap W_j$. Therefore on the finite intersection $\sW = \cap_{i \in \cI} \sW_j$, which is again a neighbourhood of $\gamma$ in Whitney $C^k$-topology, for all $\tilde{\gamma} \in \sW$,  all critical points of $ \DT_{\tilde{\gamma}}$ in $K$ are non-degenerate.
\end{proof}

\label{ssec:smooth_open}
\begin{proposition} \label{prop:smooth_open}
        For $k \geq 2$, the subset of embeddings $\Xi \subset \Emb{k}{S^1}{\R^d}$ on which the distance transform is Morse-supported is open in Whitney $C^k$-topology.
\end{proposition}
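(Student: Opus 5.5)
Fix $\gamma \in \Xi$; the plan is to build an open neighbourhood of $\gamma$ in $\Emb{k}{S^1}{\R^d}$ contained in $\Xi$ by combining \Cref{lem:smooth_open_isolating} and \Cref{lem:smooth_morse_compact}. Since \labelcref{M3} holds for every $C^k$-embedding with $k \geq 2$ (\Cref{prop:M3}), and \labelcref{M1} implies \labelcref{M2} for such embeddings (\Cref{cor:M1impliesM2}), it suffices to show that on this neighbourhood every interior critical point of $\DT_{\tilde\gamma}$ is non-degenerate, i.e.\ that \labelcref{M1} persists.

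First, because $\gamma \in \Xi$, $\DT_\gamma$ has only finitely many interior critical points by \labelcref{M2}. \Cref{lem:smooth_open_isolating} therefore gives an open neighbourhood $\sU$ of $\gamma$ and a neighbourhood $L$ of $\partial\Mob$ such that $\DT_{\tilde\gamma}$ has no critical points in $L \setminus \partial\Mob$ for every $\tilde\gamma \in \sU$. Shrinking $L$ to an open neighbourhood of $\partial \Mob$ if necessary, set $K := \Mob \setminus \interior{L}$. Then $K$ is closed in the compact space $\Mob$, hence compact, and $K \subset \interior{\Mob}$ because $L \supset \partial\Mob$.

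Second, since $\DT_\gamma$ is Morse on $\interior{\Mob}$ by \labelcref{M1}, \Cref{lem:smooth_morse_compact} applied to this compact $K$ yields an open neighbourhood $\sW$ of $\gamma$ (open in the Whitney $C^k$-topology, which is finer than $C^2$ for $k\ge 2$) on which every critical point of $\DT_{\tilde\gamma}$ lying in $K$ is non-degenerate. For $\tilde\gamma \in \sU \cap \sW$, any interior critical point of $\DT_{\tilde\gamma}$ lies either in $L \setminus \partial\Mob$, which is impossible, or in $K$, where it is non-degenerate. Hence \labelcref{M1} holds for $\tilde\gamma$, and by the first step \labelcref{M2,M3} follow, so $\sU \cap \sW \subset \Xi$.

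The only delicate points are bookkeeping. One must ensure $K$ is compact and disjoint from $\partial\Mob$, which the choice of $L$ as a neighbourhood guarantees. One must also check that the neighbourhoods produced by the two lemmas are open in the same (Whitney $C^k$) topology on $\Emb{k}{S^1}{\R^d}$. This holds because $S^1$ is compact, so the weak and strong topologies coincide, and the finite intersections of weak subbasic sets used in \Cref{lem:smooth_morse_compact} are therefore open. I expect no substantial obstacle beyond this.
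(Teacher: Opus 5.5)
Your proposal is correct and follows essentially the same argument as the paper. It uses \Cref{lem:smooth_open_isolating} to get a uniform boundary neighbourhood $L$ free of interior critical points, applies \Cref{lem:smooth_morse_compact} on a compact $K \subset \interior{\Mob}$ covering the rest of the interior, intersects the two neighbourhoods, and recovers the remaining conditions from \Cref{cor:M1impliesM2} and \Cref{prop:M3}. Your explicit choice $K = \Mob \setminus \interior{L}$ makes precise the paper's somewhat loosely stated covering requirement on $L$ and $K$.
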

\begin{proof}
Because the distance transform $\DT_\gamma$ of $\gamma \in \Xi$ has finitely many critical points on $\interior{\Mob}$, consider the open neighbourhood $\sU \subset 
\Emb{k}{S^1}{\R^d}$ of $\gamma$ as described in \Cref{lem:smooth_open_isolating}: for all $\tilde{\gamma} \in \sU$, there exists a neighbourhood $L$ of $\partial \Mob$ such that  $\DT_{\tilde{\gamma}}$ has no critical points on $L$. Then consider $K \subset \interior{\Mob}$ such that $\interior{L} \cup \interior{K} = \interior{\Mob}$. Because $\DT_\gamma\rvert_{\interior{\Mob}}$ is Morse due to $\gamma \in \Xi$, by~\Cref{lem:smooth_morse_compact} there exists an open neighbourhood $\sW \subset 
\Emb{k}{S^1}{\R^d}$ of $\gamma$, such that for all $\tilde{\gamma} \in \sW$, all critical points of $\DT_{\tilde{\gamma}}$ in $K$ are non-degenerate. Consider then  the open neighbourhood $\sU \cap \sW$ of $\gamma$ in $\Emb{2}{S^1}{\R^d}$, and $\tilde{\gamma} \in \sU \cap \sW$. Since $\interior{L} \cup \interior{K} = \interior{\Mob}$ and there are no critical points of $\DT_{\tilde{\gamma}}$ on $L$, all critical points of $\DT_{\tilde{\gamma}}$ in $\interior{\Mob}$ are non-degenerate and thus $\DT_{\tilde{\gamma}}\rvert_{\interior{\Mob}}$ is Morse. Thus for any $\gamma \in \Emb{k}{S^1}{\R^d}$, there is an open neighbourhood $\sU \cap \sW$ on which $\DT_{\tilde{\gamma}}$ satisfies~\labelcref{M1} for $\tilde{\gamma} \in \sU \cap \sW$. Since~\labelcref{M3} is automatically satisfied (\Cref{prop:M3}), and~\labelcref{M1} implies~\labelcref{M2} (\Cref{cor:M1impliesM2}), we conclude that the subset $\Xi$ on which~\labelcref{M1,M2,M3} are all satisfied for their distance transforms is open in Whitney $C^k$-topology. 
\end{proof}

\subsubsection{Density}\label{ssec:smooth_dense}
Having shown that Morse-supported embeddings are an open subset of embeddings, we now show that they are dense using the multi-jet transversality theorem of~\Cref{thm:damon}. We apply it to show that embeddings whose \functionname has no interior degenerate critical points (satisfying~\labelcref{M1}) are dense in Whitney $C^k$-topology. We recall that~\Cref{thm:damon} relates to maps whose multi-jets intersect a Whitney-stratified space in multi-jet space transversally. Our proof proceeds by framing embeddings whose \functionname violate~\labelcref{M1} as those which intersect a Whitney stratified subset $\chi_0$ of the multi-jet space $\Jet{2}{2}{S^1}{\R^d}$. We call this the \emph{degenerate locus} $\chi_0$. By showing this in~\Cref{lem:degenerate_locux_M1}, we convert the problem into showing that the embeddings whose multi-jets avoid $\chi_0$ in $\Jet{2}{2}{S^1}{\R^d}$ are dense. Using dimensionality arguments, we then show in~\Cref{prop:crit_ndg} that for embeddings, the avoidance of $\chi_0$ is equivalent to transverse intersection of the mult-jet with $\chi_0$. Finally, because~\Cref{prop:crit_ndg} implies transverse intersections with $\chi_0$ are dense, we show in~\Cref{thm:crit_ndg} that embeddings whose multijets avoid $\chi_0$ are dense in Whitney $C^k$-topology, and thus the subset of embeddings with Morse-supported \functionname are dense in Whitney $C^k$-topology.

\paragraph{The Degenerate Locus}
Let us consider the {two-fold jet bundle} \Jet{2}{2}{S^1}{\R^d} for maps in $\FSpace{k}{S^1}{\R^d}$ for $k \geq 2$. Let $Z = \sett{(\zeta_i, Z_i)}$ be an atlas of $\Jet{2}{2}{S^1}{\R^d}$ induced by an atlas $\Theta = \{(\theta_i, U_i)\}$ on $S^1$. Consider the prolongation $\prolong{2}{2} f : \Conf{2}{S^1} \to \Jet{2}{2}{S^1}{\R^d}$ of a map $f \in \FSpace{k}{S^1}{\R^d}$. On a chart $\zeta_i: Z_i \to \R^{6d + 2}$, we can express $\prolong{2}{2} f$ in local coordinates as
\begin{align}
      \zeta((\prolong{2}{2} f)(\fibre{\theta_1}{t_1}, \fibre{\theta_1}{t_1}) &= (t_1, f(t_1),\dot{f}(t_1),\ddot{f}(t_1), f(t_2), f(t_2),  \dot{f}(t_2), \ddot{f}(t_2)), 
\end{align}
(here we abuse notation and let $f(t_i) := f \circ \inv{\theta}(t_i)$, and similarly for the derivatives; see~\cref{eq:abuse_of_notation}). We let $(t_1,  x_1,x_1',x_1'', t_2, x_2,x_2',x_2'')$
denote the local coordinates on any such chart, where $t_i \in \R$ are local coordinates for $S^1$, and $x_i,x_i',x_i'' \in \R^d$.

We aim to represent the degenerate locus $\chi_0$ as a set of polynomial constraints on each chart. We first consider critical points of $\DT_f$, and represent them as the intersection between the image of $\prolong{2}{2}f$ and the \emph{critical locus} $C_0$. Due to the characterisation of gradients in~\cref{eq:dF}, consider a local map $C_i: \zeta_i(Z_i) \to \R^2$, given in local coordinates by 
\begin{equation}
    C_i(t_1,  x_1,x_1',x_1'', t_2, x_2,x_2',x_2'') = \pmqty{x_1' \vdot v \\ {x_2'} \vdot {v}} \qq{where} v = x_1 - x_2.
\end{equation}
For a fixed $f \in C^k(S^1,\R^d)$, the evaluation of $C_i \circ \inv{\zeta_i}$ on $\prolong{2}{2} f$ yields the gradients of $\DT_f$ in local coordinates given by the atlas $\Theta$ on $S^1$. We note that the zero-set of the gradient $\fibre{C_i}{0}$ is independent on the choice of chart $\zeta_i$ on $Z_i$. 

We can similarly represent the degeneracy of the Hessian as a polynomial constraint on each chart. Let us write the Hessian matrix of $\DT_f$ in local coordinates as a map on each chart $H_i: \zeta_i(Z_i) \to \R^3$, valued in the space of real symmetric $2\times 2$ matrices:
\begin{align}
    H_i(t_1,  x_1,x_1',x_1'', t_2, x_2,x_2',x_2'')   &= \pmqty{p(v, x_1', x_1'') & r(x_1', x_2') \\ r(x_1', x_2') & q(v, x_2', x_2'') } 
\end{align}
where functions $p,q,r$ are defined in~\cref{eq:pqr}. For a fixed $f \in C^k(S^1,\R^d)$, the evaluation of $H_i \circ \inv{\zeta_i}$ on $\prolong{2}{2} f$ yields the Hessian of $\DT_f$ in local coordinates.  Let $D_i := \det \circ H_i : \zeta_i(Z_i) \to \R$, where $\det(p,q,r) = pq-r^2$. Then the zero-set $\fibre{D_i}{0}$ characterises where maps have degenerate Hessians, and is independent of the choice of chart $\zeta_i$ on $V_i$.  

Having assembled maps $C_i, D_i$ on which characterise critical points and degeneracy of Hessians on $V_i$, we can define the {degenerate locus} $\chi_0$. Consider $\chi_i = (C_i, D_i) \to \R^2$. The intersection of $\prolong{2}{2} f$ with $\fibre{\chi_i}{0,0}$ characterises the degenerate critical points of $\DT_f$ on the local coordinate patch $Z_i$ of the jet space. Given an atlas $Z = \sett{(\zeta_i, Z_i)}$, we define the degenerate locus across the whole  $\Jet{2}{2}{S^1}{\R^d}$ by gluing together the zero sets across each $Z_i$:
\begin{equation} \label{eq:degenerate_locux_union}
    \chi_0 = \bigcup_i \inv{\zeta_i}(\fibre{\chi_i}{0})
\end{equation}
We remark that $\chi_0$ is independent of choice of atlas. Since $\chi_0$ is an algebraic set on any chart, it is a Whitney stratified subset of \Jet{2}{2}{M}{\R^d}~\cite[p.36-38]{Wall1975-rt}. 

\begin{lemma} \label{lem:degenerate_locux_M1} For $k\geq 2$ and $f \in \FSpace{k}{S^1}{\R^d}$, its \functionname $\DT_f: \ExpS{2}{S^1} \to \R$ has no degenerate critical points on its interior $\UConf{2}{S^1}$ (\labelcref{M1}), if and only if  $\prolong{2}{2}f \cap \chi_0 = \emptyset$ .
\end{lemma}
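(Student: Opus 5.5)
The plan is to unwind the definitions chart by chart, so that both conditions in \Cref{lem:degenerate_locux_M1} become the same pointwise statement about pairs $(t_1,t_2)$ with $t_1\neq t_2$. The key link is the relationship between the prolongation $\prolong{2}{2}f$ on $\Conf{2}{S^1}$ and the \functionname on $\UConf{2}{S^1}$. The quotient $q$ restricted to $\Conf{2}{S^1}$ is a double cover of $\UConf{2}{S^1}=\interior{\ExpS{2}{S^1}}$; this follows from \Cref{prop:fn_top_homeo} and \Cref{prop:boundary}, together with part (iii) of \Cref{prop:Morton}, which identifies $q(\Delta)$ with $\partial\ExpS{2}{S^1}$. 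Hence every interior point $w=\ucpt{z_1,z_2}$ has exactly two preimages, $(z_1,z_2)$ and $(z_2,z_1)$, and these lie in $\Conf{2}{S^1}$.

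First, fix an interior point $w$ and a chart $\varphi_j=\vartheta_{ab}\circ\psi_j$ from \Cref{prop:mobius_atlas} around it, with local coordinates $(t_1,t_2)$. In these coordinates $\DT_f\circ\inv{\varphi_j}$ is (up to the harmless factor $\tfrac12$) the function $\tfrac12\norm{f(t_1)-f(t_2)}^2$ on $S^1\times S^1$. Its gradient is given by \cref{eq:dF} and its Hessian by \cref{eq:d2F}. By the definitions of $C_i$ and $H_i$, evaluating them on $\zeta_i\circ\prolong{2}{2}f$ at $(\inv{\theta}(t_1),\inv{\theta}(t_2))$ returns exactly these expressions.

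It follows that $w$ is a critical point of $\DT_f$ if and only if $C_i$ vanishes at the jet $\prolong{2}{2}f(z_1,z_2)$. Likewise, $w$ is degenerate if and only if $D_i=\det\circ H_i$ vanishes there. Non-degeneracy is chart independent, since under a change of chart a Hessian at a critical point transforms by congruence $J^{T}HJ$, and the same argument gives the independence of the zero sets noted in the text. Therefore $w$ is a degenerate interior critical point exactly when $\prolong{2}{2}f(z_1,z_2)\in\chi_0$, using \cref{eq:degenerate_locux_union}.

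Both directions now follow. If $\prolong{2}{2}f$ meets $\chi_0$ at some $(z_1,z_2)\in\Conf{2}{S^1}$, then $q(z_1,z_2)$ is an interior degenerate critical point, so \labelcref{M1} fails. Conversely, a degenerate interior critical point $w$ lifts to $(z_1,z_2)$ with $z_1\neq z_2$, and this lift gives a point of $\prolong{2}{2}f\cap\chi_0$.

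Two small points need checking. The first is symmetry: swapping $(z_1,z_2)$ exchanges $p$ with $q$ (since $v\mapsto -v$) and leaves $r$ fixed. So $C$ is permuted and $\det H$ is unchanged, and the condition does not depend on which lift is taken. The second is that for an interior point $t_1\neq t_2$ we can pick $\theta$-charts with $U_a\cap U_b$ small enough, so that the pair lies in a single jet chart $Z_i$.

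I expect the main obstacle to be bookkeeping rather than mathematics. The task is to verify carefully that the chart $Z_i$ of the multi-jet space and the chart $\varphi_j$ of $\Mob$ share the same $\Theta$ coordinates. This is exactly the compatibility supplied by \Cref{prop:mobius_atlas}. Once that is in place, the evaluation $H_i\circ\zeta_i\circ\prolong{2}{2}f$ is literally the coordinate Hessian of $\DT_f\circ\inv{\varphi_j}$, with no Jacobian correction.
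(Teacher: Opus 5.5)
Your proposal is correct and follows essentially the same route as the paper. Both arguments pass through the double cover $\Conf{2}{S^1}\to\UConf{2}{S^1}$, and both observe that $\chi_i$ evaluated on $\prolong{2}{2}f$ in product coordinates gives exactly the gradient (up to a sign in the second component of $C_i$) and the Hessian determinant of the lifted chordal distance transform, so that meeting $\chi_0$ is equivalent to having a degenerate interior critical point. Your additional checks make explicit what the paper's short proof leaves implicit: chart independence at critical points, the $p\leftrightarrow q$ symmetry when the lift is swapped, and the compatibility of the M\"obius charts with the $\Theta$-induced jet charts via \Cref{prop:mobius_atlas}.
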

\begin{proof}
    Recall $\UConf{2}{S^1} \subset \ExpS{2}{S^1}$ is double-covered by $\Conf{2}{S^1}$. A point $\ucpt{z_1,z_2} \in \UConf{2}{S^1} = \interior{\ExpS{2}{S^1}}$ is a degenerate critical point of $\DT_f$, iff the lift of $\DT_f$ to $\Conf{2}{S^1}$ has $(z_1,z_2)$ as degenerate critical point. Consider then $\prolong{2}{2}f: \Conf{2}{S^1} \to \Jet{2}{2}{S^1}{\R^d}$.  We observe then that $(z_1,z_2)$ is a degenerate critical point of the lift of $\DT_f$ on $\Conf{2}{S^1}$, iff on a chart for $(z_1,z_2)$, 
    \begin{equation}
        \chi_i(\zeta((\prolong{2}{2} f)(z_1,z_2))= 0.
    \end{equation}
    This follows directly from the construction of $\fibre{\chi_i}{0}$. Because $\chi_0$ is the union over all fibres of $\chi_i$~\cref{eq:degenerate_locux_union}, an multijet $\prolong{2}{2}f$ has empty intersection with $\chi_0$, iff $\DT_f$ has no degenerate critical points. 
\end{proof}

\paragraph{The Degenerate Locus Near Multi-Jets of Embeddings}
We now work to show that the transverse intersection between the degenerate locus $\chi_0$ and a jet $\prolong{2}{2}f$ is empty if $f$ is a $C^2$-embedding. We proceed by describing the local topology of $\chi_0$ near a point $\xi \in \chi_0 \cap \prolong{2}{2}f$. It suffices to work on a local chart $\zeta_i: Z_i \to \R^{6d+2}$, where $\chi_0 \cap V_i$ can be described by $C_i = D_i = 0$. We first focus on the condition where $D_i = 0$, expressing the degeneracy of the Hessian of $\DT_f$. Since the Hessian is a $2 \times 2$ matrix, a degenerate Hessian is either zero, or has rank one. We consider the two cases individually. 

\begin{lemma} \label{lem:chi_mfd} Suppose $f \in \Emb{k}{S^1}{\R^d}$ for $k \geq 2$, and consider $\xi \in \chi_0 \cap \prolong{2}{2}f \cap V_i$. If $H_i(\xi) \neq 0$, then $\chi_0$ on a neighbourhood of $\xi$ is a codimension-3 submanifold of $\Jet{2}{2}{S^1}{\R^d}$. 
\end{lemma}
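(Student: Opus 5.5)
In local coordinates $(t_1,x_1,x_1',x_1'',t_2,x_2,x_2',x_2'')$ on the chart $\zeta_i(Z_i)\subset\R^{6d+2}$, the set $\chi_0$ is the zero set of $\chi_i=(C_i,D_i):\zeta_i(Z_i)\to\R^3$, where $C_i=(x_1'\cdot v,\,x_2'\cdot v)$ with $v=x_1-x_2$, and $D_i=pq-r^2$. The plan is to invoke the regular value theorem: it suffices to show that $\dd{\chi_i}$ has rank $3$ at $\xi$. Then, by continuity of the derivative, $\dd{\chi_i}$ has rank $3$ on a neighbourhood $N$ of $\xi$, and $\chi_0\cap N=\fibre{\chi_i}{0}\cap N$ is a codimension-3 submanifold.

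The key is to choose three directions of variation in which the Jacobian is visibly triangular. First record what the hypotheses give at $\xi$. Since $\xi$ lies on $\prolong{2}{2}f$ with $f$ an embedding, we have $x_1'=\dot f(t_1)\neq0$, $x_2'=\dot f(t_2)\neq 0$, and $v=f(t_1)-f(t_2)\neq0$, because points of $\Conf{2}{S^1}$ have $t_1\neq t_2$ and $f$ is injective.

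\textbf{Step 1 (rank of $\dd C_i$).} Vary $x_1$ only, keeping $x_2$ fixed, and then vary $x_2'$ only. We have $\partial_{x_1'}C_i=(v^T,0)$ and $\partial_{x_2'}C_i=(0,v^T)$. Since $v\neq0$, these two blocks already give rank $2$ for $\dd C_i$ using only the $x_1'$ and $x_2'$ directions.

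\textbf{Step 2 (a third, independent direction).} I would use the second-derivative coordinates $x_1''$ and $x_2''$, which do not appear in $C_i$ at all. We have $\partial_{x_1''}D_i=q\,v^T$ and $\partial_{x_2''}D_i=-p\,v^T$. So if $(p,q)\neq(0,0)$ at $\xi$, some direction $x_j''\propto v$ changes $D_i$ at nonzero rate while leaving $C_i$ fixed. Then $\dd\chi_i$ is block triangular, with the $x_1',x_2'$ columns controlling $C_i$ and an $x''$ column controlling $D_i$, and the rank is $3$.

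\textbf{Step 3 (main obstacle: the case $p=q=0$).} Here $H_i(\xi)\neq0$ with $D_i=0$ forces $r^2=pq=0$, so $r=0$ and $H_i=0$, contradicting the hypothesis. So the hypothesis $H_i(\xi)\neq0$ is exactly what excludes this case, and it needs no further work. The remaining care is in Step 2: the $x''$-variation must genuinely not disturb $C_i$, and the $x'$-variations used in Step 1 may disturb $D_i$ (through $p,q,r$), but that is harmless. Ordering the columns as ($x_1'$ direction along $v$, $x_2'$ direction along $v$, $x_j''$ direction along $v$) and the rows as ($C^{(1)},C^{(2)},D$) gives a lower-triangular $3\times3$ minor with diagonal entries $\norm{v}^2,\norm{v}^2,\pm(q\text{ or }p)\norm{v}^2$, all nonzero.

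Finally, independence of the chosen chart follows because $\chi_0$ was shown to be chart independent, so the conclusion holds on a neighbourhood of $\xi$ in $\Jet{2}{2}{S^1}{\R^d}$.
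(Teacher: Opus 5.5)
Your proposal is correct and follows essentially the same route as the paper. Both arguments use $v\neq 0$ together with the fact that $\det H_i(\xi)=0$ and $H_i(\xi)\neq 0$ force $(p,q)\neq(0,0)$; this shows $\dd{\chi_i}$ has rank $3$ at $\xi$, and the submersion (regular value) theorem then gives the codimension-$3$ submanifold. Your explicit lower-triangular $3\times 3$ minor in the $x_1',x_2',x_j''$ directions simply makes precise the paper's assertion that the rows of the Jacobian are linearly independent. (In Step 1, ``vary $x_1$ only'' should read $x_1'$.)
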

\begin{proof}
Since $H_i$ is degenerate and non-zero on $\xi$, we restrict ourselves to considering intersections $\xi \in \chi_0 \cap \prolong{2}{2}f$, at which $H_i$ has rank 1. The Jacobian of $\chi_i$ at $\xi$ is given by
    \begin{equation} \label{eq:dchi}
        \dd{\chi_i} = \pmqty{\dd{C_i} \\ \dd{D_i}} = 
                    \pmqty{x_1' & v & 0  & 0 & 0\\
                           x_2' & 0  & v & 0 & 0 \\
                           qx_1'' - px_2'' & 2(qx_1' - rx_2') & 2(px_2' - r x_1') & qv & -pv }\pmqty{\dd{v} \\ \dd{x_1'} \\ \dd{x_2'} \\ \dd{x_1''} \\ \dd{x_2''} }
    \end{equation}
    As $H(\xi) \neq 0$, the constraint that $D_i(\xi) = \det H_i(\xi) = pq-r^2 = 0$ implies at least one of $p,q \neq 0$. Furthermore, because $v \neq 0$, the row vectors in the Jacobian are linearly independent and thus the Jacobian has rank 3 at $\xi$. Since $H \neq 0$ is an open condition, $\chi_i: \zeta_i(Z_i) \to \R^3$ is a submersion on some open ball $U$ around $\xi$. By the submersion theorem, $\fibre{\chi_i}{0} \cap U$ is a submanifold with codimension 3. 
\end{proof}

We proceed to describe the case where $H_i(\xi) = 0$ for $\xi \in \chi_0 \cap \prolong{2}{2}f \cap V_i$. Since $H(\xi)= 0$ iff $p(\xi) = q(\xi) = r(\xi) = 0$, we see by substitution into \cref{eq:dchi} is $\dd{\chi}$ rank 2 at $\xi$ and no longer surjective, thus $\xi$ is no longer a regular point of $\chi$, and the argument in \Cref{lem:chi_mfd} no longer applies in this case. We deal with this by explicitly describing the local stratification of $\chi_0$ on a neighbourhood of $\xi$. 

\begin{lemma}\label{lem:chi_sing} Consider $\xi \in \chi_0 \cap V_i \cap \prolong{2}{2}f$ for $f \in \Emb{k}{S^1}{\R^d}$, for $k \geq 2$. If the Hessian $H_i(\xi) = 0$ vanishes at $\xi$, then $\xi$ lies on a co-dimension 5 stratum of $\chi_0$, and is a singular point of $\chi_0$.
\end{lemma}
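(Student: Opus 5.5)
Near $\xi$ I would work in the chart $\zeta_i$, with coordinates $(t_1,x_1,x_1',x_1'',t_2,x_2,x_2',x_2'')$ and $v=x_1-x_2$. The plan is to show that the locus where the whole Hessian vanishes together with the gradient,
\begin{equation*}
    \Sigma := \{C_i = 0,\ p=0,\ q=0,\ r=0\},
\end{equation*}
is, near $\xi$, a smooth submanifold of codimension $5$. Since $\Sigma\subset\fibre{\chi_i}{0}$, it is a natural candidate stratum of $\chi_0$; I then show it is where $\chi_0$ fails to be a manifold.

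\textbf{Step 1 (regularity of the five equations).} At $\xi$ we have $v\neq 0$, because $\xi$ lies over $\Conf{2}{S^1}$ and $f$ is injective. We also have $x_1',x_2'\neq 0$, because $f$ is an immersion. Moreover $r(\xi)=-\innerprod{x_1'}{x_2'}=0$. I would write the Jacobian of $(x_1'\vdot v,\ x_2'\vdot v,\ p,\ q,\ r)$ with respect to $(x_1'',x_2'',x_1',x_2',v)$:
\begin{itemize}
    \item $\dd p$ contains $v\vdot\dd x_1''$, and $\dd q$ contains $-v\vdot\dd x_2''$. These are the only rows involving $x_1''$ and $x_2''$, and they are nonzero because $v\neq0$.
    \item $\dd r=-x_2'\vdot\dd x_1'-x_1'\vdot\dd x_2'$, while the two critical-point rows involve $v\vdot\dd x_1'$ and $v\vdot\dd x_2'$ respectively.
\end{itemize}
So the five rows are independent provided the $3$ covectors $(v,0)$, $(0,v)$, $(x_2',x_1')$ on $\R^d\times\R^d$ are linearly independent. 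Suppose $(x_2',x_1')=\alpha(v,0)+\beta(0,v)$. Then $x_2'=\alpha v$ and $x_1'=\beta v$, and the critical-point equations $x_i'\vdot v=0$ force $\alpha=\beta=0$. This contradicts $x_i'\neq0$. Hence the map has rank $5$ at $\xi$, and by the submersion theorem $\Sigma$ is a codimension-$5$ submanifold near $\xi$. This dimension argument is where the embedding hypothesis enters, and it is the step to check carefully. In particular, when $d=1$ the three covectors live in $\R^2$ and cannot be independent, so the $d=1$ case is excluded separately: there no $C^2$ embedding of $S^1$ exists.

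\textbf{Step 2 (singularity).} First, $\Sigma\subset\chi_0$, since $C_i=0$ and $pq-r^2=0$ hold there. Next, $\fibre{\chi_i}{0}$ near $\xi$ has regular points of codimension $3$ arbitrarily close to $\xi$. By the submersion argument of \Cref{lem:chi_mfd}, these are the points where $H\neq0$, and such points exist nearby: perturb $x_1''$ so that $p\neq 0$ while keeping $C_i=0$, then adjust $x_2''$ so that $q=r^2/p$. Thus $\chi_0$ is $3$-codimensional along a dense open part near $\xi$, but contains the $5$-codimensional piece $\Sigma$ through $\xi$.

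To see that $\xi$ is genuinely singular, restrict to the transverse slice given by the normal coordinates $(p,q,r)$ inside $\{C_i=0\}$. This is possible because Step 1 shows $(C_i,p,q,r)$ is a submersion. In the slice, $\chi_0$ is locally $\{C_i=0\}\times\{pq=r^2\}$, the product of a manifold with a quadric cone in $\R^3$. The cone is not a topological manifold at its vertex: removing the vertex disconnects it into two nappes. Hence $\chi_0$ is singular at $\xi$.

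\textbf{Step 3 (stratification).} Finally, the decomposition $\{\chi_0\setminus\Sigma,\ \Sigma\}$ locally agrees with the Whitney stratification of the algebraic set $\chi_0$. This holds because the cone over a smooth conic with a product factor is Whitney regular along its vertex line. Therefore $\xi$ lies on a codimension-$5$ stratum.
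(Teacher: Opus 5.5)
Your proposal is correct and follows essentially the paper's route: you show $\Upsilon=(H_i,C_i)=(p,q,r,C_i)$ is a submersion near $\xi$ using $v\neq 0$ and $x_1',x_2'\neq 0$, so that $\chi_0$ is locally $\{pq=r^2\}\times\{0\}^2\times\R^{n-5}$ (i.e.\ $\Sigma\times$cone, not $\{C_i=0\}\times$cone as you wrote). You then stratify the cone as vertex plus complement; the paper verifies Whitney (B) explicitly via the conical structure, whereas you only assert it. The differences are minor: your rank computation rules out dependence using the critical-point equations $x_i'\cdot v=0$ instead of the paper's $\mathrm{d}v$-column argument, and you add an explicit argument that $\xi$ is a non-manifold point, which the paper leaves implicit.
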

\begin{proof}
    In this proof we let $n = 6d+2$ be the dimension of $\Jet{2}{2}{S^1}{\R^d}$. Consider $\xi \in \chi_0 \cap V_i \cap \prolong{2}{2}f$, which is equivalent to  $\xi \in \fibre{\chi_i}{0} \cap \prolong{2}{2}f$.  Let us rewrite the map $\chi_i: V_i \to \R^3$ as the composition
    \begin{equation*}
        \chi_i : V_i \xrightarrow{(H_i, C_i)} \R^3 \times \R^2 \xrightarrow{(\det, \Id)} \R \times \R^2. 
    \end{equation*}
    Let $\Upsilon = (H_i, C_i)$. We can then write $\fibre{\chi_i}{0}$ as 
    \begin{equation*}
        \fibre{\chi_i}{0} = \fibre{((\det, \Id) \circ \Upsilon)}{0} = \fibre{\Upsilon}{\fibre{(\det, \Id)}{0}} = \fibre{\Upsilon}{\fibre{{\det}}{0} \times \Bqty{(0,0)}}.  
    \end{equation*}
    Furthermore, if $H_i(\xi) = 0$, then $\xi \in \fibre{\Upsilon}{0}$. We show that $\Upsilon$ is a submersion on a neighbourhood $U$ of $\xi$. We first compute the Jacobian matrix of $\Upsilon$:
    \begin{equation}
      \dd\Upsilon =  \pmqty{\dd{H_i} \\ \dd{C_i}} = \pmqty{\dd{p} \\ \dd{q} \\ \dd{r} \\ \dd{(x_1' \vdot v)} \\ \dd{(x_2' \vdot v)}} = \pmqty{x_1'' & 2x_1' & 0 & v & 0 \\
             -x_2'' & 0 & 2x_2' & 0 & -v \\
             0 & -x_2' & -x_1' & 0 & 0 \\ 
             x_1' & v & 0  & 0 & 0\\
             x_2' & 0  & v & 0 & 0 } \pmqty{\dd{v} \\ \dd{x_1'} \\ \dd{x_2'} \\ \dd{x_1''} \\ \dd{x_2''}}. 
    \end{equation}
    Since $\xi \in \fibre{\chi_i}{0} \cap \prolong{2}{2}f$ for an embedding $f$, we have $v \neq 0$ and $x_1', x_2' \neq 0$ on a neighbourhood $U$ of $\xi$. We show that this implies on $U$, the Jacobian matrix has full rank, and thus $\Upsilon\vert_U$ is a submersion. We prove by contradiction: suppose the rows are not linearly independent; that is, there are coefficients $(a_0, a_1, a_2, a_3, a_4) \neq 0$, such that 
    \begin{equation}
       \pmqty{a_0 & a_1 & a_2 & a_3 & a_4} \pmqty{x_1'' & 2x_1' & 0 & v & 0 \\
             -x_2'' & 0 & 2x_2' & 0 & -v \\
             0 & -x_2' & -x_1' & 0 & 0 \\ 
             x_1' & v & 0  & 0 & 0\\
             x_2' & 0  & v & 0 & 0 } = 0.
    \end{equation}
    Since $v \neq 0$ at $\xi$, the fact that the bottom right $3 \times 2$ blocks of the Jacobian is zero implies $a_0 = a_1 = 0$. This leaves us with the system of equation
    \begin{align*}
        a_3 x_1' + a_4 x_2' &= 0 \\
        a_2 x_2' - a_3 v & = 0 \\
        a_2 x_1' - a_4 v & = 0
    \end{align*}
    Since $x_1', x_2', v \neq 0$ at $\xi$, either $a_2 = a_3 = a_4 =0$, or they are all non-zero. If we substitute $x_1'$ and $x_2'$ for $v$ in the top equation, then we are left with $2(a_3 a_4/a_2) v = 0$. Since $v \neq 0$ and $a_2,a_3,a_4$ are all non-zero, we reach a contradiction. Thus $\Upsilon$ is a submersion on a neighbourhood $U$ of $\xi$.

    If we choose $U$ sufficiently small, the submersion theorem then implies, we can chose a surjective chart $\mu: (U, \xi) \to (\R^{n},0)$ such that $\Upsilon\rvert_U \circ \inv{\mu}: \R^{n} \to \R^5$ is an orthogonal projection onto the first five coordinates. Thus,  $\fibre{\chi_i}{0} \cap U$ is locally homeomorphic to the following subset in $\R^n$:
    \begin{align*}
        \mu(\fibre{\chi_i}{0} \cap U) &= \mu\qty(\fibre{\Upsilon\rvert_U}{(\fibre{{\det}}{0} \times \Bqty{(0,0)}) \cap \Upsilon(U)}) \\
        &=  \fibre{(\Upsilon\rvert_U \circ \inv{\mu})}{(\fibre{{\det}}{0} \times \Bqty{(0,0)}) \cap \Upsilon(U)} \\
        &=
        \qty(\qty(\fibre{{\det}}{0} \times \Bqty{(0,0)} ) \times  \R^{n-5}) \cap  \mu(U) = \qty(\fibre{{\det}}{0} \times \Bqty{(0,0)} ) \times  \R^{n-5}
    \end{align*}
    where the last equality is due $\mu$ being surjective. 
    We now give an explicit Whitney stratification of $\fibre{{\det}}{0} \subset \R^3$. Recall it is the polynomial $\det(w) = w_1 w_2 - w_3^2  = 0$. We claim $\fibre{{\det}}{0}$ admits a Whitney stratification $\{0\} \subset \fibre{{\det}}{0}$; in other words, $\fibre{{\det}}{0}$ decomposes into strata $\fibre{{\det}}{0} = W^{(0)} \sqcup W^{(2)}$, where $W^{(2)} = \fibre{{\det}}{0}\setminus \{0\}$, and $W^{(0)} = \{0\}$. The differential 
    \begin{equation*}
        \dd{(\det)} = w_2 \dd{w_1} + w_1 \dd{w_2} - 2w_3\dd{w_3} 
    \end{equation*}
     is only singular at the origin $w_1 = w_2 = w_3 = 0$; Hence $W^{(2)} = \fibre{{\det}}{0}\cap \{w \neq 0\}$ is a locally closed, two-dimensional submanifold of $\R^3$. By definition, $W^{(0)}$ is zero dimensional. We show that Whitney condition (B) is satisfied in this case. In this case, this amounts to showing the following: for a sequence $y_i \in W^{(2)}$ such that $y_i \to 0$ such that the secant line $\ell_i$ between line $y_i$ and 0 tends to some line $\ell$, and the tangent planes $\mathsf{T}_i$ at $y_i$ converges to some plane $\mathsf{T}$, then $\ell \subset T$. In fact, in our case, the secant line $\ell_i$ always lies in the tangent planes $\mathsf{T}_i$.  Firstly, for any point $w = (w_1, w_2, w_3) \in W^{(2)}$, note that a point along the secant line $t \cdot (w_1, w_2, w_3)$ to the origin lies in $W^{(2)}$ too (for $t \neq 0$), since the condition $w_1w_2 - w^3 = 0$ is invariant w.r.t. rescaling. Hence $(w_1, w_2, w_3)$ as a tangent vector in $T_w W^{(2)} \subset T_w\R^3$, and the whole secant $t\cdot (w_1, w_2, w_3)$  is in the tangent plane $\mathsf{T}_w$ to $W^{(2)}$ at $w$. Thus, for any sequence $y_i \in W^{(2)}$, the secant $\ell_i$ between $y_i$ and $0$ lies in the tangent plane $\mathsf{T}_i$ at $y_i$, thus if $\ell_i \to \ell$ and $\mathsf{T}_i \to \mathsf{T}$, we have $\ell \subset \mathsf{T}$. 

     Having obtained a stratification of $\fibre{{\det}}{0}$, we can stratify the embedding of $\chi_0$ on the neighbourhood $U$ of $\xi$, recalling $\mu$ is a diffeomorphism:
     \begin{align*}
         \mu(\fibre{\chi_i}{0} \cap U) &= \qty(\fibre{{\det}}{0} \times \Bqty{(0,0)} ) \times  \R^{n-5}\\
          &= \qty((W^{0} \sqcup W^{(2)}) \times \Bqty{(0,0)} ) \times  \R^{n-5}\\
         &= \qty((0,0,0,0,0) \times \R^{n-5} )\sqcup \qty(  W^{(2)} \times \{(0,0)\} \times \R^{n-5}). 
     \end{align*}
     where the last equality is due to $W^{(0)} = (0,0,0) \in \R^3$. Since $\mu(\xi) = 0$,  we conclude that $\xi$ lies on a co-dimension 5 stratum of $\chi_0$.
\end{proof}

\begin{proposition} \label{prop:crit_ndg}
    Let $k \geq 2$. If $f \in \Emb{k}{S^1}{\R^d}$, then $\prolong{2}{2}f$ is transverse to the degenerate locus $\prolong{2}{2}f \transverse 
    \chi_0$ in \Jet{2}{2}{S^1}{\R^d}, if and only if   $\prolong{2}{2}f \cap 
    \chi_0 = \emptyset$. 
\end{proposition}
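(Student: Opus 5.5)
The plan is a dimension count, using the local descriptions of $\chi_0$ from \Cref{lem:chi_mfd,lem:chi_sing}. The direction ``empty intersection implies transversality'' holds by the definition of transversality in \cref{eq:transversality}: if $\prolong{2}{2}f(\Conf{2}{S^1}) \cap \chi_0 = \emptyset$, the transversality condition is vacuous on every stratum. So all the content lies in the converse. We suppose $\prolong{2}{2}f \transverse \chi_0$ and that some $\xi = \prolong{2}{2}f(z_1,z_2) \in \chi_0$ exists, and we aim for a contradiction.

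The source of $\prolong{2}{2}f$ is $\Conf{2}{S^1}$, which has dimension $2$. Hence $\dd{(\prolong{2}{2}f)}_{(z_1,z_2)}(T_{(z_1,z_2)}\Conf{2}{S^1})$ has dimension at most $2$. Transversality of $\prolong{2}{2}f$ to a stratum $S \ni \xi$ of $\chi_0$ means
\[
T_\xi S + \dd{(\prolong{2}{2}f)}(T_{(z_1,z_2)}\Conf{2}{S^1}) = T_\xi \Jet{2}{2}{S^1}{\R^d}.
\]
For this sum to fill the tangent space we need $\operatorname{codim} S \leq 2$. We work in a chart $\zeta_i$ around $\xi$, where $\chi_0$ is locally $\fibre{\chi_i}{0}$, and split into two cases according to whether the local Hessian $H_i(\xi)$ vanishes.

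\emph{Case $H_i(\xi) \neq 0$.} Because $f$ is an embedding, $v = x_1 - x_2 \neq 0$ and $x_1', x_2' \neq 0$ at $\xi$. Then \Cref{lem:chi_mfd} applies: near $\xi$, $\chi_0$ is a codimension-3 submanifold, so the stratum through $\xi$ has codimension $3 > 2$. This contradicts transversality.

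\emph{Case $H_i(\xi) = 0$.} Here \Cref{lem:chi_sing} shows that $\xi$ lies on a codimension-5 stratum, namely $\mu^{-1}(\{0\}\times\R^{n-5})$ in the local stratification of $\fibre{\chi_i}{0}$. Transversality is required stratum by stratum, and $5 > 2$, so again we have a contradiction. One should also check that the local stratification constructed in \Cref{lem:chi_sing} is compatible with the global Whitney stratification of $\chi_0$ used in the transversality notion. Since both are canonical (regular part versus singular part of an algebraic set), this should be routine.

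In both cases no point of $\chi_0$ can lie in the image of $\prolong{2}{2}f$, so $\prolong{2}{2}f \cap \chi_0 = \emptyset$. The step that needs most care is the covering of all intersection points: every $\xi \in \chi_0 \cap \prolong{2}{2}f$ must fall under one of the two lemmas. This requires using the embedding hypothesis at $\xi$ in two ways. First, $v \neq 0$, because points of $\Conf{2}{S^1}$ are distinct and $f$ is injective. Second, $x_1', x_2' \neq 0$, because $f$ is an immersion. With these in hand the case split $H_i(\xi) = 0$ versus $H_i(\xi) \neq 0$ is exhaustive. The only remaining subtlety is that transversality to a Whitney stratified set is defined stratum-wise, which is exactly what the codimension argument needs.
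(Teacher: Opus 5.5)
Your proposal is correct and is essentially the paper's argument: \Cref{lem:chi_mfd,lem:chi_sing} place every point of $\prolong{2}{2}f \cap \chi_0$ on a stratum of codimension at least $3$. A map from the $2$-dimensional $\Conf{2}{S^1}$ cannot meet such a stratum transversally, and the direction ``empty intersection implies transverse'' holds vacuously. The compatibility check you flag is unnecessary: by those same lemmas, near any such point $\chi_0$ has dimension at most $\dim \Jet{2}{2}{S^1}{\R^d} - 3$, so every stratum through it, in any stratification of $\chi_0$, has codimension at least $3$.
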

\begin{proof}
    Recall $\chi_0$ is a Whitney stratified space.  In \Cref{lem:chi_mfd,lem:chi_sing}, we have shown that $\prolong{2}{2}f$ intersects $\chi_0$ at strata with codimension at least three. If the intersection is non-empty and transverse, then at such intersections $\rank \dd (\prolong{2}{2}f)$ must be greater or equal to the codimension of the strata of $\chi_0$ at intersections. Since $\prolong{2}{2}f$ is an immersion of $\Conf{2}{S^1}$, this implies $\rank \dd (\prolong{2}{2}f) = \dim \Conf{2}{S^1} = 2$, which is less than the codimension of the $\chi_0$ strata at intersections.  Thus, the dimensional constraints imposed by transversality~\cref{eq:transversality} implies the intersection $\prolong{2}{2}f \cap 
    \chi_0$ is empty. 
\end{proof}
Having characterised embeddings whose~\functionname satisfy~\labelcref{M1} as those with transversal intersections of two-fold 2-jets with the critical locus, we can then apply the transversality theorem~\Cref{thm:damon} to show that such embeddings are dense in Whitney $C^k$ topology, for $k \geq 2$. We note that naive application of~\Cref{thm:damon} would imply density for $k \geq 3$; we extend this to $k = 2$ by borrowing the argument used in the proof of Theorem 9.25 in~\cite{arnal2023critical}.

\begin{proposition}\label{thm:crit_ndg}
    For $k \geq 2$, the subset of embeddings $\Emb{k}{S^1}{\R^d}$ whose \functionname satisfy~\labelcref{M1} are dense in $\Emb{k}{S^1}{\R^d}$ under Whitney $C^k$-topology. Hence the subset of embeddings $\Xi \subset \Emb{k}{S^1}{\R^d}$ (\cref{eq:Xi}) whose \functionname are Morse-supported are dense in $\Emb{k}{S^1}{\R^d}$ under Whitney $C^k$-topology.
\end{proposition}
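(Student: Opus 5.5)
The plan is to combine \Cref{lem:degenerate_locux_M1}, \Cref{prop:crit_ndg} and the multi-jet transversality theorem \Cref{thm:damon}. By \Cref{lem:degenerate_locux_M1}, an embedding $f$ satisfies~\labelcref{M1} exactly when $\prolong{2}{2}f \cap \chi_0 = \emptyset$. By \Cref{prop:crit_ndg}, for embeddings this is equivalent to $\prolong{2}{2}f \transverse \chi_0$. So it suffices to show that the set of embeddings whose two-fold $2$-jet is transverse to $\chi_0$ is dense in $\Emb{k}{S^1}{\R^d}$.

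\textbf{Step 1: the smooth case $k \geq 3$.} Here I apply \Cref{thm:damon} with $M = S^1$, $N = \R^d$, $r = 2$, $s = 2$, $W = \chi_0$, and $V = \Conf{2}{S^1}$. Since $V$ need not be closed, I exhaust $\Conf{2}{S^1}$ by countably many compact subsets $V_n = \{(z_1,z_2) : d(z_1,z_2) \geq 1/n\}$. Each $\Gamma_n = \{f : \prolong{2}{2}f \transverse_{V_n} \chi_0\}$ is residual. The countable intersection $\bigcap_n \Gamma_n$ is then residual, and it consists of maps whose multi-jet is transverse to $\chi_0$ on all of $\Conf{2}{S^1}$. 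Note that $\chi_0$ is a closed algebraic, hence Whitney stratified, subset. Since $C^k_S$ is Baire and $\Emb{k}{S^1}{\R^d}$ is open, the intersection with embeddings is dense in $\Emb{k}{S^1}{\R^d}$.

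\textbf{Step 2: lowering to $k = 2$.} Following the argument of \cite[Thm 9.25]{arnal2023critical}, take $\gamma \in \Emb{2}{S^1}{\R^d}$ and an open neighbourhood $\sU$ of $\gamma$ in the $C^2$ Whitney topology. Smooth maps are dense in $C^2_S$, and embeddings are open, so there is a $C^\infty$ (in particular $C^3$) embedding $\gamma' \in \sU$. Now work in $C^3$. The intersection $\sU \cap C^3(S^1,\R^d)$ is open in the $C^3$ topology, because the $C^3$ topology is finer than the $C^2$ topology. By Step 1 it therefore contains some $C^3$ embedding $\gamma''$ satisfying~\labelcref{M1}, and $\gamma''$ is also an element of $\Emb{2}{S^1}{\R^d}$ lying in $\sU$. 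Hence the~\labelcref{M1} embeddings are dense in the $C^2$ topology. For general $k \geq 2$ the same argument runs with $C^{k+1}$ in place of $C^3$.

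\textbf{Step 3: from~\labelcref{M1} to $\Xi$.} By \Cref{cor:M1impliesM2},~\labelcref{M1} implies~\labelcref{M2}, and by \Cref{prop:M3},~\labelcref{M3} holds for every $C^2$ embedding. So the set of embeddings satisfying~\labelcref{M1} is exactly $\Xi$, which is therefore dense.

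I expect the main obstacle to be the technical fit with \Cref{thm:damon}: the theorem requires a closed stratified $V$, and residuality is only available at regularity $k \geq r+1 = 3$. The exhaustion in Step 1 and the approximation in Step 2 are meant to handle exactly these two points, and care is needed that openness of the $C^2$ neighbourhood is preserved when passing to the finer topology.
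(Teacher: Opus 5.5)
Your proposal is correct and follows essentially the same route as the paper: approximate by an embedding of class $C^r$ with $r \geq 3$, apply \Cref{thm:damon} in the finer $C^r$ topology inside the given $C^k$-open set, convert transversality into \labelcref{M1} via \Cref{prop:crit_ndg,lem:degenerate_locux_M1}, and then invoke \Cref{cor:M1impliesM2,prop:M3} to obtain $\Xi$. The one difference is your exhaustion of $\Conf{2}{S^1}$ by compact sets. It is harmless but not needed: $V = \Conf{2}{S^1}$ is already closed in $\Conf{2}{S^1}$, and compactness of $V$ is used only in the openness part of Damon's theorem, not for residuality.
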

\begin{proof}
    We first recall for every $1\leq k \leq r$, the $C^r$-embeddings $\Emb{r}{S^1}{\R^d}$ are a dense subset of the $C^k$-embeddings $\Emb{r}{S^1}{\R^d}$~\cite[Theorem 2.7]{hirsch2012differential}, under Whitney $C^k$-topology. Suppose $k \geq 2$; density implies for any open Whitney-$C^k$ neighbourhood $U$ of $f \in \Emb{k}{S^1}{\R^d}$, we can choose some $f_1 \in U$ such that $f_1 \in \Emb{r}{S^1}{\R^d}$ (for $k \geq 3$, we choose $f_1 = f$). Consider then $r \geq 3$. Because residual subsets are dense in Whitney $C^r$-topology~\cite[Theorem 4.4]{hirsch2012differential},~\Cref{thm:damon} implies that we can choose some $f_2 \in \Emb{r}{S^1}{\R^d}$ in any open Whitney-$C^r$ neighbourhood $V$ of $f_1$, such that $\prolong{2}{2}f_2$ intersects $\chi_0$ transversely. By~\Cref{prop:crit_ndg,lem:degenerate_locux_M1}, this is equivalent to saying $\DT_{f_2}$ has no non-degenerate critical points in the interior of $\Mob$. Because $k \leq r$, any Whitney $C^r$-neighbourhood $V$ of $f_1$ (as a $C^r$-embedding), is also a Whitney $C^k$-neighbourhood of $f_1$ (as a $C^k$-embedding). Since $f_1 \in U$, and $U$ is Whitney $C^k$ open, we can choose the Whitney $C^k$ neighbourhood $V$ of $f_1$ such that $V \subset U$, which implies $f_2 \in U$. Hence for any $f \in \Emb{k}{S^1}{\R^d}$, and any Whitney $C^k$ neighbourhood $U$ of $f$, we can find a $f_2 \subset U$ a $C^k$-embedding, such that $\DT_{f_2}$ has only non-degenerate critical point on the interior. Because of~\Cref{prop:M3,cor:M1impliesM2}, $\DT_{f_2}$ also satisfies~\labelcref{M2,M3}. Thus $\Xi$ is dense in $\Emb{k}{S^1}{\R^d}$ under Whitney $C^k$-topology.
\end{proof}

\subsection{Geometric Characterisation of Critical Points}
\label{ssec:crit_geometric_smooth}
Suppose $\DT_\gamma$ is Morse-supported for an embedding $\gamma \in \Emb{2}{S^1}{\R^d}$, we can relate the Morse index of interior critical points of $\DT_\gamma$ with geometric characteristics of the curve from the formulae for the gradient and Hessian~\cref{eq:dF,eq:d2F}. 
In that case, the entries of the Hessian~\cref{eq:pqr} at $\ucpt{z_1, z_2}$ can be simplified as 
\begin{align}
    p &= 1 - \kappa_{12}, \\
    q &= 1 - \kappa_{21}, \\
    r &= - \cos{\theta_{12}},
\end{align}
where $v_{ij} = x_j - x_i$, $\kappa_{ij} := \langle x_i'', v_{ij}\rangle$ and $\cos{\theta_{12}} = \langle x_1', x_2'\rangle$.

The Morse index of a critical point $\ucpt{z_1,z_2}$ in the interior $\UConf{2}{S^1}$ of $\Mob$ can be read off the entries of the determinant of the Hessian. If $\det H < 0$, then $\ucpt{z_1,z_2}$ is a saddle point; if $\det H >0$, then $p$ and $q$ is of the same sign; if that sign is positive, then $\ucpt{z_1,z_2}$ is a minimum; if that sign of negative, then $\ucpt{z_1,z_2}$ is a maximum. We shall now give a geometric interpretation of these critical points. 

The critical point condition
\begin{equation}
    \langle x_1', v_{12} \rangle  = \langle x_2', v_{21}\rangle =  0
\end{equation}
gives that 
$x_1'$ and $x_2'$ are perpendicular to $v_{12} = -v_{21} = x_2 - x_1$.
We will say two segments centred at $x_1$ and $x_2$
are \emph{opposing} if this condition is satisfied.
We can now simply restrict ourselves to the 3-dimensional space spanned by $\{x_1', x_2', v_{12}\}$.
The determinant of the Hessian $pq - r^2$ is
\begin{equation}
\label{eqn:hessian-xv}
    \frac{1}{\|x_1'\|\|x_2'\|}
    \left[ 
    (1 - \kappa_{12})( 1 - \kappa_{21}) - \cos^2{\theta_{12}}
    \right].
\end{equation}
where $\kappa_{ij} = \langle x_i''/\|x_i'\|, v_{12}\rangle$
and $\theta_{12}$ is the angle between $x_1'$ and $x_2'$.

To keep the following analysis simple,
we shall assume $|\kappa_{12}| = |\kappa_{21}|=: \kappa$.
Now (after removing the positive scaling) expression~\ref{eqn:hessian-xv} becomes
\begin{equation}
    \label{eqn:hessian-fixed-k}
    \sgn(\kappa_{12})\sgn(\kappa_{21})\kappa^2
    - (\sgn(\kappa_{12}) + \sgn(\kappa_{21}))\kappa + \sin^2{\theta_{12}}.
\end{equation}

We can now give a geometric characterisation of critical points, as shown in Figure~\ref{fig:critical-points}.

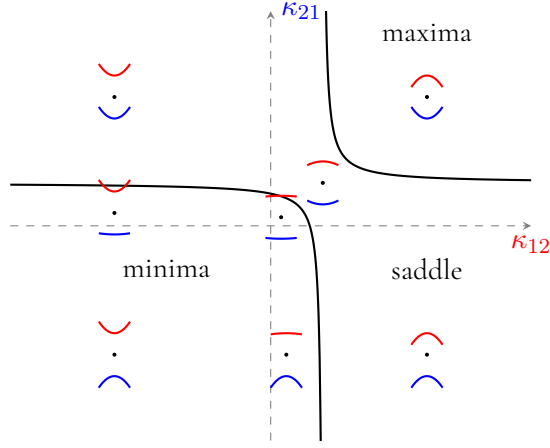
\begin{figure}[h]
    \centering
    \begin{tikzpicture}
\begin{axis}[
    axis line style={gray, dashed},
    axis lines=middle,
    xlabel={\textcolor{red}{$\kappa_{12}$}},
    ylabel={\textcolor{blue}{$\kappa_{21}$}},
    xmin=-5, xmax=5,
    ymin=-5, ymax=5,
    grid=both,
    every axis x label/.style={at={(current axis.right of origin)}, anchor=north},
    every axis y label/.style={at={(current axis.above origin)}, anchor=west},
    xtick=\empty,
    ytick=\empty
]

\addplot[black, thick, smooth, domain=1.01:5, samples=300] expression {(x - 0.75)/(x-1)};
\addplot[black, thick, smooth, domain=-5:0.99, samples=300] expression {(x - 0.75)/(x-1)};

\foreach \px/\py in {3/3, -3/3, 3/-3, -3/-3, 1/1, -3/0.3, 0.3/-3, 0.2/0.2} {
    \addplot[mark=*, mark size=0.05em, black, only marks] coordinates {(\px, \py)};
    \addplot[red,  thick, smooth, domain=-0.3+\px:0.3+\px] expression {-\px*(x-\px)^2 + \py + 0.5};
    \addplot[blue, thick, smooth, domain=-0.3+\px:0.3+\px] expression {\py*(x-\px)^2 + \py - 0.5};
}
\node at (axis cs: -2,-1) {minima};
\node at (axis cs: 3, 4.5) {maxima};
\node at (axis cs: 3, -1) {saddle};
\end{axis}
\end{tikzpicture}
    \caption{Classification of critical points in terms of $\kappa_{12}$ and $\kappa_{21}$ for $\theta_{12} = \pi/3$ where $d>2$ (\cref{eqn:hessian-xv}).
    Displayed are two dimensional cartoons of the 9 classes of critical point,
    organised by $\sgn(\kappa_{ij})$ and Morse index.}
    \label{fig:critical-points}
\end{figure}

\begin{description}
    \item[Curve away: $\sgn{\kappa_{12}}=\sgn(\kappa_{21})=-1$.]
    This case is particularly simple;
    the expression~\ref{eqn:hessian-fixed-k}
    is always positive, $p$ is positive, and thus
    all opposing segments that curve away are local minima.
    \item[Curve towards: $\sgn(\kappa_{12}) = \sgn(\kappa_{21})= 1$.]
    This occurs when two opposing segments are curving towards each other.
    Then, for $0\leq \kappa <1-\cos\theta_{12}$ we have a local minimum,
    for $\kappa >1 +\cos{\theta_{12}}$ we have a local maximum
    and for $1-\cos{\theta_{12}} < \kappa < 1 +\cos{\theta_{12}}$ we have a saddle point.
  
    \item[Curve together: $\sgn(\kappa_{12}) = - \sgn(\kappa_{21})$.]
    WLOG assume $\sgn(\kappa_{12}) = 1$.
    If $-\sin{\theta_{12}}<\kappa<\sin\theta_{12}$
    we have a local minimum,
    and if $\kappa <-\sin{\theta_{12}}$ or $\kappa > \sin{\theta_{12}}$
    we have a saddle point.
\end{description}

For the $\theta =0$ case, where $x_1', x_2'$ and $v_{12}$ are coplanar, we can give further intuition.
Define the radius of curvature relative as $\rho = \|v_{12}\|/\kappa =|\langle x_1''/\|x_1'\|, v_{12}/\|v_{12}\|\rangle|^{-1}$.
Consider the circle with centre the midpoint of $x_1$ and $x_2$ with radius $\frac{1}{2}\|v_{12}\|$.
If the radius of curvature $\rho$ is less than the radius $\frac{1}{2}\|v_{12}\|$ we have a local maximum,
if it is greater, we have a saddle point.

\begin{example}
\label{ex:ellipse1}
Consider an ellipse embedded in $\R^2$ with major axis $2a$ and minor axis $2b$.
There are exactly two critical points lying in $\UConf{2}{S^1}$ corresponding to the major and minor axes.
The radius of curvature on each side of the major axis is $b^2/a$
and $a^2/b$ for the minor axis.
As $b^2/a < a$ the major axis corresponds to a local (and global) maximum,
and as $a^2/b > b$ the minor axis corresponds to a saddle point.
\end{example}

We summarise these observations in the following theorem.

\begin{restatable}{theorem}{SmoothChar}
    \label{thm:SmoothChar}
    Suppose $\DT_\gamma$ is Morse-supported for an embedding $\gamma \in \Emb{2}{S^1}{\R^d}$.
    For a point $\ucpt{z_1, z_2} \in \Mob$ define
    $x_i = \gamma(z_i)$,
     $v_{ij} = x_j - x_i$, $\kappa_{ij} := \langle x_i''/_\|x_i'\|, v_{ij}\rangle$ and $\cos{\theta_{12}} = \langle x_1', x_2'\rangle$.
     Then $\ucpt{z_1, z_2}$
    is critical for $\DT_\gamma$ if 
    $$
    \langle x_1', v_{12} \rangle  = \langle x_2', v_{21}\rangle =  0
    $$
    If $(1 - \kappa_{12})( 1 - \kappa_{21}) <\cos^2{\theta_{12}}$ this is a saddle point.
    Otherwise by non-degeneracy $(1 - \kappa_{12})( 1 - \kappa_{21}) > \cos^2{\theta_{12}}$.
    Then, 
    if $\kappa_{12}, \kappa_{21} > 1$ this is a local maximum, 
    else if $\kappa_{12}, \kappa_{21} < 1$, this is a local minimum.
\end{restatable}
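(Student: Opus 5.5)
The plan is to read everything off the local-coordinate gradient and Hessian formulae \cref{eq:dF,eq:d2F}, using a chart from the compatible atlas of \Cref{prop:mobius_atlas} around an interior point $\ucpt{z_1,z_2}$. Since $\DT_\gamma\circ\inv{\varphi_j}(t_1,t_2)=\tfrac12\norm{\gamma(t_1)-\gamma(t_2)}^2$, the gradient vanishes exactly when $\langle \dot\gamma(t_1), x_1-x_2\rangle = \langle \dot\gamma(t_2), x_2-x_1\rangle = 0$, which is the stated criticality condition. Whether $\dot\gamma$ is normalised does not affect this condition.

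Next I would simplify the Hessian at such a point. First reparametrise by arc length near $z_1$ and $z_2$, giving $\norm{x_i'}=1$; this is allowed because the criticality and the Morse index of a non-degenerate critical point are chart-independent, and by \Cref{prop:sym} we may reparametrise. With this normalisation, \cref{eq:pqr} gives $p=1+\langle x_1'',x_1-x_2\rangle = 1-\kappa_{12}$, $q=1-\langle x_2'',x_2-x_1\rangle\cdot(-1)$ adjusted to $1-\kappa_{21}$ (I would check the sign conventions against $v_{ij}=x_j-x_i$ carefully), and $r=-\langle x_1',x_2'\rangle=-\cos\theta_{12}$. Hence $\det H=(1-\kappa_{12})(1-\kappa_{21})-\cos^2\theta_{12}$. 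If the parametrisation is not arc length, I would instead note that at a critical point a change of coordinates $t\mapsto s(t)$ rescales the Hessian by the congruence $\mathrm{diag}(s_1',s_2')$, because the first-order terms vanish. This preserves the signs of $\det H$ and of the diagonal entries, and that is what licenses the normalised $\kappa_{ij}$ in the statement.

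The classification is then Sylvester's criterion for a $2\times2$ symmetric matrix. If $\det H<0$, the eigenvalues have opposite signs, so the point is a saddle of index $1$. If $\det H>0$, then $pq>r^2\ge 0$, so $p$ and $q$ are nonzero with the same sign. When $p,q>0$, i.e.\ $\kappa_{12},\kappa_{21}<1$, $H$ is positive definite and the point is a minimum. When $p,q<0$, i.e.\ $\kappa_{12},\kappa_{21}>1$, $H$ is negative definite and the point is a maximum. The case $\det H=0$ is excluded by \labelcref{M1}, since $\DT_\gamma$ is Morse-supported.

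The only real obstacle is bookkeeping. I must confirm that the sign conventions in \cref{eq:pqr} match the paper's definitions of $\kappa_{ij}$ and $v_{ij}$, and that the reparametrisation or congruence argument really makes the normalised quantities legitimate. In particular, the stated $\cos\theta_{12}=\langle x_1',x_2'\rangle$ implicitly assumes unit tangents, so I would state the arc-length normalisation explicitly at the start.
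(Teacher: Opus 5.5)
Your proposal is correct and follows the paper's own argument: read the gradient and Hessian off \cref{eq:dF,eq:d2F} in the compatible chart, identify $p=1-\kappa_{12}$, $q=1-\kappa_{21}$, $r=-\cos\theta_{12}$, and classify by the sign of $\det H$ and of the diagonal entries, with $\det H=0$ excluded by condition (M1). Your explicit unit-speed normalisation is more careful than the paper. The right justification for it is the diagonal congruence of the Hessian at a critical point, as you also say; \Cref{prop:sym} only concerns persistence diagrams. The paper's $\kappa_{ij}=\langle x_i''/\norm{x_i'},v_{ij}\rangle$ and $\cos\theta_{12}=\langle x_1',x_2'\rangle$ only match $p,q,r$ when $\norm{x_i'}=1$; in general the parametrisation-invariant quantity at a critical point is $\langle x_i'',v_{ij}\rangle/\norm{x_i'}^2$.
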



\section{Piecewise-Linear Loops}
\label{sec:pl-case}
In this section, we consider the case where $\gamma$ is finite piecewise linear. 
Similar to the analysis in~\cref{sec:smooth-case} for the smooth case, we wish to geometrically interpret the features in $\pershomf_\ast(\DT_\gamma)$ for generic embeddings $\gamma$ by relating the birth and death of such features to non-degenerate critical points of $\DT_\gamma$, and characterise these non-degenerate points geometrically.
However, the \functionname of a PL loop is only piecewise smooth, and thus the smooth Morse theory used in~\cref{sec:smooth-case} cannot be applied to the PL case. While there is a Morse theory for piecewise smooth functions~\cite{BARTELS1995385,Agrachev97} for generalised (Clarke) critical points~\cite{Clarke1975-zo,clarke1990optimization}, $\DT_\gamma$ may admit degenerate critical points that are excluded from the analysis of~\cite{BARTELS1995385,Agrachev97} (see~\Cref{rmk:clarke}). We also remark that even though $\gamma$ is piecewise linear, neither $\DTm_\gamma$ nor $\DT_\gamma$  are piecewise linear themselves, excluding the application of piecewise linear Morse theory~\cite{Banchoff1967-tn,Fugacci2020-qx}.

As such, we take a discrete approach by constructing a filtered simplicial complex $\nerve{\cV^\bullet}$ that is homotopy equivalent to the sublevel set filtration $\Gamma^\bullet$ of $\DT_\gamma$. The filtered complex $\nerve{\cV^\bullet}$ is the nerve of a filtered covering $\cV^\bullet$ of sublevel sets of $\DT_\gamma$. This allows us to apply the Conley index theory (described in~\cref{ssec:morse_sets}) for the Morse sets of $\nerve{\cV^\bullet}$ to describe the birth and death of features in $\pershomf_\ast(\DT_\gamma)$. In~\cref{ssec:pl-loops}, we first restrict our attention to finite piecewise linear embeddings $\gamma: S^1 \to \R^d$ which have no two edges parallel (see conditions~\labelcref{C1,C2,C3}). This allows us to subsequently characterise the Morse sets of the filtration $\nerve{\cV^\bullet}$ in terms of analogues of smooth non-degenerate critical points called $n$-saddles (see \Cref{ssec:morse_sets}).
Therefore, we are able to compute the homological critical values of $\DT_\gamma$
via the combinatorial object $\nerve{\cV^\bullet}$.
In~\cref{ssec:filtration-of-the-nerve} we explicitly describe the filtration of the simplicial complex $\nerve{\cV^\bullet}$, which allows us to exactly compute the persistence diagrams $\pershomf_\ast(\DT_\gamma)$  using algorithms for computing persistent homology.
Then in~\cref{ssec:pl-persistence} we show
that for an embedding satisfying the non-degeneracy conditions~\ref{C1}-\ref{C3}
all Morse sets (apart from at value $0$) are simplices, along with some of their faces,
and so can be thought of a critical points, in analogy with the smooth case.
Finally in~\cref{ssec:pl-geometric-characterisation}
we build a bijective correspondence between such critical simplices and points in $\UConf{2}{S^1}$
whose images under $\gamma$
satisfy a local geometric condition (\Cref{dfn:pl-critical-index}).
This condition is in terms of acuteness/obtuseness of the four angles created by a separation (chordal) vector,
and from which the index of the critical simplex can be read off.
The Morse type of these critical simplices bears similarity with the smooth case: if locally the opposing segments curve away from each other, the simplex has Conley index $0$,
if they curve towards each other, it has index $2$, and if they curve together, the simplex has index $1$.

\subsection{Piecewise-linear loops}
\label{ssec:pl-loops}
Suppose we are given a set of points $x_i \in \R^d$ cyclically indexed over $i = 0, \ldots, n-1 \in\Z_n$ for $n > 3$. We consider a piecewise linear loop interpolating these points $f: S^1 \to \R^d$.  In order to parametrise $f$ explicitly, we set out some notation. Let $u_i = x_{i+1} - x_i$ denote the finite difference vector between successive points in the cycle  $(x_i)_{i \in \Z_n}$. We define $l_i = \norm{u_i}$ to be the length of each linear segment interpolated between $x_i$ and $x_{i+1}$. Let us also consider the total length along the loop $L = \sum_{i = 0}^{n-1} l_i$, and partial lengths $L_i = \sum_{j < i}l_i/L$. We also recall the universal covering $\scre(t) := e^{2\pi \imath t}$ of $S^1$.

A piecewise linear interpolation of $(x_i)_{i \in \Z_n}$ is then a map $f: S^1 \to \R^d$ that sends $f(\scre(L_i)) = x_i$ , and 
\begin{equation} \label{eq:PLprimitive}
    f \circ \scre(t) = \frac{L}{l_i}\qty((t- L_i)x_{i+1} - (t - L_{i+1})x_i ) =  \frac{L}{l_i} (t - L_i)u_i + x_i \quad \text{for\ } t \in [L_i, L_{i+1}].
\end{equation}

\paragraph{Non-degenerate Piecewise Linear Loops}
We restrict ourselves to piecewise linear interpolations $f:S^1\to \R^d$ that satisfy the following properties. We call a cyclic sequence of data points $(x_i)_{i \in \Z_n}$  \emph{non-degenerate}, if 
\begin{enumerate}[label = (C\arabic*)]
    \item \label{C1} No distinct pair of points in the sequence are identical: $x_i \neq x_j$ for $i \neq j$;
    \item \label{C2} $f$ is a topological embedding: the interiors of interpolated segments may not cross one another; and
    \item \label{C3} No pair of distinct segments are parallel or anti-parallel: $|\innerprod{u_i}{u_j}| < \norm{u_i} \norm{u_j}$ for $i \neq j$.
\end{enumerate}
For $d > 2$, any `generic' cyclic sequence $(x_i)_{i \in \Z_n}$ is non-degenerate: we only require the set $\sett{x_0, \ldots, x_n}$ to be in \emph{general position}: no $k$ of such points lie in a $(k-2)$-dimensional affine subspace of $\R^d$ for $k = 2,\ldots, d+1$. The general position requirement immediately implies~\labelcref{C1} regardless of dimension $d$, as a pair of points lying on a $0$-dimensional affine subspace implies they are identical. The following lemmas show that~\labelcref{C2,C3} also follow from the general position assumption, if $d > 2$. Indeed, these observations extend to the case where points $x_i$ are vertices of a geometric graph, where edges between vertices are interpolated by linear segments. 
\begin{lemma}
    Consider a PL geometric graph in $\R^d$ with vertex set $X = (x_0,\ldots, x_{n-1}) \subset \R^d$. If $d > 2$, and the vertex set $X$ is in general position, then the PL geometric graph is an embedding of the underlying graph --- that is, the relative interior of an edge does not intersect another edge. 
\end{lemma}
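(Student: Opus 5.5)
We argue by contradiction via affine dimension counting, using only the general position hypothesis: no $k$ points of $X$ lie in a $(k-2)$-dimensional affine subspace for $k = 2,\ldots,d+1$. Since $d > 2$, we may use $k \le 4$, so any four distinct vertices are affinely independent (not coplanar), any three are not collinear, and any two are distinct. Suppose the relative interior of an edge $[x_a, x_b]$ meets another edge $[x_c, x_e]$, where $\{a,b\} \neq \{c,e\}$. We split into cases according to how many endpoints the two edges share.

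\emph{Case 1: the edges share no endpoint.} Then $x_a, x_b, x_c, x_e$ are four distinct vertices. A common point $y$ lies on the line through $x_a, x_b$ and on the line through $x_c, x_e$. These two lines therefore intersect, so they span an affine subspace of dimension at most $2$ containing all four points. This contradicts general position with $k = 4 \le d+1$ (here we use $d \ge 3$).

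\emph{Case 2: the edges share exactly one endpoint,} say $x_b = x_c$, so the second edge is $[x_b, x_e]$ with $x_e \neq x_a$. The intersection point $y$ lies in the relative interior of $[x_a, x_b]$, so $y \neq x_b$. The two segments emanate from the common point $x_b$, and both contain the point $y \neq x_b$. Hence the lines $x_bx_a$ and $x_bx_e$ both pass through $x_b$ and $y$, so they coincide. Then $x_a, x_b, x_e$ are collinear, lying in a $1$-dimensional affine subspace, which contradicts general position with $k = 3$.

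Two edges cannot share both endpoints, since edges are distinct and determined by their endpoints (a simple graph, or consecutive segments of the cyclic sequence with $n > 3$). We also note that an edge is a non-degenerate segment, because $x_a \neq x_b$ by the $k = 2$ condition.

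The main subtlety is Case 1. It needs a careful statement that two intersecting lines lie in a common $2$-plane, and it is the only place where $d > 2$ is genuinely required: in $\R^2$ four points in general position can still produce crossing edges. Applied to the PL loop, the lemma yields condition~\labelcref{C2}.
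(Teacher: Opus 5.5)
Your proof is correct and uses the same affine-dimension-counting argument as the paper. Edges with four distinct endpoints that meet would put all four vertices in a common $2$-plane, contradicting general position when $d>2$. Your Case~2 rules out overlapping adjacent edges via the non-collinearity ($k=3$) condition, which makes explicit a step the paper only asserts (that consecutive segments meet only at their shared vertex); splitting by shared endpoints rather than by the indices $j=i+1$ versus $j>i+1$ also handles the wrap-around pair of edges at $x_0$ cleanly.
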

\begin{proof}
    If $n =1$ (only one vertex) the map sending a vertex into $\R^d$ is trivially an embedding; if $n = 2$ (two vertices), then the general position requirement requires the two vertices two be distinct, and thus the interpolation $\overline{x_0x_1}$ is an embedding of the graph with one edge.

    Let us then consider the case where $n > 2$. First, we note that the general position requirement implies all points in $X$ are distinct. So for $i < j$, two closed segments $\overline{x_ix_{i+1}}$ and  $\overline{x_jx_{j+1}}$ can either intersect only at single point $x_j$ (with $j = i+1$) at the end of segment; or the relative interior of one segment intersects a single point along the other, which may either be in the relative interior or ends (with $j > i+1$). The latter  requires the four distinct points $x_i, x_{i+1},x_{j}, x_{j+1}$ to lie on a two dimensional affine subspace of $\R^d$, which contradicts the general position requirement on $X$ for $D > 2$. 
\end{proof}
\begin{lemma}
\label{lem:no-edges-parallel}
If $d > 2$, a PL geometric graph in $\R^d$ in general position has no two edges parallel.
\end{lemma}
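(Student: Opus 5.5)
The plan is to argue by contradiction using the general position hypothesis: no $k$ of the vertices lie in a $(k-2)$-dimensional affine subspace of $\R^d$, for $k = 2,\ldots, d+1$. Suppose two distinct edges $e = \overline{x_ix_{i'}}$ and $e' = \overline{x_jx_{j'}}$ of the geometric graph are parallel or anti-parallel. That is, $u = x_{i'} - x_i$ and $u' = x_{j'} - x_j$ satisfy $|\innerprod{u}{u'}| = \norm{u}\norm{u'}$, which by Cauchy--Schwarz is equivalent to $u' = c\,u$ for some scalar $c \neq 0$. Both vectors are non-zero, because general position with $k=2$ already forces distinct vertices to be distinct points. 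I would then split into cases according to how many endpoints the two edges share.

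\textbf{Case 1: the edges share exactly one endpoint.} Since the edges are distinct, they cannot share both endpoints, so this is the only shared-endpoint case. Say $x_j = x_i$ after relabelling, so that $x_{j'} - x_i = c\,(x_{i'} - x_i)$. Then the three distinct vertices $x_i, x_{i'}, x_{j'}$ lie on the affine line $x_i + \R u$, which is a $1$-dimensional affine subspace. This contradicts general position with $k = 3$, which is permitted since $3 \leq d+1$.

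\textbf{Case 2: the edges share no endpoint.} Then the four vertices $x_i, x_{i'}, x_j, x_{j'}$ are distinct. All four lie in the affine subspace $x_i + \operatorname{span}\{u, x_j - x_i\}$, because $x_{i'} = x_i + u$ and $x_{j'} = x_j + c\,u$. This subspace has dimension at most $2 = 4-2$, contradicting general position with $k = 4$. This is exactly where the hypothesis $d > 2$ is needed: it guarantees $4 \leq d+1$, so the condition with $k=4$ is available. (For $d = 2$ the claim genuinely fails, since four planar points always lie in a $2$-dimensional affine subspace.)

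Combining the two cases, no two distinct edges are parallel or anti-parallel. Applied to the cyclic graph with edges $\overline{x_ix_{i+1}}$, this yields~\labelcref{C3}. The only point requiring care is the bookkeeping on $k$ and $d$ in Case 2. I would also note that anti-parallel is covered by allowing $c < 0$, so no separate argument is needed for it.
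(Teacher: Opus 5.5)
Your proof is correct and follows the same approach as the paper: parallelism places $x_{j'}$ in the affine span of the other three endpoints, which has dimension at most $2$, contradicting general position with $k=4$ (this is where $d>2$ is used). Your separate treatment of the shared-endpoint case, where three vertices become collinear and contradict the $k=3$ condition, spells out a step the paper's one-line argument leaves implicit: in that case the span is only $1$-dimensional, so ``dimension at most $2$'' would not by itself give a contradiction.
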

\begin{proof}
    Suppose we have a PL geometric graph with edges 
    $\overline{x_i x_{i+1}}$ and $\overline{x_j x_{j+1}}$ parallel.
    Then for some $\lambda \in \R$, $x_{j+1} = x_j + \lambda(x_{i+1} - x_i)$ 
    hence $x_{i+1}$ is in the affine span of $x_i, x_{i+1}$, and $x_{j}$ which has affine dimension no more than 2,
    thus the graph is not in general position, a contradiction.
\end{proof}

\subsection{The Distance Function}
\label{ssec:the-distance-function}
We now consider the chordal distance transform $\DT_\gamma $ of a piecewise linear interpolated loop $\gamma: S^1 \to \R$, satisfying the embedding conditions~\labelcref{C1,C2}. Since $\gamma$ is piecewise linear, $\DT_\gamma$ is a piecewise smooth on $\ExpS{2}{S^1}$. Here we define $\DT_\gamma$ explicitly on pieces $V_{ij} \subset \ExpS{2}{S^1}$ associated to pairs of points from segments $\overline{x_i,x_{i+1}}$ and $\overline{x_j,x_{j+1}}$. Given we have chosen $\cU$ to be a good open cover, let $\Phi$ and $\Theta$ be the compatible atlases for $\ExpS{2}{S^1}$ and $S^1$ given in~\Cref{prop:mobius_atlas}. On a chart $\varphi_{ij}: W_{ij} \to \R^2$, recall from the previous section that we can express $\DT_\gamma$ in local coordinates $(t_1,t_2)$ as 
\begin{equation*}
      \DT_\gamma(t_1,t_2) := \frac{1}{2}\norm{\gamma(t_1) - \gamma(t_2)}^2
\end{equation*}
We can define $V_{ij} \subset W_{ij}$ to be the subset described in local coordinates by $t_1 \in [L_i, L_{i+1}]$ and $t_2 \in [L_j, L_{j+1}]$:
\begin{align}
    V_{ij} &= \inv{\varphi_{ij}}(([L_i,L_{i+1}] \times [L_j,L_{j+1}]) \cap \stripinf) \label{eq:PLpieces}.
\end{align}
The distance transform $\DT_\gamma$ thus evaluates on $V_{ij}$ the distance between a pair of points from segments $\overline{x_i,x_{i+1}}$ and $\overline{x_j,x_{j+1}}$ respectively. We give an illustration of the elements $V_{ij}$ in~\Cref{fig:PL-nerve}. If $i = j$, we observe that $V_{ii} = \ExpS{2}{[L_i, L_{i+1}]}$ is a triangle; otherwise, it is a rectangle. If $i$ and $j$ are adjacent on the cycle $\Z_n$, then it intersects the boundary $\partial \ExpS{2}{S^1}$ at one of its corners, corresponding to $\ucpt{z_i = z_j}$. If $i$ and $j$ are at least one element apart on the cycle $\Z_n$, then $V_{ij} \subset \UConf{2}{S^1}$. We note that $\bigcup_{ij \in \Z_n}V_{ij} = \ExpS{2}{S^1}$. That is, the collection of subsets $\cV = \{V_{ij}\}_{ij \in \Z_n}$ form a closed cover of $\ExpS{2}{S^1}$. 

We can now describe $\DT_\gamma$ by prescribing it on each piece $V_{ij}$ with local coordinates. If we consider $t_1 \in [L_i, L_{i+1}]$ and $t_2 \in [L_j, L_{j+1}]$, 
\begin{equation}
    \label{eqn:sdf-quadratic}
   \DT_\gamma (t_1,t_2) =  \frac{L^2}{2}\norm{s_1\tau_i - s_2\tau_j + \frac{x_i - x_j}{L}}^2 
\end{equation}
where as a shorthand we write $s_1 = t_1-L_i$,  $s_2 = t_2 - L_j$, and let $\tau_k = u_k/l_k$ denote the unit tangent vector. 
\begin{remark} \label{rmk:clarke}
    Note that while $\DT_\gamma$ is piecewise smooth, the piecewise smooth Morse theory of~\cite{BARTELS1995385,Agrachev97} cannot be applied to characterise all Clarke critical points of $\DT_\gamma$. If $(L_i, L_j)$ is a Clarke critical point of $f$, then it is incident on 4 smooth pieces $V_{i-1,j-1},V_{i-1,j},V_{i,j-1},V_{ij}$, which violates the condition for non-degeneracy; which requires any three of the four gradients of the smooth functions on $V_{i-1,j-1},V_{i-1,j},V_{i,j-1},V_{ij}$ evaluated at $(L_i, L_j)$ to be linearly independent (see~\cite[Def 2.2 ND1)]{Agrachev97} or~\cite[Def 1.3(ND1)]{BARTELS1995385}). As $\ExpS{2}{S^1}$ is a two-dimensional manifold, this any collection of three gradients cannot be linearly independent. Therefore, if $(L_i,L_j)$ is a Clarke critical point, it cannot be non-degenerate in the framework of~\cite{BARTELS1995385,Agrachev97}.
\end{remark}
\paragraph{Sublevel sets of the distance function}
Having defined $\DT_\gamma$ piecewise on each closed cover element $V_{ij}$, we now describe sublevel sets $\AutoDsq^a$ by their restrictions to each piece $V_{ij}$
 \begin{equation}
     V_{ij}^a := \AutoDsq^a \cap V_{ij}.
 \end{equation}
We note that $\cV^a := \{V_{ij}^a\}_{i,j\in \Z_n}$ is a cover of $\AutoDsq^a$. We recall there are charts that take $V_{ij}$ to a convex subset of $\R^2$. We show in the lemma below that this is also true for sublevel sets $ V_{ij}^a $, and intersections with other cover elements of $\cV^a$. We recall a good cover is a cover where any finite intersection of cover elements are contractible. 
\begin{lemma} \label{lem:cover_PL}
    $\cV^a $ is a good cover of $\AutoDsq^a$ by closed subsets; furthermore, there is an atlas $\Phi = \{(\varphi_{ij}, W_{ij})\}$ of $\ExpS{2}{S^1}$ that takes any non-empty intersection $V_{i_1j_1}^a \cap \cdots \cap V_{i_m j_m}^a$ homeomorphically to a closed convex subset 
    \begin{equation*}
        \varphi_{i_1j_1} (V_{i_1j_1}^a \cap \cdots \cap V_{i_m j_m}^a) \subset \R^2.
    \end{equation*}
\end{lemma}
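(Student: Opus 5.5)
The strategy is to reduce everything to convexity of a quadratic function in the chart coordinates. For each piece $V_{ij}$ I will use the chart $\varphi_{ij}$ from \Cref{prop:mobius_atlas}, restricted so that it carries $V_{ij}$ onto $([L_i,L_{i+1}]\times[L_j,L_{j+1}])\cap\stripinf$. This set is a rectangle when $i\neq j$ and a triangle when $i=j$; in both cases it is a compact convex polygon. By \eqref{eqn:sdf-quadratic}, on this polygon $\DT_\gamma\circ\inv{\varphi_{ij}}$ is the composition of the affine map $(s_1,s_2)\mapsto s_1\tau_i-s_2\tau_j+(x_i-x_j)/L$ with the convex function $\frac{L^2}{2}\norm{\cdot}^2$. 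It is therefore a convex function on $\R^2$, and its sublevel set $\{\DT_\gamma\circ\inv{\varphi_{ij}}\le a\}$ is closed and convex. Consequently
\[
\varphi_{ij}(V_{ij}^a)=\varphi_{ij}(V_{ij})\cap\{\DT_\gamma\circ\inv{\varphi_{ij}}\le a\}
\]
is closed and convex, being an intersection of two such sets. Closedness of $V_{ij}^a$ in $\AutoDsq^a$ follows since $V_{ij}$ is closed and $\DT_\gamma$ is continuous.

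For intersections $V_{i_1j_1}^a\cap\cdots\cap V_{i_mj_m}^a$, I will show that, seen in the single chart $\varphi_{i_1j_1}$, each $V_{i_kj_k}\cap V_{i_1j_1}$ is a face of the polygon $\varphi_{i_1j_1}(V_{i_1j_1})$: the whole polygon, an edge, or a vertex. This is because two pieces meet only where $t_1$ or $t_2$ sits at a breakpoint $L_i$. The intersection is then
\[
\varphi_{i_1j_1}(V_{i_1j_1}^a)\cap F_2\cap\cdots\cap F_m,
\]
where each $F_k$ is a convex face, so the result is again closed and convex.

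The main obstacle is the chart bookkeeping. Two pieces can be glued through the $\Z$-action or the $S_2$-swap $(t_1,t_2)\mapsto(t_2,t_1)$, so the coordinates of $V_{i_kj_k}$ may arrive as $V_{j_ki_k}$ or translated by $1$. I also need the chart to remain injective on the whole closed piece, including the diagonal piece $V_{ii}$ and the pieces touching $\partial\ExpS{2}{S^1}$ at a corner. To handle this I will work in the strip $\stripinf$, using the embedding $\lambda$ of \Cref{prop:Morton}. I will choose the lift of $V_{i_1j_1}$ there and observe that $\bar{\scre}$ is injective on the closed fundamental region $\{0\le t_2-t_1\le 1\}$ cut down to the lift. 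The preimage of each $V_{i_kj_k}$ meeting this lift is then a grid cell $[L_a,L_{a+1}]\times[L_b,L_{b+1}]$ intersected with the strip, and intersecting grid cells gives faces. The case $n>3$ together with \labelcref{C1,C2} ensures that no piece wraps onto itself, which is what makes this injectivity hold.

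Finally, the good-cover statement follows at once: every nonempty finite intersection is homeomorphic to a nonempty convex subset of $\R^2$, which is contractible. The union of the $V_{ij}^a$ is $\AutoDsq^a$ because the $V_{ij}$ cover $\ExpS{2}{S^1}$.
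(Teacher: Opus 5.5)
Your intersection step takes a different route from the paper's, and that step has a gap.

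The paper moves each $\varphi_{i_kj_k}(V^a_{i_kj_k})$ into the chart $\varphi_{i_1j_1}$ by a single $\Z$-translate $k_l$, which is an affine glide reflection, and then intersects convex sets. You instead use $V^a_{i_kj_k}\cap V^a_{i_1j_1}=(V_{i_kj_k}\cap V_{i_1j_1})\cap V^a_{i_1j_1}$. So you cut by the sublevel set only once, in one chart, and everything rests on $V_{i_kj_k}\cap V_{i_1j_1}$ being a single face of the polygon $\varphi_{i_1j_1}(V_{i_1j_1})$. That reduction is sound and a little cleaner, since it needs nothing about transition maps beyond locating the intersection.

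The face claim is where the proof breaks. The preimage of $V_{i_kj_k}$ in the strip is the union of all $\Z$-translates of one lifted cell, not a single grid cell. Nothing in your argument stops two different translates from touching the lift of $V_{i_1j_1}$, and for $n=4$ this happens. Write $A_k=[L_k,L_{k+1}]$; a direct check gives $V_{02}\cap V_{13}=\{\{L_0,L_2\},\{L_1,L_3\}\}$. These are opposite corners of the cell of $V_{02}$, and different translates of the cell of $V_{13}$ meet them, because $(L_0,L_2)\sim(L_2,L_4)$ with $L_4=L_0+1$.

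For $a\geq\frac12\max(\|x_0-x_2\|^2,\|x_1-x_3\|^2)$ this two-point set is a nonempty intersection of members of $\cV^a$. Hence $\cV^a$ is not good, and the lemma itself is false for $n=4$; the hypothesis $n>3$ does not save it. The paper's proof, which chooses one $k_l$ per piece, has exactly the same blind spot.

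For $n\geq 5$ your argument can be completed as follows. Let $E(F,G)$ denote the image of $F\times G$ in $\ExpS{2}{S^1}$. Then $V_{ij}\cap V_{i'j'}=E(A_i\cap A_{i'},A_j\cap A_{j'})\cup E(A_i\cap A_{j'},A_j\cap A_{i'})$. Suppose both terms are nonempty and $\{i,j\}\neq\{i',j'\}$. Then one of three things holds:
\begin{enumerate}
\item $i=j$ or $i'=j'$, and the two terms coincide.
\item Exactly one index is shared between $\{i,j\}$ and $\{i',j'\}$, which gives three distinct, pairwise adjacent indices in $\Z_n$; this is impossible for $n\geq 4$.
\item $i,i',j,j'$ are distinct and form a $4$-cycle of adjacencies $i\sim i'\sim j\sim j'\sim i$; this is possible only for $n=4$.
\end{enumerate}
So for $n\geq5$ each pairwise intersection is a single $E(F,G)$ with $F,G$ faces of arcs. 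In your chart this is a face of the rectangle or triangle, intersections of such faces are convex, and the rest of your proof goes through.

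You should therefore either assume $n\geq 5$ or treat $n=4$ separately. One option is to refine the cover by inserting a vertex in the interior of a segment. This leaves the arc-length parametrised loop, and hence $\DT_\gamma$ and its sublevel sets, unchanged, and convexity still holds because the Hessian stays positive semidefinite.
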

 \begin{proof}
     W.l.o.g. we normalise $L = 1$.  We first prove that each $V_{ij}^a$ is convex. Let us choose compatible atlases $\Theta = \{(\vartheta_i, U_i)\}$ and $\Phi = \{(\varphi_{ij}, W_{ij})\}$ as described in~\Cref{prop:mobius_atlas}, where each $[L_i,L_{i+1}]$ is contained in some unique open arc $U_i \in \cU$. Evaluating $\DT_\gamma$ on local coordinates $s_i, s_j$~\cref{eqn:sdf-quadratic}. the derivatives and Hessians of $\DT_\gamma$ are given by 
\begin{align}
    \pdv{\DT_\gamma}{s_i} &=  s_i - \innerprod{\tau_i}{\tau_j} s_j + \innerprod{x_i - x_j}{\tau_i}\\
    \pdv{\DT_\gamma}{s_j} &= - \innerprod{\tau_i}{\tau_j} s_i + s_j + \innerprod{x_j - x_i}{\tau_j} \\
    H_{\DT_\gamma} &= \mqty(1  & -\innerprod{\tau_i}{\tau_j} \\ -\innerprod{\tau_i}{\tau_j} & 1 ). \label{eq:PL-hessian}
\end{align}
 Since  $|\langle \tau_i,\tau_j \rangle| \leq 1$, the Hessian of the squared distance function is positive semi-definite everywhere;  sublevel sets of $\DT_\gamma (s_i,s_j)$ (\cref{eqn:sdf-quadratic} as a function extended to $\R^2$)  are convex subsets. Thus the restriction of a non-empty sublevel set of $\DT_\gamma$ to a closed, convex subset $\varphi_{ij}(V_{ij})$ is also closed and convex.

 Consider a non-empty intersection between $m$ cover elements. Recall the transition maps of the atlas are given by the $\Z$-action that yielded the covering $q: \stripinf \twoheadrightarrow \ExpS{2}{S^1}$. Then there are elements $k_2, \ldots, k_m \in \Z$, such that
 \begin{equation}
     \varphi_{i_1j_1} (V_{i_1j_1}^a \cap \cdots \cap V_{i_m j_m}^a)  =   \varphi_{i_1j_1}(V_{i_1j_1}^a) \cap \left(k_2 \cdot \varphi_{i_2j_2}(V_{i_2 j_2}^a) \right)  \cap \cdots \cap \left(k_m \cdot \varphi_{i_mj_m}(V_{i_m j_m}^a) \right).
\end{equation}
Because the $\Z$-action is an affine transformation in $\R^2$, each $\left(k_l \cdot \varphi_{i_lj_l}(V_{i_l j_l}^a) \right)$ is still a closed convex subset, and thus the intersection is still closed and convex. Since $\varphi_{i_1,j_1}$ is a homeomorphism, we conclude that any non-empty intersection of cover elements in $\cV^a$ is contractible, and thus $\cV^a$ is a good cover of $\AutoDsq^a$. 
 \end{proof}

\begin{figure}[th]
    \centering
    \begin{minipage}{0.3\textwidth}
\begin{tikzpicture}[scale=1, cm={0,1,1,0,(0,0)}]
  \coordinate (A) at (0,0);
  \coordinate (B) at (5,0);
  \coordinate (C) at (5,5);
  \begin{scope}
  \clip (A) -- (B) -- (C) -- cycle;
  \draw[step=1.0] (0,0) grid (5,5);
  \draw[thick] (A) -- (B) -- (C) -- cycle;
  \end{scope}
    \foreach \x in {0, ..., 4} {
    \foreach \y in {0, ..., 4} {
        \begin{scope}[shift={(\x, \y)}]
        \ifnum\x<\y
        \else
        \node at (0.7, 0.35) {$V_{\y \x}$};
        \fi
        \end{scope}
    }
  }
  \draw [very thick] (0,0) -- (5,0);
  \draw [very thick] (2.4, 0.1) -- (2.55, 0) -- (2.4, -0.1);
  \draw [very thick] (5,0) -- (5,5);
  \draw [very thick] (5.1, 2.1) -- (5, 2.25) -- (4.9, 2.1);
\end{tikzpicture}
    \end{minipage}
    \begin{minipage}{0.3\textwidth}
\begin{tikzpicture}[scale=1, cm={0,1,1,0,(0,0)}]
  \coordinate (A) at (0,0);
  \coordinate (B) at (5,0);
  \coordinate (C) at (5,5);
  \begin{scope}
  \clip (A) -- (B) -- (C) -- cycle;
  \draw[step=1.0] (0,0) grid (5,5);
  \foreach \x/\y in {
  -1/-1, 0/-1, 1/-1, 2/-1, 3/-1,
  0/0, 1/0, 2/0, 3/0, 4/0,
  1/1, 2/1, 3/1, 4/1,
  2/2, 3/2, 4/2,
  3/3, 4/3,
  4/4
  } {
        \begin{scope}[shift={(\x, \y)}]
        \filldraw [color=red, fill=red, fill opacity=0.2, ultra thick] (0.5, 0.5) -- (1.5, 0.5) -- (1.5, 1.5) -- cycle;
        \filldraw [color=red, fill=red, fill opacity=0.2, ultra thick] (0.5, 0.5) -- (0.5, 1.5) -- (1.5, 1.5) -- cycle;
        \ifnum\x>\y
        \filldraw [color=red, fill=red, fill opacity=0.2, ultra thick] (0.5, 0.5) -- (1.5, 0.5) -- (0.5, 1.5) -- cycle;
        \filldraw [color=red, fill=red, fill opacity=0.2, ultra thick] (1.5, 0.5) -- (1.5, 1.5) -- (0.5, 1.5) -- cycle;
        \fi
        \filldraw [color=red, fill=red] (0.5, 0.5) circle (3pt);
        \end{scope}
  }

\filldraw [color=red, fill=red, fill opacity=0.2, ultra thick] (4.5, 0.0) -- (5.0, 0.0) -- (5.0, 0.5) -- (4.5, 0.5) -- cycle;
  \end{scope}
  \draw [very thick] (0,0) -- (5,0);
  \draw [very thick] (2.4, 0.1) -- (2.55, 0) -- (2.4, -0.1);
  \draw [very thick] (5,0) -- (5,5);
  \draw [very thick] (5.1, 2.1) -- (5, 2.25) -- (4.9, 2.1);
\end{tikzpicture}
    \end{minipage}
    \begin{minipage}{0.3\textwidth}
\begin{tikzpicture}[scale=1, cm={0,1,1,0,(0,0)}]
  \coordinate (A) at (0,0);
  \coordinate (B) at (5,0);
  \coordinate (C) at (5,5);
  \begin{scope}
  \clip (A) -- (B) -- (C) -- cycle;
  \draw[step=1.0] (0,0) grid (5,5);
  \foreach \x/\y in {
  -1/-1, 2/-1,
  0/0, 1/0, 4/0,
  1/1, 2/1,
  2/2, 3/2,
  3/3, 4/3,
  4/4
  } {
        \begin{scope}[shift={(\x, \y)}]
        \ifnum\x=\y
        \filldraw [color=red, fill=red, fill opacity=0.2, ultra thick] (0.5, 0.5) -- (1.5, 0.5) -- (1.5, 1.5) -- cycle;
        \fi
        \filldraw [red] (0.5, 0.5) circle (3pt);
        \end{scope}
  }

\filldraw [color=red, fill=red, fill opacity=0.2, ultra thick] (4.5, 0.0) -- (5.0, 0.0) -- (5.0, 0.5) -- (4.5, 0.5) -- cycle;
  \end{scope}
  \draw [very thick] (0,0) -- (5,0);
  \draw [very thick] (2.4, 0.1) -- (2.55, 0) -- (2.4, -0.1);
  \draw [very thick] (5,0) -- (5,5);
  \draw [very thick] (5.1, 2.1) -- (5, 2.25) -- (4.9, 2.1);
\end{tikzpicture}
    \end{minipage}
    \caption{Left: A closed cover $\cV$ of the M\"obius band $\ExpS{2}{S^1}$. Centre: its nerve $\nerve{\cV}$. Right: the nerve of the level set $\DT_\gamma = 0$ . given by $\nerve{\cV^0}$.}
    \label{fig:PL-nerve}
\end{figure}

\paragraph{Homotopy type of sublevel sets from Nerve}
The description of sublevel sets $\AutoDsq^{a}$ with a cover $\cV^a$ satisfying the properties described~\Cref{lem:cover_PL} allows us to apply the \emph{unified nerve theorem}~\cite[Theorem 5.9]{BAUER2023125503} to deduce the homotopy type of sublevel sets with a discrete construct called the nerve. Recall the nerve $\nerve{\cV}$ of $\cV$ is the simplicial complex whose $k$-simplices are non-empty $k$-fold intersections of cover elements.  We let $|\nerve{\cV}|$ denote the geometric realisation of the nerve. 

We now state a specialised version of the unified nerve theorem~\cite[Theorem 5.9]{BAUER2023125503}, derived from the its simplification in~\cite[Table 1]{BAUER2023125503}. We first recall some technical language in order to state the theorem. For a cover $\cV = \{V_i\}_{i \in \cI}$  of a space $X$, let us denote $V_J = \cap_{j \in J} V_j$. For $V_J$, its \emph{latching space} $L(V_J) \subset V_J$ is the union $\cup_{I \supsetneq J} V_I$. We say the cover $\cV$ satisfies the \emph{latching condition}, if for any $J \subset \cI$, the pair $(V_J, L(V_J))$ satisfies the homotopy extension property (see for example~\cite[\S 0]{hatcher}). 

\begin{theorem}[{\cite[Theorem 5.9]{BAUER2023125503}}]\label{thm:bauer}
    Let $X$ be a locally compact Hausdorff space. Suppose $\cV = \{V_i\}_{i \in \cI}$ is a finite and good cover of $X$ by closed subsets that satisfies the latching condition. Then there is a functorial homotopy equivalence $X \simeq \qty|\nerve{\cV}|$. 
\end{theorem}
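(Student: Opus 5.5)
This is the unified nerve theorem of~\cite{BAUER2023125503}, so in the paper it may be quoted; if I had to prove it, I would go through homotopy colimits. Let $P$ be the poset of non-empty index sets $J \subseteq \cI$ with $V_J \neq \emptyset$, ordered by reverse inclusion. This is the face poset of $\nerve{\cV}$. The cover gives a diagram $D: P \to \mathbf{Top}$ with $D(J) = V_J$, whose maps are the inclusions $V_J \hookrightarrow V_{J'}$ for $J \supseteq J'$.

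\textbf{Step 1: $X$ is the colimit of $D$.} The cover is finite and consists of closed sets. Hence a subset $A \subseteq X$ is closed if and only if each $A \cap V_i$ is closed in $V_i$. So $X$ carries the colimit (pasting) topology, and $\operatorname{colim}_P D \cong X$. The local compactness and Hausdorff assumptions guarantee that the ambient convenient category of spaces does not alter these topologies.

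\textbf{Step 2: the colimit is a homotopy colimit.} I would work in the Str{\o}m model structure on $\mathbf{Top}$. There every space is fibrant and cofibrant, the cofibrations are the closed Hurewicz cofibrations, and the weak equivalences are the genuine homotopy equivalences, so the conclusion will be a homotopy equivalence rather than only a weak one. The poset $P$ is a direct Reedy category, since one can grade by $-|J|$. The latching object of $D$ at $J$ is the union $\bigcup_{I \supsetneq J} V_I = L(V_J)$, and the latching map is the inclusion $L(V_J) \hookrightarrow V_J$. The latching condition says exactly that this map has the homotopy extension property; both sets are closed, so it is a Str{\o}m cofibration. Hence $D$ is Reedy cofibrant, and the canonical map $\operatorname{hocolim}_P D \to \operatorname{colim}_P D = X$ is a homotopy equivalence.

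\textbf{Step 3: replace $D$ by a point.} The cover is good, so every $V_J$ is contractible. The natural transformation $D \to *$ to the constant one-point diagram is therefore an objectwise homotopy equivalence, and by homotopy invariance of homotopy colimits it induces $\operatorname{hocolim}_P D \simeq \operatorname{hocolim}_P * = |N(P)|$. The nerve of the face poset of $\nerve{\cV}$ is its barycentric subdivision, so $|N(P)| \cong |\nerve{\cV}|$.

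\textbf{Functoriality and the main obstacle.} Composing Steps 2 and 3 gives the zigzag $X \simeq \operatorname{hocolim}_P D \simeq |\nerve{\cV}|$. Every map in it is natural in the covered space, for refinements and inclusions of covers compatible with the indexing, and this gives functoriality. I expect the main obstacle to be Step 2: the Reedy-cofibrancy argument in the Str{\o}m structure, where one must check that latching objects really are the unions $L(V_J)$ with the correct topology. This uses closedness and finiteness of the cover again. Since the Str{\o}m structure is not cofibrantly generated, I would first try the explicit Bousfield--Kan/Segal model of the homotopy colimit and prove by induction on $|J|$ that the projection to the colimit is a homotopy equivalence, gluing cell by cell with the homotopy extension property.
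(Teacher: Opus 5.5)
Your sketch is correct and is essentially the homotopy-colimit argument behind the unified nerve theorem of \cite{BAUER2023125503}. A finite closed cover makes $X$ the colimit of the intersection diagram over the face poset, and the latching condition is exactly Reedy cofibrancy in the Str\o m structure, so the map from the homotopy colimit to the colimit is a homotopy equivalence; homotopy invariance against the point diagram then gives the nerve. The paper gives no proof of its own and simply quotes \cite[Theorem 5.9]{BAUER2023125503}. The obstacle you flag at the end is milder than you fear: Reedy model structures exist over an arbitrary model category, and for a direct indexing category such as your $P$ this is just the projective structure, so the lack of cofibrant generation of the Str\o m structure does not block your Step 2.
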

The properties we have shown about $\cV^a$ in~\Cref{lem:cover_PL} are sufficient to satisfy the conditions of the nerve theorem, as closed, convex covers in Euclidean space satisfy the latching condition~\cite[Prop 5.15\&18]{BAUER2023125503}. 

\begin{proposition} \label{prop:nerve_to_sublevel}
    There is a functorial homotopy equivalence $\AutoDsq^{a} \simeq \qty|\nerve{\cV^a}|$.
\end{proposition}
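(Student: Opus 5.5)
The plan is to apply the unified nerve theorem (\Cref{thm:bauer}) directly to the cover $\cV^a = \{V_{ij}^a\}_{i,j\in\Z_n}$ of $\AutoDsq^a$, so the task reduces to checking its hypotheses one by one.

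\emph{The space and the cover.} First, $\AutoDsq^a$ is a closed subset of the compact Hausdorff M\"obius band $\ExpS{2}{S^1}$, because $\DT_\gamma$ is continuous. Hence $\AutoDsq^a$ is compact Hausdorff, and in particular locally compact Hausdorff. Second, the cover $\cV^a$ is finite, since it has at most $n^2$ elements. We discard the empty $V_{ij}^a$, which do not change the nerve. Each $V_{ij}^a = \AutoDsq^a\cap V_{ij}$ is closed, being the intersection of two closed sets. By \Cref{lem:cover_PL}, every non-empty finite intersection $V^a_{i_1j_1}\cap\cdots\cap V^a_{i_mj_m}$ is mapped by the chart $\varphi_{i_1j_1}$ homeomorphically onto a closed convex subset of $\R^2$. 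It is therefore contractible, so $\cV^a$ is a good closed cover.

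\emph{The latching condition.} This is the step I expect to be the main obstacle. We must show that for each index set $J$, the pair $(V_J, L(V_J))$ has the homotopy extension property, where $L(V_J)=\bigcup_{I\supsetneq J}V_I$. I would transport the whole configuration into a single chart. Fix $J$ with $V_J\neq\emptyset$ and a chart $\varphi=\varphi_{i_1j_1}$. By the proof of \Cref{lem:cover_PL}, each $V_I$ with $I\supsetneq J$ is sent to an intersection of $\varphi(V_J)$ with affine translates $k\cdot\varphi(V_{ij}^a)$. All of these are closed convex subsets of $\R^2$. So $(\varphi(V_J),\varphi(L(V_J)))$ is a pair consisting of a closed convex set and a finite union of closed convex subsets of it, all sitting inside one Euclidean chart. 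For finite closed convex covers of subsets of Euclidean space, \cite[Prop 5.15\&18]{BAUER2023125503} guarantees the latching condition. The homotopy extension property is invariant under homeomorphism of pairs, so it transfers back through $\varphi$.

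The subtle point is that the relevant sets must all lie in the image of one chart. Cover elements adjacent across the glide-reflection identification have to be moved by the appropriate $\Z$-translate so that they lie in the same copy of the strip $\stripinf$. Since $V_J$ is non-empty, all the sets involved are subsets of $V_J$, so a single lift of $V_J$ suffices. I would also note that the convexity from \Cref{lem:cover_PL} comes from the positive semi-definite Hessian \cref{eq:PL-hessian}, which holds for every $a$, so the argument is uniform in $a$.

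\emph{Conclusion and functoriality.} With all hypotheses verified, \Cref{thm:bauer} yields a homotopy equivalence $\AutoDsq^a\simeq|\nerve{\cV^a}|$. For $a\le b$ we have $V_{ij}^a\subseteq V_{ij}^b$, so the inclusion $\AutoDsq^a\hookrightarrow\AutoDsq^b$ is a map of covers indexed by the same set. It therefore induces the simplicial inclusion $\nerve{\cV^a}\hookrightarrow\nerve{\cV^b}$. Functoriality of the equivalence in \Cref{thm:bauer} then makes the homotopy equivalences commute, up to homotopy, with these inclusions. This gives the functorial statement and hence an isomorphism of persistence modules.
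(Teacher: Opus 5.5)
Your proposal matches the paper's proof essentially step for step: finiteness, closedness and goodness of $\cV^a$ come from \Cref{lem:cover_PL}; the latching condition comes from carrying $(V^a_J, L(V^a_J))$ through one chart to a configuration of closed convex subsets of $\R^2$ and citing \cite[Prop 5.15\&18]{BAUER2023125503}; and \Cref{thm:bauer} then gives the equivalence. One caveat applies equally to the paper, since it sits inside \Cref{lem:cover_PL} at exactly the ``single lift'' point you flagged: the argument needs $n\geq 5$, because for $n=4$ the pieces $V_{02}$ and $V_{13}$ meet in the two points $q(L_1,L_3)$ and $q(L_0,L_2)$ (the two diagonals, reached through different $\Z$-translates in the strip), so $V^a_{02}\cap V^a_{13}$ is disconnected for large $a$, and indeed the full nerve then has Euler characteristic $10-31+28-6=1\neq 0=\chi(\Mob)$.
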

\begin{proof}
    Recall from~\Cref{lem:cover_PL} that $\cV^a$ is a closed, good cover. By construction it is also finite. 
    Recall the latching space condition. Since we have a chart $\varphi_{ij}$ that takes  $V^a$ and its intersections with other cover elements to a homeomorphic closed, convex subset, the chart takes the pair $(V_J^a, L(V_J^a))$ to a homeomorphic pair generated by a cover of closed convex subset in $\R^2$, we can apply~\cite[Prop 5.15\&18]{BAUER2023125503} and deduce that $\cV^a$ satisfies the latching condition.   Thus \Cref{thm:bauer} implies there is a functorial homotopy equivalence $\AutoDsq^{a} \simeq \qty|\nerve{\cV^a}|$.
\end{proof}
Consequently, the persistence homology of the sublevel set filtration of $\ExpS{2}{S^1}$ by $\DT_\gamma$ can be recovered from a filtration of the nerve. For $a \leq b$, the inclusion of cover elements of $\cV^a$ into $\cV^b$ induces a filtration of simplicial complexes
\begin{equation}
    \nerve{\cV^\bullet}:\quad \nerve{\cV^0} \subseteq \cdots \subseteq  \nerve{\cV^a}  \subseteq \nerve{\cV^b} \subseteq \cdots \subseteq  \nerve{\cV^\infty} := \nerve{\cV}.
\end{equation}
If we take the geometric realisation of the nerves, we get a similar filtration of topological spaces. Because of the functorial equivalence of simplicial and singular homology, the persistent simplicial homology of the filtration above is equivalent to the persistent singular homology of the geometric realisation of the filtration nerve $\pershomf_i(\nerve{\cV^\bullet}) \cong \pershomf_i(|\nerve{\cV^\bullet}|)$. Since we have a functorial homotopy equivalence  $\AutoDsq^{a} \simeq \qty|\nerve{\cV^a}|$ by~\Cref{prop:nerve_to_sublevel}, we conclude that the persistence homology of the nerve filtration is isomorphic to the persistence homology arising from the sublevel set filtration of $\ExpS{2}{S^1}$ by $\DT_\gamma$:
\begin{equation} \label{eq:nerve_PH_equiv}
    \pershomf_i(\nerve{\cV^\bullet}) \cong \pershomf_i(\AutoDsq^{\bullet}) =: \pershomf_i(\DT_\gamma). 
\end{equation}
Since the homology of a finite simplicial complex $\nerve{\cV^a}$ is finite dimensional,~\Cref{prop:nerve_to_sublevel} directly implies that $\pershomf_i(\DT_\gamma)$ is tame.
\begin{corollary} \label{cor:PL_tame}
If $\gamma$ is a PL embedding of $S^1$ (i.e. it satisfies conditions~\labelcref{C1,C2}), then $\pershomf_i(\DT_\gamma)$ is tame. 
\end{corollary}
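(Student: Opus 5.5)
The plan is to deduce tameness directly from \Cref{prop:nerve_to_sublevel} and the isomorphism~\cref{eq:nerve_PH_equiv}, which holds because the homotopy equivalences are functorial. By~\cref{eq:nerve_PH_equiv}, $\pershomf_i(\DT_\gamma)$ is isomorphic, as a persistence module, to the persistent simplicial homology of the filtration $\nerve{\cV^\bullet}$. It therefore suffices to show two things about this nerve filtration: each vector space is finite dimensional, and the module changes at only finitely many parameter values, so that the diagram is finite.

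For pointwise finiteness, note that for every $a$ the nerve $\nerve{\cV^a}$ is a subcomplex of $\nerve{\cV}$. The complex $\nerve{\cV}$ is finite because the cover $\cV=\{V_{ij}\}_{i,j\in\Z_n}$ has at most $n^2$ elements, so it has at most $2^{n^2}$ simplices. The simplicial chain groups of $\nerve{\cV^a}$ with coefficients in $\mathbb{F}$ are therefore finite dimensional, and hence so is $H_i(\nerve{\cV^a})$. This holds uniformly in $a$ and for every $i$, with $H_i=0$ once $i$ exceeds $\dim\nerve{\cV}$.

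For finiteness of the diagram, I would observe that $\nerve{\cV^a}$ is determined by which subsets $J$ satisfy $V_J\cap\AutoDsq^a\neq\emptyset$. For fixed $J$ this happens exactly when $a\geq m_J:=\min_{V_J}\DT_\gamma$. The minimum exists because $V_J$ is compact and $\DT_\gamma$ is continuous, and for $J$ with $V_J=\emptyset$ the simplex is simply never present. So $\nerve{\cV^\bullet}$ is the sublevel filtration of a filter function on $\nerve{\cV}$ that takes finitely many values $m_J$. Between consecutive values the complex is constant, so the structure maps are identities, and $H_i(\nerve{\cV^\bullet})$ decomposes into finitely many intervals. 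Consequently $\pershomf_i(\DT_\gamma)$ is pointwise finite dimensional with finitely many points.

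The only step needing care is the use of the functorial homotopy equivalence of \Cref{thm:bauer}. It must commute with the inclusions $\AutoDsq^a\hookrightarrow\AutoDsq^b$, so that the pointwise isomorphisms assemble into an isomorphism of persistence modules and not merely of individual vector spaces. This is exactly the functoriality asserted in \Cref{prop:nerve_to_sublevel}, which I take as given. The hypotheses there require \labelcref{C1,C2}, which ensure that $\gamma$ is a PL embedding and that the pieces $V_{ij}$ are well-defined, and these are assumed in the statement.
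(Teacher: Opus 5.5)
Your proposal is correct and follows the paper's argument. Via the functorial nerve equivalence (\Cref{prop:nerve_to_sublevel}, \cref{eq:nerve_PH_equiv}), you pass to the filtration of the finite complex $\nerve{\cV}$, whose homology is finite dimensional. Your observation that the filter function $F(J)=\min_{V_J}\DT_\gamma$ takes only finitely many values, so the diagram is finite, is a step the paper leaves implicit here; it is needed for the paper's definition of tame, and the paper uses it in the following subsection.
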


\subsection{The Filtration of the Nerve}
\label{ssec:filtration-of-the-nerve}
In the preceding section, we have show that the persistent homology of the filtration of $\ExpS{2}{S^1}$ by $\DT_\gamma$ is isomorphic to that of a filtration of $\nerve{\cV^\bullet}$. To give an account of $\nerve{\cV^\bullet}$, we first describe the full nerve $\nerve{\cV}$ explicitly. We then show how the geometry of the PL loop prescribes the times at which each simplex in the nerve enters the filtration. Finally, we show how homological critical Morse sets in the nerve filtration correspond to `critical points' in the underlying sublevel set filtration $\AutoDsq^\bullet$. 

\subsubsection{The Nerve}
\label{ssec:what-a-nerve}
 We now describe the \emph{nerve} of $\cV$ by considering the intersection between such squares and triangles (see~\Cref{fig:PL-nerve}). Recall the simplices of the nerve are generated by finite elements of cover elements. Let us first consider pairwise intersections of cover elements. The cover elements $V_{ij}$ and $V_{i'j'}$ have non-empty intersection only if at least one of the indices from each pair are the same or adjacent in $\Z_n$. Excluding the trivial case $\qty{ij} = \qty{i',j'}$, there are two cases of non-empty intersection to consider: 
\begin{enumerate}[ref={(E\arabic*)}, label=(E\arabic*)]
    \item \label{item:PLedge1} $j = j'$, and $i,i'$ are distinct and adjacent in $\Z_n$; or 
    \item \label{item:PLedge2}  $j,j'$ are distinct and adjacent in $\Z_n$, and $i,i'$ are also distinct and adjacent in $\Z_n$.
\end{enumerate}
We recall the quotient map $q: \stripinf \twoheadrightarrow \ExpS{2}{S^1}$, and describe these intersections as homeomorphic images of subsets of $\stripinf$. In the case described by~\labelcref{item:PLedge1}, where one index element is shared, the intersections are along closed intervals:
\begin{equation} \label{eq:intersection_2_edge}
    V_{ij} \cap V_{i+1,j} = q(\{L_i\} \times [L_j, L_{j+1}])
\end{equation}
Note that addition of indices is modulo $n$ here. We also remark that no intersection of cover elements other than  $V_{ij}$ and $V_{i+1,j}$ is equal to $q_2(\{L_i\} \times [L_j, L_{j+1}])$. In the second case described by~\labelcref{item:PLedge2}, the two cover elements intersect at one of their corners:
\begin{equation} \label{eq:intersection_2_corner}
    V_{ij} \cap V_{i+1,j+1} = q(L_{i+1}, L_{j+1}).
\end{equation}
The characterisation of two-way intersections by~\labelcref{item:PLedge1,item:PLedge2} and \cref{eq:intersection_2_edge,eq:intersection_2_corner} constrain the intersections between three and four elements to only take place at points $q(L_i, L_j)$ (see~\Cref{fig:PL-nerve}).  If  $i \neq j$, then $q(L_i, L_j)$ lies in the interior $\UConf{2}{S^1}$. Otherwise, $q(L_i, L_i)$ is a boundary point. For an interior point $i\neq j$, the point $q(L_{i+1}, L_{j+1}) \in \UConf{2}{S^1}$ is also the four way intersection
\begin{equation}  \label{eq:intersection_4}
   V_{ij} \cap V_{i+1,j} \cap V_{ij+1}  \cap V_{i+1,j+1} = q(L_{i+1}, L_{j+1}),
\end{equation}
along with three way intersections between cover elements indexed by three-subsets of $\{ \{i,j\},  \{i+1,j\} , \{i,j+1\} , \{i+1,j+1\} \}$. Otherwise, if $i=j$, then $q(L_{i+1}, L_{i+1}) $ is in a unique \emph{three way} intersection
\begin{equation} \label{eq:intersection_3_boundary}
     V_{ii} \cap V_{i,i+1} \cap V_{i+1, i+1} = q(L_{i+1}, L_{i+1}). 
\end{equation}
Having fully described the possible modes of intersections between cover elements of $\cV$, we can constructively the nerve $\nerve{\cV}$:
\begin{enumerate}
    \item We have vertices, corresponding to single cover elements of $\cV$, indexed over $(i,j)$ with $0 \leq i \leq j \leq n-1$;
    \item We have an edge between $(i,j)$ and $(i',j')$ corresponding to intersections between pairs of cover elements; such edges appear in the complex if \labelcref{item:PLedge1,item:PLedge2} are satisfied;
    \item We have a two simplex between $(i,i), (i,i+1), (i+1, i+1)$ for $i = 0,\ldots, n-1$, corresponding the intersections of cover elements at the boundary of $\ExpS{2}{S^1}$ (see right hand side panel of~\Cref{fig:PL-nerve}). 
    \item For $i \neq j$, we add a three simplex between $(i,j), (i+1,j), (i,j+1), (i+1, j+1)$, and all its faces. 
\end{enumerate} 

\begin{remark}
    We can make a further simplification to the nerve for every three simplex $(i,j), (i+1,j), (i,j+1), (i+1, j+1)$ where $i \neq j$. We note that the anti-diagonal edge $(i,j+1)(i+1,j)$ is a \emph{free faces} of  $(i,j), (i+1,j), (i,j+1), (i+1, j+1)$ in $\nerve{\cV}$. Thus, we can perform a simplicial collapse to remove simplices that are simultaneously co-faces of $(i,j+1)(i+1,j)$  and faces of $(i,j), (i+1,j), (i,j+1), (i+1, j+1)$, without changing the homotopy type of $\nerve{\cV}$. What remains of the faces of $(i,j), (i+1,j), (i,j+1), (i+1, j+1)$ are a pair of two-simplicies $(i,j)(i,j+1)(i+1,j+1)$ and  $(i,j)(i+1,j)(i+1,j+1)$ and all their faces. 
    Thus, instead of adding a three simplex between $(i,j), (i+1,j), (i,j+1), (i+1, j+1)$, and all its faces in step (4) above, we can construct a smaller simplicial complex by only adding a pair of two-simplicies $(i,j)(i,j+1)(i+1,j+1)$ and  $(i,j)(i+1,j)(i+1,j+1)$ for $i \neq j$, as the faces of such simplices are added already in steps (1)-(3). 
\end{remark}
\subsubsection{The Filtration}
We now give an explicit description of the filtered nerve $\qty(\nerve{\cV^a})_{a\in\R}$. For a simplex  $J = (i_0,j_0),\ldots, (i_m, j_m)$ in the nerve $\nerve{\cV}$, consider $F: \nerve{\cV} \to \R$ given by 
\begin{equation}
    F(J) = \inf \{a \in \R \ : \ J \in \nerve{\cV^a} \}
\end{equation}
Recall any simplex $J \in \nerve{\cV^a}$ corresponds to a non-empty intersection of cover elements $V_J^a$. Recall by definition that $V_J^a$ is a compact set generated by 
\begin{equation}
    V_J^a = V_J \cap \AutoDsq^a.
\end{equation}
By compactness, $\DT_\gamma$ attains a minimum on $V_J$. Thus, a simplex $J$ is present in $\nerve{\cV^a}$, if and only if $a \geq \min_{V_J} \DT_\gamma$. In other words, 
\begin{equation}
   F(J) =  \min_{V_J} \DT_\gamma. 
\end{equation}
Note that $F$ is monotone with respect to the face poset of the nerve: for $I \subset J$ a face of $J$, because $V_J \subseteq V_I$, it follows from the definition of $F$ as a minimum that $F(I) \leq F(J)$. In particular, this implies $F$ is a filter function of the filtration $\qty(\nerve{\cV^a})_{a\in\R}$, and the sublevel set $\sublevel{F}{a}$ of simplices gives the nerve $\nerve{\cV^a}$.

Having characterised the intersections of cover elements in~\cref{eq:intersection_2_corner,eq:intersection_2_edge,eq:intersection_4,eq:intersection_3_boundary}, we can evaluate the minima  $\min_{V_J} \DT_\gamma$ on each simplex and derive $F$. We give a description of these cases below.
\paragraph{0-simplices}
For a 0-simplex $J = (i,j)$, the filtration value $F((i,j))$ can simply be inferred from the minimal distance between two line segments $\overline{x_ix_{i+1}}$ and $\overline{x_jx_{j+1}}$:
\begin{equation}
    F((i,j)) = \frac{1}{2}d(\overline{x_ix_{i+1}}, \overline{x_jx_{j+1}})^2.
\end{equation}
We remark that if $V_{ij}$ intersects the boundary, i.e. $j = i$ or $j = i+1$, then the two line segments overlap at a point, and $F((i,j)) = 0$.
\paragraph{1-simplices}
We recall from~\cref{eq:intersection_2_edge,eq:intersection_2_corner} that the intersection between two cover adjacent cover elements can either happen along an edge or a corner. For the case of the edge in~\cref{eq:intersection_2_edge}, the minimum of $\DT_\gamma$ can be inferred from the distance between $x_i$ and the line segment $\overline{x_j,x_{j+1}}$. Otherwise if the cover elements intersect at a corner, then we can simply infer it with point wise distances:
\begin{align}
    \text{\labelcref{item:PLedge1}}&: \quad F((i,j)(i+1,j)) = \frac{1}{2} d(x_i, \overline{x_j x_{j+1}})^2 \\
    \text{\labelcref{item:PLedge2}}&: \quad F((i,j)(i+1,j+1)) = \frac{1}{2} \norm{x_{i+1} - x_{j+1}}^2.
\end{align}
\paragraph{2-simplices and 3-simplices}
For $J$ a 2-simplex or 3-simplex, the intersection $V_J$ is a single point~\cref{eq:intersection_4,eq:intersection_3_boundary}. For the interior case $i \neq j$ described in~\cref{eq:intersection_4}, a 3-simplex $J = (i,j), (i+1,j), (i,j+1), (i+1, j+1)$ and all 2-simplices in its faces enters the filtration at value 
\begin{equation}
    F((i,j), (i+1,j), (i,j+1), (i+1, j+1)) = \frac{1}{2}\norm{x_{i+1} - x_{j+1}}^2. 
\end{equation}
In the boundary where $i = j$ as described in~\cref{eq:intersection_3_boundary}, since $q(L_{i+1}, L_{i+1})$ is on the boundary, 
\begin{equation}
    F((i,i), (i,i+1), (i+1, i+1)) = 0.
\end{equation}
This thus accounts for all simplices in $\nerve{\cV}$ and we have defined the filtration $\nerve{\cV^\bullet}$ as a sublevel set filtration of the filter function $F$.

\subsection{Persistent Homology}
\label{ssec:pl-persistence}

We turn our attention then to the persistent homology of the sublevel set filtration of $\ExpS{2}{S^1}$ by $\DT_\gamma$, which is isomorphic to the filtration of the nerve $\pershomf_i(\nerve{\cV^\bullet}) \cong \pershomf_i(\Gamma^\bullet)$ (\cref{eq:nerve_PH_equiv}). We are interested in homological critical values of the nerve filtration $\nerve{\cV^\bullet}$. Since we have a finite simplicial complex, there are only finitely many values in the filtration $\nerve{\cV^\bullet}$ at which the simplicial complex actually changes by addition of simplices. Let us denote those distinct values by $0 = a_0 < a_1 < \ldots < a_m = \max F$,
which is the ordered set of values $a \in \R$ where $\fibre{F}{a} \neq \emptyset$. For all values $s,t$ such that  $a_i \leq s \leq t < a_{i+1}$, the nerve is constant, and thus $t \in (a_i, a_{i+1})$ are trivially homological regular values. We can thus restrict our attention to $a_0, \ldots, a_m$.

In analogy with the smooth case where Morse theory relates how critical points of smooth functions changes the homotopy type of sublevel sets, we now discuss how the addition of individual simplices in the filtration $\nerve{\cV^\bullet}$ changes the homology of the sublevel set. We recall the from~\Cref{ssec:morse_sets} how we can characterise them with the Conley indices of Morse sets. In the following, we shall say a Morse set $S_i$ of a simplicial complex is \emph{generated by a $k$-simplex} if $\closure{S_i}$ is a the closure of a $k$-simplex. We shall also define the subcomplex $S_{\partial \Mob}$ as the union of all (closures of) 2-simplices that are not faces of a 3-simplex, i.e. those lying along the boundary $\partial \Mob$.

\begin{proposition}
    \label{prop:morse-set-pl}
    Consider a PL embedded loop in $\R^d$
    satisfying~\labelcref{C1,C2,C3},
    with corresponding filtration $\qty(\nerve{\cV^a})_{a \in \R}$.
    For values \(a < 0\), 
    no Morse sets exist.
    For \(a = 0\), the only Morse set is $S_{\partial\Mob}$.
    For values \(a > 0\), all  Morse sets are generated by 0, 1, or 3-simplices.
\end{proposition}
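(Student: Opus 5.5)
The plan is to attach to every simplex $J$ with $F(J)=a>0$ a canonical point $p_J \in V_J$ at which $\DT_\gamma$ attains its minimum over $V_J$, and to show that the Morse sets at level $a$ are exactly the classes of simplices sharing a common point $p$. The cases $a<0$ and $a=0$ are quick. Since $F \geq 0$, no simplex enters at a negative value, so $X\setminus X^-$ is empty and there are no Morse sets for $a<0$. At $a=0$ the new simplices are those whose $V_J$ meets $\fibre{\DT_\gamma}{0}=\partial\Mob$. By the description of the nerve in \cref{ssec:what-a-nerve} these are exactly the vertices $(i,i)$ and $(i,i+1)$, the edges among them, and the boundary $2$-simplices $(i,i),(i,i+1),(i+1,i+1)$. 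This union is $S_{\partial\Mob}$, and it is connected, so it is the only Morse set at $a=0$.

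For $a>0$ the first step is uniqueness of minimisers. On any cell $V_{ij}$ with $i\neq j$, the local expression \cref{eqn:sdf-quadratic} has Hessian \cref{eq:PL-hessian}, whose determinant is $1-\innerprod{\tau_i}{\tau_j}^2$. This is strictly positive by \labelcref{C3}, so $\DT_\gamma$ is strictly convex on the convex chart image $\varphi_{ij}(V_{ij})$. Cells $V_{ii}$ play no role here, because every simplex involving $(i,i)$ has $F=0$. By \Cref{lem:cover_PL}, every non-empty intersection $V_J$ with $F(J)>0$ is a closed convex subset of such a chart. Hence $\DT_\gamma$ has a unique minimiser $p_J$ on $V_J$, and $\DT_\gamma(p_J)=F(J)$.

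The second step describes the new simplices with a common minimiser. Fix $a>0$ and a point $p$ arising as some $p_J$, and let $\sigma_p$ be the simplex of the nerve spanned by all cover elements containing $p$. Put $S_p=\{J : F(J)=a,\ p_J=p\}$. I claim that every $J\in S_p$ is a face of $\sigma_p$, since $p\in V_J$ forces each vertex of $J$ to contain $p$. I also claim that $\sigma_p \in S_p$: we have $V_{\sigma_p}=\{p\}\cup\cdots \subseteq V_J$, so $F(\sigma_p)\geq F(J)=a$, while $F(\sigma_p)\leq \DT_\gamma(p)=a$. Hence $\closure{S_p}=\closure{\sigma_p}$.

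By the intersection analysis \cref{eq:intersection_2_edge,eq:intersection_2_corner,eq:intersection_4,eq:intersection_3_boundary}, the simplex $\sigma_p$ depends on where $p$ sits:
\begin{itemize}
\item if $p$ is in the interior of a cell, $\sigma_p$ is a vertex;
\item if $p$ is in the relative interior of a shared edge, $\sigma_p$ is an edge;
\item if $p$ is an interior corner $q(L_{i+1},L_{j+1})$ with $i\neq j$, $\sigma_p$ is a $3$-simplex;
\item if $p$ is a boundary corner, $\sigma_p$ is a $2$-simplex, but there $\DT_\gamma(p)=0$, which is excluded for $a>0$.
\end{itemize}

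The remaining step, which I expect to be the main obstacle, is to show that each connected component of $F^{-1}(a)$ is exactly one $S_p$. Each $S_p$ is connected, since every element is a face of $\sigma_p\in S_p$. For the converse, suppose $J,J'\in F^{-1}(a)$ are adjacent through a common face $K$ with $F(K)=a$ (a face relation between them counts as such a case). Then $V_J,V_{J'}\subseteq V_K$, and $\DT_\gamma(p_J)=\DT_\gamma(p_{J'})=a=\min_{V_K}\DT_\gamma$. Uniqueness of the minimiser on $V_K$ gives $p_J=p_K=p_{J'}$. Chaining this along paths shows that a component cannot mix different points. The care needed is in making \emph{connected} precise for sets of simplices: adjacency should be through face relations inside $F^{-1}(a)$. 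If two simplices of value $a$ only share a face of strictly smaller value, they must be in different components, and the argument above is consistent with that. Combining the three steps, each Morse set at level $a>0$ has closure equal to $\closure{\sigma_p}$ for a $0$-, $1$- or $3$-simplex $\sigma_p$.
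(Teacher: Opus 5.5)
Your argument is correct after one small repair (second paragraph), and it is organised differently from the paper's. The paper proves two local lemmas and an observation:
\begin{enumerate}
\item two edges at a common interior vertex that enter together either have that vertex in their link, or are faces of a 3-simplex entering at the same time;
\item two interior 3-simplices sharing an edge that enter together have that edge in their link;
\item 2-simplices only enter as faces of 3-simplices.
\end{enumerate}
From these it concludes that simplices entering at the same value meet only through their faces. You replace this configuration check with one invariant: the unique minimiser $J\mapsto p_J$ is constant along face relations inside $F^{-1}(a)$. So the Morse sets are exactly the fibres $S_p$, each with closure $\closure{\sigma_p}$ for the carrier simplex $\sigma_p$ of $p$.

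Both arguments rest on the same input, namely strict convexity of $\DT_\gamma$ on off-diagonal cells from \labelcref{C3}. Yours handles every adjacency pattern at once. This includes patterns the paper's lemmas do not address explicitly, such as two 3-simplices meeting only in a vertex, or positive-value simplices at the boundary-touching vertices $(i,i+1)$. It also produces the point $p$ and the fact that $\sigma_p$ itself enters at $a$. That is exactly the correspondence $\ucpt{z_1,z_2}\mapsto\sigma\ucpt{z_1,z_2}$ used in the proof of \Cref{thm:PLmain}.

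The repair concerns your uniqueness step: it is not true that every simplex involving $(i,i)$ has $F=0$. The triangle $V_{ii}$ has a corner $q(L_i,L_{i+1})$ in the interior of $\Mob$; it is the chord joining the two endpoints of the $i$-th segment. There the 3-simplex $(i-1,i),(i,i),(i-1,i+1),(i,i+1)$ enters at $\tfrac12 l_i^2>0$, and so do faces such as the edge $(i,i)(i-1,i+1)$.

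The same corner affects your description at $a=0$. The new edges there are ``the edges whose intersection meets $\partial\Mob$'', i.e. the edges of the boundary triangles. They are not all edges among the vertices $(i,i),(i,i+1)$: for example, $(i-1,i)(i,i+1)$ has positive value.

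The fix is short. Using $n>3$, the only simplices spanned by diagonal vertices alone are the vertices $(i,i)$ and the edges $(i,i)(i+1,i+1)$, all of value $0$. Hence any $J$ with $F(J)>0$ has a vertex $(k,l)$ with $k\neq l$. Then $V_J\subseteq V_{kl}$ is convex in the chart of $V_{kl}$, where $\DT_\gamma$ is strictly convex. With this, $p_J$ is unique and the rest of your argument goes through unchanged.
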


For the case \(a >0\) we shall split this into cases by the dimension of the simplex being added.
In the following, we shall say a vertex of $\nerve{\cU}$ is an \emph{interior} vertex if its corresponding cover element
does not intersect the boundary $\partial \Mob$. 

The first lemma shows that Morse sets that are top-dimension 1 (only edges and vertices) consist of no more than one edge. 

\begin{lemma}
    If a pair of  edges $e,e'$ that are incident on the same interior vertex $v$ are added to the filtration at the same time, then either
    \begin{itemize}
        \item $v$ is in the link of both edges; or
        \item edges $e,e'$ are faces of a 3-simplex, and $v,e,e'$ are added along with the unique three simplex containing $e,e'$ in its faces.
    \end{itemize}
\end{lemma}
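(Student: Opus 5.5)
Let $v = (i,j)$ be an interior vertex and let $e, e'$ be two distinct edges incident on $v$ that both enter at $a = F(e) = F(e') > 0$. I would organise the proof as a case analysis on the types \labelcref{item:PLedge1,item:PLedge2}. This lets us compute both filtration values with the explicit formulas of \cref{ssec:filtration-of-the-nerve} and compare them.

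The key geometric input is that $\DT_\gamma$ restricted to $V_{ij}$ is strictly convex. By \labelcref{C3}, the Hessian in \cref{eq:PL-hessian} is positive definite, since $|\innerprod{\tau_i}{\tau_j}|<1$. Hence $\DT_\gamma$ has a unique minimiser $m_v$ on the convex set $\varphi_{ij}(V_{ij})$. Every edge incident on $v$ corresponds to a face of $V_{ij}$: a side of the rectangle for \labelcref{item:PLedge1}, or a corner for \labelcref{item:PLedge2}. Its filtration value is the minimum of $\DT_\gamma$ over that face.

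The first step is to dispose of the case where $m_v$ lies on neither face. Then $F(v) < F(e)$, so $v$ entered strictly earlier and lies in the link of both edges.

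Otherwise $F(v) = F(e)$, say, so $m_v$ lies on the face $V_v\cap V_w$ of $e = vw$. If $F(e') = F(v)$ as well, then $m_v$, by uniqueness, lies in both faces, so the two faces must meet.

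I would then enumerate the ways two distinct faces of the rectangle $V_{ij}$ can intersect: two adjacent sides, a side and a corner on it, or two coincident corners. All of these intersections are corners $q(L_{i'},L_{j'})$. Each interior corner is the four-way intersection \cref{eq:intersection_4} and is the support of a unique 3-simplex. So $m_v$ would be a corner point $c$, and $F(v)=F(e)=F(e')=\tfrac12\|x_{i'}-x_{j'}\|^2$ is exactly the value of that 3-simplex.

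The step I expect to be the main obstacle is showing that in this corner case everything enters simultaneously, with $v, e, e'$ all faces of that unique 3-simplex. The statement says "added along with"; it does not say that other vertices of the 3-simplex must appear at the same time. So I only need two facts. First, $e$ and $e'$ are both faces of the 3-simplex at $c$, which holds because each face involved contains $c$ and hence both edges are edges among the four cover elements at $c$. Second, the 3-simplex's value is $\DT_\gamma(c) = a$. This disposes of the dichotomy.

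There is a remaining subtlety: the case where $m_v$ lies on the face of $e$ but not on the face of $e'$. There $F(e') > F(v)$, so $v$ already lies in the link of $e'$, and hence also of $e$ whenever $F(v)<a$. If instead $F(v)=F(e)=a<F(e')$, the two edges are not added simultaneously, which contradicts the hypothesis. I would double-check this bookkeeping: whether $v$ being in the link can fail when $F(v)=a$ is exactly the situation forced into the corner case by strict convexity, and that is where \labelcref{C3} is essential.
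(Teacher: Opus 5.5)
Your proposal is correct and follows essentially the same argument as the paper: strict convexity from (C3) gives a unique minimiser $m_v$ on $V_v$, and if $v$ is not already present below level $a$, then $m_v$ must lie in both faces $V_v\cap V_w$ and $V_v\cap V_{w'}$. Their intersection is a single interior corner, namely the four-way intersection supporting the unique 3-simplex, so that 3-simplex enters at the same value $a$. Your ``remaining subtlety'' case is actually vacuous, since $m_v$ lying on the face of $e$ already forces $F(v)=F(e)=a$; the dichotomy is therefore simply $F(v)<a$ versus $F(v)=a$.
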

\begin{proof}
    Suppose $v$ is not in the link of $e = (v,w)$ and $e' = (v,w')$. Then $e,e'$ being added to the filtration at the same time implies the unique minimum (as $v$ in interior, and~\labelcref{C3} condition) of the convex function on the cover element associated to $v$ is contained in the intersection between the squares belonging to $v,w,w'$. Since the intersection is a single point corresponding to the intersection between cover elements corresponding $v,w,w'$ and some other vertex $w''$, the 3-simplex containing $e,e'$ in its faces is added at the same time. 
\end{proof}

The lemma above shows that if an edge is added at a step in the filtration, along with other simplices, then either it is in the face of a 3-simplex being added; or other simplices added at the same time (apart from vertices) are not connected to the edge.
For the 2-simplex case,
we observe that by the structure of the cover, 2-simplices are always added as a face of a 3-simplex.
The next lemma now deals with the 3-simplex case.

\begin{lemma}
    If a pair of interior 3-simplices $\sigma,\sigma'$ incident on the same edge $e$ are added at the same time, then the edge is in the link of both $\sigma$ and $\sigma'$. 
\end{lemma}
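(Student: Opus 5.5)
The approach is to identify which edge two distinct interior 3-simplices can share, and then show directly from the filtration formulas of \cref{ssec:filtration-of-the-nerve} that this edge enters the filtration strictly before value $a$. Entering strictly earlier means the edge lies in $X^- = \nerve{\cV^{a'}}$ for every $a' < a$ close to $a$, and is therefore in the link of both Morse sets.

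\emph{Step 1 (which edge is shared).} An interior 3-simplex $\sigma$ is indexed by a grid corner $c_\sigma = q(L_{i+1},L_{j+1}) \in \UConf{2}{S^1}$, using \cref{eq:intersection_4}; its vertices are the four cover elements meeting at $c_\sigma$. I will sort its six edges by the intersection types.
\begin{itemize}
\item The two \labelcref{item:PLedge2}-edges (the diagonals) have $V_e = \{c_\sigma\}$ by \cref{eq:intersection_2_corner}. So they lie in the closure of exactly one 3-simplex, namely the one indexed by that corner.
\item The four \labelcref{item:PLedge1}-edges have $V_e$ equal to a closed grid segment, by \cref{eq:intersection_2_edge}, and each such segment has $c_\sigma$ as one endpoint.
\end{itemize}
A 3-simplex contains $e$ only if its corner lies in $V_e$. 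Hence two distinct interior 3-simplices $\sigma,\sigma'$ sharing $e$ force $e$ to be an \labelcref{item:PLedge1}-edge, with $c_\sigma, c_{\sigma'}$ the two endpoints of $V_e$. After relabelling, via the chart $\varphi_{ij}$ and the $\Z$-action as in \Cref{lem:cover_PL}, this gives
$$V_e = q(\{L_{i+1}\}\times[L_j,L_{j+1}]), \qquad c_\sigma = q(L_{i+1},L_j), \qquad c_{\sigma'} = q(L_{i+1},L_{j+1}).$$

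\emph{Step 2 (compare filtration values).} Both simplices enter at the same value $a$, so
$$\tfrac12\norm{x_{i+1}-x_j}^2 = \tfrac12\norm{x_{i+1}-x_{j+1}}^2 = a.$$
On $V_e$, written in the local coordinate $s = t_2 - L_j \in [0,l_j/L]$ (equivalently, by arclength along the segment), \cref{eqn:sdf-quadratic} restricts to
$$g(s) = \tfrac{L^2}{2}\norm{\frac{x_{i+1}-x_j}{L} - s\,\tau_j}^2.$$
This is a quadratic in $s$ with leading coefficient $L^2/2 > 0$, because $\norm{\tau_j} = 1$. Condition \labelcref{C1} ensures $x_j \neq x_{j+1}$, so the segment is non-degenerate.

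A strictly convex function taking equal values at the two endpoints of a non-degenerate interval attains a strictly smaller value at an interior point. Therefore
$$F(e) = \min_{V_e}\DT_\gamma = \tfrac12\, d(x_{i+1},\overline{x_jx_{j+1}})^2 < a.$$

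\emph{Step 3 (conclude).} Since $F(e) < a$, the edge $e$ belongs to $\nerve{\cV^{a'}}$ for all $a' \in [F(e),a)$. So $e$ is not added at time $a$; it lies in $X^-$ and in the closure of both $\sigma$ and $\sigma'$, and is thus in the link of each.

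The main point needing care is Step 1: I must confirm that no further incidence arises from the M\"obius identification near the twisted edge of the strip. The plan is to argue entirely in a single chart from \Cref{lem:cover_PL}, where all intersections are convex subsets of a grid in $\R^2$, and to rely on the remark after \cref{eq:intersection_2_edge} that the segment $q(\{L_i\}\times[L_j,L_{j+1}])$ arises from a unique pair of cover elements. The convexity argument in Step 2 is routine.
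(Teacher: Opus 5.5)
Your proof is correct and follows the same route as the paper: the shared edge corresponds to the grid segment on which the two cover elements meet, the two 3-simplex corners are its endpoints, and equal values at the endpoints together with strict convexity of $\DT_\gamma$ along the segment force $F(e)<a$, so the edge lies in the link of both simplices. Beyond the paper's proof, your Step 1 explicitly rules out a shared diagonal edge, which the paper takes for granted. You also show strict convexity along the segment directly from the leading coefficient $L^2/2$, so this lemma does not actually need \labelcref{C3}, which the paper invokes at that step.
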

\begin{proof}
    Consider $\sigma = uvab$ and $\sigma' = uva'b'$. The cover elements $u$ and $v$ intersect at a closed line segment, with end points being the quadruple intersections between cover elements $u,v,w,z$  and $u,v,w',z'$ respectively. If the end points have the same function value $a$, then by the convexity of the distance function on individual cover elements, the function along the relative interior of the segment is either equal to $a$, or strictly less than $a$. Since the distance function is positive definite quadratic when restricted to each cover element (due to the~\labelcref{C3} condition), the distance function cannot be constant along line segments, and thus the relative interior of the segment must take value strictly less than $a$. Since the segment is the intersection between cover elements indexed by $u$ and $v$, the edge $uv$ in the nerve must then be added at an earlier point in the filtration than $a$. Thus the edge $uv$ is in the link of both $\sigma$ and $\sigma'$. 
\end{proof}

\begin{proof}[Proof of Proposition~\ref{prop:morse-set-pl}]
For \(a < 0\), as the filtration is non-negative, $\nerve{\cV^a}$ is empty.
For \(a = 0\), as shown in Figure~\ref{fig:PL-nerve},
the only component $S$ is $S_{\partial \Mob}$.
For \(a > 0\)
The lemmas above
constrain the Morse sets of the filtration. The two observations,
imply that
when a simplex is added to the filtration
not as a face
when an edge or a 3-simplex is added to the filtration, the simplex is disconnected with other simplices added at the same time, other than those in its faces. Thus, the Morse sets $S_i$ at each step of the filtration (connected components of simplices added) are either a   single vertex, edge, or 3-simplex, along with a subset of simplices in their faces.
\end{proof}

For $S_{\partial \Mob}$, its link is empty and $S_{\partial \Mob}$ is homotopy equivalent to $S^1$,
hence its Conley index is isomorphic to $H_\bullet(S^1)$. Note that this is not reduced homology, so $S_{\partial \Mob}$ is not a $1$-saddle.

If the Morse set enters the filtration at $a > 0$, since the closure of each Morse set is a closed simplex, it is contractible. Thus we can infer its Conley indices via the links of the Morse set. 
Let $\sigma \in \nerve{\cV^a}$.
We describe the possible 
the possible subcomplexes $K \leq \closure{\sigma}$
that are realisable as $\closure{\sigma}\cap\nerve{\cV^{a-\varepsilon}}$
for some $\varepsilon$ such that no simplices
are added in $(a - \varepsilon, a)$.
For vertices and edges, all possible subcomplexes are realisable.
Let $(i, j), (i +1, j), (i, j+1)$ and $(j +1, j+1)$ be the indices of the cover
giving rise to a 3-simplex in the nerve.
The possible subcomplexes (apart from $\closure{\sigma}$) we need to consider are the subgraphs of the complete graph on these vertices.
Subgraphs containing isolated vertices
are not realisable:
a vertex $v$
being isolated implies
that the minimum of the distance function
restricted to $V_v$ is in the interior of $V_v$. 
As the sublevel set restricted to $U^v$ is a non-degenerate
ellipse (intersected with $V_v$) by condition~\ref{C3},
these can not intersect with the corner
$V_{ij} \cap V_{i + 1, j} \cap V_{i j+1} \cap V_{i+1, j+1}$
without first intersecting one of the line segments ($V_{ij} \cap V_{i+1, j}$ etc.),
thus the 3-simplex cannot be added immediately after a vertex.
Any subgraph containing a diagonal edge
$(i, j), (i+1, j+1)$ or $(i +1,j), (i, j+1)$ 
is also not realisable as the existence of this edge implies the existence of the 3-simplex.
The remaining subgraphs are realisable.
This is tabulated in Table~\ref{tab:pwlinear-critical-points} where we observe these Morse sets are $n$-saddles.

\begin{table}
    \centering
\begin{tabular}{c c c c c c c}
    & dim & $\closure{\sigma}$ & $\closure{\sigma}\cap \AutoDsq^a$ & tangent angles & $\beta_\bullet(\closure{\sigma}, \closure{\sigma}\cap \AutoDsq^a$ & index \\ 
    \hline
    \noalign{\smallskip}
    1&0&
    \begin{tikzpicture}
        \draw (0, 0) rectangle (1, 1);
        \filldraw [red] (0.5, 0.5) circle (2pt);
    \end{tikzpicture}
    &
    \begin{tikzpicture}
        \draw (0, 0) rectangle (1, 1);
    \end{tikzpicture}
    &
    \begin{tikzpicture}
        \draw [thick] (0, 0) -- (0, 1);
        \draw [thick] (1, 0) -- (1, 1); 
        \draw [magenta, thick, dashed] (0, 0.5) -- (1, 0.5);
    \end{tikzpicture}
    &
    1, 0, 0
    &
    0
    \\
    2&&
    \begin{tikzpicture}
        \draw (0, 0) rectangle (1, 1);
        \filldraw [red] (0.5, 0.5) circle (2pt);
    \end{tikzpicture}
    &
    \begin{tikzpicture}
        \draw (0, 0) rectangle (1, 1);
        \filldraw [red] (0.5, 0.5) circle (2pt);
    \end{tikzpicture}
    &
    &
    0, 0, 0
    &
    --
    \\
    \hline
    \noalign{\smallskip}
    3&1 
    &
    \begin{tikzpicture}
        \draw (0, 0) rectangle (2, 1);
        \draw (1, 0) -- (1, 1);
        \filldraw [red] (0.5, 0.5) circle (2pt);
        \filldraw [red] (1.5, 0.5) circle (2pt);
        \draw [red, thick] (0.5, 0.5) -- (1.5, 0.5);
    \end{tikzpicture}
    &
    \begin{tikzpicture}
        \draw (0, 0) rectangle (2, 1);
        \draw (1, 0) -- (1, 1);
    \end{tikzpicture}
    &
    \begin{tikzpicture}
        \draw [thick] (0, 0) -- (0, 1);
        \filldraw [black] (1, 0.5) circle (2pt);
        \draw [thick] (1, 0.5) -- (2, 1);
        \draw [thick] (1, 0.5) -- (2, 0);
        \draw [magenta, thick, dashed] (0, 0.5) -- (1, 0.5);
    \end{tikzpicture}
    &
    1, 0, 0
    &
    0
    \\
    4&&
    &
    \begin{tikzpicture}
        \draw (0, 0) rectangle (2, 1);
        \draw (1, 0) -- (1, 1);
        \filldraw [red] (0.5, 0.5) circle (2pt);
    \end{tikzpicture}
    &
    &
    0, 0, 0
    &
    --
    \\
    5&&
    &
    \begin{tikzpicture}
        \draw (0, 0) rectangle (2, 1);
        \draw (1, 0) -- (1, 1);
        \filldraw [red] (0.5, 0.5) circle (2pt);
        \filldraw [red] (1.5, 0.5) circle (2pt);
    \end{tikzpicture}
    &
    \begin{tikzpicture}
        \draw [thick] (0, 0) -- (0, 1);
        \filldraw [black] (2, 0.5) circle (2pt);
        \draw [thick] (2, 0.5) -- (1, 1);
        \draw [thick] (2, 0.5) -- (1, 0);
        \draw [magenta, thick, dashed] (0, 0.5) -- (2, 0.5);
    \end{tikzpicture}
    &
    0, 1, 0
    &
    1
    \\
    6&&
    &
    \begin{tikzpicture}
        \draw (0, 0) rectangle (2, 1);
        \draw (1, 0) -- (1, 1);
        \filldraw [red] (0.5, 0.5) circle (2pt);
        \filldraw [red] (1.5, 0.5) circle (2pt);
        \draw [red, thick] (0.5, 0.5) -- (1.5, 0.5);
    \end{tikzpicture}
    &
    &
    0, 0, 0
    &
    --
    \\
    \hline\noalign{\smallskip}
    7&3
    &
    \begin{tikzpicture}
        \draw (0, 0) rectangle (2,2);
        \draw (1, 0) -- (1, 2);
        \draw (0, 1) -- (2, 1);
        \filldraw [red] (0.5, 0.5) circle (2pt);
        \filldraw [red] (1.5, 0.5) circle (2pt);
        \filldraw [red] (0.5, 1.5) circle (2pt);
        \filldraw [red] (1.5, 1.5) circle (2pt);
        \filldraw [color=red, fill=red, fill opacity=0.2, thick] (0.5, 0.5) -- (1.5, 0.5) -- (0.5, 1.5) -- cycle;
        \filldraw [color=red, fill=red, fill opacity=0.2, thick] (0.5, 0.5) -- (1.5, 0.5) -- (1.5, 1.5) -- cycle;
        \filldraw [color=red, fill=red, fill opacity=0.2, thick] (0.5, 0.5) -- (0.5, 1.5) -- (1.5, 1.5) -- cycle;
        \filldraw [color=red, fill=red, fill opacity=0.2, thick] (1.5, 0.5) -- (1.5, 1.5) -- (0.5, 1.5) -- cycle;
    \end{tikzpicture}
    &
    \begin{tikzpicture}
        \draw (0, 0) rectangle (2,2);
        \draw (1, 0) -- (1, 2);
        \draw (0, 1) -- (2, 1);
    \end{tikzpicture}
    &
    \begin{tikzpicture}
        \filldraw [black] (1, 0.5) circle (2pt);
        \draw [thick] (0, 1) -- (1, 0.5);
        \draw [thick] (0, 0) -- (1, 0.5);
        \filldraw [black] (2, 0.5) circle (2pt);
        \draw [thick] (2, 0.5) -- (3, 1);
        \draw [thick] (2, 0.5) -- (3, 0);
        \draw [magenta, thick, dashed] (1, 0.5) -- (2, 0.5);
    \end{tikzpicture}
    &
    1, 0, 0
    &
    0
    \\
    8&&
    &
    \begin{tikzpicture}
        \draw (0, 0) rectangle (2,2);
        \draw (1, 0) -- (1, 2);
        \draw (0, 1) -- (2, 1);
        \filldraw [red] (0.5, 1.5) circle (2pt);
        \filldraw [red] (1.5, 1.5) circle (2pt);
        \draw [red, thick] (0.5, 1.5) -- (1.5, 1.5);
    \end{tikzpicture}
    &
    &
    0, 0, 0
    &
    --
    \\
    9&&
    &
    \begin{tikzpicture}
        \draw (0, 0) rectangle (2,2);
        \draw (1, 0) -- (1, 2);
        \draw (0, 1) -- (2, 1);
        \filldraw [red] (0.5, 1.5) circle (2pt);
        \filldraw [red] (1.5, 1.5) circle (2pt);
        \filldraw [red] (0.5, 0.5) circle (2pt);
        \filldraw [red] (1.5, 0.5) circle (2pt);
        \draw [red, thick] (0.5, 0.5) -- (1.5, 0.5);
        \draw [red, thick] (0.5, 1.5) -- (1.5, 1.5);
    \end{tikzpicture}
    &
    \begin{tikzpicture}
        \filldraw [black] (1, 0.5) circle (2pt);
        \draw [thick] (0, 1) -- (1, 0.5);
        \draw [thick] (0, 0) -- (1, 0.5);
        \filldraw [black] (3, 0.5) circle (2pt);
        \draw [thick] (2, 0) -- (3, 0.5);
        \draw [thick] (2, 1) -- (3, 0.5);
        \draw [magenta, thick, dashed] (1, 0.5) -- (3, 0.5);
    \end{tikzpicture}
    &
    0, 1, 0
    &
    1
    \\
    10&&
    &
    \begin{tikzpicture}
        \draw (0, 0) rectangle (2,2);
        \draw (1, 0) -- (1, 2);
        \draw (0, 1) -- (2, 1);
        \filldraw [red] (0.5, 1.5) circle (2pt);
        \filldraw [red] (1.5, 1.5) circle (2pt);
        \filldraw [red] (0.5, 0.5) circle (2pt);
        \draw [red, thick] (0.5, 1.5) -- (1.5, 1.5);
        \draw [red, thick] (0.5, 0.5) -- (0.5, 1.5);
    \end{tikzpicture}
    &
    &
    0, 0, 0
    &
    -- \\
    11&&
    &
    \begin{tikzpicture}
        \draw (0, 0) rectangle (2,2);
        \draw (1, 0) -- (1, 2);
        \draw (0, 1) -- (2, 1);
        \filldraw [red] (0.5, 1.5) circle (2pt);
        \filldraw [red] (1.5, 1.5) circle (2pt);
        \filldraw [red] (0.5, 0.5) circle (2pt);
        \filldraw [red] (1.5, 0.5) circle (2pt);
        \draw [red, thick] (0.5, 1.5) -- (1.5, 1.5);
        \draw [red, thick] (0.5, 0.5) -- (0.5, 1.5);
        \draw [red, thick] (0.5, 0.5) -- (1.5, 0.5);
    \end{tikzpicture}
    &
    &
    0, 0, 0
    &
    --
    \\
    12&&
    &
    \begin{tikzpicture}
        \draw (0, 0) rectangle (2,2);
        \draw (1, 0) -- (1, 2);
        \draw (0, 1) -- (2, 1);
        \filldraw [red] (0.5, 1.5) circle (2pt);
        \filldraw [red] (1.5, 1.5) circle (2pt);
        \filldraw [red] (0.5, 0.5) circle (2pt);
        \filldraw [red] (1.5, 0.5) circle (2pt);
        \draw [red, thick] (0.5, 1.5) -- (1.5, 1.5);
        \draw [red, thick] (0.5, 0.5) -- (0.5, 1.5);
        \draw [red, thick] (0.5, 0.5) -- (1.5, 0.5);
        \draw [red, thick] (1.5, 0.5) -- (1.5, 1.5);
    \end{tikzpicture}
    &
    \begin{tikzpicture}
        \filldraw [black] (0, 0.5) circle (2pt);
        \draw [thick] (0, 0.5) -- (1, 0);
        \draw [thick] (0, 0.5) -- (1, 1);
        \filldraw [black] (3, 0.5) circle (2pt);
        \draw [thick] (2, 0) -- (3, 0.5);
        \draw [thick] (2, 1) -- (3, 0.5);
        \draw [magenta, thick, dashed] (0, 0.5) -- (3, 0.5);
    \end{tikzpicture}
    &
    0, 0, 1
    &
    2
    \\
    13&&
    &
    \begin{tikzpicture}
        \draw (0, 0) rectangle (2,2);
        \draw (1, 0) -- (1, 2);
        \draw (0, 1) -- (2, 1);
        \filldraw [red] (0.5, 0.5) circle (2pt);
        \filldraw [red] (1.5, 0.5) circle (2pt);
        \filldraw [red] (0.5, 1.5) circle (2pt);
        \filldraw [red] (1.5, 1.5) circle (2pt);
        \filldraw [color=red, fill=red, fill opacity=0.2, thick] (0.5, 0.5) -- (1.5, 0.5) -- (0.5, 1.5) -- cycle;
        \filldraw [color=red, fill=red, fill opacity=0.2, thick] (0.5, 0.5) -- (1.5, 0.5) -- (1.5, 1.5) -- cycle;
        \filldraw [color=red, fill=red, fill opacity=0.2, thick] (0.5, 0.5) -- (0.5, 1.5) -- (1.5, 1.5) -- cycle;
        \filldraw [color=red, fill=red, fill opacity=0.2, thick] (1.5, 0.5) -- (1.5, 1.5) -- (0.5, 1.5) -- cycle;
    \end{tikzpicture}
    &
    &
    0, 0, 0
    &
    --
    \\
\end{tabular}
    \caption{
    Pairs of closed simplices $\closure{\sigma}$
    alive at $a$
    and possible subcomplexes $\closure{\sigma}\cap \AutoDsq^a$
    and their Conley index.}
    \label{tab:pwlinear-critical-points}
\end{table}

\subsubsection{Geometric characterisation of critical points}
\label{ssec:pl-geometric-characterisation}

In this section we will show a bijective
correspondence between certain points in $\UConf{2}{S^1}$ whose image under an embedding $\gamma$
satisfies certain geometric conditions (given in \cref{dfn:pl-critical-index})
and homological critical simplices ($n$-saddles)
of $\nerve{\cV^\bullet}$.

\paragraph{Local Geometry of PL Loops}
In subsequent discussions, we find it useful to describe the local geometry of a PL loop using the following quantities, illustrated in~\Cref{fig:tangentangle}. On each chart $\vartheta_i: U_i \to \R$ of $S^1$, we define the one-sided unit `tangent' vectors as the limiting tangents,
\begin{equation} \label{eq:PLtangent}
    \tau^\pm(t)  = \lim_{h \to 0^\pm} \frac{f\circ \inv{\vartheta_i}(t +h/L)-  f\circ\inv{\vartheta_i}(t)}{h} \in \R^d.
\end{equation}
 If $t \in (L_i, L_{i+1})$, i.e. $f\circ \inv{\vartheta_i}$ is not one of the vertices $x_j$, then the limits agree $\tau^+(t) = \tau^-(t)$. Furthermore,  if we assume that the vertices of $f$ are in general position~\labelcref{C3}, then the limits agree iff $f(z)$ is in the interior of a segment, as no three vertex points are allowed to be collinear. Note that due to the parametrisation of $f$ \cref{eq:PLprimitive}, $\tau^\pm = u_i/l_i$ for some $i$, and thus $\norm{\tau^\pm} = 1$ are always unit tangent vectors. In subsequent discussions, we can assume the atlas $\Theta$ is fixed for given cyclic data, and abuse notation and let $\tau^\pm(z) =  \tau^\pm(t)$ for $t = \vartheta_i(z)$.

We find it useful to characterise the relative directions of the loop between two points $z_1, z_2 \in S^1$. Letting $v_{1,2} = f(z_2) - f(z_1)$ denote the relative position between two points on the loop, we define the \emph{tangent angles of $f$
 at $z_1$ relative to $z_2$}
 as the angles $\theta^\pm_f(z_1, z_2) \in [0,\pi)$
between $v_{1,2}$ and $\pm \tau^\pm (z_1)$, as illustrated in~\Cref{fig:tangentangle}:
 \begin{align} \label{eq:tangentangles}
     \cos(\theta^\pm_f(z_1, z_2)) &= \pm \innerprod{\tau^\pm(z_1)}{v_{1,2}},\\
     \cos(\theta^\pm_f(z_2, z_1)) &= \pm \innerprod{\tau^\pm(z_2)}{v_{2,1}} =  \mp \innerprod{\tau^\pm(z_2)}{v_{1,2}}.
 \end{align}

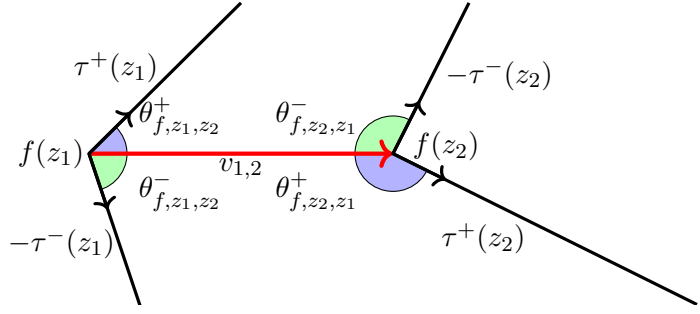
\begin{figure}[ht]
\centering
\begin{tikzpicture}
\clip (-3.5, -2) -- (-3.5, 2) -- (6, 2) -- (6, -2);
\begin{scope}
\clip (0, 2) -- (-2, 0) -- (2, 0);
\draw[fill=blue!30] (-2, 0) circle (0.5);
\end{scope}
\begin{scope}
\clip (-1, -3) -- (-2, 0) -- (2, 0);
\draw[fill=green!30] (-2, 0) circle (0.5);
\end{scope}

\begin{scope}
\clip (6, -2) -- (2, 0) -- (-2, 0);
\draw[fill=blue!30] (2, 0) circle (0.5);
\end{scope}
\begin{scope}
\clip (3, 2) -- (2, 0) -- (-2, 0);
\draw[fill=green!30] (2, 0) circle (0.5);
\end{scope}

\draw[->][ultra thick, red] (-2, 0) -- (2, 0);
\node at (0, -0.2) {$v_{1,2}$};
\draw[very thick] (-1, -3) -- (-2, 0) -- (0, 2);
\draw[very thick] (6, -2) -- (2, 0) -- (3, 2);

\draw[->, very thick] (2, 0) -- (2 + 0.35, 0.7);
\node at (2 + 0.7 * 1.0 + 0.35*2, -0.35 * 1.0 + 0.7*2) {$-\tau^-(z_2)$};
\draw[->, very thick] (2, 0) -- (2 + 0.7, -0.35);
\node at (2 - 0.35 * 0.6 + 0.7*2, -0.7 * 0.6 - 0.35*2) {$\tau^+(z_2)$};

\draw[->, very thick] (-2, 0) -- (-2 + 0.247, -0.742);
\node at (-2 + -0.553 * 1.1 + 0.247, -0.553 * 0.8 + -0.742) {$-\tau^-(z_1)$};
\draw[->, very thick] (-2, 0) -- (-2 + 0.553, 0.553);
\node at (-2 - 0.247 * 0.8 + 0.553, +0.742 * 0.8 + 0.553) {$\tau^+(z_1)$};

\node at (-2.5, 0) {$f(z_1)$};
\node at (2.7, 0.1) {$f(z_2)$};

\node at (-0.8, 0.5) {$\theta^+_{f,z_1,z_2}$};
\node at (-0.8, -0.5) {$\theta^-_{f,z_1,z_2}$};
\node at (1.0, 0.5) {$\theta^-_{f,z_2,z_1}$};
\node at (1.0, -0.5) {$\theta^+_{f,z_2,z_1}$};

\end{tikzpicture}
\caption{\label{fig:tangentangle} 
Tangent angles of $f$ at $z_1$ relative to $z_2$ and $z_2$ relative to $z_1$ (Definition~\ref{dfn:pl-curve-away}).
The picture shows the two segments of a clockwise-oriented loop $f:S^1 \to \R$ corresponding to $z_1, z_2$.
Here, $f$ curves towards $f(z_2)$ at $f(z_1)$
and curves away from $f(z_1)$ at $f(z_2)$.
Hence $\ucpt{z_1, z_2}$ satisfies condition \ref{plmain-CTog} of Theorem~\ref{thm:PLmain} and so corresponds to a homological critical
point of index 1, a saddle point.}
\end{figure}

\begin{definition}
\label{dfn:pl-curve-away}
If $\theta^+_f(z_1, z_2)$ and $\theta^-_f(z_1, z_2)$
are both acute, then we shall say 
\emph{$f$ curves towards $f(z_2)$ at $f(z_1)$}.
If neither are acute we shall say
\emph{$f$ curves away from $f(z_2)$ at $f(z_1)$}.
\end{definition}

\begin{definition}
    \label{dfn:pl-critical-index}
    For $f: S^1 \to \R^d$ piecewise-linear,
    we define three conditions for a point $\ucpt{z_1, z_2} \in \UConf{2}{S^1}$:
    \begin{enumerate}[label = (K\arabic*), start = 0]
    \item \label{plmain-CAw}$f$ curves away from $f(z_2)$ at $f(z_1)$, and away from $f(z_1)$ at $f(z_2)$;
    \item \label{plmain-CTog} $f$ curves towards $f(z_2)$ at $f(z_1)$, and away from $f(z_1)$ at $f(z_2)$, or vice versa;
    \item \label{plmain-CTow} $f$ curves towards $f(z_2)$ at $f(z_1)$, and towards $f(z_1)$ at $f(z_2)$.
    \end{enumerate}
    Note that at most one of these conditions may hold.
    If \ref{plmain-CAw}, \ref{plmain-CTog}, or \ref{plmain-CTow} hold
    we define the index $\lambda(\ucpt{z_1, z_2})$ as 0, 1, or 2 respectively. 
    We let $C_a^i \subset \fibre{f}{a}$ be the points in the level set at $a$ which satisfy condition $(Ki)$. 
\end{definition}

\begin{restatable}{theorem}{PLmain}
\label{thm:PLmain}
Suppose $\gamma: S^1 \to \R^d$ is a piecewise linear interpolation of cyclic data $(x_i)_{i \in \Z_n}$ satisfying the non-degeneracy conditions~\labelcref{C1,C2,C3}. For $a > 0$ and $\epsilon >0$ sufficiently small, 
\begin{equation}
    H_\bullet(\AutoDsq^a, \AutoDsq^{a-\epsilon}) \cong \bigoplus_{i = 0}^2 \bigoplus_{w \in C_a^i} \tilde{H}_\bullet(S^i),
\end{equation}
where $C_a^i$ are points in $\fibre{\DT_\gamma}{a}$ described in Definition~\ref{dfn:pl-critical-index} satisfying the geometric properties~\labelcref{plmain-CAw,plmain-CTog,plmain-CTow}. For $a = 0$, we also have $H_\bullet(\AutoDsq^0, \AutoDsq^{-\epsilon}) \cong H_\bullet(S^1)$.
\end{restatable}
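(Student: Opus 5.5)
The plan is to transport the statement to the nerve filtration and then read off the answer from Conley indices of Morse sets. By \Cref{prop:nerve_to_sublevel} and the functoriality of the homotopy equivalence, together with the five lemma applied to the long exact sequences of the pairs, we have $H_\bullet(\AutoDsq^a,\AutoDsq^{a-\epsilon}) \cong H_\bullet(\nerve{\cV^a},\nerve{\cV^{a-\epsilon}})$. For $\epsilon$ small, no simplices enter the nerve in $(a-\epsilon,a)$, because the filter function $F$ takes finitely many values. By \Cref{lem:conley_relative_homology}, this relative homology is $\bigoplus_i \Con_\bullet(S_i)$, summed over the Morse sets $S_i$ entering at $a$.

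For $a=0$, \Cref{prop:morse-set-pl} says the only Morse set is $S_{\partial\Mob}$. It has empty link and is homotopy equivalent to $S^1$, which gives $H_\bullet(S^1)$.

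For $a>0$, \Cref{prop:morse-set-pl} says each Morse set is a single vertex, edge or $3$-simplex $\sigma$, together with some of its faces. Its closure is contractible, and Table~\ref{tab:pwlinear-critical-points} lists every realisable link $\closure{\sigma}\cap\nerve{\cV^{a-\epsilon}}$. Each entry has Conley index either zero or $\tilde H_\bullet(S^i)$ with $i\in\{0,1,2\}$. It therefore remains to construct a bijection between the critical Morse sets of index $i$ entering at $a$ and the set $C_a^i$, that is, the points $w=\ucpt{z_1,z_2}\in\fibre{\DT_\gamma}{a}$ satisfying (K$i$).

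To each Morse set we assign the point $w\in V_\sigma$ where $\DT_\gamma|_{V_\sigma}$ attains its minimum $F(\sigma)=a$. There are three cases.
\begin{itemize}
\item For a vertex $(i,j)$, $w$ is the unique minimiser of the strictly convex quadratic \cref{eqn:sdf-quadratic}. This uniqueness is where \labelcref{C3} is used.
\item For an edge, $w$ is the minimiser on a segment $q(\{L_i\}\times[L_j,L_{j+1}])$, or a corner point.
\item For a $3$-simplex, $w$ is the corner $q(L_{i+1},L_{j+1})$.
\end{itemize}
The link is then determined by which neighbouring pieces $V_{i'j'}$, or pairwise intersections, attain values strictly below $a$ near $w$. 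This is decided by the signs of the one-sided directional derivatives of $\DT_\gamma$ at $w$ in the directions of the adjacent pieces.

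These derivatives are, up to positive factors, $\pm\innerprod{\tau^\pm(z_k)}{v_{1,2}}$, which are exactly the cosines in \cref{eq:tangentangles}. The key step is to verify row by row in Table~\ref{tab:pwlinear-critical-points} the following correspondence.
\begin{itemize}
\item The link is empty (index $0$) exactly when all four tangent angles are non-acute, i.e.\ (K0).
\item The link is two disjoint vertices or edges (index $1$) exactly when one endpoint curves towards and the other away, i.e.\ (K1).
\item The link is the $4$-cycle (index $2$) exactly when both endpoints curve towards, i.e.\ (K2).
\item Every other realisable link is acyclic relative to the closure, and corresponds to a minimiser $w$ at which some tangent angle is strictly acute in one direction and obtuse in the other along the same segment. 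Such a $w$ lies in no $C_a^i$.
\end{itemize}
Interior minimisers of a vertex automatically have both tangents perpendicular to $v_{1,2}$, and these count as non-acute. Conversely, every point of $C_a^i$ is a local minimum of $\DT_\gamma$ restricted to each incident closed piece. So by convexity it is the minimiser on the corresponding intersection $V_\sigma$, and it produces such a Morse set. Distinct points give distinct $\sigma$ by the uniqueness of the minimisers, which again uses \labelcref{C3}.

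The main obstacle is this dictionary between links and angle conditions, particularly at corners, where four pieces meet and angles can be exactly right angles. I would first handle it in local coordinates $(s_1,s_2)$ at the corner. There the sign of each partial derivative of each of the four quadratics at $(0,0)$ is a tangent-angle cosine. Pieces with both relevant derivatives non-negative stay out of the link, while an edge $V_I\cap V_{I'}$ is in the link iff its one-dimensional restriction has a negative one-sided derivative. Checking that \labelcref{C3} excludes the borderline cases (zero derivatives along both directions) should then make the case analysis match the table.
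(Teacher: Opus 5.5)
Your route is essentially the paper's. You pass to the nerve filtration, use \Cref{prop:morse-set-pl} and \Cref{lem:conley_relative_homology} to reduce to single-simplex Morse sets, and translate the links in Table~\ref{tab:pwlinear-critical-points} into tangent-angle conditions through one-sided derivatives, which is the paper's cosine-rule argument. Your bookkeeping (Morse set $\mapsto$ minimiser) just reverses the paper's map $w\mapsto\sigma\ucpt{z_1,z_2}$, and the forward direction is fine.

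\textbf{The converse step, as written, is false.} A point of $C_a^1$ or $C_a^2$ is \emph{not} a local minimum of $\DT_\gamma$ on its incident pieces. At a (K2) corner all four tangent angles are acute, so each of the four incident pieces has a descent direction at $w$; that is exactly why the link is the whole $4$-cycle. Likewise, at a (K1) point the endpoint that curves towards gives a descent direction into every incident piece. What you need, and what is true, is that $w$ minimises $\DT_\gamma$ on its carrier $V_{\sigma(w)}=\bigcap_{V_{ij}\ni w}V_{ij}$, so that $F(\sigma(w))=a$. The argument runs as follows.
\begin{itemize}
\item At an endpoint in the interior of a segment, $\tau^+=\tau^-$, so $\theta^++\theta^-=\pi$ there. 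Such an endpoint cannot curve towards, and curving away forces both angles to be right.
\item Hence the derivative of $\DT_\gamma$ along the carrier vanishes. If both endpoints are interior, the carrier is a piece and the gradient is zero. If exactly one endpoint is a polygon vertex, the carrier is the segment $q(\{L_i\}\times[L_j,L_{j+1}])$, along which only the interior endpoint moves. If both endpoints are vertices, $V_{\sigma(w)}=\{w\}$.
\item Strict convexity then makes $w$ the unique minimiser on the carrier.
\end{itemize}
This is the counterpart of the paper's remark that if an interior endpoint does not have both angles right, then $\sigma\ucpt{z_1,z_2}$ already exists before $a$.

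\textbf{Two smaller corrections.}

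First, \labelcref{C3} does not exclude right angles. They are permitted and appear as R in Table~\ref{tab:angle-correspondence}. Zero one-sided derivatives are settled by convexity alone: a convex function whose derivatives in all feasible directions at $w$ are nonnegative is minimised at $w$, which is why right angles behave as non-acute. What \labelcref{C3} supplies is strict convexity, hence uniqueness of minimisers. That uniqueness is what makes Morse sets single simplices and your correspondence injective.

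Second, the non-critical links are one vertex, or one, two adjacent, or three side edges. They arise from an endpoint at a polygon vertex with exactly one of its two tangent angles acute, and those two angles are measured along \emph{different} segments. An acute/obtuse pair within a single segment never occurs at the minimiser of any $V_\sigma$.
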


\begin{proof}
    From Proposition~\ref{prop:morse-set-pl},
    the only critical Morse sets are $S_{\partial \Mob}$ at $a=0$
    and those generated by simplices for $a>0$.
    For $\ucpt{z_1, z_2} \in \UConf{2}{S^1}$
    define 
    $$
    \sigma\ucpt{z_1, z_2} = \bigcap_{V_{ij}:\ucpt{z_1, z_2} \in V_{i j}} V_{ij},
    $$
    the highest dimensional simplex containing $\ucpt{z_1, z_2}$.
    Suppose $\DT_\gamma(\ucpt{z_1, z_2}) = a$
    so $\sigma\ucpt{z_1, z_2}$
    is in the filtration at value $a$.
    We will build a correspondence, given in Table~\ref{tab:angle-correspondence}, between $\theta^\pm_{\gamma, z_1, z_2}, \theta^\pm_{\gamma, z_2, z_1}$
    and the subcomplex
    of $\sigma(z_1,z_2)$
    that exists at filtration value $a-\epsilon$,
    for $\epsilon$ sufficiently small that no simplices
    are added within $[a-\epsilon, a)$.
    Unless this subcomplex is $\sigma\ucpt{z_1, z_2}$
    itself, $\sigma\ucpt{z_1, z_2}$
    also first appears at $a$.
    In this case, from the proof of Proposition~\ref{prop:morse-set-pl},
    $\sigma \ucpt{z_1, z_2}$ will be a Morse set.
    We can then refer to Table~\ref{tab:pwlinear-critical-points}
    to see if $\sigma\ucpt{z_1, z_2}$ is critical
    and deduce its Conley index.

    We shall now show the correspondence in Table~\ref{tab:angle-correspondence}.
    Suppose $f(z_1) = x_i$
    and $f(z_2) = x_j$ for $i\neq j$, i.e. $\sigma\ucpt{z_1, z_2}$ is a 3-simplex.
    Then using the cosine rule
    we have for $0<\delta \leq l_i$ that
    $$
    \DT_\gamma(\ucpt{\exp(2\pi\imath(\frac{\delta}{L}+L_i)), z_2}) =
    \frac{1}{2}\|\gamma(z_2) - (x_i + \delta\tau^+(z_1))\|^2
    < \frac{1}{2}\|v_{1,2}\|^2 = \DT_\gamma(\ucpt{z_1, z_2})
    $$
    if and only if $\delta < 2\|v_{1,2}\|\cos{\theta^+_{f, z_1, z_2}}$.
    Therefore
    the simplex $V_{i,j-1} \cap V_{i,j}$ is born strictly before 
    $\sigma\ucpt{z_1, z_2} = V_{i-1,j-1} \cap V_{i j-1} \cap V_{i-1, j} \cap V_{i j}$
    if and only if $\theta^+_{\gamma,z_1, z_2}$ is acute.
    Likewise, the simplices
    $$
        V_{i-1,j-1} \cap V_{i-1,j},\quad
        V_{i-1,j} \cap V_{ij},\quad \text{ and } \quad
        V_{i-1,j-1} \cap V_{ij-1}
    $$
     are born strictly before
     $\sigma\ucpt{z_1, z_2}$
    exactly when $\theta^-_{\gamma,z_1, z_2}$, $\theta^+_{\gamma,z_2, z_1}$,
    and $\theta^-_{\gamma,z_2, z_1}$ are respectively acute.
    Now consider the case where $\sigma(z_1, z_2)$ is 0 or 1 dimensional, so w.l.o.g. $f(z_1)$ lies in the interior of $\overline{x_i x_{i+1}}$
    and $\tau_1^+ = \tau_1^-$.
    Then unless both $\theta^\pm_{\gamma, z_1, z_2}$ are right,
    $\sigma(z_1, z_2)$ exists before $a$.
    This covers the dimension 0 case.
    For the dimension 1 case, using a similar cosine rule argument
    we have that the simplices $U^{ij}$ and $U^{i j+1}$
    are born strictly before $U^{ij} \cap U^{i j+1}$
    if and only if both $\theta^\pm_{\gamma, z_1, z_2}$ are right
    and both $\theta^\pm_{\gamma, z_2, z_1}$ are respectively acute.
    Considering the different combinations of the tangent angles being acute, right, or obtuse gives Table~\ref{tab:pwlinear-critical-points}.
    \end{proof}

The geometric characterisations of critical points for smooth loops
(Theorem~\ref{thm:SmoothChar})
versus piecewise-linear loops (Theorem~\ref{thm:PLmain})
bare similarities. 
A critical point of the \functionname for a smooth loop where 
$\kappa_{12}, \kappa_{21} < 0$, a local minimum, 
would also correspond\footnote{
Under an appropriate discretisation with a vertex
at each of the points in the critical chord endpoints,
and where the discretisation is sufficiently fine.}
to a local minimum in the piecewise-linear case, satisfying~\ref{plmain-CAw}.
Similarly, for $|\kappa_{12}| \approx |\kappa_{21}| \gg 0$,
local maxima and saddle points of the smooth loop correspond
to local maxima and saddle points of an appropriate PL approximation.
However, when the relative curvature $\kappa_{12}/\kappa_{21}$ is very large or small,
the opposing segments are approximately parallel and thus are saddle points,
a phenomenon not seen in the PL case.

\begin{table}
    \centering
         \begin{tabular}{|c|c|c|c|}
         \hline
         $\theta_{\gamma, z_1, z_2}^\pm$ & $\theta_{\gamma, z_2, z_1}^\pm$ & Table~\ref{tab:pwlinear-critical-points} lines& index\\
         \hline
         \multirow{6}{2em}{RR}
        & RR & 1, 3, 7& 0\\
        & AR & 4, 8 & --\\
        & OR & 3, 7& 0\\
        & AA & 5, 9& 1\\
        & AO & 2, 4, 8& --\\
        & OO & 3, 7& 0\\
        \hline
        \multirow{5}{2em}{AR}
        & AR & 10& --\\
        & OR & 8& --\\
        & AA & 11& --\\
        & AO & 6, 10& --\\
        & OO & 8& --\\
        \hline
        \multirow{4}{2em}{OR}
        & OR & 7& 0\\
        & AA & 9& 1\\
        & AO & 6, 8& --\\
        & OO & 7& 0\\
        \hline
        \multirow{3}{2em}{AA}
        & AA & 12& 2\\
        & AO & 6, 11& --\\
        & OO & 9& 1\\
        \hline
        \multirow{2}{2em}{AO}
        & AO & 2, 6, 10& --\\
        & OO & 8& --\\
        \hline
        \multirow{1}{2em}{OO}
        & OO & 7& 0\\ \hline
    \end{tabular}
    \begin{tabular}{|c| c c c c c c|}
    \hline
        \diagbox{$\theta_{\gamma, z_1, z_2}^\pm$}{index}{$\theta_{\gamma, z_2, z_1}^\pm$}&AA&RR&OR&OO&AR&AO \\ \hline
         AA &2 &1 &1 &1 &--&--\\
         RR &1 &0 &0 &0 &--&--\\
         OR &1 &0 &0 &0 &--&--\\
         OO &1 &0 &0 &0 &--&--\\ 
         AR &--&--&--&--&--&--\\
         AO &--&--&--&--&--&-- \\ \hline
    \end{tabular}
    \caption{Correspondence between tangent angles and $n$-saddles.\\
    Key: 
    R=right angle, A=acute angle, O=obtuse angle.}
    \label{tab:angle-correspondence}
\end{table}

\newpage

\section{Higher Order Finite Subset Spaces and the Volume Transform}
\label{sec:ho-functions}

In order to generalise our approach to higher order spaces, we need to know more about their topological structure. If higher order finite subset spaces could be triangulated then they would be more tractable for computations, such as the pipeline we have constructed for the CDT. However, to do this we must first understand what these higher order finite subset spaces are. Difficulties arise from the fact that even if you start with a manifold $M$, higher order finite subset spaces need not be manifolds or manifolds with boundary (See section 7 of \cite{KallelSjerve2009} and the last remark  of section 1.2 of \cite{tuffleyexp}). A detailed understanding of the topology of these spaces will allow us to create a generalisation of our pipeline of the CDT.

\begin{remark}
    We should note that, in the generalized approach there are two options, either factor through the symmetric product or the $n$-th finite subset space. If one was to factor to just the symmetric product, then for functions depending only on the set, you would introduce duplicate computations, as we have that the $\ExpS{n}{X}$ is a quotient of $SP^n(X)$.
\end{remark}

\subsection{Topological properties of higher order finite subset spaces}

In \Cref{subsubsec:StratHO} we discussed how finite subset spaces are stratified and how they fit together. We might also like to know about the overall topological structure of these spaces. For a thorough overview, \cite{KallelSjerve2009}, \cite{lazovskis2026simpleconnectednessranspace}, \cite{handel} are recommended. What we find, is that for $n\geq 3$, we have that $\ExpS{n}{X}$ is simply connected \cite{KallelSjerve2009}. A recent result from Lazovskis \cite{lazovskis2026simpleconnectednessranspace}, provides a different proof approach for $n\geq 4$, using the result that the map $\pi_1(\ExpS{n}{X}) \rightarrow \pi_1(\ExpS{n+2}{X})$ induced by the inclusion of spaces is trivial for all $n$.

\begin{theorem}
    \cite{KallelSjerve2009} \cite{lazovskis2026simpleconnectednessranspace} For every positive integer $n \geq 3$, $\pi_1 (\ExpS{n}{X})$ is trivial.
\end{theorem}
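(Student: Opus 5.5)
The plan is to show that every loop in $\ExpS{n}{X}$ comes from a product of ``one-point'' loops, and that each such loop is killed already in $\ExpS{3}{X} \subset \ExpS{n}{X}$ by a symmetry trick. Throughout I assume $X$ is path-connected and Hausdorff; without path-connectedness the statement fails. I fix a basepoint $x_0 \in X$ and use $\{x_0\} \in \ExpS{1}{X} \subset \ExpS{n}{X}$ as the basepoint of $\ExpS{n}{X}$, writing $(x_0,\ldots,x_0)$ for the basepoint of $X^n$.

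\emph{Step 1 (surjectivity from $X^n$).} I would show that $E_n : X^n \to \ExpS{n}{X}$ induces a surjection $\pi_1(X^n) \to \pi_1(\ExpS{n}{X})$. This amounts to lifting loops, up to homotopy, along the closed finite-to-one map $E_n$ (Handel). The strategy I would try first has four parts. Subdivide the loop $\ell:[0,1]\to\ExpS{n}{X}$ finely. On each subinterval, use that $E_n$ is closed, hence perfect, to choose locally a labelling of the points of $\ell(t)$ by coordinates. At moments where points collide or split, insert the duplicated coordinates by paths inside the finite set. Finally, close up the lifted path using path-connectedness of $X$ to reach $(x_0,\ldots,x_0)$. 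If this proves delicate in full generality, I would first assume $X$ is a CW complex or a manifold and use simplicial approximation of $\ell$.

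\emph{Step 2 (reduction to one-point loops).} Since $\pi_1(X^n) = \pi_1(X)^n$, every class in $\pi_1(X^n)$ is a product of loops of the form $(x_0,\ldots,\gamma(t),\ldots,x_0)$. Under $E_n$, such a loop maps to the loop $\ell_\gamma(t) = \{\gamma(t), x_0\}$, which lies in $\ExpS{2}{X}$. By Step 1 it therefore suffices to show that each $\ell_\gamma$ is nullhomotopic in $\ExpS{3}{X}$, and hence in $\ExpS{n}{X}$ by the inclusions in \cref{eq:Fnfiltration}.

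\emph{Step 3 (the key homotopy).} I would define $H:[0,1]^2 \to \ExpS{3}{X}$ by $H(s,t) = \{\gamma(s),\gamma(t),x_0\}$. This map is continuous because it factors through $E_3$. Its restrictions to the edges $s=0$, $s=1$, $t=0$ and $t=1$, and to the diagonal $s=t$, all equal $\ell_\gamma$ or a loop that is constant at one end and traces $\ell_\gamma$. Explicitly, $H(s,0)=\{\gamma(s),x_0\}$, $H(1,t)=\{x_0,\gamma(t)\}$ since $\gamma(1)=x_0$, and $H(t,t)=\{\gamma(t),x_0\}$. Restricting $H$ to the triangle $\{t \le s\}$ gives a null-homotopy of the boundary loop $\ell_\gamma \cdot \ell_\gamma \cdot \ell_\gamma^{-1}$. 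Hence $[\ell_\gamma]^2[\ell_\gamma]^{-1} = [\ell_\gamma] = 1$ in $\pi_1(\ExpS{3}{X})$. This is where $n \ge 3$ enters: it leaves room for the third point $x_0$ alongside $\gamma(s)$ and $\gamma(t)$.

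Combining the three steps, every class in $\pi_1(\ExpS{n}{X})$ is a product of trivial classes. The main obstacle is Step 1: rigorous path lifting through $E_n$ for a general Hausdorff space, given that $E_n$ is not open. Steps 2 and 3 are formal once Step 1 is in place.
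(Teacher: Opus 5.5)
\paragraph{Verdict: Steps 2 and 3 are sound, Step 1 has a genuine gap.} Your Steps 2 and 3 are correct. The triangle $H(s,t)=\{\gamma(s),\gamma(t),x_0\}$ on $\{t\le s\}$ is exactly the right device for killing the one-point loops $\ell_\gamma$ in $\ExpS{3}{X}$. You are also right that path-connectedness must be assumed; the statement as quoted omits it. The paper itself gives no proof, only the citations. The gap is Step 1, which carries most of the content, and your sketch for it fails in two concrete ways.

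\paragraph{Local lifting can fail.} Paths in $\ExpS{n}{X}$ need not lift through $E_n$ even near a single time. So ``subdivide finely and label the points on each piece'' cannot work as stated. For a counterexample in $\ExpS{3}{\R}$, put $\ell(1)=\{0,1\}$. On $I_k=[1-2^{-k},1-2^{-k-1}]$, set $\ell(t)=\{0,\varepsilon_k(t),1\}$ for $k$ even and $\ell(t)=\{0,1+\varepsilon_k(t),1\}$ for $k$ odd. Here $\varepsilon_k\colon I_k\to[0,2^{-k}]$ is continuous and vanishes exactly at the endpoints of $I_k$.
\begin{itemize}
\item This is a continuous loop at $\{0,1\}$.
\item For $t$ near $1$, every coordinate of a lift stays in $(-\tfrac13,\tfrac13)$ or in $(\tfrac23,\tfrac43)$, so the number of coordinates near $0$ is constant.
\item But that number must be $2$ on the even $I_k$ and $1$ on the odd ones, a contradiction.
\end{itemize}
Perfectness of $E_n$ does not help. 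You first need a homotopy to a loop with finitely many collision and splitting events. That requires real structure on $X$, for instance triangulations of $X^n$ and $\ExpS{n}{X}$ for which $E_n$ is simplicial. Your fallback gestures at this but does not supply it.

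\paragraph{Multiplicity transfers need a separate lemma.} Even for tame loops, a lift must in general change which point carries the repeated coordinates. If the lift sits at $(a,a,b)$ and the loop next buds a new point off $b$, you need $(a,b,b)$. No path inside the fibre $E_n^{-1}(\{a,b\})$ does this, since that fibre is a finite subset of a Hausdorff space and hence discrete. You must insert a path such as $(a,\beta(t),b)$, with $\beta$ a path from $a$ to $b$ in $X$. Its image $\lambda(t)=\{a,\beta(t),b\}$ is a non-constant loop at $\{a,b\}$. So what you actually lift is $\ell$ with such loops spliced in, and Step 1 needs a separate lemma that these loops are null-homotopic.

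Your closing-up step has the same flaw. Appending a path $\beta$ in $X^n$ changes the loop downstairs by $E_n\circ\beta$. An honest lift of a loop at $\{x_0\}$ needs no closing up anyway, since $E_n^{-1}(\{x_0\})$ is the single point $(x_0,\dots,x_0)$.

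The lemma is true for $n\ge 3$ and follows from your own trick. The map $G(s,t)=\{\beta(s),\beta(t),b\}$ on $\{s\le t\}$ has boundary loop $\lambda\cdot Q\cdot\bar Q$ with $Q(s)=\{\beta(s),b\}$, so $[\lambda]=1$. For larger $n$ and a set $S\ni a,b$ with $|S|<n$, use $(S\setminus\{a\})\cup\{\beta(s),\beta(t)\}$ in the same way.

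With a taming step and this lemma your outline goes through, for example when $X$ is a simplicial complex. As written, however, Step 1 and hence the theorem are not established.
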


So when considering a filtration of a higher order $n$-th finite subset space, there can be no point of the form $(a,\infty)$ in the $H_1$ persistence diagram.

As was mentioned in \Cref{sec:Background:config}, if we start with a manifold $X$ for higher orders we are not guaranteed that $\ExpS{n}{X}$ is a manifold or manifold with boundary. If we restrict ourselves to closed manifolds (compact without boundary) then we have a complete description as follows:

\begin{theorem} \label{thm:almost_never_manifold}
    \cite{KallelSjerve2009} Let $X$ be a closed manifold of dimension $d\geq 1$, then $\ExpS{n}{X}$ is a closed manifold if and only if, $d=1$ and $n=3$ \emph{or}, $d=2$ and $n=2$.
\end{theorem}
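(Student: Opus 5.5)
We treat the two directions separately. Throughout we assume $n \geq 2$, since $\ExpS{1}{X} = X$ is trivially a closed manifold. We also assume $X$ is connected, which is the setting of~\cite{KallelSjerve2009}. Both directions rest on the local structure of $\ExpS{n}{X}$ near its strata, as described in~\Cref{subsubsec:StratHO}.

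For the ``only if'' direction we use local homology at a point $w = \{x_1,\ldots,x_{n-1}\}$ of the top singular stratum $E_n(X^n_{n-1}) \homeo \UConf{n-1}{X}$. Choose pairwise disjoint Euclidean chart balls $B_i \ni x_i$. A subset of cardinality $\leq n$ close to $w$ must meet every $B_i$, so at most one $x_i$ can split into two nearby points. Hence a neighbourhood of $w$ is a union of $n-1$ branches $N_i \homeo \R^{d(n-2)} \times \ExpS{2}{B_i}$, glued pairwise along the common stratum $\R^{d(n-1)}$ (where no point splits). Using radial dilation about the midpoint, $\ExpS{2}{\R^d} \homeo \R^d \times \mathrm{cone}(\R P^{d-1})$: an unordered pair is determined by its midpoint, its half-separation $r \geq 0$ and a direction in $\R P^{d-1}$. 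Therefore a neighbourhood of $w$ is $\R^{d(n-1)} \times \mathrm{cone}(Y)$, where $Y = \bigsqcup_{i=1}^{n-1} \R P^{d-1}$. If $\ExpS{n}{X}$ is a manifold without boundary of dimension $dn$, then $Y$ must have the integral homology of $S^{d-1}$, by local homology and excision: $H_\bullet(\mathrm{cone}(Y), \mathrm{cone}(Y)\setminus\{\ast\}) \cong \tilde{H}_{\bullet-1}(Y)$. We then split into cases.
\begin{itemize}
\item For $d = 1$, $Y$ is $n-1$ points, and this equals $S^0$ only when $n = 3$. When $n = 2$ the single point makes the cone a half-line, so there is boundary; this is the M\"obius band of~\Cref{prop:Morton}.
\item For $d = 2$, $Y$ is $n-1$ circles, which is a homology $S^1$ only when $n = 2$.
\item For $d \geq 3$, $H_1(\R P^{d-1};\Z) = \Z/2 \neq 0$, so $Y$ is never a homology sphere.
\end{itemize}
This leaves exactly $(d,n) \in \{(1,3),(2,2)\}$.

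For the ``if'' direction, compactness is automatic, because $\ExpS{n}{X}$ is a continuous image of $X^n$ and is Hausdorff; we only need local Euclideanity everywhere. For $d=2$, $n=2$ there is only one singular stratum, the diagonal. Near a diagonal point a chart gives $\ExpS{2}{\C} = SP^2(\C)$, and the elementary symmetric functions $(z_1+z_2, z_1z_2)$ give a homeomorphism $SP^2(\C) \to \C^2$ (fundamental theorem of algebra). For $d = 1$, $n = 3$, a connected closed $1$-manifold is $S^1$, and $\ExpS{3}{S^1} \homeo S^3$ by Bott's theorem~\cite{Bott}, which we cite. Alternatively, one can check the two singular strata directly. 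At a two-point subset the analysis above gives $\R^2 \times \mathrm{cone}(S^0) = \R^3$. At a singleton $\{x\}$ the local model is $\ExpS{3}{\R} \homeo \R \times \mathrm{cone}(L)$, with $L$ the subsets of $\R$ of diameter $1$ containing $0$ as min or max, up to translation, one needs $L \homeo S^1$: $L$ is two intervals (the middle point ranges over $[0,1]$) glued at their endpoints.

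The main obstacle is making the local product-cone description rigorous in the quotient topology. We must check that the branches glue as claimed and that the neighbourhood is really homeomorphic to $\R^{d(n-1)} \times \mathrm{cone}(Y)$, rather than merely a continuous bijective image of it. We would handle this with the closedness (perfectness) of $E_n$~\cite{handel}, restricted to compact neighbourhoods: a continuous bijection from a compact space onto a Hausdorff space is a homeomorphism.
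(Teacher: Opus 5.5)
The paper gives no proof of this statement. It is imported from Kallel--Sjerve, together with their lemma that $\ExpS{n}{X}$ is never a manifold when $d>2$. Your proposal is a correct, self-contained replacement, and it works by a single local model.

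Near a point of $E_n(X^n_{n-1})$, the space is $\R^{d(n-1)}\times\mathrm{cone}\bigl(\bigsqcup_{i=1}^{n-1}\R P^{d-1}\bigr)$. This one model does several jobs:
\begin{itemize}
\item It rules out every excluded pair $(d,n)$ at once. In your set-up the quoted $d>2$ lemma is just $H_1(\R P^{d-1};\Z)=\Z/2$.
\item It recovers the boundary of the M\"obius band for $(d,n)=(1,2)$.
\item It gives the ``if'' direction at that stratum directly: $\R^2\times\mathrm{cone}(S^1)\homeo\R^4$ and $\R^2\times\mathrm{cone}(S^0)\homeo\R^3$.
\end{itemize}
So the elementary symmetric functions are unnecessary. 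They would in any case need continuity of roots, not just the fundamental theorem of algebra, to give a homeomorphism. Bott's theorem is needed only if you skip the singleton stratum for $(1,3)$. Nothing in the argument uses connectedness of $X$, so you can drop that assumption.

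Three points to tighten.

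\textbf{Degree bookkeeping.} Say explicitly that a chart at $w$ must have dimension $dn$. This follows from invariance of domain, since $\UConf{n}{X}$ is open and dense. Also state the local homology formula
\[
H_\bullet\bigl(\R^k\times\mathrm{cone}(Y),\ \R^k\times\mathrm{cone}(Y)\setminus\{(0,\ast)\}\bigr)\cong\tilde H_{\bullet-k-1}(Y),
\]
which follows, for example, from $\R^k\times\mathrm{cone}(Y)\homeo\mathrm{cone}(S^{k-1}\ast Y)$. With $k=d(n-1)$, this is what forces $Y$ to be an integral homology $S^{d-1}$.

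\textbf{The topology step.} Your compactness remark does not by itself do the job. A compact piece of the model does embed, but you still need the image of its interior to be open. A continuous bijection from a non-compact, locally compact space need not be a homeomorphism; consider $[0,1)\to S^1$.

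The clean fix is that $E_n$ restricted to the open saturated set $E_n^{-1}(U)$ is a quotient map. On each open piece of $E_n^{-1}(U)$, the chart coordinates are continuous functions of ordered tuples. These coordinates are the points in the singly occupied balls, together with the midpoint, half-separation and direction in $\R P^{d-1}$ of the pair in the doubly occupied ball. The direction is irrelevant as the separation tends to $0$, so the inverse chart is continuous. Identify each $B_i$ with $\R^d$ by a homeomorphism; then $\ExpS{2}{B_i}\homeo\ExpS{2}{\R^d}$ and no shrinking is needed.

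\textbf{The description of $L$.} This is muddled. Modulo translation, the diameter-one subsets are $\{0,t,1\}$ for $t\in[0,1]$, and $t=0$ and $t=1$ both give $\{0,1\}$. So $L$ is a single interval with its endpoints identified, not two intervals. The conclusions $L\homeo S^1$ and $\ExpS{3}{\R}\homeo\R^3$ are unaffected.
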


This result excludes the trivial case of $n=1$. To prove this result, a lemma is used that doesn't require $X$ to be a closed manifold.

\begin{lemma}
    \cite{KallelSjerve2009} If $X$ is a manifold of dimension $d>2$, then $\ExpS{n}{X}$ is never a manifold for $n \geq 2$.
\end{lemma}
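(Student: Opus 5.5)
The plan is to detect non-manifold points through local homology with $\Z$ coefficients. At an interior point of a topological $m$-manifold the local homology $H_\bullet(V, V\setminus\{p\})$ is $\Z$ in degree $m$ and zero otherwise. At a boundary point it vanishes entirely. So it suffices to exhibit, for each $n\ge 2$, a point of $\ExpS{n}{X}$ whose local homology contains $2$-torsion. I would pick a point on the stratum just below the top, namely $w=\{x_1,\ldots,x_{n-1}\}$ with distinct $x_i$; for $n=2$ this is a boundary-stratum point $\{x\}$. I would then compute a local model of $\ExpS{n}{X}$ near $w$ using the quotient description $E_n$ and the stratification of \Cref{subsubsec:StratHO}.

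\textbf{Step 1 (local model).} Choose pairwise disjoint Euclidean balls $U_i\ni x_i$. I would show that the set of subsets $S\in\ExpS{n}{X}$ with $S\subset\bigcup_i U_i$ and $S\cap U_i\neq\emptyset$ for all $i$ is an open neighbourhood of $w$; openness is checked on the preimage under $E_n$, which is open. Since $|S|\le n$, at most one $U_i$ contains two points of $S$. The neighbourhood is therefore
\[
N=\bigcup_{i=1}^{n-1} N_i,\qquad N_i=\ExpS{2}{U_i}\times\prod_{j\ne i}U_j ,
\]
and any two $N_i$ meet exactly in $N_0=\prod_j U_j\cong\R^{d(n-1)}$. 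For $n=2$ there is a single piece $\ExpS{2}{\R^d}=SP^2(\R^d)$. Using the centre of mass and the difference vector, $SP^2(\R^d)\cong\R^d\times C(\R P^{d-1})$, where $C$ denotes the open cone. So each $N_i\cong\R^{d(n-1)}\times C(\R P^{d-1})$, and $N_0$ sits inside $N_i$ as $\R^{d(n-1)}\times\{\text{cone point}\}$.

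\textbf{Step 2 (local homology of one piece).} By the Künneth formula and the cone,
\[
H_k\bigl(N_i,N_i\setminus\{w\}\bigr)\cong \tilde H_{k-d(n-1)-1}(\R P^{d-1}).
\]
For $d\ge3$ we have $\tilde H_1(\R P^{d-1};\Z)=\Z/2$, which gives $2$-torsion in degree $d(n-1)+2$. This settles $n=2$ immediately. For $d\le 2$ the link $\R P^{d-1}$ is $S^0$ or $S^1$, and no torsion appears; this is exactly why the hypothesis $d>2$ is needed.

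\textbf{Step 3 (gluing, $n\ge3$).} I would apply the relative Mayer--Vietoris sequence to the pairs $(N_i, N_i\setminus\{w\})$. These pieces pairwise intersect in $(N_0, N_0\setminus\{w\})$, whose local homology is $\Z$ in degree $d(n-1)$ only. I would proceed inductively on the number of pieces, or equivalently compute the link of $w$ as $n-1$ copies of $S^{d(n-1)-1}\ast\R P^{d-1}$ glued along a common $S^{d(n-1)-1}$. The contribution of $N_0$ lives in degrees $d(n-1)$ and $d(n-1)\pm1$, which lie away from the torsion degree $d(n-1)+2$. So the sequence shows that $H_{d(n-1)+2}(N,N\setminus\{w\})$ contains $(\Z/2)^{n-1}$.

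The main obstacle is this bookkeeping. I must make sure the maps in the Mayer--Vietoris sequence cannot kill the torsion classes, and I would check the degree separation carefully. I must also confirm that the topology on $N$ really is that of the union, glued along $N_0$; this uses that $E_n$ is closed, hence perfect, together with Handel's identification of the subspace topology. Once the torsion is established, $w$ can be neither an interior point nor a boundary point of a manifold. Hence $\ExpS{n}{X}$ is not a manifold, with or without boundary, for every $n\ge2$.
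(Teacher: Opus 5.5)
Your argument is correct. The paper offers no proof of its own here. The lemma is quoted from Kallel--Sjerve and used only to motivate the discussion of higher finite subset spaces, so your local-homology computation at $w=\{x_1,\ldots,x_{n-1}\}$ is a self-contained replacement rather than a variant of an in-paper argument.

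The local model, with $m=d(n-1)$ and pieces $N_i\cong\R^m\times C(\mathbb{R}P^{d-1})$ glued along $N_0\cong\R^m$, is right, and so is the K\"unneth step. The degree separation also checks out: $H_{m+1}$ and $H_{m+2}$ of $(N_0,N_0\setminus w)$ vanish, so $\bigoplus_i H_{m+2}(N_i,N_i\setminus w)\to H_{m+2}(N,N\setminus w)$ is an isomorphism.

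Two small repairs are needed. First, the $N_i$ are closed rather than open in $N$, so the relative Mayer--Vietoris sequence needs an excisiveness remark; the cone collar of $N_0$ in each $N_i$ provides it. Second, $\mathbb{R}P^0$ is a point, not $S^0$. For $d=1$ the cone is a half-line, which is exactly why $\ExpS{2}{S^1}$ has boundary.

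A cleaner way to finish Step 3 is to note that $N$ is the open cone on its link. The inclusion $S^{m-1}\hookrightarrow S^{m-1}*\mathbb{R}P^{d-1}$ is null-homotopic, so the link is homotopy equivalent to $\bigvee_{n-1}\Sigma^m\mathbb{R}P^{d-1}\vee\bigvee_{n-2}S^m$. This gives
\[
H_k(N,N\setminus w)\cong\bigoplus_{n-1}\tilde H_{k-m-1}(\mathbb{R}P^{d-1})\oplus\bigl(\Z^{n-2}\text{ if }k=m+1\bigr).
\]

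This closed form shows what your approach gives beyond the lemma:
\begin{itemize}
\item For $n\ge 4$, the free summand alone excludes a manifold for every $d$.
\item For $n=3$, every $d\ge 2$ is excluded.
\item For $d=1$, $n=3$, you get $\Z$ in degree $3$, consistent with $\ExpS{3}{S^1}\cong S^3$.
\end{itemize}
So the $2$-torsion and the full hypothesis $d>2$ are genuinely needed only when $n=2$. There, $d=2$ gives $SP^2(\R^2)\cong\R^4$, which is a manifold.
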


For higher order $n$-th finite subset spaces we may not have a manifold structure, but we do still have the following:

\begin{lemma}
    \cite{KallelSjerve2009} For $X$ a simplicial complex then, $\ExpS{n}{X}$ has a CW-decomposition with top cells in dimension $n \ \mathrm{dim}(X)$, so that $H_{*}(\ExpS{n}{X})$ is trivial for $* > n \ \mathrm{dim}(X)$.
\end{lemma}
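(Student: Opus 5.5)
The plan is to build an explicit CW structure on $\ExpS{n}{X}$ from the simplicial structure of $X$. Let $\dim X = m$. First I pass to an ordered simplicial complex, which costs nothing since the barycentric subdivision of $X$ is homeomorphic to $X$, and fix a total order on its vertices. For a finite set $S$ of points of $X$, each point lies in the relative interior of a unique simplex. The \emph{combinatorial type} of $S$ records, for each open simplex $\sigma$, how many points of $S$ lie in $\mathrm{int}\,\sigma$. The candidate open cells are then products
\[
\prod_{\sigma} \UConf{k_\sigma}{\mathrm{int}\,\sigma}, \qquad \sum_\sigma k_\sigma \le n ,
\]
which partition $\ExpS{n}{X}$ set-theoretically. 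Each factor $\UConf{k}{\mathrm{int}\,\sigma}$ should be further cut into open cells. For $\dim\sigma=1$ this is clear: $\UConf{k}{(0,1)}$ is an open $k$-simplex, as in \Cref{ex:confint}. In general I would use a lexicographic ordering via the vertex order (equivalently, a cell structure on $SP^k(\Delta^p)$ compatible with the diagonals) to decompose $\UConf{k}{\mathrm{int}\,\Delta^p}$ into open cells of dimension at most $kp$.

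The main obstacle is to show that these pieces assemble into a genuine CW complex. This requires characteristic maps from closed balls whose boundaries land in lower-dimensional skeleta, and the difficulty is precisely the identification in $\ExpS{n}{X}$: when two points collide, the number of points drops. Rather than building the characteristic maps by hand, my first approach would be to invoke the result that $\ExpS{n}{X}$ is a quotient of $X^n$ by the closed map $E_n$ (Handel), together with its identification with $SP^n(X)$ modulo the relation collapsing repeated entries. Since $X^n$ carries the product CW structure and $SP^n(X)$ inherits a CW structure from an ordered simplicial complex (the classical Dold–Thom/Liao construction), I would check that the collapse relation is cellular. It is generated by maps $SP^{n-1}(X)\to SP^n(X)$ doubling a point, which are cellular for the ordered structure. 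The quotient of a CW complex by a cellular equivalence relation of this kind remains CW, and its cells are images of cells of $X^n$.

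The dimension count is then immediate, and this is the only quantitative claim. Every cell is the image of a cell of $X^n = X\times\cdots\times X$, whose dimension is at most $n\dim X$. So the top cells have dimension $n\dim X$, and such cells do occur, namely configurations of $n$ distinct points in interiors of top simplices. Cellular homology then vanishes above degree $n\dim X$, giving $H_*(\ExpS{n}{X})=0$ for $*>n\dim X$. The homology statement also follows more cheaply from the weaker fact that $\ExpS{n}{X}$ is a quotient of an $n\dim X$-dimensional compact (finite) CW complex by a closed map with a CW structure. For the cellularity step I would fall back on the explicit citation of Kallel–Sjerve.
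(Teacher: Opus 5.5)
The paper does not prove this lemma; it only cites Kallel--Sjerve. Insofar as you fall back on that citation, you match the paper. The argument you sketch in its place, however, fails at the one step that carries the content: ``the quotient of a CW complex by a cellular equivalence relation of this kind remains CW, and its cells are images of cells of $X^n$.'' This is not a general principle. For images of open cells to be open cells, you need $E_n$ to be injective on each open cell, and you need the images of any two open cells to be equal or disjoint. Cellularity gives neither. Already with the product cell structure on $X^n$ that you invoke, the swap folds $\mathrm{int}\,\sigma\times\mathrm{int}\,\sigma$ onto itself. For an edge $\sigma$, its image in $\ExpS{2}{X}$ is $\{0<p\le q<1\}$, which is not an open cell. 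Your description of the relation is also off. There is no well-defined map $SP^{n-1}(X)\to SP^n(X)$ ``doubling a point'', since nothing says which point to double. Writing multisets additively, the relation is generated by $z+2a+b\sim z+a+2b$ for $(z,a,b)\in SP^{n-3}(X)\times X\times X$. The ``cheaper'' homological route does not work either. A Peano surjection $[0,1]\to S^2$ is a closed quotient map from a $1$-dimensional complex onto a CW complex with $H_2\neq 0$; even finite-to-one closed maps can raise dimension, e.g.\ the Cantor set onto $[0,1]$. So the bound must come from the cells of $\ExpS{n}{X}$ being images of cells of dimension at most $n\dim X$.

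You can close the gap directly on $X^n$, bypassing $SP^n(X)$, as follows.
\begin{itemize}
\item Fix a total order on the vertex set $V$ and use the ordered (staircase) triangulation of $X^n$. Its simplices are chains $\mathbf{v}^0<\cdots<\mathbf{v}^k$ in $V^n$ for the coordinatewise order, with each coordinate spanning a simplex of $X$. The point with weights $t$ has coordinates $x_i=\sum_j t_j v^j_i$, and each ``row'' $v^0_i\le\cdots\le v^k_i$ is monotone.
\item For $y\in X$ put $F_y(w)=\sum_{u\le w}\beta_u(y)$, where $\beta_u$ are the barycentric coordinates. On the open simplex, the nonzero values of the $F_{x_i}$ are exactly the partial sums $T_j=t_0+\cdots+t_j$. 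Every value has this form. Each $T_j$ with $j<k$ occurs as $F_{x_i}(v^j_i)$ for any $i$ with $v^j_i<v^{j+1}_i$, and such an $i$ exists because consecutive chain elements differ.
\item Hence the set $\{x_1,\ldots,x_n\}$ determines $k$ and $t$. Each of its points $y$ then determines its row via $s_j=\min\{w:F_y(w)\ge T_j\}$.
\item Consequently $E_n$ is injective on each open simplex. Two open simplices have equal images when they share $k$ and the set of rows, and disjoint images otherwise. Faces go to cells with smaller $k$.
\end{itemize}
Since $E_n$ is a closed surjection of compact Hausdorff spaces, these images form a finite CW structure. The characteristic maps are $E_n$ composed with one closed simplex per cell, and all cells have dimension at most $\dim X^n=n\dim X$. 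This is the argument your sketch needs instead of the appeal to cellularity.
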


Furthermore, in section 5.3 of \cite{TuffPHD} we have that if $X$ is triangulated then so is $\ExpS{n}{X}$. Hence, although in most cases we lose the manifold structure we are theoretically able to construct a triangulation. This would enable us to perform computations when considering filtrations on the higher order finite subset spaces

\subsection{Volume functions}

Above we saw that we can generalize our approach from the CDT to higher order spaces. To do this we need to consider higher order functions. In this subsection we show the continuity and stability of a higher order volume function, namely the square volume of the $k$-dimensional simplex of $k+1$ points.

The natural generalization from the \functionname is to consider generalisations of distance functions to $(k+1)$-points where $k > 1$, which are invariant with respect to the isometries of the space, and are only dependent on the set of points. We call such functions \emph{geometric functions}. A natural candidate is the $k$-dimensional volume of convex hulls of $(k+1)$ points: for a pair of points, this reduces to the distance between points. 

We first show how these geometric functions induce a function on $\ExpS{k+1}{X}$. Let $X$ be a compact space and let $Y$ be a metric space and a continuous function $\mathfrak{e}: X \rightarrow Y$. Consider a geometric function $\Gamma^{n}_{\mathfrak{e}} : X^{k+1} \rightarrow \mathbb{R}_{\geq 0}$, defined using $\mathfrak{e}$ and such that it is invariant when we permute $(x_0, x_1, \dots , x_k)$, and is only dependent on the set of points, then it will factor through a quotient map from $X^{k+1}$ to the $(k+1)$-th finite subset space $\ExpS{k+1}{X}$ (note that if we do not wish it to be only dependent on the set of points then one could factor to the symmetric product $SP^{k+1}(X)$):
\begin{equation}
    \begin{tikzcd}[cramped]
	{X^{k+1}} & {\mathbb{R}_{\geq 0}} \\
	{\ExpS{k+1}{X}}
	\arrow["\Gamma_\mathfrak{e}", from=1-1, to=1-2]
	\arrow["q"', two heads, from=1-1, to=2-1]
	\arrow["{\exists ! g_{\mathfrak{e}}}"', dashed, from=2-1, to=1-2]
\end{tikzcd}
\end{equation}
Here $g_{\mathfrak{e}}: \ExpS{k+1}{X} \to [0,\infty)$ is the unique, continuous map such that the diagram commutes, and is given explicitly by 
\begin{align}
    g_{\mathfrak{e}}(\{x_0, x_1, \dots , x_k\}) &= \Gamma_\mathfrak{e}(x_0, x_1, \dots , x_k). \label{eq:GeomFN}
\end{align}
In particular, we focus on the following case. For $k+1$ points on a space $X$, and a map $\mathfrak{e}: X \to \mathbb{R}^d$, we can consider the set of points $\{\varphi(x_0), \ldots, \varphi(x_k)\}$, and consider the $k$-dimensional square volume of the convex hull of those points. We define this map as $\Gamma_\mathfrak{e}^k : X^{k+1} \rightarrow \mathbb{R}_{\geq 0}$ for $k \leq d$ by, $$\Gamma_\mathfrak{e}^k(x_0 , \dots , x_k) := Vol(\Delta^k(\mathfrak{e}(x_0), \dots , \mathfrak{e}(x_k)))^2 $$ We then define the volume transform, $\VolT_k : C(X,\mathbb{R}^d) \rightarrow C(\ExpS{k+1}{X}, \mathbb{R})$ by, $ \mathfrak{e} \mapsto g_{\mathfrak{e}}$. We have the following continuity result, if we impose the following metrics on the function spaces induced by the sup-norm, which is attained by maxima as $X$ and $\ExpS{k+1}{X}$ are both compact.  For $\mathfrak{e}_1,\mathfrak{e}_2 \in C(X,\mathbb{R}^d)$ we define $d(\mathfrak{e}_1,\mathfrak{e}_2) := \max_{x \in X} \|\mathfrak{e}_1(x) - \mathfrak{e}_2(x)\|_2 $; for $h_1,h_2 \in C(\ExpS{k+1}{X}, \mathbb{R})$, we let $d(h_1,h_2):= \max_{\mathbf{x} \in \ExpS{k+1}{X}} |h_1(\mathbf{x}) -h_2(\mathbf{x})|$.

\begin{restatable}{theorem}{ContStabVol}
\label{thm:ContStabVol} 
If $X$ is a compact space, then the volume transform $$\VolT_k : C(X,\mathbb{R}^d) \rightarrow C(\ExpS{k+1}{X}, \mathbb{R})$$ is continuous for both spaces equipped with the sup-norm. In particular, if $d(\mathfrak{e}_1,\mathfrak{e}_2) \leq M$ for $\mathfrak{e}_1,\mathfrak{e}_2 \in C(X,\R^d)$, and 
$$D = \max(\diam(\mathfrak{e}_1), \diam(\mathfrak{e}_2)),$$
we have a bound
\begin{equation}
d(\VolT_k(\mathfrak{e}_1),\VolT_k(\mathfrak{e}_2)) \leq \frac{(k+2) \sqrt{k(k+1)} }{(k!)^2 2^{k-2}} M ( M + D) \ \sqrt{(k^2+2k+2) D^2 + (2k+2)}^{k+1}. 
\end{equation}
\end{restatable}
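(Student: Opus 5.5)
The plan is to express the squared $k$-volume as a Gram determinant and then use a multilinear telescoping estimate for determinants. For points $y_0,\dots,y_k\in\R^d$, set $w_i = y_i - y_0$ and let $G$ be the Gram matrix $G_{ij}=\innerprod{w_i}{w_j}$. Then
\begin{equation*}
\mathrm{Vol}(\Delta^k(y_0,\dots,y_k))^2 = \frac{1}{(k!)^2}\det G .
\end{equation*}
This quantity is symmetric in the $y_i$ and is unchanged when points are repeated, since a repeated point gives a degenerate simplex with $\det G = 0$. Hence it depends only on the set $\{y_0,\dots,y_k\}$, and $g_{\mathfrak{e}}$ in \cref{eq:GeomFN} is well defined and continuous on $\ExpS{k+1}{X}$ by the universal property of the quotient, exactly as for $\DT$. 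Because $\ExpS{k+1}{X}$ is the image of $X^{k+1}$, we have
\begin{equation*}
d(\VolT_k\mathfrak{e}_1,\VolT_k\mathfrak{e}_2)=\sup_{(x_0,\dots,x_k)\in X^{k+1}}\bigl|\Gamma^k_{\mathfrak{e}_1}(x_0,\dots,x_k)-\Gamma^k_{\mathfrak{e}_2}(x_0,\dots,x_k)\bigr| .
\end{equation*}
So it suffices to bound this difference pointwise, uniformly in $(x_0,\dots,x_k)$, by the stated expression in $M$ and $D$. Continuity then follows immediately, since the bound tends to $0$ as $M\to 0$ with $D$ locally bounded.

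Fix $(x_0,\dots,x_k)$ and let $G^{(1)}, G^{(2)}$ be the Gram matrices for $\mathfrak{e}_1,\mathfrak{e}_2$, with $w^{(a)}_i = \mathfrak{e}_a(x_i)-\mathfrak{e}_a(x_0)$. The vector estimates are $\|w^{(a)}_i\|\le D$ and $\|w^{(1)}_i-w^{(2)}_i\|\le 2M$. From these the entrywise estimate is
\begin{equation*}
|G^{(1)}_{ij}-G^{(2)}_{ij}|\le \|w^{(1)}_i-w^{(2)}_i\|\,\|w^{(1)}_j\| + \|w^{(2)}_i\|\,\|w^{(1)}_j-w^{(2)}_j\| \le 2M(2D+2M),
\end{equation*}
which is of order $M(M+D)$. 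Next, write $\det G^{(1)}-\det G^{(2)}$ as a telescoping sum of $k$ determinants. In the $i$-th term the first $i-1$ columns come from $G^{(2)}$, the $i$-th column is the difference column, and the remaining columns come from $G^{(1)}$. Each term is bounded by Hadamard's inequality (a product of column norms). The difference column has norm at most $\sqrt{k}\cdot 2M(2M+2D)$. Every other column has norm at most $\sqrt{k}\,D^2$, or this plus a perturbation of order $M(M+D)$.

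To reach a bound of the exact shape stated, I would not work with the raw Gram entries but instead bound columns by the norms of augmented vectors. Under the standing normalisation $M\le 1$ implicit in the statement, this yields column norms dominated by $\sqrt{(k^2+2k+2)D^2+(2k+2)}$. Alternatively, one can use the Cayley--Menger form
\begin{equation*}
\mathrm{Vol}^2=\frac{(-1)^{k+1}}{2^k(k!)^2}\det CM ,
\end{equation*}
where $CM$ is the $(k+2)\times(k+2)$ bordered matrix of squared distances. This form is likely where the constants $(k+2)$, $2^{k-2}$ and the $(k+1)$-th power come from: there are $k+2$ columns, each column norm is bounded by roughly $\sqrt{(k+1)D^4+1}$ after scaling, and squared-distance entries change by at most $|\,\|a\|^2-\|b\|^2| \le \|a-b\|(\|a\|+\|b\|)\le 2M(2D+2M)$. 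The telescoping then gives $(k+2)$ terms, each bounded by (difference column norm)$\times$(column bound)$^{k+1}$, and the prefactor $1/((k!)^2 2^{k})$ combines with the factors of $2$ to produce $2^{k-2}$.

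The main obstacle is bookkeeping the constants to match the stated expression exactly, especially the radicand $(k^2+2k+2)D^2+(2k+2)$. I would first carry out the Cayley--Menger telescoping with Hadamard's inequality and obtain some explicit constant $C(k)\,M(M+D)\,\Phi(D)^{k+1}$. I would then check whether $\Phi$ can be bounded by the stated radicand, possibly after rescaling the border row of ones in $CM$ or bounding $D^4$ terms in combination with $D^2$. If exact matching fails, the qualitative conclusion (continuity, with a locally Lipschitz bound of order $M(M+D)$) is unaffected.
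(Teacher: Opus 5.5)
Your continuity argument is sound and follows the paper's route. Well-definedness on $\ExpS{k+1}{X}$ holds because two tuples with the same underlying set are either permutations of each other or both span a degenerate simplex. You then reduce to a uniform estimate on $X^{k+1}$, bound the squared-distance entries by $4M(M+D)$, and apply a multilinear perturbation estimate to the $(k+2)\times(k+2)$ Cayley--Menger determinant. The paper uses the Ipsen--Rehman inequality $|\det A_1-\det A_2|\le (k+2)\|A_1-A_2\|_2\max\{\|A_1\|_2,\|A_2\|_2\}^{k+1}$. Your telescoping with Hadamard's inequality plays the same role, with column norms in place of operator norms.

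The gap is the explicit bound, and the step you propose for it would fail.

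\emph{The proposed step.} There is no normalisation $M\le 1$ in the statement, and the obstruction lies in $D$ anyway. Gram and Cayley--Menger entries are quadratic in the configuration, so their column and operator norms grow like $D^2$. For instance, for a regular simplex of edge $D$, the Rayleigh quotient on $(1,\dots,1,0)$ gives $\|A^k_{\mathfrak{e}}\|_2\ge kD^2$. Such norms cannot be dominated by $\sqrt{(k^2+2k+2)D^2+(2k+2)}$ for large $D$.

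\emph{Why no bookkeeping can fix it.} The displayed inequality is not scale consistent. Replacing $\mathfrak{e}_a$ by $\lambda\mathfrak{e}_a$ scales the left side exactly by $\lambda^{2k}$, while the right side grows only like $\lambda^{k+3}$. Hence for $k\ge 4$ the inequality is false. Take $X=\{p_0,\dots,p_4\}$, $\mathfrak{e}_1(p_0)=0$, $\mathfrak{e}_1(p_i)=\lambda e_i$, and $\mathfrak{e}_2\equiv 0$: the left side is $\lambda^8/576$, while the right side is only of order $\lambda^7$.

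\emph{Where the paper goes wrong.} The paper's proof gets a radicand linear in $D^2$ only through an incorrect bound on $\|A^k_{\mathfrak{e}}\|_2$. What that step actually gives is $\|A^k_{\mathfrak{e}}\|_2\le\|A^k_{\mathfrak{e}}\|_F\le\sqrt{k(k+1)D^4+2(k+1)}$.

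\emph{What to do instead.} Do not leave the constant as an open matching problem; commit to what your method proves, for example
\begin{equation*}
d(\VolT_k(\mathfrak{e}_1),\VolT_k(\mathfrak{e}_2))\le\frac{(k+2)\sqrt{k(k+1)}}{(k!)^2\,2^{k-2}}\,M(M+D)\,\bigl(k(k+1)D^4+2(k+1)\bigr)^{(k+1)/2}.
\end{equation*}
Alternatively, your Gram route gives a sharper, correctly homogeneous bound: use $|G^{(1)}_{ij}-G^{(2)}_{ij}|\le 4MD$, $k$ telescoping terms and Hadamard with column norms $\sqrt{k}D^2$ to get $\frac{4k^{(k+2)/2}}{(k!)^2}MD^{2k-1}$. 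In either case, say explicitly that the radicand in the statement must involve $D^4$, not $D^2$.
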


\begin{proof}

Let $\mathfrak{e}_1, \mathfrak{e}_2 \in C(X, \mathbb{R}^N)$, with $\Gamma_\mathfrak{e}^k $ and $g_{\mathfrak{e}} $ as above. The volume squared,  $ Vol(\Delta^k(\mathfrak{e}(x_0), \dots , \mathfrak{e}(x_k)))^2 $ can be given by using the determinant of the Cayley-Menger matrix:

\[
A_{\mathfrak{e}}^k(\mathbf{x}) = 
\begin{pmatrix} 
0 & d_{01}^2 & d_{02}^2 & \cdots & d_{0k}^2 & 1 \\
d_{01}^2 & 0 & d_{12}^2 & \cdots & d_{1k}^2 & 1 \\
d_{02}^2 & d_{12}^2 & 0 & \cdots & d_{2k}^2 & 1 \\
\vdots & \vdots & \vdots & \ddots & \vdots & \vdots \\
d_{0k}^2 & d_{1k}^2 & d_{2k}^2 & \cdots & 0 & 1 \\
1 & 1 & 1 & \cdots & 1 & 0
\end{pmatrix}
\]
where $\mathbf{x} = (x_0, \dots , x_k)$ $ \in X^{k+1}$,  \(d_{ij} = \|\mathfrak{e}(x_i) - \mathfrak{e}(x_j)\|_2\); then $Vol(\Delta^k(\mathfrak{e}(x_0), \dots , \mathfrak{e}(x_k)))^2  = \frac{(-1)^{k+1}}{(k!)^2 2^k} \det{A_{\mathfrak{e}}^k(\mathbf{x})}$.

Let $B = A_{\mathfrak{e}_1}^k(\mathbf{x}) - A_{\mathfrak{e}_2}^k(\mathbf{x})$, then $B $ has at most $(k+2)^{2} - 3k -4 = k(k+1)$ non-zero terms. So now we wish to bound the difference of the squares of  \(d_{ij} = \|\mathfrak{e}(x_i) - \mathfrak{e}(x_j)\|_2\) for $\mathfrak{e}_1$ and $\mathfrak{e}_2$, i.e wishing to bound $ b_{ij} = \|\mathfrak{e}_1(x_i) - \mathfrak{e}_1(x_j)\|_2^2 - \|\mathfrak{e}_2(x_i) - \mathfrak{e}_2(x_j)\|_2^2$. If we let $M:=\max_{x \in X} \|\mathfrak{e}_1(x) - \mathfrak{e}_2(x)\|_2 $, we find that $$|b_{ij}| \leq 4M ( M + \text{min}\{\text{diam}(\mathfrak{e}_1(X)), \text{diam}(\mathfrak{e}_2(X))\})$$

So, $$\|B\|_2 \leq \sqrt{k(k+1)} \ \  4M ( M + \text{min}\{\text{diam}(\mathfrak{e}_1(X)), \text{diam}(\mathfrak{e}_2(X))\}) $$

Let us then bound $d(\VolT_k(\mathfrak{e}_1), \VolT_k(\mathfrak{e}_2)) = \max_{\mathbf{x} \in X^{k+1}} |\Gamma^k_{\mathfrak{e}_1}(\mathbf{x}) - \Gamma^k_{\mathfrak{e}_2}(\mathbf{x})|$. 
\[
\begin{aligned}
\bigl|\Gamma^k_{\mathfrak{e}_1}(\mathbf{x}) - \Gamma^k_{\mathfrak{e}_2}(\mathbf{x})\bigr|
&= \bigl|
Vol\bigl(\Delta^k(\mathfrak{e}_1(x_0), \dots, \mathfrak{e}_1(x_k))\bigr)^2
- Vol\bigl(\Delta^k(\mathfrak{e}_2(x_0), \dots, \mathfrak{e}_2(x_k))\bigr)^2
\bigr| \\[0.5em]
&= \left|
\frac{(-1)^{k+1}}{(k!)^2 2^{k}} \det A_{\mathfrak{e}_1}^k(\mathbf{x})
- \frac{(-1)^{k+1}}{(k!)^2 2^{k}} \det A_{\mathfrak{e}_2}^k(\mathbf{x})
\right| \\[0.5em]
&= \frac{1}{(k!)^2 2^{k}}
\bigl|\det A_{\mathfrak{e}_1}^k(\mathbf{x}) - \det A_{\mathfrak{e}_2}^k(\mathbf{x})\bigr| \\[0.5em]
&\le
\frac{1}{(k!)^2 2^{k}} (k+2)
\bigl\|A_{\mathfrak{e}_1}^k(\mathbf{x}) - A_{\mathfrak{e}_2}^k(\mathbf{x})\bigr\|_2 
\max\!\left\{
\bigl\|A_{\mathfrak{e}_1}^k(\mathbf{x})\bigr\|_2,
\bigl\|A_{\mathfrak{e}_2}^k(\mathbf{x})\bigr\|_2
\right\}^{k+1} \\[0.5em]
&\le
\frac{1}{(k!)^2 2^{k}} (k+2)\sqrt{k(k+1)} \, 4M
\bigl(M + \min\{\diam(\mathfrak{e}_1(X)), \diam(\mathfrak{e}_2(X))\}\bigr) \\
&\qquad \cdot
\max\!\left\{
\bigl\|A_{\mathfrak{e}_1}^k(\mathbf{x})\bigr\|_2,
\bigl\|A_{\mathfrak{e}_2}^k(\mathbf{x})\bigr\|_2
\right\}^{k+1} \\[0.5em]
&\le
\frac{(k+2)\sqrt{k(k+1)} M
\bigl(M + \min\{\diam(\mathfrak{e}_1(X)), \diam(\mathfrak{e}_2(X))\}\bigr)}
{(k!)^2 2^{k-2}} \\
&\qquad \cdot
\max\!\Bigl\{
\sqrt{(k^2+2k+2)\diam(\mathfrak{e}_1(X))^2 + (2k+2)^2}, \\
&\qquad\qquad
\sqrt{(k^2+2k+2)\diam(\mathfrak{e}_2(X))^2 + (2k+2)^2}
\Bigr\}^{k+1}.
\end{aligned}
\]

With the first inequality coming from theorem 2.12 of \cite{IpsenRehman2008}.
This then provides an upper bound on $d(\VolT_k(\mathfrak{e}_1), \VolT_k(\mathfrak{e}_2))$ as the supremum of the left hand side of the inequality.

Let us fix $\mathfrak{e}_1$. Because $|\diam(\mathfrak{e}_1)  - \diam(\mathfrak{e}_2)|\leq 2M$, for any $\epsilon > 0$, we can find a sufficiently small $M > 0$, such that for all $\mathfrak{e}_2$ in an $M$-ball of $\mathfrak{e}_1$, the right hand side of the equality is bounded below $\epsilon$. Thus $\VolT$ is continuous with respect to the sup-norm metrics.

Let $D = \text{max}\{\text{diam}(\mathfrak{e}_1(X)), \text{diam}(\mathfrak{e}_2(X))\}$. We can then simplify the inequality to 

$$ d(\VolT_k(\mathfrak{e}_1), \VolT_k(\mathfrak{e}_2)) \leq \frac{(k+2) \sqrt{k(k+1)} }{(k!)^2 2^{k-2}} M ( M + D) \ \sqrt{(k^2+2k+2) D^2 + (2k+2)}^{k+1}.$$

\end{proof}

Having defined a continuous function $g_\mathfrak{e} : \ExpS{k+1}{X} \to \R$, we can construct a sublevel set filtration and consider its persistent homology. Since $\ExpS{k+1}{X}$ is compact and and trianguable (see 5.3 Discussion of \cite{TuffPHD}), continuous functions on $\ExpS{k+1}{X}$ always have $q$-tame persistent homology~\cite[Theorem 2.22]{Chazal2016-bo}. Thus, we can consider the persistence diagrams $\pershomf_i(g_\mathfrak{e})$ in the metric space of diagrams, equipped with bottleneck distance. Because the map $\pershomf$ is a continuous transformation of functions from   \Cref{thm:ContStabVol} to the space of diagrams, the composition of $\VolT_k: C(X, \mathbb{R}^d) \to C(\ExpS{k+1}{S^1}, \R)$ with $\pershomf$ to yield $\pershomf_i(g_\mathfrak{e})$ is a continuous function.

\begin{corollary}
     $\pershomf \circ \VolT_k: C(X,\mathbb{R}^d) \to \Dgm$ continuous w.r.t. bottleneck distance on $\Dgm$, for $X$ triangulable.
\end{corollary}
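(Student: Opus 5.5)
The statement is a corollary, so the plan is to compose \Cref{thm:ContStabVol} with the stability theorem (\Cref{thm:stability_ph}), as was done for $\pershomf\circ\DT$ in \Cref{cor:cont_ph_cdt}.

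First, I would make sure the persistence diagrams are well defined. For $X$ triangulable (and compact, as \Cref{thm:ContStabVol} assumes), the finite subset space $\ExpS{k+1}{X}$ is again compact and triangulable, by the discussion in \S5.3 of \cite{TuffPHD}. Hence for every $\mathfrak{e}\in C(X,\R^d)$ the function $\VolT_k(\mathfrak{e}) = g_{\mathfrak{e}}$ is a continuous function on a finitely triangulable space. By \cite[Theorem 2.22]{Chazal2016-bo} its sublevel set persistence modules are q-tame, so $\pershomf_i(g_{\mathfrak{e}})\in\Dgm$ exists for every $i$.

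Second, I would apply \Cref{thm:stability_ph} on the fixed triangulable space $\ExpS{k+1}{X}$. For $\mathfrak{e}_1,\mathfrak{e}_2\in C(X,\R^d)$ it gives
\[
d_B\bigl(\pershomf_i(\VolT_k(\mathfrak{e}_1)),\pershomf_i(\VolT_k(\mathfrak{e}_2))\bigr)\le \norm{\VolT_k(\mathfrak{e}_1)-\VolT_k(\mathfrak{e}_2)}_\infty = d(\VolT_k(\mathfrak{e}_1),\VolT_k(\mathfrak{e}_2)).
\]
Thus $\pershomf_i$ is $1$-Lipschitz from $C(\ExpS{k+1}{X},\R)$ with the sup metric to $(\Dgm,d_B)$.

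Third, continuity follows by composition. $\VolT_k$ is continuous for the sup metrics by \Cref{thm:ContStabVol}, and $\pershomf_i$ is Lipschitz, so $\pershomf_i\circ\VolT_k$ is continuous. If a quantitative version is wanted, the explicit bound in \Cref{thm:ContStabVol} transfers directly, giving local Lipschitz continuity on sets of bounded diameter. The only modest obstacle is the triangulability of $\ExpS{k+1}{X}$ needed in the first two steps, and I would simply cite \cite{TuffPHD} for it rather than reprove it.
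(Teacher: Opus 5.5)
Your proposal is correct and follows the paper's own argument: triangulability of $\ExpS{k+1}{X}$ (via Tuffley) gives q-tameness by Theorem 2.22 of Chazal et al., and composing the continuity of $\VolT_k$ (\Cref{thm:ContStabVol}) with the $1$-Lipschitz stability of $\pershomf_i$ (\Cref{thm:stability_ph}) gives the result. Your explicit note that $X$ must also be compact, as \Cref{thm:ContStabVol} assumes, is a sensible clarification of the statement's hypothesis.
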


\section{Discussion and Future Work}\label{sec:disft}

The motivation for this work was to find a geometric descriptor of embedded loops that would be invariant under internal symmetries like diffeomorphisms of $S^1$ and external symmetries such as euclidean transformations, and stable under perturbation. In this article, we have proved that the composition $\pershomf \circ \DT$ of the chordal distance transform $\DT$ and persistent homology $\pershomf$ satisfies these desirable properties. Furthermore, we have characterised the behaviour of the map $\pershomf \circ \DT$ for generic $C^2$ and finite piecewise linear embeddings, and shown such persistence diagrams are generically tame. In the generic setting, we have also given a geometric description of the critical points that generate the birth and death of homological features in the persistence diagram $\pershomf \circ \DT(\gamma)$, allowing practitioners who apply this method to interpret $\pershomf \circ \DT(\gamma)$ in terms of the geometry of the embedded loop. While the \functionname captures geometric features associated to pairs of points on the embedded loop, we show that it is a special case of  the volume transformation, a generalisation that captures higher order geometric information associated to multiple points along the loop. We show the volume transform of a compact space embedded in Euclidean space is also continuous and stable.

As future work, we would like to apply a similar Morse-theoretic approach to understanding generalisations of the chordal distance transform to other spaces and higher order point configurations. An immediate generalisation of the chordal distance transform of $S^1$ is to that of $S^2$, where the two-point finite subspace is a manifold $\ExpS{2}{S^2}=\mathbb{C}P^2$; we anticipate a lot of the Morse-theoretic techniques in this work can be transferred easily to analyse that case. However, challenges arise for other spaces as finite subset spaces are manifolds only in a few particular cases (see~\Cref{thm:almost_never_manifold}). Nonetheless, we can explore a similar approach with stratified Morse theory~\cite{Goresky1988-ym} or a recent analogous theory for CW complexes~\cite{nanda2026stratified}. 
Finally, we would like to put this methodology to the test on real world scientific problems, such as the analysis of cells morphologies, ring polymers, and organoid shape where the interpretability of the critical points of the \functionname and the invariance of our descriptor could prove useful in classification tasks.

\section*{Acknowledgements}
James Binnie: This work was supported by the Additional Funding Programme for Mathematical Sciences, delivered by EPSRC (EP/V521917/1) and the Heilbronn Institute for Mathematical Research. Ka Man Yim conducted work on this project while supported by a UKRI Future Leaders Fellowship [grant number MR/W01176X/1; PI: J Harvey], and subsequently by the EPSRC grant ``Mathematical Foundations of Intelligence: An `Erlangen Programme' for AI" [grant number EP/Y028872/1]. 
Otto Sumray was supported by the ELBE postdoctoral fellows programme.

The authors thank David Hien for sharing his insights in discussions on this topic. James Binnie and Ka Man Yim are grateful to John Harvey for his thoughtful suggestions. Ka Man Yim would also like to thank John Harvey and Gesine Reinert for their advice as academic mentors.
 
\appendix

\section{Morton's Construction of the 2\textsuperscript{nd} Symmetric Product of $S^1$} \label{app:mobius}

In this section, we show constructively that $\ExpS{2}{S^1}$ is homeomorphic to the M\"{o}bius band $\Mob$, via the construction by Morton~\cite{Morton1967-or}
Morton's construction involves deriving the universal covering $\bar{\scre}$ of $\ExpS{2}{S^1}$ by an infinite strip $[0,1] \times \R$, through the universal covering $\tilde{\scre}$ of $S^1 \times S^1$ by $\R^2$: they are related by a commutative diagram 
\begin{equation}
    \begin{tikzcd}[ampersand replacement=\&]
	{\R^2} \& {[0,1] \times \R} \\
	{S^1 \times S^1} \& \ExpS{2}{S^1}
	\arrow["\tilde{q}", "{/(\Z \rtimes S_2)}"', two heads, from=1-1, to=1-2]
	\arrow["{/\Z^2}", "\screii"', two heads, from=1-1, to=2-1]
	\arrow["{/\Z}", "\bar{\scre}"', two heads,  from=1-2, to=2-2]
	\arrow["q", "{/S_2}"', two heads, from=2-1, to=2-2]
\end{tikzcd} \tag{\cref{eq:master_mob_diagram}}
\end{equation}
The main idea behind the construction is to decompose the successive quotients $q \circ \tilde{e}$ into another pair of successive quotients $\bar{\scre} \circ \tilde{q}$, where $\bar{\scre}$ is the universal covering of the M\"obius band $\Mob$ by a $\Z$-action on $[0,1] \times \R$. 

We leverage this construction to induce an atlas of the $\Z$-covering $\bar{\scre}$ from an atlas for the $\Z$-covering $\scre: \R \to S^1$. An atlas of $\scre$ allows us to give local coordinates for $S^1$, and an atlas it induces on $\bar{\scre}$ gives local coordinates for $\ExpS{2}{S^1}$ in terms of local coordinates of $S^1$. We show that in the subsequent section. Here we set out, in concise terms, the technical apparatus needed for the construction of such an atlas. We develop Morton's construction in two steps. First, we show that $\ExpS{2}{S^1}$ can be written as the quotient of $\R^2$ by an action of the semi-direct product $G = \Z^2 \rtimes S_2$. We then show that this quotient is a composition of the two quotients maps $\bar{\scre} \circ \tilde{q}$. The first is a quotient of $\R^2$ by the action of a normal subgroup $N \cong \Z \rtimes S_2$ of $G$. This yields an orbit space $\R^2/N$ homeomorphic to the diagonal strip $[0,1] \times \R$. The second quotient is that of the diagonal strip by a $\Z$-action, which gives the universal covering of the M\"obius band $\Mob$, as illustrated in~\Cref{fig:mobius_commutative}.  

\paragraph{$\ExpS{2}{S^1}$ as a Quotient of $\R^2$ by $\Z^2 \rtimes S_2$}
Recall $\ExpS{2}{S^1}$ is the quotient of the torus $q: S^1 \times S^1 \twoheadrightarrow \ExpS{2}{S^1}$ by the action of the symmetric group $S_2$. Because the torus is itself the quotient $\R^2 \twoheadrightarrow S^1 \times S^1$ of $\R^2$ by a $\Z^2$-covering action, we can express $\ExpS{2}{S^1}$ as a quotient $p: \R^2 \twoheadrightarrow \ExpS{2}{S^1}$, obtained by composing the quotient $\R^2 \twoheadrightarrow S^1 \times S^1$ with $q$. We recall the quotient $\screii := \scre \times \scre : \R^2 {\twoheadrightarrow}  S^1 \times S^1$ is given by the product of $\scre: \R \to S^1$ where
\begin{equation}
    \scre(t) = e^{2\pi \imath  t}. 
\end{equation}
Following~\cite{Morton1967-or}, we first show that the quotient map $p$ is equivalent to the quotient of $\R^2$ by an action of the semi-direct product $G = \Z^2 \rtimes S_2$. The group $G$ has presentation
\begin{equation}
    G = \langle \alpha_1, \alpha_2, \rho\  \vert \ \rho^2 = e,\ \rho \alpha_1 = \alpha_2 \rho,\ \alpha_1 \alpha_2 = \alpha_2 \alpha_1 \rangle 
\end{equation}
where we identify $\alpha_i = (t_i, e)$ with the generators of $\Z^2$, 
\begin{equation*}
    t_1 = (1,0),\ t_2 = (0,1), 
\end{equation*}
and $\rho = (0,r)$ with the generator $r$ of $S_2 = \langle r \ \vert \ r^2 = e\rangle$. Because $\Z^2$ generated by $\alpha_1,\alpha_2$ is a normal subgroup of $G$, we can express any element $g \in G$ uniquely as $g =  \alpha_1^{k_1}\alpha_2^{k_2} \rho^{k_3}$ for $k_1,k_2,k_3 \in \Z$.

Let us define an action of $G$ on $\R^2$. Denoting coordinates in $\R^2$ by $(x_1,x_2)$, we have
\begin{equation}
    \alpha_1 \cdot (x_1,x_2) = (x_1 + 1, x_2)\qc  \alpha_2 \cdot (x_1,x_2) = (x_1 , x_2 + 1)\qc \rho \cdot (x_1,x_2) = (x_2,x_1).
\end{equation}
Thus, for any element $g =  \alpha_1^{k_1}\alpha_2^{k_2} \rho^{k_3}$, 
\begin{equation}
    g \cdot (x_1, x_2) = (x_{r^{k_3}(1)} + k_1, x_{r^{k_3}(2)} + k_2). 
\end{equation}
Let us denote the quotient of $\R^2$ by the $G$-action $p': \R^2 \twoheadrightarrow \R^2/G$. We claim that $p' = p$ up to homeomorphism. We can check by direct computation that the quotient $p = q \circ 
\screii : \R^2 \twoheadrightarrow \ExpS{2}{S^1}$  sends points along an orbit $G \cdot (x_1,x_2)$ to the same point $\{ \scre(x_1), \scre(x_2)\} \in \ExpS{2}{S^1}$, and the fibres $\fibre{p}{\{ \scre(x_1), \scre(x_2)\}}$ are precisely the orbits $G \cdot (x_1,x_2)$. Thus, the universal property of quotient maps imply we have a homeomorphism $\ExpS{2}{S^1} \homeo \R^2 /G$ such that the following diagram commutes:
 \begin{equation} \label{eq:Mob1}
    \begin{tikzcd}
        \R^2 \arrow[r, "\screii", "/\Z^2"', two heads] \arrow[d, "p'", "/G"', two heads ]& S^1 \times S^1 \arrow[d, "q", "/S_2"', two heads] \\
        \R^2/G \arrow[r, "\homeo"] & \ExpS{2}{S^1}.
    \end{tikzcd}
\end{equation}
Henceforth we consider $\ExpS{2}{S^1}$ and $\R^2/G$ interchangeably. 

\paragraph{The Diagonal Presentation}
We give another presentation of $G$. If we define $\mu = \rho \alpha_1$ and $\nu = \inv{\alpha_1} \alpha_2$, then we can write 
\begin{equation} \label{eq:G_diag_present}
     G = \langle \mu, \nu, \rho\  \vert \ \rho^2 = e,\ \nu \rho \nu = \rho,\ \mu \rho =  \rho\nu \mu  \rangle 
\end{equation}
We observe from this presentation of $G$ that we can write $G$ as another semi-direct product $G \cong N \rtimes H$, where $N = \Z \rtimes S_2 = \langle \nu, \rho \ \vert \ \nu \rho \nu  = \rho \rangle$ and $G / N = H \cong \langle \mu \rangle \cong \Z$. We are also free to write any element of $G$ uniquely as $g = \mu^{n_1} \nu^{n_2} \rho^{n_3}$. 

Let us consider the action of $N$ on $\R^2$. We relate the action of $N$ to the full action of $G$ by considering how the $G$-action acts on the $N$-orbits. The action of $G$ on $N$-orbits $(N \cdot x) \subset \R^2$ is a well-defined map sending $N$-orbits to $N$-orbits, as $N$ being a normal subgroup of $G$ implies 
\begin{equation}
    g \cdot (N\cdot x) = N \cdot(g \cdot x). 
\end{equation}
Furthermore, the action on $\R^2/N$ is given by that of $H \cong \langle \mu \rangle$: because we can write any element of $G$ as  $g = \mu^k n$, for some  $n \in N$, the action of $g$ sends 
\begin{equation} \label{eq:Zaction_on_R2modN}
    g \cdot (N \cdot x) = \mu^{k} \cdot (N \cdot x) = N \cdot (\mu^{k} \cdot x).
\end{equation}
This implies the $G$-action on $N$-orbits is a $\Z$-action. Furthermore, because $\langle \mu \rangle \cap N = e$, the $N$-orbit of $\mu^{k} \cdot x$ is distinct from that of $x$ for $k \neq 0$, hence the $\Z$-action is a free action on $\R^2/N$. Henceforth, let us denote the $\Z$-action on $\R^2/N$ by
\begin{equation}
    k \star (N \cdot x) := \mu^k \cdot (N \cdot x).
\end{equation}
Because the $\Z$-action on $N$-orbits sends $N \cdot x$ in $\R^2$ to $G \cdot x$, the composition of the $\Z$-action with the $N$-action is the $G$-action on $\R^2$:
\begin{equation}\label{eq:Mob2}
    \begin{tikzcd}[ampersand replacement=\&]
	{\R^2} \\
	{\R^2/N} \& {\R^2/G}
	\arrow[two heads, from=1-1, to=2-1]
	\arrow[two heads, from=1-1, to=2-2]
	\arrow["{/\Z}"', two heads, from=2-1, to=2-2]
\end{tikzcd}.
\end{equation}

\paragraph{The Infinite Strip and the M\"obius band} We now show that the orbit space $\R^2/N$ is homeomorphic to $\stripinf := [0,1] \times \R$. Let us consider the action of generators $\mu,\nu,\rho$ of $G$ on $\R^2$. If we perform a change of coordinates $u = x_2-x_1$ and $v = x_1 + x_2$, the action of generators $\mu,\nu,\rho$ can be given by
\begin{equation} \label{eq:diagonal_generators_action}
  \mu\cdot(u,v) = (1-u, v+ 1)\qc  \nu\cdot(u,v) = (u+2,v)
   \qc \rho \cdot(u,v) = (-u, v).
\end{equation}
Since $N$ is generated by $\nu$ and $\rho$, its action is restricted to fibres $\fibre{v}{c} \homeo \R $ of the trivial bundle $v: \R^2 \twoheadrightarrow \R$, as the action of $\nu$ and $\rho$ keeps the $v$ coordinate constant, and only shifts the $u$ coordinate parametrising the fibres $\fibre{v}{c}$. Furthermore, the action on each fibre $\fibre{v}{c}$ is independent of $c$.  Writing $\R^2 = \R \times \R$ with the first factor the $u$ coordinate parametrising fibre $\fibre{v}{c}$, and the second factor the $v$ coordinate, we can thus express the $N$-orbit space as
\begin{equation*}
    \R^2 / N  = \R/N \times \R  .
\end{equation*}
The reader can verify that the subset where $u \in [0,1]$ in each fibre $\fibre{v}{c}$ is a fundamental domain of the action of $N$. In fact~\cite[Lemmas 1 \& 2]{Morton1967-or} show that $[0,1] \homeo \R/N$, and we have a homeomorphism 
\begin{equation}
   \zeta:  [0,1] \times \R = \inv{u}[0,1]  \hookrightarrow \R^2 \twoheadrightarrow \R^2/N = \R/N  \times \R
\end{equation}
The forward map $\zeta$ send $x \in \inv{u}[0,1]$ to $N\cdot x$, and its inverse $\inv{\zeta}$ sends orbits  $N \cdot x$ to its unique intersection with $\inv{u}[0,1]$. 

Let us then consider the $\Z$-action on $[0,1] \times \R$, given by $k \star x := \fibre{\zeta}{k \star \zeta(x)}$: For $(u,v) \in [0,1] \times \R$,
\begin{align}
    1 \star (u,v) &= \fibre{\zeta}{1 \star \zeta(u,v)} = \fibre{\zeta}{1 \star (N \cdot (u,v))} = \fibre{\zeta}{N \cdot (\mu \cdot (u,v))} \label{eq:Z_action_strip} \\
                 &= \fibre{\zeta}{N \cdot (1-u,v+1)} = (1-u, v+1). \nonumber
\end{align}
We note that $1 \star (u,v) \in [0,1] \times \R$, hence the $\Z$-action on $[0,1] \times \R$ is well-defined. Moreover, the $\Z$-action is that of the \emph{glide reflection}; the action of a generator $1$ restricts to a homeomorphism between fibres $\fibre{v}{c} \cap \inv{u}[0,1]$ and  $\fibre{v}{c+1} \cap \inv{u}[0,1]$, where the $u$ coordinate is reflected $ u \mapsto 1-u$ in its image. The quotient $\bar{\scre}: [0,1] \times \R \twoheadrightarrow \Mob$   by this $\Z$-action is precisely the M\"obius band $\Mob$. This implies $(\R^2/N)/\Z \homeo ([0,1] \times \R)/\Z  = \Mob$. Furthermore, $\bar{\scre}$ is the universal covering of the M\"obius band.

Since we have also shown that $\ExpS{2}{S^1} \homeo \R^2/G $ in~\cref{eq:Mob1} and  $\R^2/G \homeo (\R^2/N)/\Z $ in~\cref{eq:Mob2}, we have a sequence of homeomorphisms that establishes the two-finite subset space is homeomorphic to the M\"obius band: 
\begin{equation}
    \ExpS{2}{S^1} \homeo \R^2/G \homeo (\R^2/N)/\Z \homeo ([0,1] \times \R)/\Z  = \Mob.
\end{equation}
Combining and summarising the commutative diagrams~\cref{eq:Mob1,eq:Mob2}, we obtain a \emph{bundle morphism} $\tilde{q}: \screii \to \bar{\scre}$:
\begin{equation}
    \begin{tikzcd}[ampersand replacement=\&]
	{\R^2} \& {\inv{u}[0,1]} \\
	{S^1 \times S^1} \& \ExpS{2}{S^1}
	\arrow["\tilde{q}", "{/(\Z \rtimes S_2)}"', two heads, from=1-1, to=1-2]
	\arrow["{/\Z^2}", "\screii"', two heads, from=1-1, to=2-1]
	\arrow["{/\Z}", "\bar{\scre}"', two heads,  from=1-2, to=2-2]
	\arrow["q", "{/S_2}"', two heads, from=2-1, to=2-2]
\end{tikzcd} \tag{\cref{eq:master_mob_diagram}}
\end{equation}

\bibliographystyle{plain}
\bibliography{refs}

@article{Bleile2023PersistencePopulation,
  title={Persistence diagrams as morphological signatures of cells: A method to measure and compare cells within a population},
  author={Bokor Bleile, Yossi and Yadav, Pooja and Koehl, Patrice and Rehfeldt, Florian},
  journal={PLOS Computational Biology},
  volume={22},
  number={1},
  pages={e1013890},
  year={2026},
  publisher={Public Library of Science San Francisco, CA USA}
}

@article{arnal2023critical,
  title={Critical points of the distance function to a generic submanifold},
  author={Arnal, Charles and Cohen-Steiner, David and Divol, Vincent},
  journal={arXiv preprint arXiv:2312.13147},
  year={2023}
}

@article{damon1997generic,
  title={Generic properties of solutions to partial differential equations},
  author={Damon, James},
  journal={Archive for Rational Mechanics and Analysis},
  volume={140},
  number={4},
  pages={353--403},
  year={1997},
  publisher={Springer}
}

@book{hirsch2012differential,
  title={Differential topology},
  author={Hirsch, Morris W},
  volume={33},
  year={2012},
  publisher={Springer Science \& Business Media}
}

@book{golubitsky2012stable,
  title={Stable mappings and their singularities},
  author={Golubitsky, Martin and Guillemin, Victor},
  volume={14},
  year={2012},
  publisher={Springer Science \& Business Media}
}

@article{boissonnat2019reach,
  title={The reach, metric distortion, geodesic convexity and the variation of tangent spaces},
  author={Boissonnat, Jean-Daniel and Lieutier, Andr{\'e} and Wintraecken, Mathijs},
  journal={Journal of applied and computational topology},
  volume={3},
  number={1},
  pages={29--58},
  year={2019},
  publisher={Springer}
}

@article{federer1959curvature,
  title={Curvature measures},
  author={Federer, Herbert},
  journal={Transactions of the American Mathematical Society},
  volume={93},
  number={3},
  pages={418--491},
  year={1959},
  publisher={JSTOR}
}

@article{Turner2024Extended,
	author = {Turner, Katharine and Robins, Vanessa and Morgan, James},
	date = {2024/11/01},
	doi = {10.1007/s41468-024-00175-8},
	id = {Turner2024},
	isbn = {2367-1734},
	journal = {Journal of Applied and Computational Topology},
	number = {7},
	pages = {2111-2154},
	title = {The extended persistent homology transform of manifolds with boundary},
	url = {https://doi.org/10.1007/s41468-024-00175-8},
	volume = {8},
	year = {2024}}

@article{tuffleyexp,
    title={Finite subset spaces of S1},
  author={Tuffley, Christopher},
  journal={Algebraic \& Geometric Topology},
  volume={2},
  number={2},
  pages={1119--1145},
  year={2002},
  publisher={Mathematical Sciences Publishers}
}

@article{tuffley2003finite,
  title={Finite subset spaces of graphs and punctured surfaces},
  author={Tuffley, Christopher},
  journal={Algebraic \& Geometric Topology},
  volume={3},
  number={2},
  pages={873--904},
  year={2003},
  publisher={Mathematical Sciences Publishers}
}

@article{BAUER2023125503,
title = {A unified view on the functorial nerve theorem and its variations},
journal = {Expositiones Mathematicae},
volume = {41},
number = {4},
pages = {125503},
year = {2023},
issn = {0723-0869},
doi = {https://doi.org/10.1016/j.exmath.2023.04.005},
url = {https://www.sciencedirect.com/science/article/pii/S0723086923000415},
author = {Ulrich Bauer and Michael Kerber and Fabian Roll and Alexander Rolle}
}

@incollection{mrozek2025dynamics,
  title={Dynamics of Combinatorial Multivector Fields},
  author={Mrozek, Marian and Wanner, Thomas},
  booktitle={Connection Matrices in Combinatorial Topological Dynamics},
  pages={105--118},
  year={2025},
  publisher={Springer}
}

@phdthesis{hien2025topological,
  title={Topological Signatures for Analyzing Recurrence in Sampled Dynamics},
  author={Hien, David Thomas},
  year={2025},
  url = {https://mediatum.ub.tum.de/doc/1772185/1772185.pdf},
  school={Technische Universit{\"a}t M{\"u}nchen}
}

@article{handel,
    author = {Handel, D.},
    title = {Some Homotopy Proporties of Spaces of Finite Subsets of Topological Spaces},
    journal = {Houston Journal of Mathematics} ,
    publisher = {University of Houston},
    year = {2000}
}

@book{kriegl1997convenient,
  title={The convenient setting of global analysis},
  author={Kriegl, Andreas and Michor, Peter W},
  volume={53},
  year={1997},
  publisher={American Mathematical Soc.}
}

@book{hatcher, 
author = {Hatcher, Allen}, 
title = {{Algebraic Topology}},
publisher = {Cambridge Univ. Press}, year = {2002}, url = {https://pi.math.cornell.edu/~hatcher/AT/AT.pdf}}

@book{Orbifolds, 
author = {Adem, Alejandro Leida, Johann and Ruan, Yongbin}, 
title = {{Orbifolds and String Topology}},
publisher = {Cambridge Univ. Press}, 
year = {2009},
ISBN = {9780511543081},
DOI = {https://doi.org/10.1017/CBO9780511543081}
}

@article{Bott,
    author = {Bott, R.},
    title = {On the third symmetric potency of S1},
    journal = {Fund. Math.} ,
    volume={39},
    year = {1952}
}

@article{Knud,
  title={Configuration spaces in algebraic topology},
  author={Knudsen, Ben},
  journal={arXiv preprint arXiv:1803.11165},
  year={2018}
}

@inbook{CohenConf,
author = {Frederick R. Cohen},
title = {INTRODUCTION TO CONFIGURATION SPACES AND THEIR APPLICATIONS},
booktitle = {Braids},
chapter = {},
publisher = { World Scientific Publishing Co. Pte. Ltd.},
year = {2009},
pages = {183-261},
doi = {10.1142/9789814291415_0003},
URL ={https://www.worldscientific.com/doi/abs/10.1142/9789814291415_0003},
eprint = {https://www.worldscientific.com/doi/pdf/10.1142/9789814291415_0003}
   
}

@article{kallel2025conf,
  title={Configuration Spaces of Points: A User's Guide},
  author={Kallel, Sadok},
  journal={arXiv preprint arXiv:2407.11092},
  year={2024}
}

@misc{TuffPHD,
      title={Finite subset spaces of graphs and surfaces, PhD Thesis}, 
      author={Tuffley, Christopher},
      year={2003},
}

@BOOK{Nicolaescu2011-oh,
  title     = "An Invitation to Morse Theory",
  author    = "Nicolaescu, Liviu I",
  publisher = "Springer",
  series    = "Universitext",
  edition   =  2,
  month     =  nov,
  year      =  2011,
  address   = "New York, NY",
  copyright = "https://www.springernature.com/gp/researchers/text-and-data-mining",
  language  = "en"
}

@BOOK{Mazzucchelli2011-tr,
  title     = "Critical point theory for Lagrangian systems",
  author    = "Mazzucchelli, Marco",
  publisher = "Springer",
  series    = "Progress in Mathematics",
  edition   =  2012,
  month     =  nov,
  year      =  2011,
  address   = "Basel, Switzerland",
  copyright = "https://www.springernature.com/gp/researchers/text-and-data-mining",
  language  = "en"
}

@ARTICLE{Morton1967-or,
  title     = "Symmetric products of the circle",
  author    = "Morton, H R",
  journal   = "Math. Proc. Camb. Philos. Soc.",
  publisher = "Cambridge University Press (CUP)",
  volume    =  63,
  number    =  2,
  pages     = "349--352",
  month     =  apr,
  year      =  1967,
  language  = "en"
}

@BOOK{Banyaga2004-my,
  title     = "Lectures on Morse Homology",
  author    = "Banyaga, Augustin and Hurtubise, David",
  publisher = "Springer",
  series    = "Texts in the Mathematical Sciences",
  edition   =  2004,
  month     =  oct,
  year      =  2004,
  address   = "New York, NY",
  language  = "en"
}

@INCOLLECTION{Wall1975-rt,
  title     = "Regular stratifications",
  booktitle = "Lecture Notes in Mathematics",
  author    = "Wall, C T C",
  publisher = "Springer Berlin Heidelberg",
  pages     = "332--344",
  series    = "Lecture Notes in Mathematics",
  year      =  1975,
  address   = "Berlin, Heidelberg"
}

@article{IpsenRehman2008,
  author  = {Ipsen, Ilse C. F. and Rehman, Rizwana},
  title   = {Perturbation Bounds for Determinants and Characteristic Polynomials},
  journal = {SIAM Journal on Matrix Analysis and Applications},
  volume  = {30},
  number  = {2},
  pages   = {762--776},
  year    = {2008},
  publisher = {Society for Industrial and Applied Mathematics}
}

@BOOK{Milnor1963-mi,
  title     = "Morse theory. ({AM-51)}, volume 51",
  author    = "Milnor, John",
  publisher = "Princeton University Press",
  series    = "Annals of Mathematics Studies",
  month     =  may,
  year      =  1963,
  address   = "Princeton, NJ",
  language  = "en"
}

@article{CohenSteiner2007,
  title     = "Stability of persistence diagrams",
  author    = "Cohen-Steiner, David and Edelsbrunner, Herbert and Harer, John",
  journal   = "Discrete Comput. Geom.",
  publisher = "Springer Science and Business Media LLC",
  volume    =  37,
  number    =  1,
  pages     = "103--120",
  month     =  jan,
  year      =  2007,
  language  = "en"
}

@book{edelsbrunner2010computational,
  title={Computational topology: an introduction},
  author={Edelsbrunner, Herbert and Harer, John},
  year={2010},
  publisher={American Mathematical Soc.}
}

@article{edelsbrunner2008persistent,
  title={Persistent homology-a survey},
  author={Edelsbrunner, Herbert and Harer, John and others},
  journal={Contemporary mathematics},
  volume={453},
  number={26},
  pages={257--282},
  year={2008},
  publisher={Providence, RI: American Mathematical Society}
}

@article{AYALA2017903,
title = {Local structures on stratified spaces},
journal = {Advances in Mathematics},
volume = {307},
pages = {903-1028},
year = {2017},
issn = {0001-8708},
doi = {https://doi.org/10.1016/j.aim.2016.11.032},
url = {https://www.sciencedirect.com/science/article/pii/S0001870816316097},
author = {David Ayala and John Francis and Hiro Lee Tanaka},
}

@misc{Lurie2017HigherAlgebra,
  author       = {Lurie, Jacob},
  title        = {Higher Algebra},
  year         = {2017},
  note         = {Preprint},
  url          = {https://people.math.harvard.edu/~lurie/papers/HA.pdf},
}

@BOOK{Chazal2016-bo,
  title     = "The structure and stability of persistence modules",
  author    = "Chazal, Frederic and de Silva, Vin and Glisse, Marc and Oudot,
               Steve Y",
  publisher = "Springer International Publishing",
  series    = "SpringerBriefs in Mathematics",
  edition   =  1,
  month     =  oct,
  year      =  2016,
  address   = "Cham, Switzerland",
  language  = "en"
}

@article{BorsukUlam1931SymmetricProducts,
  author  = {Borsuk, Karol and Ulam, Stanislaw},
  title   = {On symmetric products of topological spaces},
  journal = {Bulletin of the American Mathematical Society},
  volume  = {37},
  number  = {12},
  pages   = {875--882},
  year    = {1931},
  month   = {December}
}

@article{HacquardLebovici2024Euler,
  title   = {Euler Characteristic Tools for Topological Data Analysis},
  author  = {Hacquard, Olympio and Lebovici, Vadim},
  journal = {Journal of Machine Learning Research},
  volume  = {25},
  year    = {2024},
  pages   = {1--39},
  month   = jul
}

@article{Munch02012025,
author = {Elizabeth Munch},
title = {An Invitation to the Euler Characteristic Transform},
journal = {The American Mathematical Monthly},
volume = {132},
number = {1},
pages = {15--25},
year = {2025},
publisher = {Taylor \& Francis},
doi = {10.1080/00029890.2024.2409616},


URL = { 
    
        https://doi.org/10.1080/00029890.2024.2409616
    
    

},
eprint = { 
    
        https://doi.org/10.1080/00029890.2024.2409616
    
    

}

}

@book{NixonAguado2012Feature,
  title     = {Feature Extraction and Image Processing for Computer Vision},
  chapter   = {7},
  author    = {Nixon, Mark and Aguado, Alberto},
  year      = {2012},
  publisher = {Elsevier},
  address   = {Oxford, UK}
}

@article{KUHL1982236,
title = {Elliptic Fourier features of a closed contour},
journal = {Computer Graphics and Image Processing},
volume = {18},
number = {3},
pages = {236-258},
year = {1982},
issn = {0146-664X},
doi = {https://doi.org/10.1016/0146-664X(82)90034-X},
url = {https://www.sciencedirect.com/science/article/pii/0146664X8290034X},
author = {Frank P Kuhl and Charles R Giardina}

}

@article{Gower1975GPA,
  author  = {Gower, J. C.},
  title   = {Generalized Procrustes Analysis},
  journal = {Psychometrika},
  volume  = {40},
  number  = {1},
  pages   = {33--51},
  year    = {1975},
  month   = mar,
  doi     = {10.1007/BF02291478}
}

@article{AdamsRohlfSlice2004Geomorph,
  author  = {Adams, Dean C. and Rohlf, F. James and Slice, Dennis E.},
  title   = {Geometric Morphometrics: Ten Years of Progress Following the {``}Revolution{''}},
  journal = {Italian Journal of Zoology},
  volume  = {71},
  number  = {1},
  pages   = {5--16},
  year    = {2004},
  doi     = {10.1080/11250000409356545}
}

@article{ghrist1999configurationspacesbraidgroups,
  title={Configuration spaces and braid groups on graphs in robotics},
  author={Ghrist, Robert},
  journal={arXiv preprint math/9905023},
  year={1999}
}

@article{KallelSjerve2009,
  author  = {Kallel, Sadok and Sjerve, Denis},
  title   = {Remarks on Finite Subset Spaces},
  journal = {Homology, Homotopy and Applications},
  volume  = {11},
  number  = {2},
  year    = {2009},
  pages   = {229--250}
}

@article{lazovskis2026simpleconnectednessranspace,
  title={Simple connectedness of the Ran space},
  author={Lazovskis, J{\=a}nis},
  journal={arXiv preprint arXiv:2602.09815},
  year={2026}
}

@ARTICLE{Benjamin2023-qb,
  title    = "Homology of homologous knotted proteins",
  author   = "Benjamin, Katherine and Mukta, Lamisah and Moryoussef, Gabriel
              and Uren, Christopher and Harrington, Heather A and Tillmann,
              Ulrike and Barbensi, Agnese",
  journal  = "J. R. Soc. Interface",
  volume   =  20,
  number   =  201,
  pages    = "20220727",
  month    =  apr,
  year     =  2023,
  language = "en"
}

@article{vaughan1977rectangles,
  title={Rectangles and simple closed curves},
  author={Vaughan, H},
  journal={Lecture, Univ. of},
  year={1977}
}

@inproceedings{meyerson1981balancing,
  title={Balancing acts},
  author={Meyerson, Mark D},
  booktitle={Topology Proc},
  volume={6},
  number={1},
  pages={59--75},
  year={1981}
}

@article{Turner2014PHT,
  title   = {Persistent homology transform for modeling shapes and surfaces},
  author  = {Turner, Katharine and Mukherjee, Sayan and Boyer, Doug M.},
  journal = {Information and Inference: A Journal of the IMA},
  volume  = {3},
  pages   = {310--344},
  year    = {2014},
  doi     = {10.1093/imaiai/iau011}
}

@ARTICLE{Ali2023-xo,
  title     = "A survey of vectorization methods in topological data analysis",
  author    = "Ali, Dashti and Asaad, Aras and Jimenez, Maria-Jose and Nanda,
               Vidit and Paluzo-Hidalgo, Eduardo and Soriano-Trigueros, Manuel",
  journal   = "IEEE Trans. Pattern Anal. Mach. Intell.",
  publisher = "Institute of Electrical and Electronics Engineers (IEEE)",
  volume    =  45,
  number    =  12,
  pages     = "14069--14080",
  month     =  dec,
  year      =  2023,
  copyright = "https://creativecommons.org/licenses/by/4.0/legalcode",
  language  = "en",
  doi       = {10.1109/TPAMI.2023.3308391}
}

@article{yang2026topological,
  title={Topological shape transform for thymus structures},
  author={Yang, Haochen and Lebovici, Vadim and Tarcevski, Andreas and Tchernev, Liliana and Zuklys, Saulius and Holl{\"a}nder, Georg A and Byrne, Helen M and Harrington, Heather A},
  journal={arXiv preprint arXiv:2602.18889},
  year={2026}
}

@article{ichinomiya2025machine,
  title={Machine learning of time series data using persistent homology},
  author={Ichinomiya, Takashi},
  journal={Scientific Reports},
  volume={15},
  number={1},
  pages={20508},
  year={2025},
  publisher={Nature Publishing Group UK London}
}

@INCOLLECTION{Eckmann1995-vb,
  title     = "Recurrence plots of dynamical systems",
  booktitle = "Turbulence, Strange Attractors and Chaos",
  author    = "Eckmann, J-P and Kamphorst, S Oliffson and Ruelle, D",
  publisher = "WORLD SCIENTIFIC",
  pages     = "441--445",
  month     =  9,
  year      =  1995,
  doi = {10.1209/0295-5075/4/9/004}
}

@book{clarke1990optimization,
  title={Optimization and nonsmooth analysis},
  author={Clarke, Frank H},
  year={1990},
  doi={10.1137/1.9781611971309},
  publisher={SIAM}
}

@ARTICLE{Clarke1975-zo,
  title     = "Generalized gradients and applications",
  author    = "Clarke, Frank H",
  journal   = "Trans. Am. Math. Soc.",
  publisher = "American Mathematical Society (AMS)",
  volume    =  205,
  pages     = "247--247",
  year      =  1975,
  language  = "en",
  doi = {10.1090/S0002-9947-1975-0367131-6}
}

@article{BARTELS1995385,
title = {Continuous selections of linear functions and nonsmooth critical point theory},
journal = {Nonlinear Analysis: Theory, Methods \& Applications},
volume = {24},
number = {3},
pages = {385-407},
year = {1995},
issn = {0362-546X},
doi = {https://doi.org/10.1016/0362-546X(95)91645-6},
url = {https://www.sciencedirect.com/science/article/pii/0362546X95916456},
author = {Sven G. Bartels and Ludwig Kuntz and Stefan Scholtes}
}

@article{Agrachev97,
	author = {Agrachev, A. A. and Pallaschke, D. and Scholtes, S.},
	doi = {10.1023/A:1021810206861},
	isbn = {1573-8698},
	journal = {Journal of Dynamical and Control Systems},
	number = {4},
	pages = {449--469},
	title = {On Morse Theory for Piecewise Smooth Functions},
	url = {https://doi.org/10.1023/A:1021810206861},
	volume = {3},
	year = {1997},
}

@ARTICLE{Fugacci2020-qx,
  title     = "Critical sets of {PL} and discrete Morse theory: A
               correspondence",
  author    = "Fugacci, Ulderico and Landi, Claudia and Varl{\i}, Hanife",
  journal   = "Comput. Graph.",
  publisher = "Elsevier BV",
  volume    =  90,
  pages     = "43--50",
  month     =  aug,
  year      =  2020,
  language  = "en",
  doi = {10.1016/j.cag.2020.05.020}
}

@ARTICLE{Banchoff1967-tn,
  title     = "Critical points and curvature for embedded polyhedra",
  author    = "Banchoff, Thomas",
  journal   = "J. Differential Geom.",
  publisher = "International Press of Boston",
  volume    =  1,
  number    = "3-4",
  pages     = "245--256",
  month     =  jan,
  year      =  1967,
  doi = {10.4310/jdg/1214428092}
}

@article{nanda2026stratified,
  title={Stratified Morse Theory for Cell Complexes},
  author={Nanda, Vidit and Tombari, Francesca},
  journal={arXiv preprint arXiv:2601.18343},
  year={2026}
}

@BOOK{Goresky1988-ym,
  title     = "Stratified Morse theory",
  author    = "Goresky, Mark and MacPherson, Robert",
  publisher = "Springer",
  series    = "Ergebnisse der Mathematik und ihrer Grenzgebiete. 3. Folge",
  edition   =  1988,
  month     =  feb,
  year      =  1988,
  address   = "Berlin, Germany",
  doi = {10.1007/978-3-642-71714-7}
}

\end{document}